\definecolor{darkblue}{rgb}{0,0,0.7} 
\definecolor{green}{RGB}{57,181,74} 
\definecolor{violet}{RGB}{147,39,143} 
\newcommand{\defn}[1]{{\emph{#1}}}
\newcommand{\T}{\mathbb{T}}
\newcommand{\Sym}{\mathbb{S}}
\newcommand{\eqdef}{:=}
\newcommand{\defeq}{\mbox{~\ensuremath{=}\raisebox{0.2ex}{\scriptsize\ensuremath{\mathrm:}} }} 
\newcommand{\calC}{\mathcal{C}}
\newcommand{\calI}{\mathcal{I}}
\newcommand{\calJ}{\mathcal{J}}
\newcommand{\calL}{\mathcal{L}}
\newcommand{\calF}{\mathcal{F}}
\newcommand{\calG}{\mathcal{G}}
\newcommand{\calQ}{\mathcal{Q}}
\newcommand{\calR}{\mathcal{R}}
\newcommand{\calT}{\mathcal{T}}
\newcommand{\calX}{\mathcal{X}}
\newcommand{\calY}{\mathcal{Y}}
\newcommand{\rmC}{\mathrm{C}}
\newcommand{\rmI}{\mathrm{I}}
\newcommand{\rmO}{\mathrm{O}}
\newcommand{\rmP}{\mathrm{P}}
\newcommand{\rmQ}{\mathrm{Q}}
\newcommand{\rmT}{\mathrm{T}}
\newcommand{\id}{\operatorname{id}}
\newcommand{\End}{\operatorname{End}}
\newcommand{\Lie}{\operatorname{Lie}}
\newcommand{\cLie}{\operatorname{cLie}}
\newcommand{\ev}{\operatorname{ev}}
\newcommand{\forget}{\calF}
\newcommand{\free}{\calG}
\newcommand{\Curv}{\operatorname{Curv}_\infty}
\newcommand{\coCurv}{\operatorname{Curv}^\as}
\newcommand{\nsCurv}{\operatorname{Curv}_\infty^{\mathrm{ns}}}
\newcommand{\coTwist}{\operatorname{Twist}^\as}
\newcommand{\Twist}{\operatorname{Twist}_\infty}
\newcommand{\nsTwist}{\operatorname{Twist}_\infty^{\mathrm{ns}}}
\newcommand{\kurv}{\kappa}
\newcommand{\diff}{m}
\newcommand{\Vect}{\operatorname{Vect}}
\newcommand{\Op}{\operatorname{Op}}
\newcommand{\coOp}{\operatorname{coOp}}
\newcommand{\dgOp}{\operatorname{dg-Op}}
\newcommand{\Mult}{(\Linf \downarrow \Op)}
\newcommand{\nsMult}{(\Ainf \downarrow \Op)}
\newcommand{\MultSym}{(\Linf \downarrow \Op)}
\newcommand{\cMult}{(\cLinf \downarrow \Op)}
\newcommand{\nscMult}{(\cAinf \downarrow \Op)}
\newcommand{\cMultSym}{(\cLinf \downarrow \Op)}
\newcommand{\coCom}{(\uComd \downarrow \coOp)}
\newcommand{\form}{\varphi}
\newcommand{\counit}{\xi}
\newcounter{dummy} \numberwithin{dummy}{section}
\newtheorem{theorem}[dummy]{Theorem}
\newtheorem{thm}[dummy]{Theorem}
\newtheorem{prop}[dummy]{Proposition}
\newtheorem{lemma}[dummy]{Lemma}
\newtheorem{corollary}[dummy]{Corollary}
\newtheorem*{thm*}{Theorem}
\newtheorem*{prop*}{Proposition}
\theoremstyle{definition}
\newtheorem{definition}[dummy]{Definition}
\newtheorem{notation}[dummy]{Notation}
\newtheorem{def-prop}[dummy]{Definition-Proposition}
\newtheorem{lemma-def}[dummy]{Lemma-Definition}
\numberwithin{equation}{section}
\theoremstyle{remark}
\newtheorem{remark}[dummy]{Remark}
\newcommand{\As}{\ensuremath{\mathrm{As}}}
\newcommand{\Ainf}{\ensuremath{\mathrm{A}_\infty}}
\newcommand{\cAinf}{\ensuremath{\mathrm{cA}_\infty}}
\newcommand{\Linf}{\ensuremath{\mathrm{L}_\infty}}
\newcommand{\cLinf}{\ensuremath{\mathrm{cL}_\infty}}
\newcommand{\Gerinf}{\ensuremath{\mathrm{Ger}_\infty}}
\newcommand{\Br}{\ensuremath{\mathrm{Br}}}
\newcommand{\BT}{\ensuremath{\mathrm{BT}}}
\newcommand{\MC}{\ensuremath{\mathrm{MC}}}
\newcommand{\Tw}{\ensuremath{\mathrm{Tw}}}
\newcommand{\cTw}{\ensuremath{\mathrm{cTw}}}
\newcommand{\Com}{\ensuremath{\mathrm{Com}}}
\newcommand{\Ass}{\ensuremath{\mathrm{Ass}}}
\newcommand{\uAss}{\ensuremath{\mathrm{uAss}}}
\newcommand{\uCom}{\ensuremath{\mathrm{uCom}}}
\newcommand{\uComd}{\ensuremath{\mathrm{uCom}^{*}}}
\newcommand{\PreLie}{\ensuremath{\mathrm{PreLie}}}
\newcommand{\uPreLie}{\ensuremath{\mathrm{uPreLie}}}
\newcommand{\HypCom}{\ensuremath{\mathrm{HyperCom}}}
\newcommand{\Ger}{\ensuremath{\mathrm{Ger}}}
\newcommand{\ncGer}{\ensuremath{\mathrm{ncGer}}}
\newcommand{\uGer}{\ensuremath{\mathrm{uGer}}}
\newcommand{\Ginf}{\ensuremath{\mathrm{Ger}_\infty}}
\newcommand{\Grav}
{\ensuremath{\mathrm{Grav}}}
\newcommand{\Grainf}
{\ensuremath{\mathrm{Grav}_\infty}}
\newcommand{\Perm}{\ensuremath{\mathrm{Perm}}}
\newcommand{\Graph}{\ensuremath{\mathrm{Gra}}}
\newcommand{\Perinf}{\ensuremath{\mathrm{Perm}_\infty}}
\newcommand{\Gra}{\ensuremath{\mathrm{Gra}}}
\newcommand{\FM}{\ensuremath{\mathrm{FM}}}
\newcommand{\Pinf}{\ensuremath{\rmP_\infty}}
\newcommand{\Qinf}{\ensuremath{\rmQ_\infty}}
\newcommand{\cPinf}{\ensuremath{\mathrm{c}\rmP_\infty}}
\newcommand{\cP}{\ensuremath{\mathrm{c}\rmP}}
\newcommand{\cQ}{\ensuremath{\mathrm{c}\rmQ}}
\newcommand{\uP}{\ensuremath{\mathrm{u}\rmP}}
\newcommand{\Sh}{\ensuremath{\mathrm{Sh}}}
\newcommand{\im}{\ensuremath{\mathrm{im}}}
\newcommand{\op}{\ensuremath{\mathrm{op}}}
\newcommand{\as}{{\scriptstyle \text{\rm !`}}}
\title{Operadic twisting as an adjunction}
\author{Guillaume Laplante-Anfossi}
\author{Adrian Petr}
\author{Vivek Shende}
\begin{document}

\begin{abstract} 
For operads with a map from the curved homotopy Lie operad, we introduce a corresponding curved variant `cTw' of Willwacher's operadic twisting comonad `Tw'.  
We show that cTw-coalgebra structures on such an operad are in bijection with certain splittings (not respecting the differential) of the projection to its quotient by the curvature operation.   
We derive a similar classification of Tw-coalgebras.  

For the class of operads whose Koszul dual admits a unital extension, we give explicit formulas for the cTw-coalgebra structures on their curved homotopy resolutions, recovering the convolution Lie algebra's ``gauge group action'' of Dotsenko, Shadrin, and Vallette.
\end{abstract}

\maketitle



\section{Introduction}

Given an $\Linf$ algebra in chain complexes $(A, d)$, and an element
$\alpha \in A$, one may define new  operations 
by the following formula:
\begin{equation}
\label{eq:ell-n-alpha}
    \ell_n^\alpha \eqdef \sum_{k \geq 0} \frac{1}{k!}\ell_{k+n} (\alpha^k,-,\cdots,-)  .
\end{equation}
The new operations determine a $\cLinf$ algebra structure on the underlying chain complex $(A,d)$.  If $\ell_0^\alpha = 0$ vanishes, i.e.\ if $\alpha$ is a Maurer-Cartan element,  the $\ell_n^\alpha$ for $n \geqslant 2$ define an $\Linf$ structure on $(A, d^\alpha := d + \ell_1^\alpha)$. 

By `operadic twisting', we mean, following Willwacher \cite{Willwacher-KGT}, the following general problem. 
Given a morphism of operads 
$\Linf \to \rmP$, a $\rmP$-algebra $A$, and a Maurer-Cartan element $\alpha$ for the $\Linf$ structure on $A$, construct / classify $\rmP$-algebra structures extending the $\Linf$ structure on $A^\alpha$. 

Willwacher's approach to this question was to introduce an operad $\Tw \rmP$ encoding all the natural operations on a $\rmP$-algebra equipped with a Maurer-Cartan element for the $\Linf$ structure \cite{Willwacher-KGT, Willwacher-homotopybraces, DolgushevWillwacher15}. 
There are morphisms of operads: 
    \begin{center}
    \begin{tikzcd}
    \Linf \arrow[r] \arrow[rd] \arrow[rrd, dashed] & \rmP \arrow[r]  & \End_{(A, d)} \\
    \, & \Tw \rmP \arrow[r, dashed] \arrow[u] & \End_{(A, d^\alpha)}
    \end{tikzcd}
    \end{center}
The dashed arrows are  induced by the choice of Maurer-Cartan element $\alpha$.

A choice of section $i: \rmP \to \Tw\rmP$ (as $\Linf$ operads) provides an operadic twisting for $\rmP$.   
In fact, there is also a natural map $\Tw \rmP \to \Tw (\Tw \rmP)$ which, together with the structure map~$\Tw \rmP \to \rmP$, determines a comonad, with the property that when $i: \rmP \to \Tw\rmP$ is a coalgebra for this comonad, the resulting operadic twisting is linear in the Maurer-Cartan elements. 

The comonad $\Tw$ plays various roles in the context of Kontsevich formality,  Deligne's conjecture on the Hochschild complex and related problems \cite{Willwacher-KGT, DolgushevWillwacher15, Willwacher-homotopybraces}.
Two problems which recur in the latter references are to construct $\Tw$-coalgebra structures and to identify $\Tw$ homotopy fixed points; we note that the calculations involved in doing so have been rather non-trivial (see for instance the proof that $\Gerinf$ is a $\Tw$-coalgebra \cite[Thm.~4.12, pp.\ 1382--1387]{DolgushevWillwacher15} or the proof that any operad obtained via distributive law with $\Lie$ is a $\Tw$-coalgebra and homotopy fixed point \cite[Thm.~5.10, pp.\ 1390--1396]{DolgushevWillwacher15}).

\vspace{2mm}

Let us recall the basic relationship between co/monads and adjoints. 
For any adjoint pair of functors 
\begin{tikzcd}
L: \calX \arrow[r,shift left=.5ex]
&
\calY : R \arrow[l,shift left=.5ex]
\end{tikzcd}
with unit $\eta: 1_\calX \to RL$ and counit $\varepsilon: LR \to 1_\calY$, the endofunctor $LR: \calY \to \calY$ carries the structure of a comonad, with counit $\varepsilon$ and cocomposition given by $L\eta R: LRLR \to LR$. 
For $x \in \calX$ the object $L(x)$ carries a natural $LR$-coalgebra structure, given by $L(\eta_x): L(x) \mapsto LRL(x)$. 
When this map from $\calX$ to $LR$-coalgebras is an equivalence, the adjoint pair $(L, R)$ is said to be comonadic.
We say a functor is comonadic when it admits a right adjoint and the adjunction is comonadic. 
Likewise, $RL$ carries a monad structure, there is a map from $\calY$ to the category of $RL$-algebras, etc. 

Tautologically, $\Tw$ is the comonad associated to the comonadic forgetful functor from $\Tw$-coalgebras to $\MultSym$.
The basic purpose of our article is to give a rather different description of this adjunction.
Said differently, we will give an explicit and simple characterization of $\Tw$-coalgebras.
We will also give a criterion for identifying homotopy fixed points of $\Tw$.
Our results will, in particular, recover corresponding results of \cite{DolgushevWillwacher15}.   

\subsection*{The comonad $\cTw$ as an adjunction}

In fact, the corresponding problem is somewhat simpler for the analogue of operadic twisting in the curved setting, that is, for operads $\cLinf \to \rmQ$ under the curved homotopy Lie operad.  There is a comonadic endofunctor
$\cTw$ on the category of such operads such that $\cTw(\rmQ)$ encodes all operations which can be built from $\rmQ$ and some chosen element of a $\rmQ$-algebra (see \cref{def of cTw} below). A $\cTw$-coalgebra structure on $\rmQ$ provides a way to twist a $\rmQ$-algebra $A$ by any element of $A$ compatibly with the corresponding $\cLinf$ twisting (\cref{curved operadic twisting}). 

We will write $[x]$ for the operad generated by a single element $x$ (of some specified arity). Given an operad $\rmP$, the coproduct $\rmP \vee [x]$ is naturally equipped with a ``$x$-grading'' given by the number of $x$ which appear in a given expression.

\begin{definition} \label{curv redefined}
    A  {\em curved twisting structure} is an operad morphism $\cLinf \to \rmQ$, and a non-dg suboperad $\rmQ_0 \subset \rmQ$ such that:
    \begin{itemize}
        \item\label{coproduct curv} The natural map $[\ell_0] \vee \rmQ_0 \to \rmQ$ is an isomorphism of non-dg operads. 
        \item\label{image curv} $\rmQ_0$ contains the image of the $\ell_i$ for $i > 0$. 
        \item\label{differential curv} $d_\rmQ(\rmQ_0)$ consists of elements of degree at most one in $\ell_0$.
    \end{itemize} 
    A morphism of curved twisting structures $(\rmQ, \rmQ_0) \to (\rmQ', \rmQ_0')$ is a morphism under $\cLinf$ carrying $\rmQ_0 \to \rmQ'_0$. 
    We denote the category of curved twisting structures by $\Curv$, and write $\nsCurv$ for the similar notion for non-symmetric operads over $\cAinf$. 
\end{definition} 

Here we prove: 

\begin{theorem} 
\label{cTw comonadicity}
    The forgetful functor $\Curv \to \cMult$ is comonadic and the corresponding comonad is $\cTw$. 
    The same holds for $\nsCurv \to \nscMult$.  
\end{theorem}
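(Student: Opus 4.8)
The plan is to exhibit the forgetful functor $U \colon \Curv \to \cMult$ as the left adjoint of a comonadic adjunction whose induced comonad is $\cTw$, and then to identify $\Curv$ with the category of $\cTw$-coalgebras directly. First I would construct the right adjoint $R \colon \cMult \to \Curv$. On an object $\rmQ$ of $\cMult$ I set $R(\rmQ) \eqdef \cTw \rmQ$, equipped with the canonical non-dg suboperad $(\cTw\rmQ)_0$ provided by the explicit description of $\cTw$ in \cref{def of cTw} (the canonical complement to the curvature generator $\ell_0$). The three conditions of \cref{curv redefined} for $(\cTw\rmQ, (\cTw\rmQ)_0)$ should be read off from that description, the only point requiring care being the third, that the twisted differential lowers $\ell_0$-degree by at most one. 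The adjunction $U \dashv R$ is then the universal property of operadic twisting: a morphism $\rmP \to \rmQ$ in $\cMult$ together with the splitting $\rmP_0$ determines, and is determined by, a splitting-preserving morphism $\rmP \to \cTw\rmQ$, namely the twist of $\rmP \to \rmQ$ along $\rmP_0$. The counit $\varepsilon_\rmQ \colon \cTw\rmQ \to \rmQ$ is the projection forgetting the twisting element, and the unit $\eta_{(\rmP, \rmP_0)} \colon \rmP \to \cTw\rmP$ is the twisting map determined by $\rmP_0$.

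Granting the adjunction, the comonad $UR$ sends $\rmQ$ to $\cTw\rmQ$ with counit $\varepsilon$ and comultiplication $U\eta_{R\rmQ} \colon \cTw\rmQ \to \cTw\cTw\rmQ$, and I would check these agree with the comonad structure on $\cTw$ fixed in \cref{def of cTw}. It remains to prove that the comparison functor $K \colon \Curv \to \cTw\text{-}\mathrm{coalg}$, sending $(\rmP, \rmP_0)$ to $\rmP$ with coalgebra structure $\eta_{(\rmP, \rmP_0)}$, is an equivalence. I would build an explicit quasi-inverse: to a $\cTw$-coalgebra $(\rmQ, \rho \colon \rmQ \to \cTw\rmQ)$ I associate the non-dg suboperad $\rmQ_0 \eqdef \rho^{-1}\big((\cTw\rmQ)_0\big)$, which is a suboperad in the non-dg sense since $(\cTw\rmQ)_0$ is. Using that $\rho$ is a morphism under $\cLinf$ one gets the second condition of \cref{curv redefined}; the counit axiom $\varepsilon_\rmQ \circ \rho = \id_\rmQ$ together with the decomposition of $\cTw\rmQ$ yields the first condition $[\ell_0] \vee \rmQ_0 \cong \rmQ$; and the third condition follows from $\rho$ being a chain map and the corresponding property of $(\cTw\rmQ)_0$.

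The main obstacle is verifying that $K$ and this assignment are mutually inverse, which is where coassociativity of $\rho$ is essential. One direction, $\eta^{-1}\big((\cTw\rmP)_0\big) = \rmP_0$, amounts to showing that the twisting map $\eta_{(\rmP,\rmP_0)}$ reflects, and not merely preserves, the splitting; this I expect to follow from tracking the $\ell_0$-grading, since $\eta$ fixes $\ell_0$ and is graded for the number of twisting insertions. The other direction, that the canonical twisting map attached to $\rmQ_0 \eqdef \rho^{-1}\big((\cTw\rmQ)_0\big)$ recovers the original $\rho$, is the heart of the argument: it asserts that a $\cTw$-coalgebra structure is rigidly determined by the splitting it induces. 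Here I would use coassociativity to pin down $\rho$ on $\rmQ_0$ and on $\ell_0$, and then conclude, by the universal property of the coproduct $[\ell_0] \vee \rmQ_0 \cong \rmQ$, that $\rho$ is forced everywhere. The identical argument applies to $\nsCurv \to \nscMult$, replacing $\cLinf$ by $\cAinf$ throughout. As a robustness check one could instead invoke Beck's comonadic theorem, for which the nontrivial inputs are exactly the same reflection-of-isomorphisms statement via the $\ell_0$-grading, together with preservation of the relevant equalizers, computed through the decomposition $[\ell_0] \vee (-)_0$.
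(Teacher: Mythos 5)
Your overall architecture --- construct the right adjoint $\rmQ\mapsto(\cTw\rmQ,(\cTw\rmQ)_0)$, match the induced comonad with $\cTw$, and identify $\Curv$ with $\cTw$-coalgebras --- parallels the paper's, but where the paper verifies Beck's criterion (reflection of isomorphisms, plus equalizers in $\Curv$, computed using freeness in $\kappa$), you propose an explicit inverse to the comparison functor, $(\rmQ,\rho)\mapsto\bigl(\rmQ,\rho^{-1}((\cTw\rmQ)_0)\bigr)$. That formula is indeed the right one (it is what Beck's equalizer unwinds to here), and your grading argument that the unit reflects the splitting is sound. The first genuine gap is at the foundation: you assert the adjunction by declaring the unit to be ``the twisting map determined by $\rmP_0$'', i.e.\ ``the twist of $\rmP\to\rmQ$ along $\rmP_0$''. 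No such explicitly given map exists: the paper emphasizes that the unit of this adjunction admits no closed formula, only a recursive construction, and its existence is a substantial theorem (quoted in the paper as \cref{thm:adjunction-Curv} from the authors' earlier work). Appealing to ``the universal property of operadic twisting'' is circular, since that universal property is precisely the adjunction being established.

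The second gap is a step that fails as written: you claim that the counit axiom together with the decomposition of $\cTw\rmQ$ already yields condition (1) of \cref{curv redefined} for $\rmQ_0\eqdef\rho^{-1}((\cTw\rmQ)_0)$, reserving coassociativity for the mutual-inverse bookkeeping. The roles are in fact exchanged: once conditions (1)--(3) are known, the identity $\rho=\eta_{(\rmQ,\rmQ_0)}$ follows from uniqueness in the adjunction (both morphisms correspond to $\id_\rmQ$ under the hom-bijection), with no use of coassociativity; it is condition (1) itself that needs coassociativity. Indeed, if your claim were correct, every counit-satisfying $\cMult$-morphism $\rmQ\to\cTw\rmQ$ would equal the unit of its associated splitting and hence be coassociative, which is false. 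Take $\rmQ\eqdef[\kappa_\rmQ]\vee[e]$ with $\kappa_\rmQ$ of arity $0$ and degree $-1$, $e$ of arity $3$ and degree $1$, zero differential, and $\cLinf$-structure $\ell_0\mapsto\kappa_\rmQ$, $\ell_n\mapsto 0$ for $n\geq 1$. Set $T\eqdef e\circ_1 e$ and define $\rho(\kappa_\rmQ)\eqdef\kappa+\kappa_\rmQ$ and $\rho(e)\eqdef e+d\bigl(T(\alpha,\alpha,-,-,-)\bigr)=e+T(\kappa,\alpha,-,-,-)+T(\alpha,\kappa,-,-,-)$. This $\rho$ is a morphism in $\cMult$ satisfying the counit axiom, but it is not coassociative: $\Delta\circ\rho$ sends $e$ to an expression containing the cross-term $T(\kappa,\alpha',-,-,-)$ (old $\kappa$, new $\alpha'$), while $\cTw(\rho)\circ\rho$ produces only same-generation terms $T(\kappa,\alpha,\ldots)$ and $T(\kappa',\alpha',\ldots)$ together with terms containing at least three copies of $e$, and these cannot match by freeness. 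Concretely, condition (1) fails for this $\rho$: the image under $\rho$ of any element with nonzero $e$-component has a nonzero $\kappa$-part (corrections coming from $\kappa_\rmQ$ contain no $\alpha$, so they cannot cancel $T(\kappa,\alpha,\ldots)$), hence $\rho^{-1}((\cTw\rmQ)_0)$ misses $e$ entirely and $[\ell_0]\vee\rmQ_0\to\rmQ$ is not surjective. Your route can be completed --- it is essentially Beck's proof made explicit --- but only by invoking coassociativity exactly at the step where you claim it is not needed.
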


\begin{remark}
    Often co/monadicity results serve to access objects in a difficult-to-understand category $Y$ in terms of co/algebras in an easier-to-understand category $X$.  
    The present result is perhaps more useful in the other direction: the data and checking involved in the definition of a $\cTw$-coalgebra structure seems rather more involved than the (equivalent per Theorem \ref{cTw comonadicity}) data of a lift to $\Curv$, i.e., identifying $\rmQ_0$.   
\end{remark}

In fact, the construction of an adjoint to this forgetful functor was the second main result of our previous article~\cite{Laplante-Petr-Shende2025}.  (The first main result showed that the forgetful map from $\Curv$ to tuples $(\rmQ, d_\rmQ, \ell_0, \rmQ_0)$ is fully faithful, and characterized the image. That is, the  map from $\cAinf$ or $\cLinf$ is determined from just the data of $\ell_0$ and $\rmQ_0$.  In fact, in \cite{Laplante-Petr-Shende2025}, we used said image as the definition of $\Curv$.)  Given  the adjoint, we establish the asserted comonadicity by checking Beck's criterion~\cite{Beck25}.  The main new content in the result is the identification of the comonad with $\cTw$, which is an explicit calculation. 

\subsection*{The comonad $\Tw$ as an adjunction}

We return now to the original context of operads over $\Linf$ (rather than $\cLinf$) and $\Tw$ (rather than $\cTw$).  
\begin{definition} \label{twist new}
A {\em twisting structure} is an operad morphism $\cLinf \to \rmQ$, and a non-dg suboperad $\rmQ_{0,0} \subset \rmQ$, such that: 
    \begin{itemize}
        \item\label{coproduct} The natural map $[\ell_0] \vee [\ell_1] \vee \rmQ_{0,0} \to \rmQ$ is an isomorphism of non-dg operads. 
        \item\label{image} $\rmQ_{0,0}$ contains the images of the $\ell_i$ for $i > 1$.
        \item\label{differential} $(d_\rmQ + [\ell_1, \cdot])(\rmQ_{0,0})$ consists of elements of degree $0$ in $\ell_1$ and degree $\le 1$ in $\ell_0$.   
    \end{itemize} 
We write $\Twist$ for the category of such diagrams, and $\nsTwist$ for the analogous notion for nonsymmetric operads and $\Ainf, \cAinf$ in place of $\Linf, \cLinf$. 
\end{definition}

As for $\Curv$, the forgetful functor from $\Twist$ to tuples $(\rmQ, d_{\rmQ}, \mu_0, \mu_1, \rmQ_{0,0})$ is fully faithful -- i.e., we can reconstruct the $\cAinf$ or $\cLinf$ structure from this data -- and we can explicitly characterize the image.
This fact is not used in the present article, but is presented in \cref{sec:reconstruction}.  

It is immediate from the definition that the map $\cLinf \to \rmQ$ extends uniquely to a diagram 
    \begin{center}
    \begin{tikzcd}
    \cLinf \arrow[r] \arrow[d, two heads]  & \rmQ \arrow[d, two heads] \\
    \Linf \arrow[bend right=60, dashed, hookrightarrow]{u} \arrow[r] & \rmQ_{0,0} \arrow[bend right=60, dashed, hookrightarrow]{u}
    \end{tikzcd}
    \end{center}
where we give $\rmQ_{0,0}$ a dg structure via $\rmQ_{0,0} \cong \rmQ/\left( \ell_0, \ell_1 \right)$, the solid arrows are morphisms of dg operads,  the upward dashed arrows are non-dg sections of the downward projections, and both squares commute. 

We establish the following relation between $\Twist$ and $\Tw$: 

\begin{theorem}
\label{thm:main-thm} 
    The forgetful functor $\Twist \to \MultSym$ sending a diagram as above to $\Linf \to \rmQ_{0,0}$ is comonadic, and the corresponding comonad is $\Tw$.  
    The same holds for $\nsTwist \to \nsMult$.  
\end{theorem}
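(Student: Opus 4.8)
The plan is to follow the three-step template that establishes \cref{cTw comonadicity} --- exhibit a right adjoint, verify Beck's criterion \cite{Beck25}, and then compute the comonad --- using the already-settled curved case as a guide. The organizing observation is that a twisting structure $(\rmQ,\rmQ_{0,0})$ is a curved twisting structure in which one has further split off the operation $\ell_1$: putting $\rmQ_0 := [\ell_1] \vee \rmQ_{0,0}$ produces an object of $\Curv$ (the three conditions of \cref{curv redefined} follow from those of \cref{twist new}, the degree bound on $d_\rmQ(\rmQ_0)$ coming from the $\ell_1$-degree-$0$ clause), so that $\Twist$ maps to $\Curv$ by remembering less of the splitting data. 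I would use this only to locate what is genuinely new relative to \cref{cTw comonadicity}, namely the bookkeeping of the extra generator $\ell_1$ and of the twisted differential $d_\rmQ + [\ell_1,-]$; note that this map does not commute with the two forgetful functors to $\cMult$ (the routes differ by the further quotient by $\ell_1$), so the theorem cannot simply be read off from the curved one.

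First I would construct the right adjoint $R \colon \MultSym \to \Twist$ to the forgetful functor $U$. The cleanest route is to adapt the cofree construction of \cite{Laplante-Petr-Shende2025} underlying \cref{cTw comonadicity} so that it splits off both $\ell_0$ and $\ell_1$: given $\Linf \to \rmP$, one builds the cofree twisting structure $R(\rmP)$ with underlying operad of the form $[\ell_0] \vee [\ell_1] \vee \rmQ_{0,0}$, equipped with the unique $\cLinf$-structure and twisted differential compatible with \cref{twist new}. Here the reconstruction result recalled before the theorem (that the $\cLinf$-structure is determined by $\ell_0$, $\ell_1$ and $\rmQ_{0,0}$, proved in \cref{sec:reconstruction}) is exactly what makes this assignment well defined and functorial, and the adjunction $U \dashv R$ follows because any map out of $\rmP$ extends uniquely across the coproduct decomposition.

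Next I would check Beck's comonadicity criterion for $U$. That $U$ reflects isomorphisms is immediate from the isomorphism $[\ell_0] \vee [\ell_1] \vee \rmQ_{0,0} \cong \rmQ$: a morphism of twisting structures that is an isomorphism on $\rmQ_{0,0}$ preserves the coproduct and the canonical images $\ell_0, \ell_1$, hence is an isomorphism on all of $\rmQ$. That $U$ creates equalizers of $U$-split pairs reduces, through the same decomposition, to the corresponding statement in $\MultSym$: form the equalizer on the $\rmQ_{0,0}$-level, reassemble $\rmQ$ via $[\ell_0] \vee [\ell_1] \vee (-)$, and verify that the three conditions of \cref{twist new} are closed under the split equalizer, where the presence of the splitting is precisely what guarantees that the reconstructed $d_\rmQ + [\ell_1,-]$ still meets the degree constraints. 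This is the same argument that drives the curved case, with one additional generator to track.

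The main obstacle, and the only substantially new content, is the identification of the comonad $UR$ with $\Tw$. I would compute $UR(\rmP)$ explicitly and match it with Willwacher's operad $\Tw\rmP$ of natural operations on a $\rmP$-algebra carrying a Maurer--Cartan element, using the $x$-grading (the number of $\ell_0$-insertions) to organize the comparison: the $\ell_0$-generators encode insertions of the twisting element, the degree constraints in the third clause of \cref{twist new} enforce the Maurer--Cartan/linearity conditions that separate $\Tw$ from $\cTw$, and the twisted differential $d_\rmQ + [\ell_1,-]$ reproduces $d^\alpha = d + \ell_1^\alpha$. The delicate point is to check that the counit and cocomposition of $UR$ agree with those of $\Tw$ on the nose, which is the combinatorial heart of the proof; once this is done, the nonsymmetric statement for $\nsTwist \to \nsMult$ follows verbatim with $\Ainf, \cAinf$ replacing $\Linf, \cLinf$.
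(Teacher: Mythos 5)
Your overall template (exhibit a right adjoint, check Beck's criterion, identify the comonad) is indeed the paper's, and your Beck-criterion step is essentially the paper's argument. But there is a genuine gap at the crux: the construction of the right adjoint $R$. You assert that, after building $R(\rmP)$ with underlying operad of the form $[\ell_0]\vee[\ell_1]\vee\rmQ_{0,0}$, the adjunction ``follows because any map out of $\rmP$ extends uniquely across the coproduct decomposition.'' This is not a proof (and the direction is backwards: one must extend maps $\rmQ_{0,0}\to\rmP$ to maps of twisting structures into $R(\rmP)$). It conceals exactly the hard point: a morphism in $\Twist$ must commute with the differentials and with the maps from $\cLinf$, and by \cref{twist new} the differential does \emph{not} preserve the coproduct decomposition, since it sends $\rmQ_{0,0}$ into terms of $\ell_0$-degree $\le 1$. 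In the curved, one-generator case this unique-extension statement is precisely \cref{thm:adjunction-Curv}, the second main theorem of \cite{Laplante-Petr-Shende2025}, established by a nontrivial recursive argument whose unit admits no closed formula; ``adapting'' it to split off both $\ell_0$ and $\ell_1$ is not routine and is never carried out in your proposal. Moreover, the reconstruction result you invoke to make $R$ well defined (\cref{sec:reconstruction}) only says that the forgetful functor to tuples is fully faithful; it produces no cofree objects, and the paper explicitly notes that this fact is not used in the proof. Finally, you never identify the $\rmQ_{0,0}$-part of $R(\rmP)$ (it must be $\rmP\,\hat\vee\,[\alpha]$ with the twisted differential $d_{\Tw}$), without which the concluding comparison $UR\cong\Tw$ cannot even be set up.

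The gap is avoidable, and the route you dismissed is how the paper avoids it. You rejected factoring through $\Curv$ on the grounds that $\calI$ does not intertwine the forgetful functors; that observation is true but is not an obstruction. The paper composes \emph{three} adjunctions: $(\calI\dashv\calJ)$ of \cref{thm:adjunction-Twist-Curv} (the plus-construction you yourself identified), $(\calL\dashv\calR)$ of \cref{thm:adjunction-Curv}, and $(\pi_!\dashv\pi^!)$ induced by $\cLinf\to\Linf$. The composite left adjoint $\pi_!\calL\calI$ sends $(\rmQ,\rmQ_{0,0})$ to $\rmQ/(\ell_0,\ell_1)$, which \emph{is} naturally isomorphic to $\forget$ precisely because of the decomposition in \cref{twist new}. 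This reuses the hard content of \cite{Laplante-Petr-Shende2025} as a black box, yields $\free=\calJ\calR\pi^!$ with explicit value $(\cTw(\pi^!\rmP)^+,\Tw(\rmP))$ (\cref{thm:adjoint}), and reduces the identification of the comonad to concrete identities such as $\ell_0^{\alpha+\alpha'}=\ell_0^\alpha+\lambda_0^{\alpha'}+[\ell_1^\alpha,\alpha']$ (\cref{prop:comonad-structure}). As written, your proposal replaces all of this with an unproved existence claim for the cofree twisting structure, so it does not constitute a proof.
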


Our result in \cite{Laplante-Petr-Shende2025} {\em did not} come with an explicit formula for the unit of the adjunction (though does provide a recursive procedure to determine it), and correspondingly \cref{cTw comonadicity} does not come with a formula for the $\cTw$-coalgebra structure. 

\subsection*{The Maurer--Cartan monad and uncurving}

For an $\Linf$-operad $\rmP$, the data of a $\rmP$-algebra~$A$ and a Maurer--Cartan element $a \in A$ is encoded by an operad $\MC(\rmP)=\rmP \hat{\vee} [\alpha]$ where $d\alpha = -\ell_0^\alpha$.
This can be regarded as an endofunctor $\MC$ on $\MultSym$, which admits a natural lift to $\Twist$ (\cref{definition:cMC}).

\begin{theorem}
\label{thm:co-main-thm} 
    The functor $\MultSym \to \Twist$ sending $\rmP$ to $(\cTw(\rmP)^+,\Tw(\rmP))$ is monadic, and the corresponding monad is $\MC$.  
    The same holds for $\nsMult \to \nsTwist$.  
\end{theorem}

In particular, for a $\cLinf$-operad $\rmP$, we have a bijection between Maurer--Cartan elements in $\rmP(0)$ and $\MC$-algebra structures on the image of $\rmP$ under the right adjoint functor $\cMult \to \Twist$ (\cref{MC element = MC-algebra structures}).
Using this, we can formalize the procedure of uncurving, e.g.\ the fact that twisting a $\cLinf$-algebra by a Maurer--Cartan element gives a $\Linf$-algebra, as follows.
\begin{prop}
    Let $(\rmQ,\rmQ_{0,0})$ be an object of $\Twist$, let $\rmP$ be an object of $\cMult$, and consider a morphism $\rmQ \to \rmP$.
    Then, any Maurer--Cartan element $a \in \rmP(0)$ induces a morphism of $\Linf$-operads $\rmQ_{0,0} \to \rmP^{\ell_1^\alpha}$.
\end{prop}

\subsection*{A Koszul predual adjunction}
In \cref{Koszul preduals}, we define categories $\coCurv$ and $\coTwist$, whose objects are cooperads under~$\uCom^{*}$, the linear dual of the operad $\uCom$ encoding unital commutative algebras, with extra structure, and we show that the cobar construction induces functors $\coCurv \to \Curv$ and $\coTwist \to \Twist$.
We prove that the forgetful functor $\coCurv \to \coCom$ admits a right adjoint and give an explicit description for the unit of this adjunction.
This allows us to give an explicit formula for the units of the adjunctions $\Curv \leftrightarrow \cMult$ (\cref{explicit formula unit intro}) and $\Twist \leftrightarrow \Mult$ (\cref{explicit formula unit Twist intro}) on objects which come from the cobar construction.

We also explain how the extendable operads of Dotsenko, Shadrin, and Vallette \cite[Sec.~4.5]{DotsenkoShadrinVallette23} fit in our framework.
More precisely, we show that any unital extension $\rmP \hookrightarrow u_\chi \rmP$ of a quadratic operad $\rmP$ defines a lift of $(u_\chi \rmP)^*$ to $\coCurv$ and $\coTwist$ (\cref{dual of Ext is in coCurv intro}).
When~$\rmP$ is moreover Koszul, we explain that any unital extension $\rmP^! \hookrightarrow u_\chi \rmP^!$ of its Koszul dual induces 
\begin{enumerate}
    \item an explicit $\cTw$-coalgebra structure on $\cPinf^\chi \eqdef \Omega(u_\chi \rmP^!)^*$ (\cref{DSV formula intro}) related to the gauge group action on the Lie algebra $\hom_\Sym((u_\chi \rmP^!)^*, \End_A)$, and
    \item an explicit $\Tw$-coalgebra structure on $\Pinf \eqdef \Omega \overline{(\rmP^!)^*}$ (\cref{extendable operads are Tw-coalgebras intro}).
\end{enumerate}
In particular, using the terminology of \cite{DotsenkoShadrinVallette23}, our results imply that $\Pinf$ admits a $\Tw$-coalgebra structure if the category of $\Pinf$-algebras is twistable.

In fact, we show in \cref{sec:group-action}, using the Yoneda lemma, that given an operad $\rmQ \in \cMult$ and a complete filtered vector space~$A$, a $\cTw$-coalgebra structure on $\rmQ$ is equivalent to an action of $\calF_1 A$ on $\hom_{\Op}(\rmQ,\End_A)$. 
In the case where $\rmQ=\cPinf^\chi$, we moreover show in \cref{rem:DSV-formula} that this action coincides with the Dotsenko--Shadrin--Vallette gauge group action~\cite[Section~4.5]{DotsenkoShadrinVallette23}.

\subsection*{Homotopy fixed points}
We turn to the problem of determining homotopy fixed points of $\Tw$, i.e.\ objects $\rmP$ in $\Mult$ such that the counit $\Tw(\rmP) \to \rmP$ is a quasi-isomorphism. 
This problem was first studied by Dolgushev and Willwacher \cite{DolgushevWillwacher15}, in order to 
establish the existence of solutions to the Deligne conjecture compatible with the twisting procedure. 

Here we approach the question beginning with the observation that the corresponding problem for $\cTw$ is trivial:   every operad under $\cLinf$ is a homotopy fixed point for~$\cTw$ (\cref{lemma:homology-coproduct}). 
By elaborating on this fact and carefully studying the monad for the adjunction $\Twist \leftrightarrow \Mult$ -- see in particular \cref{thm:monad} -- we arrive at a new sufficient condition for an object to be a homotopy fixed point of $\Tw$.  

\begin{thm}
\label{thm:homotopy-fixed-point intro}
    Let $\rmP$ be an object of $\Mult$. 
    Assume that 
    \begin{enumerate}
        \item\label{cond:Tw-coalgebra} $\rmP$ admits a $\Tw$-coalgebra structure,
        \item\label{cond:zero-divisor} the Lie bracket $\ell_2$ is not a right zero divisor in $H_*(\rmP)$, i.e.\ $d_\rmP \nu = 0$ and $\nu \circ_1 \ell_2 \in \im(d_\rmP)$ imply $\nu \in \im(d_\rmP)$.
    \end{enumerate}
    Then $\rmP$ is a homotopy fixed point of $\Tw$.
\end{thm}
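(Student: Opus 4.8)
The plan is to compute $H_*(\Tw(\rmP))$ via the filtration by \emph{weight}, i.e.\ by the number of insertions of the Maurer--Cartan generator (the $x$-grading on $\rmP \vee [x]$), and to show that hypotheses~(\ref{cond:Tw-coalgebra}) and~(\ref{cond:zero-divisor}) together force this homology to be concentrated in weight~$0$, where the counit identifies it with $H_*(\rmP)$. First I would unpack~(\ref{cond:Tw-coalgebra}): by \cref{thm:main-thm} a $\Tw$-coalgebra structure is a lift of $\rmP$ to an object $\hat\rmP \in \Twist$, and the coalgebra map $\rho\colon \rmP \to \Tw(\rmP)$ is a section of the counit $\varepsilon_\rmP\colon \Tw(\rmP) \to \rmP$, so $H_*(\varepsilon_\rmP)$ is already surjective and it remains to prove injectivity. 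Because $\rho$ is a chain map, every $d_\rmP$-cycle $\nu$ lifts to an honest cycle $\rho(\nu) = \nu + (\text{higher weight})$ of $\Tw(\rmP)$; in the weight spectral sequence this says that the weight-$0$ classes are permanent cycles, so $E^\infty$ in weight~$0$ is exactly $H_*(\rmP)$ and every differential leaving weight~$0$ vanishes. The entire problem is thus to show that the positive-weight part of $E^\infty$ vanishes.

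The $E^1$-page is $E^1_w(n) = H_*(\rmP)(n+w)_{S_w}$ (up to the sign representation carried by the generators), and the first weight-raising differential $d^{(1)}\colon E^1_w \to E^1_{w+1}$ is insertion of the bracket $\ell_2$ with the Maurer--Cartan generator --- both into the $n$ genuine inputs, as $\nu \mapsto \sum_i \nu \circ_i \ell_2(x,-)$, and into the generator itself, as the self-bracket $\tfrac12 \ell_2(x,x)$. Here the curved case is the guiding principle. For $\cTw$ the analogous complex carries in addition a weight-\emph{lowering} operator $x \mapsto \ell_0$, and it is precisely this $\ell_0$-contraction that trivially kills the positive-weight homology and yields \cref{lemma:homology-coproduct}, that every operad under $\cLinf$ is a homotopy fixed point. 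Passing from $\cLinf$ to $\Linf$ sets $\ell_0 = 0$ and removes this contraction, so positive-weight acyclicity must instead be produced by the weight-raising operator $d^{(1)}$; concretely, I would prove that $d^{(1)}$ is injective on $E^1_w$ for every $w \ge 1$. Granting this, the first paragraph gives $E^2 = E^\infty$ concentrated in weight~$0$, and hence $\varepsilon_\rmP$ is a quasi-isomorphism. The explicit identification of $d^{(1)}$ with $\ell_2$-insertion is exactly what \cref{thm:monad}'s description of the monad on $\Twist$ supplies.

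The main obstacle is this injectivity of $d^{(1)}$ in positive weight, and it is the one point where hypothesis~(\ref{cond:zero-divisor}) is used. The difficulty is that $d^{(1)}$ is a \emph{symmetrized} sum of insertions of $\ell_2$ over all slots, whereas the hypothesis controls only the single composition $-\circ_1 \ell_2$ at the first input; the work is to reduce the former to the latter. I would carry this out by passing to the $S_w$-coinvariants, where all $x$-slots are identified, so that $d^{(1)}$ becomes (up to a nonzero rational scalar and signs) the operator $[\nu] \mapsto [\nu \circ_1 \ell_2]$ on $H_*(\rmP)$; if a clean collapse is unavailable, a secondary filtration isolating one distinguished slot has associated graded equal to $-\circ_1 \ell_2$. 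In either case hypothesis~(\ref{cond:zero-divisor}) --- that $\ell_2$ is not a right zero divisor in $H_*(\rmP)$ --- says exactly that $-\circ_1 \ell_2$ is injective on $H_*(\rmP)$, which yields the injectivity of $d^{(1)}$ and hence the theorem. Conceptually this is how the argument ``elaborates'' the curved computation: the regularity of $\ell_2$ plays, in the uncurved world, the role that the $\ell_0$-contraction plays for $\cTw$.
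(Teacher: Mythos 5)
Your overall framework (the weight spectral sequence of $\Tw(\rmP)$, with the coalgebra map $\rho$ supplying surjectivity of $H_*(\varepsilon_\rmP)$ and making weight-$0$ classes permanent cycles) is sound, but there is a genuine gap at the crucial step, namely the identification of $d^{(1)}$ with a nonzero multiple of $-\circ_1\ell_2$. In $\Tw(\rmP)$ the differential on $\rmP$-elements is \emph{twisted}: $d\nu = d_\rmP\nu + [\ell_1^\alpha,\nu]$. Hence $d^{(1)}$ contains, besides the self-bracket term on the generator (which is the $-\circ_1\ell_2$-type term your hypothesis controls), the outer term $\ell_2(\alpha,\nu(\dots))$ and the insertions $\nu\circ_i\ell_2(\alpha,-)$ into the \emph{genuine} inputs — a term your own description of $d^{(1)}$ omits. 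These are pairwise distinct elements of $E^1_{w+1}$: passing to $S_w$- (or $S_{w+1}$-) coinvariants only identifies relabelings of the $\alpha$-slots and does not merge an insertion into a free slot, or the outer bracket, with $\nu\circ_1\ell_2$; nor do you exhibit any ``secondary filtration'' separating them (all terms are single elements of $\rmP(n+w+1)$ with $\alpha$'s plugged in, so there is no vertex-counting grading available). Condition~(\ref{cond:zero-divisor}) says nothing about these extra terms, so your proposed proof of injectivity of $d^{(1)}$ in positive weight — which, as written, uses condition~(\ref{cond:zero-divisor}) alone — does not go through.

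What makes the identification true is condition~(\ref{cond:Tw-coalgebra}) used at full strength, and this is exactly how the paper proceeds: by \cref{thm:monad} and \cref{corollary:MC-isomorphic-Tw}, a $\Tw$-coalgebra structure yields a filtered isomorphism $\MC(\rmP)\cong\Tw(\rmP)$, and in $\MC(\rmP)$ (\cref{maurer cartan endofunctor}) the differential on $\rmP$ is untwisted, so the \emph{only} weight-raising operator is $d\alpha=-\ell_0^\alpha$, whose leading term on an $S_k$-symmetrized element $\nu(\alpha^k,-)$ is precisely a nonzero multiple of $(\nu\circ_1\ell_2)(\alpha^{k+1},-)$. (Equivalently: the chain-map property of $\eta_\rmP$ provides homotopies making the outer-bracket and free-slot terms exact on $E^1$; without the coalgebra structure there is no reason for this.) The paper then proves positive-weight acyclicity by the explicit recursion of \cref{prop:MC-fixed-point}, which is essentially your spectral-sequence computation carried out by hand on $\MC(\rmP)$, where only $\nu_k\circ_1\ell_2$ ever appears. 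So the repair is to invoke the isomorphism $\Tw(\rmP)\cong\MC(\rmP)$ \emph{before} running the spectral sequence; after that, your outline goes through, modulo two technical points you should still address: (i) hypothesis~(\ref{cond:zero-divisor}) must be applied to the $S_{k+1}$-symmetrization of $\nu_k\circ_1\ell_2$ (the ``freeness of $\alpha$'' step in the paper), and (ii) convergence of the spectral sequence of the completed coproduct $\rmP\,\hat\vee\,[\alpha]$, which requires conditional convergence for the complete decreasing filtration together with degeneration at $E^2$.
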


We apply this result to prove that any object of $\Mult$ obtained from an operad and $\Lie$ using a distributive law is a homotopy fixed point of $\Tw$ (\cref{fixed-point-distributive-law}), thus recovering \cite[Theorem 5.10]{DolgushevWillwacher15} (without the degree boundedness assumption).

\begin{remark}
    Neither condition (\ref{cond:Tw-coalgebra}) nor $(\ref{cond:zero-divisor})$ alone is sufficient.
    In \cref{examples}, we show that the permutative operad admits a $\Tw$-coalgebra structure but is not a homotopy fixed point for~$\Tw$. 
    On the other hand we show that the Lie bracket is not a right zero divisor in the pre-Lie operad, which is known not to be a homotopy fixed point for $\Tw$  \cite[Thm.~5.1]{DotsenkoKoroshkin24}. 
\end{remark}

\subsection*{Examples}
In \cref{examples}, we apply our results to systematically recover known $\Tw$-coalgebra structures and homotopy fixed points for~$\Tw$. 


\subsection*{Acknowledgements}
We would like to thank Alexander Berglund, Coline Emprin, Alyosha Latyntsev, Hugo Pourcelot, Alex Takeda, and Bruno Vallette for useful discussions.
The work of the authors is supported by Novo Nordisk Foundation grant NNF20OC0066298, Villum Fonden Villum Investigator grant 37814, and Danish National Research Foundation grant DNRF157. 
GLA was supported by the Andrew Sisson Fund and the Australian Research Council Future Fellowship FT210100256.



\subsection*{Conventions}

Unless explicitly stated, the word ``operad'', respectively ``cooperad'', always means ``non-unital symmetric operad'', respectively ``non-counital symmetric cooperad'', in chain complexes over a field of characteristic $0$.
We always assume that our cooperads admit a complete decreasing filtration so that we can apply the filtered cobar construction~$\Omega$ to them, and we denote by $\iota : \rmC \to \Omega \rmC$ the natural degree $(-1)$ map.
We use shifted homological degree conventions: for a Koszul operad $\rmP$ concentrated in degree zero, the generators in the minimal resolutions~$\Omega(\rmP^!)^{*}$ all have degree $-1$.
One can recover usual degree and signs conventions by de-suspension, see e.g.\ \cite[Sec.~4.1]{DotsenkoShadrinVallette23} for formulas.
In particular, we work with the shifted \defn{curved L-infinity dg operad} $\cLinf$, which is the free operad on generators $\ell_n$, $n\geq 0$, of arity $n$ and degree $(-1)$, endowed with the differential
\begin{equation}\label{clinf differential}
    d_{\cLinf}(\ell_n) \eqdef -\sum_{\substack{p+q=n \\ p,q \geqslant 0}} \sum_{\sigma \in \Sh_{p,q}^{-1}}(\ell_{p+1} \circ_{1} \ell_q)^\sigma \ ,
\end{equation}
where $\Sh_{p,q}^{-1} \subset \Sym_{p+q}$ denotes the set of $(p,q)$-unshuffles.
The symmetric group action on the generators is given by $\ell_n^\sigma=\ell_n$ for any $\sigma \in \mathbb{S}_n$.


\subsection*{Plan of the paper}
In \cref{sec:cTw-comonadicity} we define the comonad $\cTw$ and characterize it via its associated comonadic adjunction.
In \cref{section:comparison} we relate the categories $\Curv$ and $\Twist$ by an adjunction, which allows us to give an analogous characterization of the comonad $\Tw$ in \cref{adjunction twist}.
Symmetrically, we characterize the Maurer--Cartan and the uncurving procedure in \cref{section:monad}.
We give in \cref{homotopy-fixed-points} a criterion to test whether an operad is a homotopy fixed point for $\Tw$.
In \cref{Koszul preduals} we give a Koszul predual adjunction, and an explicit formula for $\Tw$-coalgebras whose underlying operad is in the image of the cobar functor.
We apply the preceding results to identify known $\Tw$-coalgebras and homotopy fixed points in \cref{examples}.

In \cref{sec:reconstruction} we give the equivalence between the present definitions and the ones from our previous work~\cite{Laplante-Petr-Shende2025}.
We explain in \cref{section distributivity} how to recover the~$\Tw$ comonad from a distributive law between~$\cTw$ and another comonad, using a monad from \cref{section:comparison}.
In \cref{bar-cobar} we show how to lift the bar construction to~$\coCurv$. 
In \cref{sec:group-action} we show that $\cTw$-coalgebra structures on a $\cLinf$-operad are equivalent to monoid actions. 
Finally in \cref{sec:twistable-Tw-stable} we show that the gauge group action of \cite{DotsenkoShadrinVallette23} gives $\cTw$-coalgebra structures.


\tableofcontents


\section{Tw,  cTw, and the proof of Theorem \ref{cTw comonadicity}}
\label{sec:cTw-comonadicity}

We recall Willwacher's twisting comonad~$\Tw$ \cite{Willwacher-KGT}, in the later formulation of  \cite[Sec.~5.3]{DotsenkoShadrinVallette23}. 
Given an operad~$\rmP$ in~$\Mult$, define the operad $\MC(\rmP) \eqdef \rmP \, \hat \vee \, [\alpha]$, where $\hat \vee$ denotes the completed coproduct with respect to the $\alpha$-filtration, with differential $d\alpha\eqdef -\ell_0^\alpha$.
The natural operad morphism $\Linf \to \MC(\rmP)$ sending $\ell_i \mapsto \ell_i^\alpha$ intertwines $d_{\Linf}$ not with $d_{\MC(\rmP)}$ but instead with 
$d_{\MC(\rmP)} + [\ell_1^{\alpha}, \cdot]$. We write $\Tw(\rmP)$ for $\MC(\rmP)$ with this new differential. 

\begin{remark}
    The fact that $d_{\MC(\rmP)} + [\ell_1^{\alpha}, \cdot]$ squares to zero amounts to $d \ell_1^{\alpha} + \ell_1^{\alpha} \circ \ell_1^{\alpha} = 0$.
    This situation is described in the literature by saying that $\ell_1^\alpha$ is an operadic Maurer-Cartan element.
    Further, the notation $\MC(\rmP)^{\ell_1^\alpha}$ is used for the operad $\MC(\rmP)$ equipped with the new differential $d_{\MC(\rmP)} + [\ell_1^{\alpha}, \cdot]$.  That is, by definition,
    $\Tw(\rmP) := \MC(\rmP)^{\ell_1^\alpha}$. 
\end{remark}

$\Tw$ is an endofunctor on $\MultSym$, acting on morphisms by $f \mapsto f \vee 1_\alpha$. 
The comonad structure is given by the natural transformations
 \[
    \begin{array}{ccccccc}
    \Tw(\rmP) & \xrightarrow{\Delta_\rmP} & \Tw\Tw(\rmP), & & \Tw(\rmP) & \xrightarrow{\varepsilon_\rmP} & \rmP \\
    \nu & \mapsto & \nu & & \nu & \mapsto & \nu \\
    \alpha & \mapsto & \alpha' + \alpha & & \alpha & \mapsto & 0. \\
    \end{array}
\]
for $\nu \in \rmP$, and $\alpha'$ the copy of $\alpha$ adjoined by the second instance of $\Tw$. 

We now introduce the endofunctor $\cTw$ on $\cMult$.  

\begin{definition}
\label{def of cTw}
    Given an operad $\cLinf \to \rmQ$, define an operad $\cTw(\rmQ) := \rmQ  \vee [\kurv] \, \hat \vee \, [\alpha] $ with differential acting as $d_\rmQ$ on $\rmQ$ and by $d \alpha = \kurv$.
    There is a morphism of operads $\cLinf \to \cTw(\rmQ)$ sending $\ell_0$ to $(\kurv + \ell_0^\alpha)$ and $\ell_n$ to $\ell_n^\alpha$ for $n \geq 1$.
    The action of $\cTw$ on morphisms is given by~$f \mapsto f \vee 1_{[\kurv] \vee [\alpha]}$.     
\end{definition}

There is a comonad structure given by the natural transformations: 
\[
\begin{array}{ccccccc}
\cTw(\rmQ) & \to & \cTw(\cTw(\rmQ)), & & \cTw(\rmQ) & \to & \rmQ \\
\nu & \mapsto & \nu & & \nu & \mapsto & \nu \\
\alpha & \mapsto & \alpha' + \alpha & & \alpha & \mapsto & 0 \\
\kurv & \mapsto & \kurv' + \kurv & & \kurv & \mapsto & 0. \\
\end{array}
\]
for $\nu \in \rmQ$, and $\alpha', \kurv'$ the copies of $\alpha, \kurv$ adjoined by the second instance of $\cTw$.

A $\rmQ$-algebra $(A, d)$ and element $a \in A$ determine a diagram
   \begin{center}
    \begin{tikzcd}
    \cLinf \arrow[r] \arrow[rd] \arrow[rrd, dashed] & \rmQ \arrow[r, "\rho"]  & \End_{(A, d)} \\
    \, & \cTw(\rmQ) \arrow[r, dashed] \arrow[u] & \End_{(A, d)}
    \end{tikzcd}
    \end{center}
where the map $\cTw(\rmQ) \to \End_{(A, d)}$  is given by $\rho$ on $\rmQ$ and sends $\alpha \mapsto a$. 

Unlike for $\Tw$, there is an obvious operad morphism $\rmQ \to \cTw(\rmQ)$ giving a section of the projection, but it is not generally a morphism of $\cLinf$-operads.  
Such a $\cLinf$ section is by definition a curved operadic twisting for $\rmQ$.
In fact we have the following result as a direct consequence of the construction of $\cTw$.

\begin{prop}
\label{curved operadic twisting}
    Let $\rmQ$ be a $\cLinf$-operad with a $\cTw$-coalgebra structure $\rmQ \to \cTw(\rmQ)$.
    Let $(A,d)$ be a complete filtered chain complex with a $\rmQ$-algebra structure $\rmQ \to \End_{(A,d)}$ and corresponding $\cLinf$-operations $\ell_n$.
    Then any element $a \in \calF_1 A$ gives rise to a new $\rmQ$-algebra structure on $A$ with corresponding $\cLinf$-operations $\ell_n^a$.
    This procedure is moreover linear in~$a \in \calF_1 A$.
\end{prop}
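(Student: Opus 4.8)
The plan is to realize the twisted structure as a single composite through $\cTw(\rmQ)$, and to extract the three assertions from, respectively, the fact that a composite of dg operad morphisms is one, the fact that $i$ is a morphism under $\cLinf$, and the coassociativity of the $\cTw$-coalgebra structure.

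First I would fix notation. Write $\rho : \rmQ \to \End_{(A,d)}$ for the given $\rmQ$-algebra structure and $i : \rmQ \to \cTw(\rmQ)$ for the $\cTw$-coalgebra structure. For $a \in \calF_1 A$ let $\rho_a : \cTw(\rmQ) \to \End_{(A,d)}$ be the dg operad morphism restricting to $\rho$ on $\rmQ$ and sending $\alpha \mapsto a$; since $d\alpha = \kurv$ and the arity-$0$ part of $\End_{(A,d)}$ carries the differential of $A$, this forces $\kurv \mapsto da$, and the morphism is thereby determined. Define the twisted structure to be $\rho^a := \rho_a \circ i$. The only subtlety here is the well-definedness of $\rho_a$: the factor $[\alpha]$ in $\cTw(\rmQ) = \rmQ \vee [\kurv] \, \hat\vee \, [\alpha]$ is completed with respect to the $\alpha$-grading, so I must check that the images of the resulting infinite sums converge. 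This is exactly where the hypothesis $a \in \calF_1 A$ is used: a monomial of $\alpha$-degree $k$ is sent into $\calF_k A$, and completeness of the filtration guarantees convergence. As a composite of dg operad morphisms, $\rho^a$ is again one, hence a $\rmQ$-algebra structure on the same complex $(A,d)$.

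Second I would read off the underlying $\cLinf$-operations. Because $i$ is a morphism under $\cLinf$, precomposing $\rho^a$ with the structure map $\cLinf \to \rmQ$ agrees with $\rho_a$ composed with the structure map $\cLinf \to \cTw(\rmQ)$ of \cref{def of cTw}. The latter sends $\ell_n \mapsto \ell_n^\alpha$ for $n \geq 1$ and $\ell_0 \mapsto \kurv + \ell_0^\alpha$, so applying $\rho_a$ (that is, $\alpha \mapsto a$ and $\kurv \mapsto da$) yields precisely the twisted operations $\ell_n^a$, the $n = 0$ term additionally receiving the contribution $\kurv \mapsto da$ to the curvature (this is exactly the role played by the generator $\kurv$, which has no analogue in $\Tw$). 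This step is immediate from the construction of $\cTw$ and uses only that $i$ lies over $\cLinf$, not the coalgebra axioms.

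The substantive content is the linearity, and it is here that the coalgebra structure enters. I read ``linear in $a$'' as the statement that twisting by $a$ and then by $b$ coincides with twisting by $a+b$, so that $a \mapsto \rho^a$ carries the addition of $\calF_1 A$ to composition of twists (with the counit giving $\rho^0 = \rho$). To prove $(\rho^a)^b = \rho^{a+b}$ I would introduce the auxiliary dg operad morphism $\Phi : \cTw\cTw(\rmQ) \to \End_{(A,d)}$ which is $\rho$ on $\rmQ$, sends the inner pair $(\alpha, \kurv)$ to $(a, da)$, and the outer pair $(\alpha', \kurv')$ to $(b, db)$; it is well defined by the same completeness argument applied to both gradings. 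Using the cocomposition $\alpha \mapsto \alpha' + \alpha$, $\kurv \mapsto \kurv' + \kurv$ one computes $\Phi \circ \Delta = \rho_{a+b}$, whence $\Phi \circ \Delta \circ i = \rho^{a+b}$; unwinding $\cTw(i)$, which applies $i$ to the inner copy of $\rmQ$ and relabels $(\alpha, \kurv)$ to the outer copy, one computes $\Phi \circ \cTw(i) = (\rho^a)_b$, whence $\Phi \circ \cTw(i) \circ i = (\rho^a)^b$. The coassociativity identity $\Delta \circ i = \cTw(i) \circ i$ then gives $\rho^{a+b} = (\rho^a)^b$. I expect the main friction to be purely organizational: keeping the two copies of $\alpha$ and $\kurv$ in $\cTw\cTw(\rmQ)$ separate and verifying that the two composites are literally $\rho^{a+b}$ and $(\rho^a)^b$; convergence of all infinite sums is dispatched once and for all by $a, b \in \calF_1 A$.
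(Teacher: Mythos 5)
Your proof is correct and takes essentially the same approach the paper intends: the paper asserts this proposition as a ``direct consequence of the construction of $\cTw$,'' meaning exactly your composite $\rho_a \circ i$ (with $\alpha \mapsto a$, $\kurv \mapsto da$ forced by $d\alpha = \kurv$), with the $\cLinf$-operations read off from the structure map of \cref{def of cTw} and linearity $(\rho^a)^b = \rho^{a+b}$ extracted from the coalgebra coassociativity axiom $\cTw(i) \circ i = \Delta \circ i$. Your elaboration—in particular the convergence argument from $a \in \calF_1 A$, the observation that the curvature picks up the extra $da$ term, and the auxiliary morphism $\Phi$ on $\cTw\cTw(\rmQ)$—supplies precisely the details the paper leaves implicit, with no gaps.
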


\begin{remark}
    The fact that $\cTw$ is structurally simpler than $\Tw$ corresponds to the fact that there is no need to impose the Maurer-Cartan equation or twist the differential.
\end{remark}

Before turning to the proof of \cref{cTw comonadicity}, we recall the second main result of our previous paper \cite{Laplante-Petr-Shende2025}.

\begin{theorem}[{\cite[Theorem~1.8~\&~Proposition~4.4]{Laplante-Petr-Shende2025}}]
\label{thm:adjunction-Curv}
    The forgetful functor $\calL: \Curv \to \cMult$ admits a right adjoint $\calR$ given on objects by $\rmQ \mapsto \cTw(\rmQ)$ with suboperad $\cTw(\rmQ)_0 \eqdef \rmQ \, \hat \vee \, [\alpha]$.
    The counit $\varepsilon^{\calL \calR}_{\rmQ} : \rmQ \vee [\kurv] \, \hat \vee \, [\alpha] \to \rmQ$ is the identity on $\rmQ$ and sends $\alpha, \kappa$ to~$0$. 
\end{theorem}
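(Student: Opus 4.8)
The plan is to establish $\calL \dashv \calR$ by exhibiting, for each object $\rmQ$ of $\cMult$, a terminal object of the comma category $(\calL \downarrow \rmQ)$, namely the stated arrow $\varepsilon_\rmQ \colon \calL\calR\rmQ = \cTw(\rmQ) \to \rmQ$. By the dual of the universal-arrow criterion for adjunctions, producing such a terminal arrow for every $\rmQ$ simultaneously defines $\calR$ on morphisms, makes $\varepsilon$ natural, and yields the adjunction with the asserted counit; so the real work is only (a) checking $\calR\rmQ$ lies in $\Curv$, (b) checking $\varepsilon_\rmQ$ is a morphism of $\cMult$, and (c) verifying the universal property of $\varepsilon_\rmQ$.

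For (a), write $\ell_0^{\cTw\rmQ} = \kurv + \ell_0^\alpha$ for the image of $\ell_0$ in $\cTw(\rmQ)$. The first axiom of \cref{curv redefined} holds because freely adjoining $\kurv$ and freely adjoining $\kurv + \ell_0^\alpha$ differ by the triangular, hence invertible, change of generator $\kurv \mapsto \kurv + \ell_0^\alpha$, with $\ell_0^\alpha \in \rmQ\,\hat\vee\,[\alpha]$; thus $[\ell_0^{\cTw\rmQ}] \vee (\rmQ\,\hat\vee\,[\alpha]) \to \cTw(\rmQ)$ is a non-dg isomorphism. The second axiom is immediate since $\ell_n \mapsto \ell_n^\alpha \in \rmQ\,\hat\vee\,[\alpha]$ for $n \geq 1$. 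For the third, $d_{\cTw\rmQ}$ is a derivation acting by $d_\rmQ$ on $\rmQ$ and by $d\alpha = \kurv = \ell_0^{\cTw\rmQ} - \ell_0^\alpha$; since a derivation differentiates one factor at a time, each monomial of $d_{\cTw\rmQ}(\rmQ\,\hat\vee\,[\alpha])$ acquires at most one $\kurv$, i.e.\ has degree at most one in $\ell_0^{\cTw\rmQ}$. For (b), one checks that $\varepsilon_\rmQ$ (the identity on $\rmQ$, killing $\alpha$ and $\kurv$) commutes with the $\cLinf$-structures — only the $k=0$ term of each $\ell_n^\alpha = \sum_{k}\tfrac{1}{k!}\ell_{k+n}(\alpha^k,\dots)$ survives, giving $\ell_n \mapsto \ell_n$ — and with differentials, since $d\alpha = \kurv$ and $d\kurv = 0$ both map to $0 = d_\rmQ 0$.

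The heart is (c). Fix $(\rmP, \rmP_0) \in \Curv$ and a morphism $f \colon \calL(\rmP, \rmP_0) = \rmP \to \rmQ$ of $\cMult$; we must produce a unique $\bar f \colon (\rmP, \rmP_0) \to \calR\rmQ$ in $\Curv$ with $\varepsilon_\rmQ \circ \calL\bar f = f$. Using $\rmP \cong [\ell_0^\rmP] \vee \rmP_0$, the map $\bar f$ is specified by its value on the image $\ell_0^\rmP$ of $\ell_0$ together with an operad map on $\rmP_0$. Compatibility with $\cLinf$ forces $\bar f(\ell_0^\rmP) = \kurv + \ell_0^\alpha$ and $\bar f(\ell_n^\rmP) = \ell_n^\alpha$ for $n \geq 1$, while $\varepsilon_\rmQ \circ \calL\bar f = f$ together with $\bar f(\rmP_0) \subseteq \rmQ\,\hat\vee\,[\alpha]$ forces the $\alpha$-weight-zero component of $\bar f|_{\rmP_0}$ to equal $f|_{\rmP_0}$. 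I would then build the higher $\alpha$-weight components of $\bar f|_{\rmP_0}$ by induction on $\alpha$-weight, the chain-map condition at weight $w$ being a linear equation whose inhomogeneous part is assembled from the already-constructed components of weight $< w$. Completeness of the $\alpha$-filtration assembles these into a morphism into $\rmQ\,\hat\vee\,[\alpha]$, and the fact that each component is forced by these equations yields uniqueness.

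The main obstacle is showing this recursion is consistently solvable: that the obstruction to the weight-$w$ step vanishes and that the resulting assignment respects the operadic relations of $\rmP_0$, so that $\bar f$ is genuinely an operad map and not merely defined on generators. This is exactly where the third axiom of \cref{curv redefined} for the source enters. The bound that $d_\rmP(\rmP_0)$ has degree at most one in $\ell_0^\rmP$ matches the fact that $d\alpha = \kurv$ introduces precisely one $\kurv$, so that applying $\bar f$ to $d_\rmP x$ and applying $d_{\cTw\rmQ}$ to $\bar f(x)$ produce elements of the same, controlled $\kurv$-degree; comparing the $\kurv$-degree-one and $\kurv$-free parts of the chain-map equation then respectively determines the unknown weight-$w$ term and imposes a consistency condition that the source axioms guarantee, pinning down each successive term without over-determination. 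I expect verifying this interplay — and hence both the existence and the uniqueness of the solution at each stage — to be the only genuinely delicate point, the remaining assembly and the passage from terminal arrows to the adjunction being formal.
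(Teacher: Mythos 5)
Your reduction is sound as far as it goes, and it tracks the actual argument: note first that the paper itself does not prove this statement but imports it from \cite{Laplante-Petr-Shende2025}, whose proof (as the introduction here records) constructs the adjoint together with ``a recursive procedure'' for the unit, so your skeleton is aligned with the real proof. Your part (a) is correct, including the change-of-generator argument for $\kurv \mapsto \kurv + \ell_0^\alpha$ inside the completed coproduct and the observation that the derivation introduces at most one $\kurv$ per monomial; part (b) is routine and correct; and your uniqueness analysis in (c) is right: $\bar f(\ell_n)$ is forced by $\cLinf$-compatibility, the $\alpha$-weight-zero component of $\bar f|_{\rmP_0}$ is forced by $\varepsilon_\rmQ \circ \calL \bar f = f$, and the higher components are forced because the operation ``replace one $\alpha$ by $\kurv$'' is injective on $\alpha$-weight-$w$ components in characteristic $0$ (substituting $\kurv \mapsto \alpha$ back recovers $w$ times the element).

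The gap is that you stop exactly where the content of the theorem begins. The existence half of the universal property --- that the recursively forced weight-$w$ components (i) also satisfy the $\kurv$-degree-zero part of the chain-map equation (the obstruction you name), and (ii) assemble into an honest morphism of symmetric operads --- is asserted (``the source axioms guarantee'', ``I expect verifying this interplay\dots to be the only genuinely delicate point'') but never argued. Point (ii) is more serious than your phrasing suggests: $\rmP_0$ is in general not free, so you cannot define $\bar f$ on generators; the components must satisfy the convolution-type constraints $\bar f_w(x \circ_i y) = \sum_{u+v=w} \bar f_u(x) \circ_i \bar f_v(y)$ together with $\Sym$-equivariance, and it is not evident that the inversion formula extracted from the $\kurv$-degree-one equation respects these, since for composite $x$ the left-hand side $\bar f(d_\rmP x)$ mixes the Leibniz expansion with the unknown components in a nontrivial way. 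Verifying (i) and (ii) is precisely the second main result of \cite{Laplante-Petr-Shende2025} (Theorem~1.8 and Proposition~4.4, the references attached to this statement); as written, your proposal establishes uniqueness but reduces existence to an unverified expectation, which is the mathematical heart of the claim.
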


\subsection{Proof of Theorem \ref{cTw comonadicity}}
Theorem \ref{thm:adjunction-Curv} already asserts that 
for the adjunction $(\calL \dashv \calR)$, we have $\calL \calR = \cTw$ as endofunctors.

Now we check that the comonad structures of $\calL \calR$ and $\cTw$ also agree. 
According to \cref{thm:adjunction-Curv}, we know that the counit $\calL \calR \to 1$ agrees with the map $\cTw \to 1$.
Therefore it remains to check that, for an object $\calR(\rmQ) \in \Curv$, the unit $\eta : \calR(\rmQ) \to (\calR \calL)(\calR(\rmQ))$ is given by
\[\eta(\nu) = \nu, \quad \eta(\alpha) = \alpha' + \alpha, \quad \eta(\kurv) = \kurv' + \kurv. \]
It is straightforward to check that the latter formula defines a morphism of dg operads, therefore it remains to check that it preserves the distinguished element.
In other words, writing $\lambda_0 := \kurv + \ell_0^\alpha$ and $\lambda_n := \ell_n^\alpha$ for the $\cLinf$-operations on $\calL \calR (\rmQ)$,
we need to check that $\eta(\kurv+ \ell_0^{\alpha}) = \kurv' + \lambda_0^{\alpha'}$.
We compute 
\begin{align*}
    \kurv' + \lambda_0^{\alpha'} = & \kurv' + \kurv + \sum_{k \geq 0} \frac{1}{k!} \ell_k^{\alpha} ((\alpha')^k) \\
    = & \kurv' + \kurv + \sum_{k \geq 0} \sum_{r \geq 0} \frac{1}{k!} \frac{1}{r!} \ell_{k+r} (\alpha^r, (\alpha')^k) \\ 
    = & \kurv' + \kurv + \sum_{n \geq 0} \frac{1}{n!} \ell_n ( (\alpha + \alpha')^n) = \kurv' + \kurv + \ell_0^{\alpha+\alpha'} = \eta(\kurv+ \ell_0^{\alpha}). 
\end{align*}

Finally, we check that the adjunction is comonadic.
According to the comonadicity theorem (see \cite[Sec.~3.5]{barr2000toposes} and \cite{Beck25}), it is enough to check that 
\begin{enumerate}
    \item \label{item:conservative2} $\calL$ reflects isomorphisms,
    \item \label{item:equalizers} $\Curv$ has equalizers, and they are preserved by $\calL$. 
\end{enumerate}
Condition~(\ref{item:conservative2}) is obvious.  Regarding~(\ref{item:equalizers}), recall that equalizers in the category of operads are computed by the arity-wise equalizers of the underlying vector space of operations; the same evidently holds for $\cMultSym$. 
It will now suffice to check that, in $\Curv$, we have
$$\rm{Eq}((\rmQ, \rmQ_0) \substack{\rightarrow\\[-1em] \rightarrow} (\rmQ', \rmQ_0')) \stackrel{?}{=} 
(\rm{Eq}(\rmQ \substack{\rightarrow\\[-1em] \rightarrow} \rmQ'), \rm{Eq}(\rmQ_0 \substack{\rightarrow\\[-1em] \rightarrow} \rmQ'_0)).$$
It is obvious that the right hand side will be the equalizer if it is in $\Curv$. It remains to check that as (not dg) operads, 
$$\rm{Eq}((\rmQ_0 \vee [\kappa]) \substack{\rightarrow\\[-1em] \rightarrow} (\rmQ'_0 \vee [\kappa])) = \rm{Eq}(\rmQ \substack{\rightarrow\\[-1em] \rightarrow} \rmQ') \stackrel{?}{=} \rm{Eq}(\rmQ_0 \substack{\rightarrow\\[-1em] \rightarrow} \rmQ'_0) \vee [\kappa]$$
Now there is an obvious injective map from the right side to the left; surjectivity follows from freeness in $\kappa$.  $\qed$

\subsection{Nonsymmetric case}
Recall that the shifted \defn{curved A-infinity operad}, denoted~$\cAinf$, is the free operad on generators $\mu_n$, $n\geqslant 0$, of arity $n$ and degree $(-1)$, endowed with the differential
$d_{\cAinf}(\mu_n) \eqdef - \sum_{p+q+r=n} \mu_{p+1+r} \circ_{p+1} \mu_q$.
Analogous constructions can be made for non-symmetric operads by replacing $\cLinf$ by $\cAinf$ and the operations~$\ell_n^\alpha$~\eqref{eq:ell-n-alpha} by
\begin{equation}
\label{mu-n-alpha}
    \mu_n^\alpha \eqdef \sum_{r_0,\ldots,r_n \geqslant 0} \mu_{n+r_0+\cdots+r_n} (\alpha^{r_0},-,\alpha^{r_1},-,\ldots,\alpha^{r_n}) \in \rmP \, \hat{\vee} \, [\alpha].
\end{equation}
The corresponding results hold with the same proofs.


\section{Comparison between $\Twist$ and $\Curv$}
\label{section:comparison}

Here we study the relationship between the categories $\Twist$ and  $\Curv$.
We first introduce an endofunctor of $\cMult$ following a standard construction, see for instance \cite[Section~2.3]{Merkulov23} or \cite[Section~5.2]{ChuangLazarev13} where it is called the ``hat construction''.

\begin{def-prop}
\label{def:+} 
    Given $\rmQ \in \cMult$, we denote by $\rmQ^+$ the object of $\cMult$ defined as follows
    \begin{itemize}
        \item the underlying operad is the coproduct of $\rmQ$ and the free operad $[\diff]$ generated by a single operation $\diff$ of arity $1$ and degree $(-1)$, 
        \item the differential is given by $d_{\rmQ^+} \diff \eqdef - \diff \circ_1 \diff$ and $d_{\rmQ^+} \nu \eqdef d_\rmQ \nu - [\diff, \nu]$ for $\nu \in \rmQ$,
        \item the map $\cLinf \to \rmQ^+$ sends $\ell_1 \mapsto (\diff + \ell_1)$ and $\ell_n \mapsto \ell_n$ for $n \ne 1$.
    \end{itemize}
    The assignment $\rmQ \mapsto \rmQ^+$ defines an endofunctor of $\cMult$ acting on morphisms as $\psi \mapsto \psi \vee 1_{[\diff]}$.
\end{def-prop}
\begin{proof}
    To check that the map $f: \cLinf \to \rmQ^+$ is a dg morphism, we note that $(d_{\cLinf} \ell_n + [\ell_1, \ell_n])$ does not contain any $\ell_1$ for $n \ne 1$.
    Therefore for $n \ne 1$ we compute
    \begin{align*}
        f(d_{\cLinf} \ell_n) & = f(d_{\cLinf} \ell_n + [\ell_1, \ell_n]) - f([\ell_1, \ell_n]) \\
        & = d_\rmQ \ell_n + [\ell_1, \ell_n] - [\diff + \ell_1, \ell_n] \\
        & = d_\rmQ \ell_n - [\diff, \ell_n] = d_{\rmQ^+} \ell_n.
    \end{align*}
    For $n=1$ we have 
    \begin{align*}
        -f(d_{\cLinf} \ell_1) & = f(\ell_1 \circ_1 \ell_1 + \ell_2 \circ_1 \ell_0) \\
        & = (\diff + \ell_1) \circ_1 (\diff + \ell_1) + \ell_2 \circ_1 \ell_0 \\
        & = \diff \circ_1 \diff + \ell_1 \circ_1 \ell_1 + \ell_2 \circ_1 \ell_0 + [\diff, \ell_1] = -d_{\rmQ^+}(\diff + \ell_1).
    \end{align*}
\end{proof}

\begin{definition}
\label{definition:functor-Twist-Curv}
    We denote by $\calI : \Twist \to \Curv$ the functor that acts on objects by 
    $(\rmQ, \rmQ_{0,0}) \mapsto (\rmQ, \rmQ_0 := \rmQ_{0,0}^+)$, where $\rmQ_{0,0}^+ \subset \rmQ$ via $m \mapsto \ell_1$, and acts as `the identity' on morphisms.
\end{definition}

\begin{def-prop}
\label{def-prop:functor-Curv-Twist}
    There is a functor $\calJ : \Curv \to \Twist$ acting as 
    \[
    (\rmQ, \rmQ_0) \mapsto (\rmQ^+, \rmQ^+_{0,0} := \rmQ_0).
    \] 
\end{def-prop}

\begin{proof}
    We check that $\calJ(\rmQ,\rmQ_0)$ satisfies the conditions in \cref{twist new}.
    The map $\rmQ_0 \vee [\ell_0] \vee [\ell_1] \to \rmQ \vee [\diff]$ (sending $\ell_1 \mapsto (\diff + \ell_1)$) is an isomorphism by freeness of $\diff$, and since $\rmQ_0 \vee [\ell_0] \to \rmQ$ is an isomorphism according to the definition of $\Curv$.

    Moreover, the image of $\Linf$ under $f : \cLinf \to \rmQ^+$ is contained in $\rmQ^+_{0,0} = \rmQ_0$ since the map $\cLinf \to \rmQ$ sends $\Linf$ to $\rmQ_0$ according to the definition of $\Curv$.

    Finally, for $\nu \in \rmQ^+_{0,0} = \rmQ_0$ we have $d_{\rmQ^+} \nu + [\diff + \ell_1, \nu] = d_{\rmQ} \nu + [\ell_1, \nu]$. The latter is of degree $\leq 0$ in $(\diff + \ell_1)$ and of degree at most $1$ in $\ell_0$ according to the definition of $\Curv$.
    This finishes the proof.
\end{proof}

\begin{thm}
\label{thm:adjunction-Twist-Curv}
    The two functors $\calI$ and $\calJ$ form an adjoint pair with $\calI$ left adjoint to $\calJ$.
    The unit $\eta : (\rmQ, \rmQ_{0,0}) \to (\rmQ^+, \rmQ_{0,0}^+)$ is given by 
    \begin{equation}
    \label{eq:IJ-unit}
        \eta_{|\rmQ_{0,0}} = \id_{\rmQ_{0,0}}, \quad \eta(\ell_0) = \ell_0, \quad \eta(\ell_1) = \diff + \ell_1. 
    \end{equation}
    The counit $\varepsilon : (\rmQ^+, \rmQ_0^+) \to (\rmQ, \rmQ_0)$ is given by 
    \begin{equation}
    \label{eq:IJ-counit}
    \varepsilon_{|\rmQ} = \id_{\rmQ}, \quad \varepsilon(\diff) = 0. 
    \end{equation}
\end{thm}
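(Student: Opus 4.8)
The statement packages an explicit unit and counit, so the plan is to verify directly that \eqref{eq:IJ-unit} and \eqref{eq:IJ-counit} define natural transformations in the correct categories and satisfy the two triangle identities; this suffices to establish the adjunction. Throughout I take the functoriality of $\calI$ and $\calJ$ from \cref{definition:functor-Twist-Curv} and \cref{def-prop:functor-Curv-Twist} as given.

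First I would check that $\eta$ is a morphism in $\Twist$. The non-dg isomorphism $[\ell_0] \vee [\ell_1] \vee \rmQ_{0,0} \cong \rmQ$ of \cref{twist new} shows that \eqref{eq:IJ-unit} determines a unique map of non-dg operads $\rmQ \to \rmQ^+$; it is visibly under $\cLinf$ and carries $\rmQ_{0,0}$ into the target suboperad $\rmQ_{0,0}^+$, so the only substantive point is that it is a chain map. On the generators $\ell_0$ and $\ell_1$ this reduces, via $d_{\cLinf}\ell_0 = -\ell_1 \circ_1 \ell_0$ and $d_{\cLinf}\ell_1 = -(\ell_1 \circ_1 \ell_1 + \ell_2 \circ_1 \ell_0)$ together with $\ell_2 \in \rmQ_{0,0}$, to the computations already carried out in the proof of \cref{def:+}.

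The decisive computation is on $\nu \in \rmQ_{0,0}$, and this is where I expect the main subtlety to lie. I would write $d_\rmQ \nu = w - [\ell_1, \nu]$ with $w := d_\rmQ \nu + [\ell_1, \nu]$. The third condition of \cref{twist new} is exactly what is needed here: it forces $w$ to have degree $0$ in $\ell_1$, hence to lie in $[\ell_0] \vee \rmQ_{0,0}$, on which $\eta$ acts as the identity. Consequently $\eta(d_\rmQ \nu) = w - [\diff + \ell_1, \nu] = d_\rmQ \nu - [\diff, \nu] = d_{\rmQ^+} \nu$, so $\eta$ commutes with the differential. The dual check that $\varepsilon$ is a morphism in $\Curv$ is easier: setting $\diff \mapsto 0$ gives $\varepsilon(d_{\rmQ^+}\nu) = d_\rmQ \nu$ and $\varepsilon(d_{\rmQ^+}\diff) = 0$, while $\varepsilon(\diff + \ell_1) = \ell_1$ records compatibility with $\cLinf$, and $\varepsilon$ clearly preserves the suboperads. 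Naturality of $\eta$ and $\varepsilon$ is then routine, since $\calI$ and $\calJ$ act on a morphism $g$ by $g$ and $g \vee 1_{[\diff]}$ respectively.

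It remains to verify the triangle identities, the one place demanding care with the several adjoined copies of $\diff$. For $(\varepsilon \calI) \circ (\calI \eta) = \id_{\calI}$, tracing through $\rmQ \xrightarrow{\eta} \rmQ^+ \xrightarrow{\varepsilon} \rmQ$ sends $\ell_0 \mapsto \ell_0$, $\ell_1 \mapsto \diff + \ell_1 \mapsto \ell_1$, and fixes $\rmQ_{0,0}$, yielding the identity. For $(\calJ \varepsilon) \circ (\eta \calJ) = \id_{\calJ}$ on $\calJ(\rmQ, \rmQ_0) = (\rmQ^+, \rmQ_0)$, I would use that $\ell_1 \in \rmQ_0$ to see that the unit at $(\rmQ^+, \rmQ_0)$ fixes $\rmQ$ and sends the newly adjoined generator by $\diff \mapsto \diff' + \diff$, whereas $\calJ \varepsilon = \varepsilon \vee 1_{[\diff']}$ fixes $\rmQ$, kills $\diff$, and sends $\diff' \mapsto \diff$. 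The composite then sends $\diff \mapsto \diff' + \diff \mapsto \diff$ and fixes $\rmQ$, again the identity. This completes the plan; the recurring point is that the $\ell_1$-grading built into the third condition of \cref{twist new} is precisely the input that makes $\eta$ a chain map.
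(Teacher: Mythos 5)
Your proposal is correct and follows essentially the same route as the paper's proof: the key step in both is using condition (3) of \cref{twist new} to see that $d_\rmQ \nu + [\ell_1,\nu]$ contains no $\ell_1$, hence is fixed by $\eta$, which gives $\eta(d_\rmQ \nu) = d_{\rmQ^+}\nu$, followed by the same direct verification of naturality and the two triangle identities on the objects $(\rmQ^+,\rmQ_{0,0}^{++})$ and $(\rmQ^{++},\rmQ_0^+)$. Your tracking of the adjoined generators ($\diff \mapsto \diff' + \diff \mapsto \diff$) in the second triangle identity is in fact slightly more explicit than what the paper records.
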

\begin{proof}  
From the \cref{def-prop:functor-Curv-Twist}, we see that $\eta$ commutes with the morphisms $\cLinf \to \rmQ$ and $\cLinf \to \rmQ^+$. In particular, $\eta$ commutes with the differentials on $\ell_0$ and $\ell_1$.
We now check that $\eta$ commutes with the differentials on any $\nu \in \rmQ_{0,0}$. By \cref{twist new}, $d_{\rmQ} \nu + [\ell_1, \nu]$ does not contain any $\ell_1$, therefore
\begin{align*}
    \eta(d_\rmQ \nu) & = \eta(d_{\rmQ} \nu + [\ell_1, \nu]) - \eta([\ell_1, \nu]) \\
    & = d_\rmQ \nu + [\ell_1, \nu] - [\diff + \ell_1, \nu] \\
    & = d_\rmQ \nu - [\diff, \nu] = d_{\rmQ^+} \nu.
\end{align*}
We conclude that $\eta$ commutes with the differentials. 
Moreover, given that that both~$\calI$ and~$\calJ$ act trivially on morphisms, it is clear that $\eta$ is a well-defined natural transformation.

Given that $d_{\rmQ^+}(\diff)=-\diff \circ_1 \diff$ and $d_{\rmQ^+}(\nu)=d_\rmQ(\nu)-[\diff,\nu]$ for $\nu \in \rmQ$, it is immediate to check that $\varepsilon$ commutes with the differentials.
Here again, it is clear that the condition with respect to morphisms in~$\Curv$ is fulfilled, therefore $\varepsilon$ is a well-defined natural transformation.

It remains to check that the two unit-counit equations 
\[
\id_\calI = (\varepsilon \calI)(\calI \eta)
\quad \text{and} \quad
\id_\calJ = (\calJ\varepsilon)(\eta\calJ)
\]
are satisfied. 
The object $\calI \calJ \calI(\rmQ,\rmQ_{0,0})$ in $\Curv$ is
\[\calI \calJ \calI(\rmQ,\rmQ_{0,0}) = (\rmQ^+,\rmQ_{0,0}^{++}).\]
Using the formulas for the unit~\eqref{eq:IJ-unit} and for the counit~\eqref{eq:IJ-counit} it is straightforward to check that the composite $(\varepsilon \calI)(\calI \eta)$ is the identity. 
Similarly, the object $\calJ\calI\calJ(\rmQ,\rmQ_0)$ in $\Twist$ is 
\[\calJ\calI\calJ(\rmQ,\rmQ_0) =(\rmQ^{++}, \rmQ_0^+),\]
and checking that $(\calJ\varepsilon)(\eta\calJ)$ is the identity is once again a direct verification.
\end{proof}

\begin{remark}
\label{monad IJ}
    Observe that the monad $\calI \calJ$ on $\Curv$ is given by $(\rmQ, \rmQ_0) \mapsto (\rmQ^+, \rmQ_0^+)$.
    We will elaborate on this fact in \cref{section distributivity}.
\end{remark}

The same results (with the same proofs) hold for the corresponding categories of non-symmetric operads after replacing the operations $\ell_n$ in $\cLinf$ by the operations $\mu_n$ in $\cAinf$. 


\section{Adjunction for $\Tw$ and the proof of \cref{thm:main-thm}}
\label{adjunction twist}

Consider the quotient map $\pi: \cLinf \to \cLinf/(\ell_0,\ell_1) = \Linf$.  
There is an adjunction $\pi_!: \cMult \leftrightarrow \Mult: \pi^!$
given by
\begin{eqnarray*}
    \pi_!: (\cLinf \to \rmP) & \mapsto & (\Linf \to \rmP/(\ell_1,\ell_0)), \quad \text{and} \\
    \pi^! : (\Linf \to \rmP) &\mapsto& (\cLinf \xrightarrow{\pi} \Linf \to \rmP).
\end{eqnarray*}
Combining it with the adjunctions $(\calL \dashv \calR)$ of \cref{thm:adjunction-Curv} and $(\calI \dashv \calJ)$ of \cref{thm:adjunction-Twist-Curv}, we get the following three adjunctions:
\[ 
\begin{tikzcd}
    \Twist \ar[r, bend left, "\calI"] & \Curv \ar[r, bend left, "\calL"] \ar[l, bend left, "\calJ"] & \cMult \ar[l, bend left, "\calR"] \ar[r, bend left, "\pi_!"] & \Mult \ar[l, bend left, "\pi^!"].
\end{tikzcd}
\]

\begin{theorem}
\label{thm:adjoint}
    The forgetful functor 
    \[\forget : \Twist \to \Mult, \quad (\rmQ, \rmQ_{0,0}) \mapsto \rmQ_{0,0} \]
    has for right adjoint the functor $\free := \calJ \calR \pi^! : \rmP \mapsto (\cTw(\pi^! \rmP)^+, \Tw(\rmP))$.
    The counit is
    $1_{\rmP} \vee 0 : \Tw(\rmP) = (\rmP \, \hat{\vee} \, [\alpha], d_{\Tw(\rmP)}) \to (\rmP, d_\rmP)$.
\end{theorem}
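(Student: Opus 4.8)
The plan is to realize $\forget$ as a composite of the three left adjoints in the diagram preceding the theorem, and then invoke the standard fact that a composite of left adjoints is left adjoint to the composite of the corresponding right adjoints in reverse order. Concretely, I would first check that
\[ \forget = \pi_! \circ \calL \circ \calI \colon \Twist \to \Mult. \]
Unwinding \cref{definition:functor-Twist-Curv}, the object $\calI(\rmQ,\rmQ_{0,0}) = (\rmQ, \rmQ_{0,0}^+)$ has underlying operad $\rmQ$ with its given $\cLinf$-structure (only the suboperad is repackaged); then $\calL$ forgets the suboperad, and $\pi_!$ sends $\rmQ$ to $\rmQ/(\ell_0,\ell_1)$. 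By the coproduct condition in \cref{twist new}, the non-dg isomorphism $[\ell_0]\vee[\ell_1]\vee\rmQ_{0,0}\cong\rmQ$ identifies $\rmQ/(\ell_0,\ell_1)$ with $\rmQ_{0,0}$, carrying the induced $\Linf$-operad structure to exactly the one placed on $\rmQ_{0,0}$ in the discussion following \cref{twist new}. Since all three functors act as the identity on the relevant underlying data, $\pi_!\calL\calI$ and $\forget$ agree on objects and morphisms.

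Granting this, \cref{thm:adjunction-Twist-Curv}, \cref{thm:adjunction-Curv}, and the adjunction $\pi_!\dashv\pi^!$ immediately give that $\forget$ is left adjoint to $\calJ\circ\calR\circ\pi^! = \free$. It then remains to make the explicit assertions precise. For the functor: $\pi^!\rmP$ is $\rmP$ regarded as a $\cLinf$-operad along $\pi$; \cref{thm:adjunction-Curv} gives $\calR\pi^!\rmP = (\cTw(\pi^!\rmP),\, \pi^!\rmP\,\hat\vee\,[\alpha])$; and \cref{def-prop:functor-Curv-Twist} gives $\calJ$ of this as $(\cTw(\pi^!\rmP)^+,\, \pi^!\rmP\,\hat\vee\,[\alpha])$. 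Thus the underlying operad is $\cTw(\pi^!\rmP)^+$ as claimed, and the one remaining task is to identify the suboperad $\pi^!\rmP\,\hat\vee\,[\alpha]$, with its induced structure as an object of $\Mult$, with $\Tw(\rmP)$.

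This identification is the main computational content, and I expect it to be the principal obstacle. As a non-dg operad, $\pi^!\rmP\,\hat\vee\,[\alpha]$ is visibly $\rmP\,\hat\vee\,[\alpha] = \MC(\rmP)$, so I only need to match the differential and the $\Linf$-operations. The key observation is that in $\cTw(\pi^!\rmP)^+$ the distinguished operations are $\ell_0 = \kurv + \ell_0^\alpha$ and $\ell_1 = \diff + \ell_1^\alpha$, so that modulo the ideal $(\ell_0,\ell_1)$ — which is precisely how the $\Twist$-structure equips the suboperad with its differential — one has $\kurv\equiv-\ell_0^\alpha$ and $\diff\equiv-\ell_1^\alpha$. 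Projecting $d_{\cTw(\pi^!\rmP)^+}$ then yields, for $\nu\in\rmP$,
\[ d\nu = d_\rmP\nu - [\diff,\nu] \equiv d_\rmP\nu + [\ell_1^\alpha,\nu], \]
and, for the arity-zero generator,
\[ d\alpha = \kurv - [\diff,\alpha] \equiv -\ell_0^\alpha + [\ell_1^\alpha,\alpha], \]
which are exactly the values of $d_{\MC(\rmP)} + [\ell_1^\alpha,\cdot]$; likewise the operations $\ell_n$ for $n\geq 2$ project to $\ell_n^\alpha$ (recall that in our conventions $\Linf = \cLinf/(\ell_0,\ell_1)$ has no unary generator, so matching $\ell_n$ for $n\geq 2$ suffices). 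Hence the suboperad is $\MC(\rmP)^{\ell_1^\alpha} = \Tw(\rmP)$ as an object of $\Mult$.

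Finally, the counit of the composite adjunction is the pasting of the three individual counits, evaluated at $\rmP$: the counit of $\calI\dashv\calJ$ from \cref{thm:adjunction-Twist-Curv} sends $\diff\mapsto0$, the counit of $\calL\dashv\calR$ from \cref{thm:adjunction-Curv} is the identity on $\pi^!\rmP$ and sends $\alpha,\kurv\mapsto0$, and the counit of $\pi_!\dashv\pi^!$ at $\rmP$ is the identity (since $\pi^!$ kills $\ell_0,\ell_1$, so $\pi_!\pi^!\rmP = \rmP$). Composing, the counit $\forget\free(\rmP) = \Tw(\rmP)\to\rmP$ is the identity on $\rmP$ and sends $\alpha\mapsto0$, that is, it is $1_\rmP\vee0$, as asserted. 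Of the four steps, only the third paragraph requires genuine calculation; the factorization and the counit are formal bookkeeping with the adjunctions already established.
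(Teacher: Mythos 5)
Your proposal is correct and follows essentially the same route as the paper: factor $\forget \cong \pi_!\calL\calI$, obtain the right adjoint $\calJ\calR\pi^!$ by composing the three established adjunctions, and then identify the induced differential and $\Linf$-structure on the suboperad $\rmP\,\hat\vee\,[\alpha]$ with those of $\Tw(\rmP)$. Your reduction modulo $(\ell_0,\ell_1)$ using $\kurv\equiv-\ell_0^\alpha$, $\diff\equiv-\ell_1^\alpha$ is the same computation the paper performs by adding and subtracting terms in that ideal, and your explicit pasting of the three counits supplies a verification the paper leaves implicit.
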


\begin{proof}
    Using the adjunctions $(\pi_!, \pi^!)$, $(\calL, \calR)$ from \cite[Theorem 1.7]{Laplante-Petr-Shende2025}, and $(\calI, \calJ)$ from \cref{thm:adjunction-Twist-Curv}, we know that $\calJ \calR \pi^!$ is the right adjoint of $\pi_! \calL \calI : (\rmQ, \rmQ_{0,0}) \mapsto \rmQ / (\ell_0, \ell_1)$, which is naturally isomorphic to $\forget$.
    Moreover, we have $(\calJ \calR \pi^!)(\rmP) = (\cTw(\pi^! \rmP)^+, \rmP \, \hat{\vee} \, [\alpha])$.
    
    We now show that the induced differential $d$ on $\free(\rmP)_{0,0} = \rmP \, \hat{\vee} \, [\alpha]$ is the same as $d_{\Tw}$, i.e.
    \[\left\{
    \begin{array}{llll}
    d \nu & = & d_\rmP \nu + [\ell_1^{\alpha}, \nu] & \text{for } \nu \in \rmP \\
    d \alpha  & = & - \ell_0^{\alpha}+[\ell_1^{\alpha},\alpha]. & \\
    \end{array}
    \right.\]
    By the definitions of $\cTw$ (\cref{def of cTw}) and the symbol $(-)^+$ (\cref{def:+}), the differential $D$ on $\cTw(\pi^! \rmP)^+$ satisfies 
    \begin{enumerate}
        \item $D \nu = d_\rmP \nu - [\diff, \nu] = d_\rmP \nu + [\ell_1^{\alpha}, \nu] - [\diff + \ell_1^\alpha, \nu]$ for $\nu \in \rmP$,
        \item $D \alpha = - [\diff, \alpha] + \kurv = - \ell_0^{\alpha} + [\ell_1^{\alpha},\alpha] - [\diff + \ell_1^\alpha, \alpha] + (\kurv+ \ell_0^\alpha)$.
    \end{enumerate}
    Therefore $d = d_\Tw$.

    It remains to show that the morphism $(\cLinf, \Linf) \to \calG(\rmP)$ induces the map $\Linf \to \Tw(\rmP)$ given by $\ell_n \mapsto \ell_n^\alpha$. 
    By definition of $\cTw$ (\cref{def of cTw}), the morphism $\cLinf \to (\calR \pi^!)(\rmP) = \cTw(\pi^!\rmP)$ sends $\ell_0 \mapsto (\kurv+\ell_0^\alpha)$ and $\ell_n \mapsto \ell_n^\alpha$ for $n\geq 1$.
    The result follows by applying the functor $\calJ$ of \cref{def-prop:functor-Curv-Twist}.
\end{proof}

Any adjunction gives rise to a monad on the source category, and a comonad on the target category. 
We unravel the comonad structure on the category $\Mult$ associated to the adjunction $\forget \dashv \free$ between $\Twist$ and $\Mult$ of \cref{thm:adjoint}.

\begin{prop}
\label{prop:comonad-structure} 
    If we write $(\rmP \, \hat{\vee} \, [\alpha]) \, \hat{\vee} \, [\alpha']$ for the underlying operad of $(\forget \free \forget \free)(\rmP)$, then the comonad structure of the endofunctor $\forget \free$ is given by the maps
    \[
    \begin{array}{ccccccc}
    \forget \free & \xrightarrow{\forget(\eta_{\free})} & \forget \free \forget \free, & & \forget \free & \xrightarrow{\varepsilon} & \id \\
    \nu & \mapsto & \nu & & \nu & \mapsto & \nu \\
    \alpha & \mapsto & \alpha' + \alpha & & \alpha & \mapsto & 0. 
    \end{array}
    \]
\end{prop}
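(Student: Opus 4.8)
The plan is to obtain the comonad structure from abstract nonsense and then reduce the only nontrivial point to a single direct verification. For any adjunction $L \dashv R$ with unit $\eta$ and counit $\varepsilon$, the comonad $LR$ has counit $\varepsilon$ and comultiplication $L\eta R$. Applying this to $\forget \dashv \free$ of \cref{thm:adjoint}, the counit of $\forget\free$ is the adjunction counit, which \cref{thm:adjoint} already computes to be $\nu \mapsto \nu$, $\alpha \mapsto 0$; this is the right-hand column. So everything comes down to identifying, for each $\rmP \in \Mult$, the comultiplication $\forget(\eta_{\free\rmP}) \colon \Tw\rmP \to \Tw\Tw\rmP$.

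In principle $\eta$ is the composite of the units of $(\calI \dashv \calJ)$, $(\calL \dashv \calR)$ and $(\pi_! \dashv \pi^!)$, but the unit of $(\calL \dashv \calR)$ has no closed form in general, so I would avoid expanding it. Instead I would invoke the universal property of the unit: under the adjunction bijection $\Hom_{\Twist}(\free\rmP,\free(\Tw\rmP)) \cong \Hom_{\Mult}(\Tw\rmP,\Tw\rmP)$, $f \mapsto \varepsilon_{\Tw\rmP} \circ \forget(f)$, the unit $\eta_{\free\rmP}$ is the unique preimage of $\id_{\Tw\rmP}$. Hence it suffices to exhibit a single morphism $\Psi \colon \free\rmP \to \free(\Tw\rmP)$ in $\Twist$ with $\varepsilon_{\Tw\rmP} \circ \forget(\Psi) = \id$; then $\Psi = \eta_{\free\rmP}$ and $\forget(\Psi)$ is the sought comultiplication.

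I would define $\Psi$ on $\free\rmP = (\cTw\pi^!\rmP)^+$ to be the identity on $\rmP$ and to send $\alpha \mapsto \alpha + \alpha'$. Since a morphism in $\Twist$ commutes with the distinguished $\cLinf$-operations, which on $\free\rmP$ are $\ell_0 \mapsto \kurv + \ell_0^\alpha$ and $\ell_1 \mapsto \diff + \ell_1^\alpha$ (and the primed analogues on $\free(\Tw\rmP)$), the values $\Psi(\kurv)$ and $\Psi(\diff)$ are forced. Concretely, using the decomposition of $\ell_0^{\alpha+\alpha'}$ into its $\alpha'$-free part $\ell_0^\alpha$, its part linear in $\alpha'$ (the term $\ell_1^\alpha(\alpha')$), and the remaining $\alpha'$-twist computed inside $\Tw\rmP$, one gets $\Psi(\kurv) = \kurv' - \ell_0^\alpha - \ell_1^\alpha(\alpha')$, and similarly for $\Psi(\diff)$ from the $\ell_1$-equation. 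With $\Psi$ so defined, the condition $\varepsilon_{\Tw\rmP} \circ \forget(\Psi) = \id$ is immediate, since $\varepsilon_{\Tw\rmP}$ kills $\alpha'$ and fixes $\alpha$ and $\rmP$, and $\Psi$ manifestly carries the suboperad $\Tw\rmP$ into $\Tw\Tw\rmP$.

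The remaining, and main, task is to check that $\Psi$ commutes with the differentials — i.e.\ that it is genuinely a morphism in $\Twist$ — where the relevant differentials on $\rmP, \alpha, \kurv, \diff$ are the twisted ones recorded in \cref{thm:adjoint}. On $\rmP$ and $\alpha$ this runs parallel to the computation in the proof of \cref{cTw comonadicity} that the unit preserves $\kurv + \ell_0^\alpha$; the only bookkeeping of substance is tracking how each $\ell_n^{\alpha+\alpha'}$ separates into its $\alpha'$-free part, its part linear in $\alpha'$, and its $\Tw\rmP$-twisted remainder, which is precisely what makes the forced values of $\Psi$ on $\kurv$ and $\diff$ compatible with $D$. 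Once this is verified, uniqueness in the adjunction bijection gives $\Psi = \eta_{\free\rmP}$, and applying $\forget$ yields the asserted comultiplication $\nu \mapsto \nu$, $\alpha \mapsto \alpha + \alpha'$.
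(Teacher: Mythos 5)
Your proposal is correct and takes essentially the same route as the paper: both reduce to computing the unit of $\forget \dashv \free$ at $\free(\rmP)$, arrive at exactly the same formulas (in particular $\Psi(\kurv) = \kurv' - \ell_0^\alpha - [\ell_1^\alpha,\alpha']$ and $\Psi(\diff) = \diff' - \ell_1^\alpha$), and rest on the same decomposition identities for $\ell_0^{\alpha+\alpha'}$ and $\ell_1^{\alpha+\alpha'}$. The only differences are presentational: you spell out the universal-property step (uniqueness of the preimage of $\id_{\Tw(\rmP)}$ under the adjunction bijection) that the paper leaves implicit, and you derive the values on $\kurv$ and $\diff$ as forced by $\cLinf$-compatibility rather than positing them outright and then verifying that compatibility.
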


\begin{proof}
    Let $\rmP \in \Mult$.
    According to \cref{thm:adjoint}, we already know the formula for the counit $\varepsilon : \forget \free \to \id$.
    Therefore it suffices to show that for the object $\free(\rmP)$, the unit $\eta : \free(\rmP) \to (\free \forget) (\free(\rmP))$ is given by 
    \[
    \eta(\nu) = \nu, \quad \eta(\alpha) = \alpha' + \alpha, \quad \eta(\kurv) = \kurv'-(\ell_0^\alpha+[\ell_1^\alpha, \alpha']), \quad \eta(\diff) = \diff'-\ell_1^\alpha. \]
    It is straightforward to check that the latter formula defines a morphism of operads, therefore it remains to check that it preserves the distinguished elements.
    In other words, writing $\lambda_n := \ell_n^\alpha$ for the $\Linf$-operations on $\forget \free (\rmP, d_\rmP)$,
    we need to check that $\eta(\diff+\ell_1^{\alpha}) = \diff' + \lambda_1^{\alpha'}$ and $\eta(\kurv+ \ell_0^{\alpha}) = \kurv' + \lambda_0^{\alpha'}$.
    The equality $\eta(\diff+\ell_1^{\alpha}) = \diff' + \lambda_1^{\alpha'}$ follows from the identity $\ell_1^{\alpha+\alpha'}=\ell_1^\alpha + \lambda_1^{\alpha'}$.  
    An argument for this can be found in e.g.\ \cite[Lemma~5.24]{DotsenkoShadrinVallette23}; we reproduce it in our context for completeness
    \begin{align*}
        \ell_1^{\alpha} + \lambda_1^{\alpha'} = & \sum_{n \geq 1} \frac{1}{n!} \ell_{n+1} (\alpha^n, -,) + \sum_{k \geq 1} \frac{1}{k!} \ell_{k+1}^{\alpha} ((\alpha')^k, -) \\
        = & \sum_{n \geq 1} \frac{1}{n!} \ell_{n+1} (\alpha^n, -) + \sum_{k \geq 1} \sum_{r \geq 0} \frac{1}{k!} \frac{1}{r!} \ell_{k+r+1} (\alpha^r, (\alpha')^k,-) \\ 
        = & \sum_{n \geq 1} \frac{1}{n!} \ell_{n+1} ( (\alpha + \alpha')^k, -) = \ell_1^{\alpha+\alpha'}.
    \end{align*}
    Similarly, the equality $\eta(\kurv+ \ell_0^{\alpha}) = \kurv' + \lambda_0^{\alpha'}$ follows from the identity $\ell_0^{\alpha+\alpha'} = \ell_0^\alpha + \lambda_0^{\alpha'} + [\ell_1^\alpha, \alpha']$ which is proved as follows
    \begin{align*}
        \ell_0^\alpha + \lambda_0^{\alpha'} + [\ell_1^\alpha, \alpha'] = & \sum_{n \geq 2} \frac{1}{n!} \ell_n (\alpha^n) + \sum_{k \geq 2} \frac{1}{k!} \ell_k^{\alpha} ((\alpha')^k) + \sum_{n \geq 1} \frac{1}{n!} \ell_{n+1} (\alpha^n, \alpha') \\
        = & \sum_{n \geq 2} \frac{1}{n!} \ell_n (\alpha^n) + \sum_{k \geq 2} \sum_{r \geq 0} \frac{1}{k!} \frac{1}{r!} \ell_{k+r} (\alpha^r, (\alpha')^k) + \sum_{n \geq 1} \frac{1}{n!} \ell_{n+1} (\alpha^n, \alpha') \\ 
        = & \sum_{n \geq 2} \frac{1}{n!} \ell_n ( (\alpha + \alpha')^n) = \ell_0^{\alpha+\alpha'}.
    \end{align*}
\end{proof}

We are now in position to identify $\Twist$ with the category of $\Tw$-coalgebras.

\begin{proof}
[Proof of \cref{thm:main-thm}]
    We first prove that $\forget \free = \Tw$ as comonads.
    According to \cref{thm:adjoint}, we already know that $\forget \free$ and $\Tw$ agree on objects.
    Moreover, both $\forget \free$ and $\Tw$ act on morphisms as $f \mapsto f \vee 1$.
    Finally, according to \cref{prop:comonad-structure}, the comonad structures of $\forget \free$ and $\Tw$ also agree.

    We now check that the adjunction is comonadic.
    According to the comonadicity theorem (see \cite[Sec.~3.5]{barr2000toposes}), it is enough to check that 
    \begin{enumerate}
        \item \label{item:conservative} $\forget$ reflects isomorphisms,
        \item \label{item:equalizers2} $\Twist$ has equalizers, and they are preserved by $\forget$. 
    \end{enumerate}
    
    We first check (\ref{item:conservative}). Consider a morphism $f : (\rmQ, \rmQ_{0,0}) \to (\rmQ', \rmQ'_{0,0})$ in $\Twist$ such that $\forget f : \rmQ_{0,0} \to \rmQ'_{0,0}$ is an isomorphism.
    Because $f$ is a morphism in $\Twist$, it preserves $\ell_0$ and $\ell_1$.
    We conclude from the fact that $\rmQ_{0,0} \vee [\ell_0] \vee [\ell_1] \to \rmQ$ and $\rmQ'_{0,0} \vee [\ell_0] \vee [\ell_1] \to \rmQ'$ are isomorphisms by Definition of $\Twist$.
    
    Regarding $(\ref{item:equalizers2})$, recall that equalizers in the category of operads are computed by the arity-wise equalizers of the underlying vector space of operations; the same evidently holds for $\cMult$. 
    It now suffices to observe that, in $\Twist$, 
    $$\rm{Eq}((\rmQ, \rmQ_{0,0}) \substack{\rightarrow\\[-1em] \rightarrow} (\rmQ', \rmQ_{0,0}')) = 
    \rm{Eq}(\rmQ_{0,0} \substack{\rightarrow\\[-1em] \rightarrow} \rmQ'_{0,0}) \vee [\ell_0] \vee [\ell_1]$$
    with differential on the right hand side induced by the differential on $\rmQ$. 
\end{proof}

Analogous results hold (with the same proofs) for the corresponding categories of non-symmetric operads by replacing $\Linf$ by $\Ainf$ and $\ell_n^\alpha$ by $\mu_n^\alpha$.


\section{Maurer-Cartan as the monad on Twist and uncurving}
\label{section:monad}

\subsection{Maurer-Cartan monad}

We now study the monad structure on the category $\Twist$ associated to the adjunction $\forget \dashv \free$ of \cref{thm:adjoint}.
Explicitly, it is given as follows:
\begin{enumerate}
    \item On objects, $\free \forget(\rmQ, \rmQ_{0,0}) = (\cTw(\rmQ_{0,0})^+, \Tw(\rmQ_{0,0}))$, with differential 
    \[
    d_{| \rmQ_{0,0}} = d_{\rmQ_{0,0}}-[\diff,-], 
    \quad 
    d \alpha = \kurv- [\diff,\alpha],
    \quad 
    d\kurv = -\diff(\kurv),
    \quad
    d\diff = -m \circ_1 m,
    \]
    and structural morphism $\cLinf \to \cTw(\rmQ_{0,0})^+$ sending 
    \[
    \ell_0 \mapsto \kurv + \ell_0^\alpha, 
    \quad 
    \ell_1 \mapsto \diff + \ell_1^\alpha,
    \quad \text{and} \quad
    \ell_n \mapsto \ell_n^\alpha
    \quad \text{for } n \geqslant 2.
    \]
    \item On morphisms, $\free \forget$ sends $f$ to $f \vee 1_{\alpha,\kurv,\diff}$.
    \item If we write $(\rmQ_{0,0} \, \hat{\vee} \, [\alpha] \vee [\kurv] \vee [\diff])\, \hat{\vee} \, [\beta] \vee [\kurv'] \vee [\diff']$ for the underlying operad of $(\free \forget \circ \free \forget)(\rmQ, \rmQ_{0,0})$ and $\eta$ the unit of the adjunction $\forget \dashv \free$, then the monad structure is given by 
    \[\begin{array}{ccccccc}
    \free \forget \circ \free \forget & \to & \free \forget, & & \id & \to & \free \forget \\
    \nu & \mapsto & \nu & & \nu & \mapsto & \eta(\nu). \\
    \alpha & \mapsto & 0 & &  &  & \\
    \beta & \mapsto & \alpha & &  &  & \\
    \end{array}
    \]
\end{enumerate}

We will now introduce a new monad~$\MC$ on~$\Twist$, and show that it is isomorphic to~$\free \forget$.
This new monad allows one to trade off the complicated $\cLinf$ structure, but simple differential on~$\free \forget(\rmQ,\rmQ_{0,0})$ for a simple~$\cLinf$ structure, but complicated differential on~$\MC(\rmQ,\rmQ_{0,0})$.
Moreover, on can replace a generally complicated unit with the trivial one (!), the missing information being encoded in the isomorphism~$\MC \cong \free \forget$.

\begin{samepage}
\begin{definition}
\label{definition:cMC}
    We denote by $\MC : \Twist \to \Twist$ the monad defined as follows:
    \begin{enumerate}
        \item On objects, $\MC(\rmQ, \rmQ_{0,0}) \eqdef (\rmQ \, \hat{\vee} \, [\alpha], \rmQ_{0,0} \, \hat{\vee} \, [\alpha])$, with differential
        \[d_{| \rmQ} \eqdef d_\rmQ, \quad d \alpha \eqdef - \ell_0^\alpha \]
        and morphism $\cLinf \to (\rmQ \, \hat{\vee} \, [\alpha], d)$ sending $\ell_n \in \cLinf$ to $\ell_n \in \rmQ$.
        \item On morphisms, $\MC$ sends $f$ to $f \vee 1_\alpha$.
        \item If we write $\rmQ \, \hat{\vee} \, [\alpha] \, \hat{\vee} \, [\beta]$ for the underlying operad of $(\MC \circ \MC)(\rmQ, \rmQ_{0,0})$, then the monad structure of $\MC$ is given by 
        \[\begin{array}{ccccccc}
        \MC \circ \MC & \to & \MC, & & \id & \to & \MC \\
        \nu & \mapsto & \nu & & \nu & \mapsto & \nu. \\
        \alpha & \mapsto & \alpha & &  &  & \\
        \beta & \mapsto & \alpha & &  &  & \\
        \end{array}\]
    \end{enumerate}
\end{definition}
\end{samepage}

\medskip

\begin{theorem}
\label{thm:monad}
    Let $\eta$ be the unit of the adjunction $\forget \dashv \free$.
    Then $\eta \vee (-\id_\alpha) : \MC  \to \free \forget$ is an isomorphism of monads.
\end{theorem}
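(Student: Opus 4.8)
The plan is to exhibit $\Psi := \eta \vee (-\id_\alpha)$ as an isomorphism of functors valued in $\Twist$ and then to check it respects the monad structures, the latter being the substantial part. By \cref{thm:adjoint} the functor $\free\forget$ sends $(\rmQ,\rmQ_{0,0})$ to $(\cTw(\pi^!\rmQ_{0,0})^+,\Tw(\rmQ_{0,0}))$, whose underlying operad is $\rmQ_{0,0}\vee[\kurv]\vee[\diff]\hat\vee[\alpha]$, whereas $\MC(\rmQ,\rmQ_{0,0})$ has underlying operad $\rmQ\hat\vee[\alpha]\cong\rmQ_{0,0}\vee[\ell_0]\vee[\ell_1]\hat\vee[\alpha]$ by \cref{twist new}. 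Since $\eta$ is a morphism in $\Twist$ it is $\cLinf$-equivariant and preserves the distinguished suboperads; hence it sends the images of $\ell_0,\ell_1$ to $\kurv+\ell_0^\alpha$ and $\diff+\ell_1^\alpha$, it carries $\rmQ_{0,0}$ into $\Tw(\rmQ_{0,0})=\rmQ_{0,0}\hat\vee[\alpha]$, and by the triangle identity $\varepsilon\circ\forget(\eta)=\id$ (with $\varepsilon$ the explicit counit of \cref{thm:adjoint}) its restriction to $\rmQ_{0,0}$ is the identity modulo $\alpha$. Thus, with respect to the $\alpha$-adic filtration, $\Psi$ takes the generators $\rmQ_{0,0},\ell_0,\ell_1,\alpha$ to $\rmQ_{0,0},\kurv,\diff,-\alpha$ up to higher filtration; being a unitriangular change of generators it is an isomorphism of non-dg operads carrying suboperad to suboperad.

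Next I would check that $\Psi$ is a morphism in $\Twist$, i.e.\ a $\cLinf$-equivariant chain map natural in $(\rmQ,\rmQ_{0,0})$. Equivariance and naturality are immediate, since on $\rmQ$ the map is $\eta$ and on $[\alpha]$ it is $-\id_\alpha$, both natural and compatible with the functorial action $f\mapsto f\vee 1_\alpha$. On $\rmQ$ the map agrees with $\eta$ and so commutes with differentials; the only real computation is on $\alpha$. Writing $\lambda_0=\kurv+\ell_0^\alpha$, $\lambda_1=\diff+\ell_1^\alpha$ and $\lambda_n=\ell_n^\alpha$ ($n\geq 2$) for the $\cLinf$-operations of $\free\forget(\rmQ,\rmQ_{0,0})$, and using $\Psi(\alpha)=-\alpha$, one gets
\[ \Psi(d\alpha)=-\sum_{k\geq 0}\frac{(-1)^k}{k!}\lambda_k(\alpha^k)=-\kurv+[\diff,\alpha]-\sum_{k\geq 0}\frac{(-1)^k}{k!}\ell_k^\alpha(\alpha^k), \]
where in the last sum the operations are those of $\pi^!\rmQ_{0,0}$, for which $\ell_0=\ell_1=0$. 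By \cref{lem:alternating} that sum equals $\ell_0=0$, leaving $\Psi(d\alpha)=-\kurv+[\diff,\alpha]$, which is exactly $D(-\alpha)=D(\Psi\alpha)$ for the differential $D$ of $\free\forget(\rmQ,\rmQ_{0,0})$ computed in the proof of \cref{thm:adjoint}. So \cref{lem:alternating} is precisely the identity that makes the differentials match.

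It then remains to see that $\Psi$ is a morphism of monads. Compatibility with the units is immediate: the unit of $\MC$ is the inclusion $\nu\mapsto\nu$, and $\Psi$ restricts to $\eta$ on $\rmQ$, so $\Psi\circ\eta^\MC=\eta$, the unit of $\free\forget$. For the multiplications I would use that $\mu^{\free\forget}=\free\varepsilon\forget$ is explicit because $\varepsilon$ is: it kills the inner adjoined variable and sends the outer one to $\alpha$. As all maps involved are operad morphisms it suffices to compare $\Psi\circ\mu^\MC$ and $\mu^{\free\forget}\circ(\Psi\ast\Psi)$ on the generators $\rmQ_{0,0},\ell_0,\ell_1,\alpha,\alpha'$ of $\MC\MC(\rmQ,\rmQ_{0,0})$. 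On $\ell_0,\ell_1$ (indeed on all of $\rmQ$) both sides agree by $\cLinf$-equivariance. On $\rmQ_{0,0}$ the point is to avoid the non-explicit formula for $\eta|_{\rmQ_{0,0}}$: naturality of $\eta$ with respect to the unit $\eta^\MC$ of $\MC$, together with $\Psi\circ\eta^\MC=\eta$, gives $(\Psi\ast\Psi)(\nu)=\free\forget(\eta)(\eta(\nu))$ for $\nu\in\rmQ_{0,0}$, and then the monad unit law $\mu^{\free\forget}\circ\free\forget(\eta)=\id$ yields $\mu^{\free\forget}(\Psi\ast\Psi)(\nu)=\eta(\nu)=\Psi\mu^\MC(\nu)$. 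On $\alpha$ and $\alpha'$ I would compute $\Psi\ast\Psi$ directly, using the explicit formula for the unit at the free object $\free(\rmQ_{0,0})$ given in \cref{prop:comonad-structure} (in particular $\eta(\alpha)=\alpha'+\alpha$); applying $\mu^{\free\forget}$ then reproduces $\Psi\mu^\MC$.

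I expect the multiplication compatibility to be the main difficulty. It requires assembling the horizontal composite $\Psi\ast\Psi$ and applying $\mu^{\free\forget}$ with careful bookkeeping of the two adjoined variables, and -- more conceptually -- it is the step where the superficially different ways in which $\MC$ and $\free\forget$ merge the Maurer--Cartan variable must be reconciled; this is exactly what the explicit unit $\eta(\alpha)=\alpha'+\alpha$ on free objects and the naturality argument on $\rmQ_{0,0}$ accomplish. The differential identity on $\alpha$, supplied by \cref{lem:alternating}, is the other essential (though shorter) ingredient.
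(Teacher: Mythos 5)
Your proposal follows the paper's own proof step for step: the $\alpha$-adic filtration argument with $\eta^{0}=\id$ (extracted from the triangle identity) for invertibility, \cref{lem:alternating} to match the differentials on $\alpha$, the same naturality and unit-triangle checks, and, for compatibility with the multiplications, the same split into an abstract argument on $\rmQ_{0,0}$ (your route via naturality of $\eta$ against $\eta^{\MC}$ plus a monad unit law is a cleaner phrasing of the paper's ``the first application of $\eta$ amounts to $\eta^{0}=\id_{\rmQ}$'') and a deferred direct computation on the adjoined arity-zero generators. Up to that final step, what you write is correct.

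That final step, however, is a genuine gap: if you actually carry out the computation you postponed, it does \emph{not} ``reproduce $\Psi\mu^{\MC}$'' when $\mu^{\MC}$ is the multiplication of \cref{definition:cMC}. Write $T=\free\forget$ and let $\alpha$ be the generator adjoined by the inner $\MC$ in $\MC\MC(\calQ)$. Then $\MC(\Psi_{\calQ})(\alpha)=\Psi_{\calQ}(\alpha)=-\alpha$, an element of $T(\calQ)\subset\MC(T(\calQ))$, and $\Psi_{T(\calQ)}$ restricts on $T(\calQ)$ to the adjunction unit $\eta_{T(\calQ)}$. Hence the triangle identity $\free(\varepsilon_{X})\circ\eta_{\free(X)}=\id_{\free(X)}$ at $X=\forget(\calQ)$, i.e.\ $\mu^{T}_{\calQ}\circ\eta_{T(\calQ)}=\id$, forces
\[
\mu^{T}\bigl((\Psi\ast\Psi)(\alpha)\bigr)=\mu^{T}\bigl(\eta_{T(\calQ)}(-\alpha)\bigr)=-\alpha .
\]
This also follows from your own two ingredients: $\eta_{T(\calQ)}(\alpha)=\alpha'+\alpha$ by \cref{prop:comonad-structure}, and $\mu^{T}$ kills the inner $\alpha$ while sending $\alpha'\mapsto\alpha$. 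The lower composite, by contrast, gives $\Psi(\mu^{\MC}(\alpha))=\Psi(0)=0$, so the pentagon fails on $\alpha$. The root cause is that \cref{definition:cMC} does not actually define a monad: with the inclusion as unit, the unit law $\mu^{\MC}\circ\eta^{\MC}\MC=\id$ evaluated on $\alpha$ already forces $\mu^{\MC}(\alpha)=\alpha$, and the other unit law forces $\mu^{\MC}(\alpha')=\alpha$; sending either generator to $0$ contradicts one of them. With the corrected multiplication (both adjoined generators sent to $\alpha$) the two composites of the pentagon agree --- both send $\nu\mapsto\eta(\nu)$ and $\alpha,\alpha'\mapsto-\alpha$ --- and your argument closes. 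You share this error with the paper, whose proof asserts the same equality with the words ``for $\alpha$ and $\alpha'$, this is easy to see from the definitions,'' so it is an erratum in the paper as much as a gap in your proposal; the isomorphism of functors, and hence \cref{corollary:MC-isomorphic-Tw} and the homotopy-fixed-point applications, are unaffected. But as written, the final claim of your proof is false, and the computation you deferred is exactly where it breaks.
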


First need the following identity: 

\begin{lemma}
\label{lem:alternating}
    Let $\rmQ$ be an operad in $\cMult$. 
    Then $\sum_{k\geq 0}  \frac{(-1)^k}{k!} \ell_k^\alpha(\alpha^k)=\ell_0$ in~$\rmQ \, \hat \vee \, [\alpha]$.
\end{lemma}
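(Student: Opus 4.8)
The plan is to expand both twisted operations in terms of the generators $\ell_n$ of $\cLinf$ and reduce the claim to an elementary binomial identity. First I would unfold the definition of $\ell_k^\alpha$ from \eqref{eq:ell-n-alpha}: writing $\ell_k^\alpha = \sum_{r \geq 0} \frac{1}{r!}\,\ell_{k+r}(\alpha^r, -, \dots, -)$ with $k$ empty slots and feeding $\alpha$ into each of those $k$ slots, one obtains
\[
\ell_k^\alpha(\alpha^k) \;=\; \sum_{r \geq 0} \frac{1}{r!}\,\ell_{k+r}(\alpha^{k+r}).
\]
Here I use that $\alpha$ has arity zero, so that $\ell_{k+r}(\alpha^r, \alpha^k)$ is literally $\ell_{k+r}$ evaluated on $k+r$ copies of the single element $\alpha$; by symmetry of $\ell_{k+r}$ this element is unambiguous, and since no rearrangement of distinct inputs is performed, no Koszul signs intervene.

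Next I would substitute this expansion into the left-hand side and interchange the two summations, collecting terms according to $n := k+r$. For each fixed $n$ the total coefficient of the element $\ell_n(\alpha^n)$ is
\[
\sum_{k=0}^{n} \frac{(-1)^k}{k!}\cdot \frac{1}{(n-k)!}
\;=\; \frac{1}{n!}\sum_{k=0}^{n}\binom{n}{k}(-1)^k
\;=\; \frac{1}{n!}\,(1-1)^n,
\]
which equals $1$ for $n=0$ and vanishes for every $n \geq 1$. Only the $n=0$ term therefore survives, contributing exactly $\ell_0(\alpha^0)=\ell_0$, which is the assertion.

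The argument is essentially pure bookkeeping, so I do not expect a genuine obstacle. The one point deserving a word of care is the interchange of the two infinite sums, which is legitimate precisely because each level of the $\alpha$-filtration on $\rmQ \,\hat\vee\, [\alpha]$ receives contributions from only finitely many pairs $(k,r)$, so that the reindexing by $n=k+r$ is valid term by term in each filtration level.
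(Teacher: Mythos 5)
Your proof is correct and is essentially identical to the paper's: both expand $\ell_k^\alpha(\alpha^k)$ via \eqref{eq:ell-n-alpha} into $\sum_{r}\frac{1}{r!}\ell_{k+r}(\alpha^{k+r})$, reindex by $n=k+r$, and conclude from the vanishing of $\sum_{k=0}^{n}\binom{n}{k}(-1)^k=(1-1)^n$ for $n\geq 1$. Your added remark on justifying the interchange of sums via the $\alpha$-filtration is a fine (if routine) extra precaution that the paper leaves implicit.
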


\begin{proof}
We compute
    \begin{align*}
    \sum_{k\geq 0} \frac{(-1)^k}{k!} \ell_k^\alpha(\alpha^k) 
    &= \sum_{k\geq 0} \frac{(-1)^k}{k!}\sum_{r \geq 0} \frac{1}{r!}\ell_{k+r}(\alpha^{r},-)(\alpha^k) \\
    &= \sum_{n\geq 0} \sum_{k=0}^{n} \frac{(-1)^k}{n!} \binom{n}{k}\ell_n(\alpha^n) = \ell_0 + \sum_{n > 0 } \frac{(1-1)^n}{n!} \ell_n(\alpha^n) 
    =\ell_0.
    \end{align*}
\end{proof}

\begin{proof}
    Let $\calQ=(\rmQ,\rmQ_{0,0})$ be an object in $\Twist$.
    We first need to check that the morphism $\eta \vee (-\id_\alpha) : \MC(\calQ) \to \free \forget(\calQ)$ is in $\Twist$.
    The only non-trivial thing to check is that it commutes with the differentials. 
    This is immediate for elements of 
    $\rmQ$; let us check it for $\alpha$ as well.
    We want to show that 
    \begin{equation}
    \label{eqn:diff-check}
        d(-\alpha)=(\eta \vee -\id_\alpha)(d_{\MC(\calQ)}\alpha).
    \end{equation}
    On the one hand, we have $d(-\alpha)=[m,\alpha]-\kurv$.
    On the other hand, using the explicit description of $\eta$ on the image of $\cLinf$, we have 
    \begin{align*}
    (\eta \vee -\id_\alpha)(-\ell_0^\alpha)
    & =(\eta \vee -\id_\alpha)\left(-\kurv -m(\alpha)-\sum_{n\geqslant 2} \frac{1}{n!}\ell_n(\alpha^n)\right) \\
    & =-(\kurv+\ell_0^\alpha)+(m+\ell_1^\alpha)(\alpha)-\sum_{n\geq 2}  \frac{(-1)^n}{n!}\ell_n^\alpha(\alpha^n).
    \end{align*}
    The equality \eqref{eqn:diff-check} then follows from \cref{lem:alternating}.

    We now prove that $\eta \vee (-\id_\alpha) : \MC(\calQ) \to \free \forget(\calQ)$ is an isomorphism.
    In order to do that, we consider on both sides the (decreasing) filtration by the number of $\alpha$ and show that the morphism induced on the associated graded is an isomorphism.
    Writing $\eta = \sum_{k \geq 0} \eta^k$ according to the grading by the number of $\alpha$, the induced morphism between associated graded is $\eta^0 \vee (-\id_\alpha)$.
    It remains to show that $\eta^0 = \id_\calQ$.
    Since $\ell_0^\alpha,\ell_1^\alpha \in \free \forget(\calQ)$ are in positive $\alpha$-filtration, we have that $\eta^0(\kurv)=\kurv$ and $\eta^0(m)=m$.
    Moreover, by \cref{thm:adjoint}, the counit $\varepsilon$ of the adjunction $\forget \dashv \free$ is given by $\varepsilon(\alpha)=0$ and~$\varepsilon(\nu)=\nu$.
    Now, writing $\calQ=(\rmQ,\rmQ_{0,0})$, we have $\forget(\calQ)=\rmQ_{0,0}$ with the induced differential. 
    The first of the unit-counit identities then implies that $\id_{\rmP}=\varepsilon_{\rmQ_{0,0}}\forget(\eta)=\varepsilon_{\rmQ_{0,0}}\forget(\eta^0)$.
    We deduce that $\eta^0(\nu)=\nu$ for any $\nu \in \rmQ_{0,0}$, which completes the proof that $\eta^0=\id_\calQ$, as well as the proof that $\eta \vee (-\id_\alpha) : \MC(\calQ) \to \free \forget(\calQ)$ is an isomorphism.

    Let us check that $\eta \vee 1_\alpha$ is a natural transformation. 
    Let $f : \calQ \to \calQ'$ be a map in $\Twist$.
    We have $\MC(f)=f \vee 1_\alpha$ and $\free \forget(f)=f_{|\rmQ_{0,0}} \vee 1_{\alpha,\kurv,\diff}$.
    We thus need to check that the following diagram commutes:
    \begin{center}
    \begin{tikzcd}
    \MC(\calQ) \arrow[rr,"\eta \, \vee \,  (-\id_\alpha)"]  \arrow[d,"f \, \vee \, 1_\alpha"] & &\free \forget(\calQ) \arrow[d,"f_{|\rmQ_{0,0}} \vee \, 1_{\alpha,\kurv,\diff}"] \\
    \MC(\calQ') \arrow[rr,"\eta \, \vee \, (-\id_\alpha)"]       &    & \free \forget(\calQ') . 
    \end{tikzcd}
    \end{center}
    This can be seen directly by considering the images of $\alpha, \kurv, \diff$ and $\nu \in \rmP$ separately.
    
    It remains to show that $\eta \vee (-\id_\alpha)$ respects the monad structures. 
    This amounts to showing that the following two diagrams commute:
    \begin{center}
    \begin{tikzcd}
    \MC\circ \MC \arrow[rr,"\MC(\eta \vee -\id_\alpha)"] \arrow[d] &  & \MC \circ \free \forget \arrow[rr,"(\eta \vee -\id_{\beta})\free \forget"] &  & \free \forget \circ \free \forget \arrow[d,"\mu"'] &  &              & \id \arrow[ld,"\iota"'] \arrow[rd,"\eta"] &   \\
    \MC \arrow[rrrr,"\eta \vee -\id_\alpha"]         &  &              &  & \free \forget           &  & \MC \arrow[rr,"\eta \vee -\id_\alpha"] &                         & \free \forget
    \end{tikzcd}
    \end{center}
    The triangle on the right, where $\iota : \calQ \to \MC(\calQ)$ is the inclusion, is easily seen to commute.
    The multiplication $\mu : \free \forget \circ \free \forget \to \free \forget$ in the left pentagon is given by $\mu \eqdef \free \varepsilon (\forget)$.
    The lower composite sends $\nu \mapsto \eta(\nu)$ and $\alpha,\beta \mapsto -\alpha$.
    The upper composite does the same: for~$\beta$, this is immediate from the definitions; for~$\alpha$, this is easy to see if one remembers from~\cref{prop:comonad-structure} that $\eta_{\free}(\alpha)=\alpha+\beta$.
    For $\nu \in \rmQ$, one just has to observe that, since the counit $\varepsilon$ is part of the multiplication~$\mu$ and $\varepsilon(\alpha)=0$, the first application of $\eta$ in the composite amounts to the application of $\eta^0=\id_\rmQ$.
    We conclude that both diagrams commute, and the proof is complete.
\end{proof}

Let us now recall the operadic incarnation of the procedure of twisting by a Maurer-Cartan element (see e.g.\ \cite[Section~5.3]{DotsenkoShadrinVallette23}). 

\begin{definition} 
\label{maurer cartan endofunctor}
    Given an object $\rmP \in \Mult$, we write $\MC(\rmP)$ for the operad 
    $\rmP \, \hat \vee \, [\alpha]$ with differential characterized by $d_{\MC(\rmP)}|_{\rmP} \eqdef d_{\rmP}$ and $d_{\MC(\rmP)} \alpha \eqdef - \sum_{k \geq 0} \frac{1}{k!}\ell_{k} (\alpha^k) =  -\ell_0^\alpha$.

    We regard $\MC$ as a functor, acting on morphisms by $f \mapsto f \vee \mathrm{id}_{[\alpha]}$.  
    By the structure map~$\Linf \to \rmP \to \rmP \hat \vee [\alpha]$, we regard $\MC$ as an endofunctor of $\Mult$.  
\end{definition}

\begin{remark}
    In general, the assignment $\ell_n \mapsto \ell_n^\alpha$ does not define a morphism $\Linf \to \MC(\rmP)$.
    This problem is remedied when passing from $\MC$ to $\Tw$.
\end{remark}

The following important observation is immediate from the definitions.

\begin{lemma}
\label{remark:relation-MC} The forgetful functor $\forget$ intertwines the $\MC$ functors of Definitions~\ref{maurer cartan endofunctor} and~\ref{definition:cMC}~: we have
$\forget \circ \MC = \MC \circ \forget$. 
\end{lemma}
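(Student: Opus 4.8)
The plan is to prove the equality $\forget\circ\MC=\MC\circ\forget$ of functors $\Twist\to\Mult$ by evaluating both composites on an arbitrary object $(\rmQ,\rmQ_{0,0})$ and on morphisms, and checking that the resulting objects of $\Mult$ (underlying operad, $\Linf$-structure map, and differential) coincide. Recall that $\forget$ sends $(\rmQ,\rmQ_{0,0})$ to the $\Linf$-operad $\rmQ_{0,0}\cong\rmQ/(\ell_0,\ell_1)$. First I would identify the underlying operads. By \cref{definition:cMC}, $\MC(\rmQ,\rmQ_{0,0})=(\rmQ\,\hat\vee\,[\alpha],\rmQ_{0,0}\,\hat\vee\,[\alpha])$, so $\forget(\MC(\rmQ,\rmQ_{0,0}))$ is $(\rmQ\,\hat\vee\,[\alpha])/(\ell_0,\ell_1)$; by \cref{maurer cartan endofunctor}, $\MC(\forget(\rmQ,\rmQ_{0,0}))=\rmQ_{0,0}\,\hat\vee\,[\alpha]$. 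Since $\alpha$ is a fresh arity-zero generator not involved in the ideal $(\ell_0,\ell_1)$, the completed coproduct commutes with the quotient, giving $(\rmQ\,\hat\vee\,[\alpha])/(\ell_0,\ell_1)\cong(\rmQ/(\ell_0,\ell_1))\,\hat\vee\,[\alpha]=\rmQ_{0,0}\,\hat\vee\,[\alpha]$, so the underlying operads agree. The $\Linf$-structure maps also agree: in both cases they are the composite $\Linf\to\rmQ_{0,0}\hookrightarrow\rmQ_{0,0}\,\hat\vee\,[\alpha]$ (for $\MC\circ\forget$ this is \cref{maurer cartan endofunctor} directly; for $\forget\circ\MC$ it is the reduction mod $(\ell_0,\ell_1)$ of the map $\cLinf\to\rmQ\,\hat\vee\,[\alpha]$, $\ell_n\mapsto\ell_n$, of \cref{definition:cMC}).

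The only point that requires genuine care — and the only place where the two definitions superficially differ — is the differential on the adjoined generator $\alpha$, because the symbol $\ell_0^\alpha$ has a slightly different meaning on the two sides. On the $\MC\circ\forget$ side, \cref{maurer cartan endofunctor} sets $d\alpha=-\ell_0^\alpha=-\sum_{k\ge0}\tfrac{1}{k!}\ell_k(\alpha^k)$ using the $\Linf$-operations of $\rmQ_{0,0}$, for which $\ell_0$ and $\ell_1$ vanish, so effectively $d\alpha=-\sum_{k\ge2}\tfrac{1}{k!}\ell_k(\alpha^k)$. On the $\forget\circ\MC$ side, the differential on $\alpha$ is the reduction mod $(\ell_0,\ell_1)$ of $d\alpha=-\ell_0^\alpha=-\sum_{k\ge0}\tfrac{1}{k!}\ell_k(\alpha^k)$ computed with the $\cLinf$-operations of $\rmQ$. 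The key observation is that the ideal $(\ell_0,\ell_1)$ contains the $k=0$ term $\ell_0$ and the $k=1$ term $\ell_1(\alpha)$, and carries $\ell_k$ to the corresponding $\Linf$-operation $\bar\ell_k$ of $\rmQ_{0,0}$ for $k\ge2$; hence this reduction is exactly $-\sum_{k\ge2}\tfrac{1}{k!}\bar\ell_k(\alpha^k)$, matching the other side. On $\rmQ_{0,0}$ itself both functors produce the induced quotient differential $d_{\rmQ_{0,0}}$ (on the $\forget\circ\MC$ side this is the reduction of $d_\rmQ$ mod $(\ell_0,\ell_1)$, on the $\MC\circ\forget$ side it is $d_{\rmQ_{0,0}}$ by definition), so the two chain complexes coincide.

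Agreement on morphisms is routine: both incarnations of $\MC$ act by $f\mapsto f\vee 1_\alpha$, and $\forget$ is restriction to / projection onto the $\rmQ_{0,0}$-component, so the two composites send a morphism to the same underlying map. The main (indeed essentially the only) obstacle is the bookkeeping in the previous paragraph, namely tracking the two meanings of $\ell_0^\alpha$ and confirming that they are reconciled by the quotient $\cLinf\twoheadrightarrow\Linf$; with that in hand the identity is, as claimed, immediate from the definitions.
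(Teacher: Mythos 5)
Your proposal is correct and takes the only available route: the paper itself offers no argument, declaring the identity ``immediate from the definitions,'' and your verification is precisely the definitional unwinding that claim presupposes. In particular, your two key observations---that the completed coproduct with the free arity-zero generator $\alpha$ commutes with the quotient by $(\ell_0,\ell_1)$, and that the two readings of $\ell_0^\alpha$ (computed with the $\cLinf$-operations of $\rmQ$ and then reduced, versus computed directly with the $\Linf$-operations of $\rmQ_{0,0}$) agree because $\ell_0$ and $\ell_1(\alpha)$ lie in the ideal---are exactly the content the paper leaves implicit.
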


\begin{corollary}
\label{corollary:MC-isomorphic-Tw}
    Let $\rmP \in \Mult$ be an operad under $\Linf$.
    If $\rmP$ admits a $\Tw$-coalgebra structure $\eta_\rmP : \rmP \to \Tw(\rmP)$, then the morphism
    \[\eta_\rmP \vee (-\id_\alpha) : \MC(\rmP) \overset{\cong}{\longrightarrow} \Tw(\rmP) \]
    is an isomorphism.
\end{corollary}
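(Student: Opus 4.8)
The plan is to deduce this corollary from \cref{thm:monad} by pushing the monad isomorphism down along the forgetful functor $\forget$. The key observation is that a $\Tw$-coalgebra structure is, by the comonadicity established in \cref{thm:main-thm}, exactly the data of a lift of $\rmP$ to $\Twist$. So first I would use \cref{thm:main-thm} to reinterpret the hypothesis: since the comparison functor from $\Twist$ to $\Tw$-coalgebras is an equivalence, the pair $(\rmP, \eta_\rmP)$ corresponds to an object $\calQ = (\rmQ, \rmQ_{0,0}) \in \Twist$ with $\forget(\calQ) = \rmP$, whose associated coalgebra structure $\forget(\eta_\calQ)$ coincides with $\eta_\rmP$; here $\eta_\calQ$ denotes the component at $\calQ$ of the unit of the adjunction $\forget \dashv \free$.

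Next I would apply $\forget$ to the isomorphism $(\eta \vee (-\id_\alpha))_\calQ : \MC(\calQ) \to \free \forget(\calQ)$ furnished by \cref{thm:monad}. On the source, \cref{remark:relation-MC} gives $\forget(\MC(\calQ)) = \MC(\forget(\calQ)) = \MC(\rmP)$, and on the target the identification $\forget \free = \Tw$ of \cref{thm:main-thm} gives $\forget(\free \forget(\calQ)) = \Tw(\forget(\calQ)) = \Tw(\rmP)$. Both sides are the operad $\rmP \, \hat{\vee} \, [\alpha]$, with the Maurer-Cartan and twisted differentials respectively.

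I then need to check that $\forget$ carries the map $(\eta \vee (-\id_\alpha))_\calQ$ to precisely $\eta_\rmP \vee (-\id_\alpha)$. Since $\forget$ selects the suboperad $\rmQ_{0,0} \, \hat{\vee} \, [\alpha]$, and the map acts as $\eta_\calQ$ on the $\rmQ_{0,0}$-part and as $-\id$ on $\alpha$, its restriction is $\forget(\eta_\calQ) \vee (-\id_\alpha) = \eta_\rmP \vee (-\id_\alpha)$, as desired. Because $\forget$ is a functor and hence preserves isomorphisms, the image $\eta_\rmP \vee (-\id_\alpha)$ is an isomorphism in $\Mult$, which is the claim.

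The content of this argument is entirely bookkeeping: all the analytic work has already been done in \cref{thm:monad}. The only point requiring care—and the step I expect to be the main obstacle—is the first one, namely verifying cleanly that the abstract $\Tw$-coalgebra structure $\eta_\rmP$ matches $\forget(\eta_\calQ)$ for the lift $\calQ$ provided by comonadicity, so that applying $\forget$ to the monad isomorphism genuinely yields the stated map $\eta_\rmP \vee (-\id_\alpha)$ rather than some other isomorphism $\MC(\rmP) \cong \Tw(\rmP)$. This is precisely where the comonadic equivalence of \cref{thm:main-thm} and the naturality of the unit $\eta$ must be invoked together.
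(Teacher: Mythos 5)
Your proposal is correct and follows essentially the same route as the paper's own proof: invoke the comonadicity of \cref{thm:main-thm} to lift $(\rmP,\eta_\rmP)$ to an object $\calQ \in \Twist$, apply $\forget$ to the monad isomorphism $\MC(\calQ) \cong \free\forget(\calQ)$ of \cref{thm:monad}, and identify source and target via \cref{remark:relation-MC} and $\forget\free = \Tw$. The only difference is that you spell out the bookkeeping step—that the comparison functor matches $\eta_\rmP$ with $\forget(\eta_\calQ)$, so the resulting isomorphism is literally $\eta_\rmP \vee (-\id_\alpha)$—which the paper leaves implicit.
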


\begin{proof}
    Given $\calQ$ in $\Twist$, by \cref{remark:relation-MC} and \cref{thm:main-thm}, applying $\forget$ to the isomorphism $\MC(\calQ) \cong \free \forget(\calQ)$ from \cref{thm:monad}, we get an isomorphism 
$\MC(\forget \calQ) \overset{\cong}{\longrightarrow} \Tw(\forget \calQ)$.
\end{proof}

\begin{remark}
    Taking $\rmP=\Ainf$, the formula for $\eta_\rmP \vee (-\id_\alpha)$ is the same as in \cite[Thm.~4.7]{ChuangLazarev13}.
\end{remark}

Analogous results hold (with the same proofs) for the corresponding categories of non-symmetric operads by replacing $\Linf$ by $\Ainf$ and $\ell_n^\alpha$ by $\mu_n^\alpha$ and using the following non-symmetric version of~\cref{lem:alternating}.

\begin{lemma}
    Let $\rmQ$ be an operad in $\nscMult$. 
    Then $\sum_{k\geq 0} (-1)^k \mu_k^\alpha(\alpha^k) = \mu_0$ in~$\rmQ \, \hat \vee \, [\alpha]$.
\end{lemma}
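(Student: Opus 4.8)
The plan is to prove the non-symmetric analogue of \cref{lem:alternating}, namely the identity $\sum_{k \geq 0} (-1)^k \mu_k^\alpha(\alpha^k) = \mu_0$ in $\rmQ \, \hat\vee \, [\alpha]$. First I would substitute the definition \eqref{mu-n-alpha} of $\mu_k^\alpha$, which sums over insertions of powers of $\alpha$ into the $k+1$ slots of $\mu_{k + r_0 + \cdots + r_k}$, and then plug all $k$ remaining inputs equal to $\alpha$. After this substitution, every generator appearing is some $\mu_n$ with \emph{all} $n$ arguments equal to $\alpha$; the key point is that, unlike in the symmetric case where there is a single symmetric slot, here the positions of the $k$ ``external'' $\alpha$'s among the $n$ total $\alpha$'s are distinguished by the non-symmetric structure.

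The main computation is to collect, for each fixed $n \geq 0$, the total coefficient of $\mu_n(\alpha^n)$. I would reorganize the double sum by setting $n = k + r_0 + \cdots + r_k$, so that the contribution of $\mu_n(\alpha^n)$ is weighted by $(-1)^k$ times the number of ways to choose which $k$ of the $n$ slots are the external inputs (the remaining slots being filled from the $\alpha^{r_i}$). Since all slots are filled with the same symbol $\alpha$ and there is no symmetric averaging, this count is simply $\binom{n}{k}$: one selects the $k$ external positions among $n$, and the arrangement of the $r_i$ between them is then determined. Hence the coefficient of $\mu_n(\alpha^n)$ is $\sum_{k=0}^{n} (-1)^k \binom{n}{k} = (1-1)^n$, which vanishes for $n > 0$ and equals $1$ for $n = 0$, leaving exactly $\mu_0$.

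The step I expect to require the most care is justifying that the combinatorial coefficient is exactly $\binom{n}{k}$ rather than something involving the factorials $1/r_i!$ that appear in the symmetric formula \eqref{eq:ell-n-alpha}. This is precisely where the non-symmetric setting simplifies matters: the definition \eqref{mu-n-alpha} of $\mu_n^\alpha$ carries \emph{no} $1/r_i!$ factors (reflecting that non-symmetric operads have no symmetric-group normalization), so the count of interleavings of external inputs with the padding $\alpha$'s is an honest binomial coefficient. I would mirror the display structure of the proof of \cref{lem:alternating}, writing the two-line \texttt{align*} that passes from the substituted expression to $\sum_{n \geq 0} \sum_{k=0}^n (-1)^k \binom{n}{k} \mu_n(\alpha^n)$ and then to $\mu_0 + \sum_{n>0} \tfrac{(1-1)^n}{1} \mu_n(\alpha^n) = \mu_0$, taking care that the absence of $1/n!$ on the non-symmetric side is reflected throughout so that no spurious factorials survive.
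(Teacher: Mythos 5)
Your proposal is correct and follows essentially the same route as the paper: substitute the definition \eqref{mu-n-alpha}, regroup by the total number $n$ of $\alpha$'s, observe that the compositions $n = k + r_0 + \cdots + r_k$ (equivalently, the choices of which $k$ of the $n$ slots are external) contribute exactly $\binom{n}{k}$, and conclude via $\sum_{k=0}^n (-1)^k \binom{n}{k} = (1-1)^n$. Your remark that the absence of $1/r_i!$ factors in the non-symmetric definition is what makes the coefficient an honest binomial is exactly the point where the paper's computation differs from the symmetric \cref{lem:alternating}.
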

\begin{proof}
We compute
    \begin{align*}
    \sum_{k\geq 0} (-1)^k \mu_k^\alpha(\alpha^k) 
    &= \sum_{k\geq 0} (-1)^k \sum_{r_0,\ldots,r_k \geq 0}\mu_{k+r_0+\cdots + r_k}(\alpha^{r_0},-,\alpha^{r_2},\ldots,-,\alpha^{r_k})(\alpha^k) \\
    &= \sum_{n\geq 0} \sum_{k=0}^{n} (-1)^k \binom{n}{k}\mu_n(\alpha^n) = \mu_0 + \sum_{n > 0} (1-1)^n \mu_n(\alpha^n) 
    =\mu_0.
    \end{align*}
\end{proof}

\subsection{Uncurving a morphism of curved $\Linf$-operads}

By \emph{uncurving} a morphism $\rmQ \to \rmP$ in $\cMult$, we mean the procedure that inputs a Maurer-Cartan element $a$ in $\rmP(0)$ and outputs a morphism $\rmQ/(\ell_0, \ell_1) \to \rmP^{\ell_1^a}$ in $\Mult$.
We will show that there is a meaningful uncurving procedure when $\rmQ$ is in the image of the forgetful functor $\calL \calI : \Twist \to \cMult$.

We will need the following result. 

\begin{lemma}
\label{G is monadic}
    The functor $\calG : \Mult \to \Twist$ is monadic, with associated monad $\MC : \Twist \to \Twist$.
\end{lemma}
\begin{proof}
    We already know that $\calG$ has a left adjoint $\calF$, and that the monad $\calG \calF$ is isomorphic to $\MC$ (\cref{thm:monad}).
    
    We now want to apply the crude monadicity theorem. 
    We need to check that 
    \begin{enumerate}
        \item $\calG$ reflects isomorphisms,
        \item $\Mult$ has coequalizers, and $\calG$ preserves them.
    \end{enumerate}

    We first check that $\calG$ reflects isomorphisms. Let $f : \rmP \to \rmP'$ be a morphism in $\cMult$ such that
    \[\calG(f) = f \vee \id_{[\alpha] \vee [\kappa] \vee[m]} : (\cTw(\rmP)^+, \Tw(\rmP)) \to (\cTw(\rmP')^+, \Tw(\rmP')) \]
    is an isomorphism.
    In particular, $f \vee \id_{[\alpha]} : \Tw(\rmP) \to \Tw(\rmP')$ is an isomorphism.
    Therefore it induces an isomorphism between the underlying non-dg operads $f \vee \id_{[\alpha]} : \rmP \vee [\alpha] \to \rmP' \vee [\alpha]$.
    This implies that $f$ induces an isomorphism $\rmP \to \rmP'$ between the underlying non-dg operads.
    We conclude that $f$ is an isomorphism.

    Observe that, since the category of operads has colimits, this is also the case of the category $\Mult$.
    Therefore it remains to check that $\calG$ preserves co-equalizers.
    Note that, since the forgetful functor from dg operads to non-dg operads reflects isomorphisms and preserves coequalizers, it is enough to check that $\calG$ preserves co-equalizers at the level of non-dg operads.
    Now the underlying pair of non-dg operads of $\calG(\rmP) = (\cTw(\rmP)^+, \Tw(\rmP))$ is $(\rmP \vee [\alpha] \vee [\kappa] \vee [m], \rmP \vee [\alpha])$, and $\calG$ acts as $f \mapsto f \vee \id_{[\alpha] \vee [\kappa] \vee[m]}$ on morphisms.
    Therefore~$\calG$ preserves coequalizers, which finishes the proof.
\end{proof}

Recall that, given $\rmP \in \cMult$, the underlying operad of $(\calJ \calR)(\rmP)$ is $(\rmP \vee \rmT)^+$, where~$\rmT$ is the operad generated by two operations $\alpha$ and $\kappa$ of arity $0$ with $d_\rmT \alpha = \kappa$.
Moreover, the morphism $\cLinf \to (\calJ \calR)(\rmP)$ sends $\ell_0 \mapsto \kappa + \ell_0^\alpha$.
In this case we write $\MC((\calJ \calR)(\rmP)) = (\calJ \calR)(\rmP) \vee [\beta]$ with $d_{\MC} \beta = -(\kappa + \ell_0^\alpha)^\beta$.

\begin{prop}
\label{MC element = MC-algebra structures}
    Let $\rmP$ be an object of $\cMult$. 
    There is a bijection
    \begin{equation}
    \label{bijection-MC}
        \begin{matrix}
            \{a \in \calF_1 \rmP(0)_0 \mid d_\rmP a + \ell_0^a = 0\} & \longrightarrow & \{ \MC\text{-algebra structure on } (\calJ \calR)(\rmP) \} \\
            a & \mapsto & \id \vee (\beta \mapsto a - \alpha).
        \end{matrix}
    \end{equation}

    Moreover, given a Maurer-Cartan element $a$ in $\calF_1 \rmP(0)$, the corresponding $\Linf$-operad given by \cref{G is monadic} is $\rmP^{\ell_1^\alpha}$, with morphism $\Linf \to \rmP^{\ell_1^a}$ sending $\ell_n \mapsto \ell_n^a$.
\end{prop}
\begin{proof}
    First observe that the map is well defined. 
    Indeed, the image of a given $a$ is a morphism of dg operads, since $d \beta = -(\kappa + \ell_0^\alpha)^\beta = -(\kappa + m(\beta) + \ell_0^{\alpha+\beta})$ is sent to the element $-(\kappa + m(a - \alpha) + \ell_0^a)$, which is exactly the differential of $(a - \alpha)$ in $(\calJ \calR)(\rmP)$.
    The two diagrams defining the $\MC$-algebra structure (compatibility with the unit and the multiplication of $\MC$) are then readily seen to be satisfied. 
    Remarkably, compatibility with multiplication is automatic once compatibility with the unit is satisfied.

    It is straightforward to see that the map is injective, so it remains to check that it is surjective.
    Let $f : \MC((\calJ \calR)(\rmP)) \to (\calJ \calR)(\rmP)$ be a $\MC$-algebra structure on $(\calJ \calR)(\rmP)$, and consider the element $a \eqdef \alpha + f(\beta)$ in $(\calJ \calR)(\rmP)$.
    The precomposition of this morphism with the inclusion $(\calJ \calR)(\rmP) \hookrightarrow \MC((\calJ \calR)(\rmP))$ is the identity, which implies $f(\alpha)=\alpha$.
    Moreover, since $f$ is a morphism of dg operads, we have
    \begin{align*}
        d_{(\calJ \calR)(\rmP)}(a) 
        &=d_{(\calJ \calR)(\rmP)}(\alpha + f(\beta)) \\
        &= \kurv -m(\alpha) + f(d_\MC\beta) \\
        &= \kurv -m(\alpha)+f(-(\kurv+m(\beta)+\ell_0^{\alpha+\beta}))\\
        &= -m(a)-\ell_0^{a}.
    \end{align*}
    Therefore we have $d_{\rmP \vee \rmT \vee [\diff]}(a)=-\ell_0^a$, and by freeness in $\alpha,\kurv,\diff$, we conclude that $a \in \rmP(0)$ is a Maurer--Cartan element.
    Therefore that the assignment \eqref{bijection-MC} is also surjective, and the proof is complete.
\end{proof}

\begin{lemma}
\label{Linf-operad for MC element}
    Let $a$ be a Maurer-Cartan element $a$ in $F_1 \rmP(0)$.
    The $\Linf$-operad corresponding to the $\MC$-algebra structure $\id \vee (\beta \mapsto a - \alpha)$ on $(\calJ \calR)(\rmP)$ via \cref{G is monadic} is $\rmP^{\ell_1^a}$, with morphism $\Linf \to \rmP^{\ell_1^a}$ sending $\ell_n \mapsto \ell_n^a$.
\end{lemma}
\begin{proof}
    We first compute the morphism 
    \begin{equation*}
    \label{GF-algebra structure}
        f \colon (\calG \calF)((\calJ \calR)(\rmP)) \to (\calJ \calR)(\rmP)
    \end{equation*}
    induced by the $\MC$-algebra structure on $(\calJ \calR)(\rmP)$ and the isomorphism $\MC \xrightarrow{\sim} \calG \calF$ of \cref{thm:monad}. 
    If we let $\alpha$, respectively $\beta$, be the extra operation added by the functor $\calR$, respectively by the functors $\MC$ and $\calG$, then the isomorphism 
    \[\MC((\calJ \calR)(\rmP)) \xrightarrow{\sim} (\calG \calF)((\calJ \calR)(\rmP)) \]
    sends $\beta \mapsto - \beta$, $\alpha \mapsto (\alpha + \beta)$, and acts as the identity otherwise.
    Therefore the morphism $f$ sends $\beta \mapsto (\alpha - a)$, $\alpha \mapsto a$, and acts as the identity otherwise.

    We now need to compute the coequalizer in $\Mult$ of the diagram
    \[ 
    \begin{tikzcd}
        (\calF \calG \calF)((\calJ \calR)(\rmP)) \ar[r, bend left, "\calF(f)"] \ar[r, bend right, "\varepsilon^{\calF \calG}"] & \calF((\calJ \calR)(\rmP)).
    \end{tikzcd}
    \]
    The latter is given by the quotient of $\calF((\calJ \calR)(\rmP)) = (\rmP \vee \rmT \vee [m])^m / (\kappa + \ell_0^\alpha, m + \ell_1^\alpha)$ by the operadic ideal generated by the image of $(\calF(f) - \varepsilon^{\calF \calG})$.
    Recall that $\varepsilon^{\calF \calG}$ sends $\beta \mapsto 0$ and acts as the identity otherwise (\cref{prop:comonad-structure}). 
    The result is therefore the quotient of $(\rmP \vee \rmT \vee [m])^m / (\kappa + \ell_0^\alpha, m + \ell_1^\alpha)$ by the operadic ideal generated by $(\alpha-a)$, which is exactly $\rmP^{\ell_1^a}$. 
\end{proof}

Recall that, given an object $\calQ$ in $\Twist$, the object $\MC(\calQ)$ has a natural structure of $\MC$-algebra given by the monadic multiplication $\MC \circ \MC \to \MC$.

\begin{lemma}
\label{Linf-operad for MC}
    Given an object $\calQ$ of $\Twist$, the $\Linf$-operad corresponding to the $\MC$-algebra $\MC(\calQ)$ through \cref{G is monadic} is $\calF(\calQ)$.
\end{lemma}
\begin{proof}
    We first compute the morphism 
    \begin{equation*}
    \label{GF-algebra structure}
        f \colon (\calG \calF)(\MC(\calQ)) \to \MC(\calQ)
    \end{equation*}
    induced by the $\MC$-algebra structure on $\MC(\calQ)$ and the isomorphism $\MC \xrightarrow{\sim} \calG \calF$ of \cref{thm:monad}. 
    If we let $\alpha$, respectively $\beta$, be the extra operation added by the functor $\MC$, respectively by the functors $\calG$ and a second application of $\MC$, then the isomorphism 
    \[\MC(\MC(\calQ)) \xrightarrow{\sim} (\calG \calF)(\MC(\calQ)) \]
    sends $\beta \mapsto - \beta$ and acts as the identity otherwise.
    Therefore the morphism $f$ sends $\beta \mapsto - \beta$ and acts as the identity otherwise.

    We now need to compute the coequalizer in $\Mult$ of the diagram
    \[ 
    \begin{tikzcd}
        (\calF \calG \calF)(\MC(\calQ)) \ar[r, bend left, "\calF(f)"] \ar[r, bend right, "\varepsilon^{\calF \calG}"] & \calF(\MC(\calQ)).
    \end{tikzcd}
    \]
    The latter is given by the quotient of $\calF(\MC(\calQ)) = \MC(\rmQ) / (\mu_0)$ by the operadic ideal generated by the image of $(\calF(f) - \varepsilon^{\calF \calG})$.
    Since $\varepsilon^{\calF \calG}$ sends $\beta \mapsto 0$ and acts as the identity otherwise (\cref{prop:comonad-structure}), the result is the quotient of $\MC(\rmQ) / (\mu_0)$ by the operadic ideal generated by $\beta$, which is exactly $\calF(\calQ)$. 
\end{proof}

We can now explain the uncurving procedure, e.g.\ the fact that ``given a Maurer--Cartan element $a$ in a curved $\Linf$-algebra $A$, one can twist by $a$ to get a non-curved $\Linf$-algebra''.

\begin{prop}
\label{bijection-MC-elements}
    Let $\calQ$ be an object of $\Twist$, let $\rmP$ be an object of $\cMult$, and consider a morphism $(\calL \calI)(\calQ) \to \rmP$.
    Then, any Maurer-Cartan element $a$ in $\rmP(0)$ induces a morphism of $\Linf$-operads $\calF(\calQ) \to \rmP^{\ell_1^a}$.
\end{prop}
\begin{proof}
    By adjunction, we have a morphism $\calQ \to (\calJ \calR)(\rmP)$.
    Applying $\MC$ to the latter, we get a morphism $\MC(\calQ) \to \MC((\calJ \calR)(\rmP))$.
    Note that the latter is a morphism of $\MC$-algebras as the image of a morphism under $\MC$.

    Now, according to \cref{MC element = MC-algebra structures}, we know that the Maurer-Cartan element $a$ in $\rmP(0)$ corresponds to a $\MC$-algebra structure $\MC((\calJ \calR)(\rmP)) \to (\calJ \calR)(\rmP)$.
    Note that the latter morphism is also a morphism of $\MC$-algebras since it is the structural morphism of a $\MC$-algebra.
    
    Therefore we get a morphism of $\MC$-algebras $\MC(\calQ) \to (\calJ \calR)(\rmP)$.
    Using the equivalences between $\MC$-algebras and $\Linf$-operads from \cref{G is monadic}, we get a morphism from the $\Linf$-operad corresponding to $\MC(\calQ)$ to the one corresponding to $(\calJ \calR)(\rmP)$ with $\MC$-algebra structures induced by $a$.
    Using the respective characterizations of these operads from \cref{Linf-operad for MC} and \cref{Linf-operad for MC element}, we get a morphism $\calF(\calQ) \to \rmP^{\ell_1^a}$.
    This concludes the proof.
\end{proof}

\section{Homotopy fixed points of Tw}
\label{homotopy-fixed-points}

\subsection{Homotopy fixed points of the Maurer-Cartan endofunctor}





We give a criterion under which an operad is a homotopy fixed point of the functor $\MC$ from \cref{maurer cartan endofunctor}.

We will need the following. 

\begin{lemma}
\label{lemma:homology-coproduct}
    Let $\rmT$ be the operad freely generated by arity zero elements $\alpha, \kurv$ where $\alpha$ has degree $0$ and $\kurv$ has degree $(-1)$, with differential $d_\rmT \alpha = \kurv$.
    Given an operad $(\rmP, d_\rmP)$, the map $(\rmP \, \hat{\vee} \,\rmT, d_\rmP + d_\rmT) \to (\rmP, d_\rmP)$ that sends $\kurv$ and $\alpha$ to $0$ is a quasi-isomorphism.
\end{lemma}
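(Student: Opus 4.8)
The plan is to filter $\rmP\,\hat\vee\,\rmT$ by the number of arity-zero generators and to exploit that $\rmT$ is acyclic. First I would describe the completed coproduct explicitly. Because $\alpha$ and $\kurv$ have arity zero they fill input slots but never receive inputs, so every $\rmT$-labelled vertex in a reduced tree is a leaf, and (after composing inside $\rmP$) no two $\rmP$-labelled vertices are adjacent; hence every element of $\rmP\,\hat\vee\,\rmT$ is a completed sum of terms consisting of a single $\nu\in\rmP(n)$ with each input either left free or capped by one of $\alpha,\kurv$. In particular $\rmT$ itself is the two-term complex $(\alpha\xrightarrow{\ \sim\ }\kurv)$, which is acyclic.

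Next I would put the \emph{weight} grading $w$ on $\rmP\,\hat\vee\,\rmT$ equal to the total number of $\alpha$'s and $\kurv$'s. The differential $d_\rmP+d_\rmT$ preserves $w$: the term $d_\rmP$ acts only on the label $\nu$ and leaves the caps in place, while $d_\rmT$ merely replaces a capping $\alpha$ by a capping $\kurv$. Thus the complex decomposes as the completed product $\prod_{w\ge 0}$ of its weight-$w$ subcomplexes, and since homology commutes with products of vector spaces it is enough to treat each weight separately. The weight-zero subcomplex is exactly $(\rmP,d_\rmP)$, on which the projection is the identity, so everything reduces to showing that the weight-$w$ subcomplex is acyclic for $w\ge 1$. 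For fixed $w$ this subcomplex is built from $\nu\in\rmP(n)$ together with a choice of $w$ capped slots labelled by $\alpha$ or $\kurv$, with differential $d_\rmP$ on the $\rmP$-factor and the Koszul differential $\sum(\alpha\mapsto\kurv)$ on the caps; up to the $\Sym$-module bookkeeping it is $\rmP(n)$ tensored with a $w$-fold power of the acyclic complex $C=(\alpha\xrightarrow{\ \sim\ }\kurv)$. As $C$ is acyclic and we are in characteristic zero, tensoring with $C$ and passing to the relevant symmetric-group coinvariants both preserve acyclicity, so Künneth gives vanishing homology; equivalently one averages over $\Sym_w$ the contracting homotopy induced by $s\colon\kurv\mapsto\alpha,\ \alpha\mapsto 0$.

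The homological input — acyclicity of $C$, exactness of tensoring with an acyclic complex, and exactness of finite-group coinvariants in characteristic zero — is immediate, so I expect the only real work to be bookkeeping: making precise the identification of the weight-$w$ piece with a symmetric power of $C$ in the presence of the operadic composition and the $\Sym$-actions, and tracking the signs should one prefer to exhibit the contracting homotopy directly. Once the combinatorial description of $\rmP\,\hat\vee\,\rmT$ is in hand, the quasi-isomorphism follows.
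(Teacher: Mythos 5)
Your proposal is correct and is essentially the paper's argument: the paper identifies the underlying module of $\rmP \, \hat{\vee} \, \rmT$ as a composite product (possible precisely because $\rmT$ is generated in arity zero, your ``caps'' picture) and then cites the operadic K\"unneth formula \cite[Prop.~6.2.3]{LodayVallette12} together with acyclicity of $(\rmT, d_\rmT)$. Your weight decomposition plus ordinary K\"unneth plus Maschke in characteristic zero is exactly the content of that cited formula, so you have simply unwound the black box rather than taken a different route.
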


\begin{proof}
    Since $\rmT$ is generated by elements in arity $0$, the underlying 
    module of $\rmP \, \hat{\vee} \, \rmT$ is the composite product $\rmP \, \hat \circ \, \rmT$ (see \cite[Section 6.2.1]{LodayVallette12}).
    Using the operadic K\"unneth formula \cite[Proposition 6.2.3]{LodayVallette12}, we get that the underlying module of $H_*(\rmP \, \hat{\vee} \, \rmT, d_\rmT)$ is $H_*(\rmP \, \hat \circ \, \rmT, d_\rmT) \cong H_*(\rmP, 0) \, \hat \circ \, H_*(\rmT, d_\rmT)$. 
    Since $(\rmT, d_\rmT)$ is acyclic, the latter is equal to $\rmP$.
\end{proof}

\begin{prop}
\label{prop:MC-fixed-point}
    Let $\rmP$ be an object of $(\Linf \downarrow \dgOp)$.
    Assume that $\ell_2$ is not a right zero divisor in $H_*(\rmP)$, i.e.\ $d_\rmP \nu = 0$ and $\nu \circ_1 \ell_2 \in \im(d_\rmP)$ imply $\nu \in \im(d_\rmP)$.
    Then the natural map $\MC(\rmP) \to \rmP$ is a quasi-isomorphism.
\end{prop}

\begin{proof}
    Consider the dg operad $(\rmP \, \hat{\vee} \, \rmT, d)$, where $d = d_\rmP + d_\rmT$.
    Observe that there is a decomposition $d=d_0+d_1$ with respect to the $(\kurv + \ell_0^\alpha)$-grading:
    \[\left\{
    \begin{array}{lll}
    d_0 \kurv = 0,  & d_0 \alpha = -\ell_0^{\alpha}, & d_0 \nu = d_\rmP \nu, \\
    d_1 \kurv = 0,  & d_1 \alpha = \kurv + \ell_0^{\alpha}, & d_1 \nu = 0. \\  
    \end{array}
    \right. \]
    
    According to \cref{lemma:homology-coproduct}, the map $(\rmP \hat{\vee} \rmT, d) \to (\rmP, d_\rmP)$ that sends $\kurv$ and $\alpha$ to $0$ is a quasi-isomorphism.
    Let $I$ be the operadic ideal generated by $(\kurv + \ell_0^\alpha)$.
    Note that $I$ is preserved by $d$, since $d(\kurv + \ell_0^\alpha) = - \ell_1^\alpha(\kurv + \ell_0^\alpha)$.
    Observe that the quotient of $(\rmP \hat{\vee} \rmT, d)$ by $I$ is $\MC(\rmP, d_\rmP)$.
    Therefore it is enough to show that $I$ is acyclic.

    Let $w$ be a closed element in $I$. 
    We will prove recursively that for all $k \geq 0$, there exists $w^0_I \in I$ and $t^0, \dots, t^k, w^k_\alpha \in \rmP \hat{\vee} [\alpha]$ such that 
    \begin{enumerate}
        \item $w^k_\alpha$ is in $\alpha$-filtration level $k$,
        \item $t^j$ is in $\alpha$-filtration level $(j-2)$ for all $j \in \{0, \dots, k\}$,
        \item $w= d(w^0_I + (\sum_{j=0}^k t^j) \circ_1 (\kurv + \ell_0^\alpha) + w^k_\alpha)$.
    \end{enumerate}
    Once this is proved, we will have $w= d(w^0_I + (\sum_{j \geq 0} t^j) \circ_1 (\kurv + \ell_0^\alpha)) \in d(I)$.

    We start with the $k=0$ case.
    Since the quasi-isomorphism $(\rmP \hat{\vee} \rmT, d) \to (\rmP, d_\rmP)$ sends~$I$ to~$0$, we know that there exists $w^0$ such that $w = d(w^0)$. 
    We decompose $w^0$ according to the $(\kurv + \ell_0^\alpha)$-grading: $w^0 = w^0_I + w^0_\alpha$ with $w^0_I \in I$ and $w^0_\alpha \in \rmP \hat{\vee} [\alpha]$ (here $t^0 = 0$). 

    Assume now that the result holds for some $k \geq 0$.
    Observe that we can write
    \[w^k_\alpha = \nu_k(\alpha^k, -) + \nu_{k+1}(\alpha^{k+1},-) + \cdots \]
    where $\nu_i \in \rmP$ is $\Sym_i$ invariant.
    Using the decomposition $d=d_0+d_1$ and the fact that $w, d_1(w^k_\alpha)$ and $d(w^k_I)$ are in the ideal~$I$ while $d_0(w^k_\alpha)$ is not, we deduce that $d_0(w^k_\alpha)=0$.
    This implies that the term in $d_0(w^k_\alpha)$ with the lowest number of $\alpha$ vanishes, i.e.\ we have $(d_\rmP \nu_k)(\alpha^k, -) = 0$ which, by freeness of $\alpha$, implies that $d_\rmP \nu_k = 0$. 
    If $k=0$, we can therefore set $w^1_\alpha := w^0_\alpha - \nu_0$ and $t_1 = t_0 = 0$.
    Assume now that $k \geq 1$.
    We use that the term in $d_0(w^k_\alpha)$ with the second lowest number of $\alpha$ vanishes, i.e.\ we have
    \[(-1)^{|\nu_k|} k (\nu_k \circ _1 \ell_2)(\alpha^{k+1}, -) + (d_\rmP \nu_{k+1})(\alpha^{k+1}, -) = 0. \]
    By freeness of $\alpha$, we get that $\nu_k \circ_1 \ell_2 \in \im(d_\rmP)$ and therefore, by assumption, that $\nu_k \in \im(d_\rmP)$: we write $\nu_k = d_\rmP(\tilde{\nu_k})$ with $\tilde{\nu_k} \in \rmP$ invariant under $\Sym_k$. 
    Now we set 
    \begin{align*}
        w^{k+1}_\alpha &:= w^k_\alpha - \nu_k(\alpha^k,-) + (-1)^{|\nu_k|} k \nu_k(\ell_0^\alpha, \alpha^{k-1}, -) \quad \text{and} \\
        t^{k+1} &:= -(-1)^{|\nu_k|} k \nu_k(-, \alpha^{k-1}, -)
    \end{align*}
    so that $t^{k+1} \circ_1 (\kurv + \ell_0^\alpha) + w^{k+1}_\alpha = w^k_\alpha - d(\tilde{\nu_k}(\alpha^k,-))$.
    This finishes the induction step.
\end{proof}

\subsection{Proof of \cref{thm:homotopy-fixed-point intro}}

When $\rmP$ admits a $\Tw$-coalgebra structure, $\MC(\rmP)$ is isomorphic to $\Tw(\rmP)$ by \cref{corollary:MC-isomorphic-Tw}.
Therefore, the result follows from \cref{prop:MC-fixed-point}. $\square$

\vspace{2mm}

\subsection{A useful criterion}

Finally, we give a useful result in order to apply \cref{thm:homotopy-fixed-point intro} in concrete examples.
We will apply this criterion repeatedly in \cref{examples}.

Recall that for any operad $\rmP$, one can form the free $\rmP$ algebra on $n$ generators (\cite[Section 5.2.5]{LodayVallette12}); we denote it $\rmP(x_0, \dots, x_{n-1})$. 

\begin{lemma}
\label{lemma:criteria-bracket-zero-divisor}
    Let $\rmP$ be a non-dg operad together with a morphism $\Lie \to \rmP$. 
    If the canonical $\rmP$-algebra map 
    \[
    \rmP(x_0, \dots, x_{n-1}) \to \rmP(u,v, x_0, \dots, x_{n-1}) / \left( \ell_2(u,v) - x_0 \right)
    \]
    is injective for every $n$, then $\ell_2$ is not a right zero divisor in $\rmP$.
\end{lemma}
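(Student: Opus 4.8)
The plan is to turn the hypothesis into a vanishing statement about a single element of a free $\rmP$-algebra. Recall that ``$\ell_2$ is not a right zero divisor in the non-dg operad $\rmP$'' means exactly that $\nu \circ_1 \ell_2 = 0$ forces $\nu = 0$ (this is the non-differential shadow of the condition in \cref{prop:MC-fixed-point}). So I fix $\nu \in \rmP(m)$ with $\nu \circ_1 \ell_2 = 0$ and aim to deduce $\nu = 0$, applying the injectivity hypothesis with $n = m$.

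First I would consider the element $\nu(x_0, \dots, x_{m-1})$ of the free $\rmP$-algebra $\rmP(x_0, \dots, x_{m-1})$ on $m$ generators. The key structural remark is that the assignment $\nu \mapsto \nu(x_0, \dots, x_{m-1})$ identifies $\rmP(m)$ with the multilinear component of this free algebra in which each generator occurs exactly once; concretely that component is $\rmP(m) \otimes_{\Sym_m} \K[\Sym_m] \cong \rmP(m)$, so the assignment is injective. It therefore suffices to show that $\nu(x_0, \dots, x_{m-1})$ is sent to $0$ by the canonical map and then invoke injectivity.

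Next I would compute the image of $\nu(x_0, \dots, x_{m-1})$ under the canonical map to $\rmP(u,v,x_0,\dots,x_{m-1})/(\ell_2(u,v) - x_0)$. Since this is a morphism of $\rmP$-algebras sending each generator to its class, and since in the quotient the class of $x_0$ equals the class of $\ell_2(u,v)$, the image is the class of $\nu(\ell_2(u,v), x_1, \dots, x_{m-1})$. By associativity of the $\rmP$-action on a free $\rmP$-algebra (equivalently, the operadic composition axiom), this equals the class of $(\nu \circ_1 \ell_2)(u,v,x_1,\dots,x_{m-1})$, which is $0$ because $\nu \circ_1 \ell_2 = 0$ by assumption. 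Injectivity of the canonical map for $n = m$ then gives $\nu(x_0,\dots,x_{m-1}) = 0$, and hence $\nu = 0$ by the identification above.

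The argument is a direct unwinding of definitions, so I do not expect a serious obstacle; the only two points demanding care are the identification $\nu \mapsto \nu(x_0,\dots,x_{m-1})$ being injective (the multilinear-part description of the free algebra) and the verification that substituting $\ell_2(u,v)$ for $x_0$ reproduces precisely the partial composite $\nu \circ_1 \ell_2$. Since $\nu$ ranges over all arities $m$, the injectivity is needed for every $n$, matching exactly what is hypothesized.
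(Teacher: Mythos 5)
Your proof is correct and is essentially the paper's own argument: fix $\nu$ with $\nu \circ_1 \ell_2 = 0$, pass to $\nu(x_0,\dots,x_{m-1})$ in the free algebra, observe its image in the quotient is the class of $(\nu \circ_1 \ell_2)(u,v,x_1,\dots,x_{m-1}) = 0$, and conclude by the injectivity hypothesis together with the faithfulness of $\nu \mapsto \nu(x_0,\dots,x_{m-1})$. The only difference is presentational: you spell out the substitution computation and the multilinear-component identification, both of which the paper leaves implicit.
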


\begin{proof}
    First, observe that an operation $\nu$ of arity $n$ in an operad $\rmP$ is zero if and only if the corresponding operation in the free $\rmP$-algebra on $n$ generators is zero.
    Let $\nu \in \rmP(n)$ such that $\nu \circ_1 \ell_2 = 0$.
    Consider the element $\nu(x_0, \dots, x_{n-1})$ in $\rmP(x_0, \dots, x_{n-1})$.
    Its image under 
    \[
    \rmP(x_0, \dots, x_{n-1}) \to \rmP(u,v, x_0, \dots, x_{n-1}) / \left( \ell_2(u,v) - x_0 \right)
    \]
    is zero by assumption.
    By injectivity, $\nu(x_0, \dots, x_{n-1})$ vanishes in $\rmP(x_0, \dots, x_{n-1})$, which implies that $\nu$ vanishes in $\rmP$.
    This finishes the proof.
\end{proof}


\subsection{Creating Tw-coalgebra morphisms}

We give an alternative proof of the result \cite[Thm. 10.1]{DolgushevWillwacher15} of Dolgushev and Willwacher.

\begin{theorem}
\label{create Tw morphism}
    Let $\rmP,\rmQ \in \Mult$, and let $f: \rmP \to \rmQ$ be a morphism in the same category.
    Suppose that $\rmP$ and $\rmQ$ are $\Tw$-coalgebras, hence have lifts to $\Twist$, which we denote by $\cP$ and $\cQ$.  Assume in addition that $\rmQ$ is a $\Tw$ homotopy fixed point, and $\cQ$ is a $\calG \calF$-algebra with structure morphism $a : \calG\calF(\cQ) \to \cQ$.
    Then, the morphism
    \begin{equation}
    \label{DW-morphism}
    \calF(a) \circ \Tw(f) \circ \calF(\eta_{\cP}) : \rmP \to \rmQ
    \end{equation}
    is a morphism of $\Tw$-coalgebras which is homotopy equivalent to $f$.
\end{theorem}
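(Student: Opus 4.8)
The plan is to realize \eqref{DW-morphism} as $\calF$ of a single morphism of $\Twist$, deducing the coalgebra-morphism claim formally, and then to establish the homotopy equivalence by trading the algebra structure $\calF(a)$ for the comonad counit $\varepsilon_\rmQ \colon \Tw(\rmQ) \to \rmQ$ (the counit of \cref{thm:adjoint}).

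First I would set $g \eqdef a \circ \calG(f) \circ \eta_{\cP} \colon \cP \to \cQ$, which is a morphism of $\Twist$, being the composite of the monad unit $\eta_{\cP}$, the image $\calG(f)$ of $f \in \Mult$, and the $\calG\calF$-algebra structure map $a$. Because $\calF\calG = \Tw$ by \cref{thm:main-thm} and acts on morphisms by $h \mapsto h \vee 1_\alpha$, applying $\calF$ yields
\[
\calF(g) = \calF(a) \circ \calF\calG(f) \circ \calF(\eta_{\cP}) = \calF(a) \circ \Tw(f) \circ \calF(\eta_{\cP}),
\]
which is exactly \eqref{DW-morphism}. The comparison functor of \cref{thm:main-thm} sends $g$ to a morphism of $\Tw$-coalgebras whose underlying map is $\calF(g)$, its source and target being $(\rmP, \calF(\eta_{\cP}))$ and $(\rmQ, \calF(\eta_{\cQ}))$, the given coalgebra structures; hence \eqref{DW-morphism} is a morphism of $\Tw$-coalgebras, settling the first assertion.

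For the second claim I would first observe that putting the counit $\varepsilon_\rmQ$ in place of $\calF(a)$ returns $f$ on the nose: naturality of $\varepsilon$ together with the triangle identity $\varepsilon_\rmP \circ \calF(\eta_{\cP}) = \id_\rmP$ gives
\[
\varepsilon_\rmQ \circ \Tw(f) \circ \calF(\eta_{\cP}) = f \circ \varepsilon_\rmP \circ \calF(\eta_{\cP}) = f.
\]
Hence it is enough to compare $\calF(a)$ and $\varepsilon_\rmQ$ as maps $\Tw(\rmQ) \to \rmQ$. Both are left inverses of $\calF(\eta_{\cQ}) \colon \rmQ \to \Tw(\rmQ)$: indeed $\varepsilon_\rmQ \circ \calF(\eta_{\cQ}) = \id_\rmQ$ is a triangle identity, while $\calF(a) \circ \calF(\eta_{\cQ}) = \calF(a \circ \eta_{\cQ}) = \id_\rmQ$ is the unit axiom for the $\calG\calF$-algebra $\cQ$.

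This is where the homotopy fixed point hypothesis is used: $\varepsilon_\rmQ$ is a quasi-isomorphism, hence invertible in the homotopy category of dg operads, and therefore so is its section $\calF(\eta_{\cQ})$. As both $\calF(a)$ and $\varepsilon_\rmQ$ are left inverses of $\calF(\eta_{\cQ})$, they both represent $[\calF(\eta_{\cQ})]^{-1}$ there, so $\calF(a) \simeq \varepsilon_\rmQ$; postcomposing this homotopy with the fixed map $\Tw(f) \circ \calF(\eta_{\cP})$ identifies \eqref{DW-morphism} with $f$ up to homotopy. The main obstacle is precisely this last step: one must pin down the operadic homotopy theory — the model structure on dg operads and the cofibrancy of the relevant operads — so that the equality in the homotopy category is genuinely witnessed by a homotopy in the sense intended by the statement. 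The homotopy fixed point assumption is exactly what forces $\varepsilon_\rmQ$, equivalently $\calF(\eta_{\cQ})$, to be a quasi-isomorphism; without it the two retractions of $\calF(\eta_{\cQ})$ would have no reason to agree up to homotopy.
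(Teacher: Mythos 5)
Your proof is correct and takes essentially the same approach as the paper: both realize \eqref{DW-morphism} as $\calF$ applied to the $\Twist$-morphism $a \circ \calG(f) \circ \eta_{\cP}$ (which gives the coalgebra claim), and both use the homotopy-fixed-point hypothesis together with the unit axiom and triangle identities to replace $\calF(a)$ by the counit $\varepsilon_{\rmQ}$ up to homotopy, after which $\varepsilon_{\rmQ} \circ \Tw(f) \circ \calF(\eta_{\cP}) = f$ follows from naturality. The only cosmetic difference is that the paper shows the idempotent $\calF(\eta_{\cQ}) \circ \calF(a)$ is a quasi-isomorphism hence homotopic to $\id_{\Tw(\rmQ)}$, whereas you compare $\calF(a)$ and $\varepsilon_{\rmQ}$ directly as left inverses of the quasi-isomorphism $\calF(\eta_{\cQ})$; both arguments live in the homotopy category, so your closing caveat about what genuinely witnesses the homotopy applies equally to the paper's own proof.
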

\begin{proof}
We show that going from top to bottom and then back in the following diagram is homotopic to the identity
\begin{equation*}
\begin{tikzcd}
\hom_{\MultSym}(\rmP,\rmQ) 
& = & 
\hom_{\MultSym}(\calF(\cP),\calF(\cQ)) \arrow[dd, "\calG(-)\,\circ\,\eta_{\cP}"', bend right]  
\\
  &   &                                                               \\
  &  \cong & \hom_{\Twist}(\cP,\calG\calF(\cQ)) \arrow[uu, "\varepsilon_{\rmQ}\,\circ\,\calF(-)"', bend right] \arrow[dd, "a\,\circ\,(-)"', bend right] \\
  &   &                                                               \\
  &   & \hom_{\Twist}(\cP,\cQ). \arrow[uu, "\eta_{\cQ}\,\circ\,(-)"', bend right]                         
\end{tikzcd}
\end{equation*}
First observe that the composition
\begin{equation}
\label{dive-composition}
\varepsilon_{\rmQ}
\,\circ\,
\calF(\eta_{\cQ})
\,\circ\,
\calF(a)
\,\circ\,
(\calF\calG)(-)
\,\circ\,
\calF(\eta_{\cP})
\end{equation}
is indeed equal to the morphism~\eqref{DW-morphism} since the first two terms cancel in virtue of the adjunction $\calF \dashv \calG$.
Now we claim that the composite $\calF(\eta_{\cQ}) \circ \calF(a)$ is homotopic to the identity.
This will prove the theorem, since~\eqref{dive-composition} will then be homotopic to
\begin{equation*}
    \varepsilon_{\rmQ}
\,\circ\,
(\calF\calG)(-)
\,\circ\,
\calF(\eta_{\cP})
=
(-)
\,\circ\,
\varepsilon_{\rmP}
\,\circ\,
\calF(\eta_{\cP})
=
(-)
\end{equation*}
since $\varepsilon$ is a natural transformation.
It thus remains to prove that $\calF(\eta_{\cQ}) \circ \calF(a)$ is homotopic to the identity.
Consider the following diagram
\begin{equation*}
\begin{tikzcd}
 \calF(\cQ) \arrow[r, "\calF(\eta_{\cQ})"] \arrow[rd, "\id"', bend right] & \calF\calG\calF(\cQ) \arrow[d, "\calF(a)"] \\     
 & \calF(\cQ)               
\end{tikzcd}
\end{equation*}
It is commutative since by definition of a $\calG \calF$-algebra we have $\calF(a\, \circ \, \eta_{\cQ})=\id_\rmQ$.
Since we assumed that $\rmQ$ is a $\Tw$ homotopy fixed point, the map $\calF(\eta_{\cQ})$ is a quasi-isomorphism. 
Therefore $\calF(a)$ is a quasi-isomorphism as well.
We conclude that the composite $\calF(\eta_{\cQ})
\,\circ\,
\calF(a)$ is homotopic to the identity.
This finishes the proof.
\end{proof}

\begin{remark} 
\label{comparison DW}
    The original  \cite[Theorem 10.1]{DolgushevWillwacher15} assumed instead that $\rmQ = \Tw(\rmQ')$.  We recover it as follows: the lift $\calG(\rmQ') \in \Twist$ of $\Tw(\rmQ')$ always admits a $\calG \calF$-algebra structure given by $\calG(\varepsilon_{\rmQ'}) : \calG \calF \calG(\rmQ') \to \calG(\rmQ')$.   
    Our formulation at first may appear slightly more general; in fact this is an illusion since $\calG$ is monadic (\cref{G is monadic}) and so any $\calG \calF$-algebra has underlying operad of the form $\Tw(\rmQ')$.
\end{remark}

\begin{remark}
    In \cite[Theorem~10.1]{DolgushevWillwacher15}, it is further assumed that~$\rmP$ is a homotopy fixed point for~$\Tw$, but this assumption is not used in their proof (or ours). 
\end{remark}


\section{Koszul preduals}
\label{Koszul preduals}

\subsection{Exposition of the results}

We denote by $\coCom$ the category of cooperads
under the linear dual of the operad $\uCom$ encoding unital commutative algebra.
Recall that~$\uCom(n)=\mathbf{k} \mu_n$ is the trivial $\Sym_n$ representation for all $n \geqslant 0$, and operadic composition is given by $\mu_k \circ_i \mu_l \eqdef \mu_{k+l-1}$ for all $k,l\geqslant 0$.

Let $\rmO$ be the $\Sym$-module defined by $\rmO(0) \eqdef \mathbf{k}$ and $\rmO(n) \eqdef 0$ for $n \geq 1$.

\begin{definition}
\label{coCurv}
    We denote by $\coCurv$ the category defined as follows
    \begin{itemize}[leftmargin=*]
        \item[--] 
        Objects are pairs $(\rmC,\form)$ where $\rmC \in \coCom$, and $\form : \rmC \to \rmO$ is a $\Sym$-module map of degree $0$ such that $\form (\mu_0^*) = 1$.
        \item[--] 
        Morphisms are maps $f: \rmC \to \rmC'$ in $\coCom$ such that $\form' \circ f = \form$.
    \end{itemize}
\end{definition}

We first prove that the forgetful functor $\coCurv \to \coCom$ admits a right adjoint, and we characterize the unit of this  adjunction (\cref{cooperadic adjunction}).

Then we show (\cref{cobar of coCurv}) that the cobar construction induces a well defined functor $\tilde \Omega : \coCurv \to \Curv, \, (\rmC, \form) \mapsto (\Omega \rmC, \Omega \ker(\form))$.
As a result, we can give an explicit formula for the unit of the adjunction $\Curv \leftrightarrow \cMult$ on the image of $\tilde \Omega$.

As in \cite[Section 5]{LodayVallette12}, we view a $\Sym$-module as a functor from the groupoid of finite sets to the category of chain complexes, and a non-counital cooperad as a coalgebra over the reduced tree comonad $\overline \T$ on the category of $\Sym$-modules. 
Given a non-unital cooperad~$\rmC$, we will denote by $\delta_\rmC : \rmC \to \overline \T \rmC$ the structural morphism.
We now introduce some notations.

\begin{notation}
\label{notation coproduct}
    Given a finite subset $A \subset X$ and an operation $v \in \rmP(X)$ in an operad $\rmP$, we denote by $v^A \in (\rmP \hat \vee [\alpha])(X \setminus A)$ the operation obtained from $v$ by grafting $\alpha$ at the entries labelled by elements in $A$. 
\end{notation}

\begin{notation}
\label{notation two-levels}
    Given a non-empty finite subset $A \subset X$, a cooperation $w \in \rmC(X)$ in a cooperad $\rmC$, and a family $\underline x = (x_a)_{a \in A}$ of elements in $\rmC(\emptyset)$, we denote by $w(\underline x) \in (\overline \T \rmC)(X \setminus A)$ the element obtained as follow: first take the tree made of the corolla with leaves labelled by~$X$ together with ``plugs'' (a.k.a\ ``corks'') at the leaves labelled by $A$, then decorate the corolla with $w$ and each plug labelled by $a \in A$ with $x_a$.  
\end{notation}

\begin{notation}
\label{notation delta +}
    Given a cooperation $v$ in a cooperad $\rmC$, we denote by $\delta_\rmC^+(v) \in \overline \T \rmC$ the summand of $\delta_\rmC(v)$ consisting of trees which have at least one cooperation of non-zero arity outside of the first level.
\end{notation}

Let $(\rmC,\form) \in \coCurv$ and $v \in \rmC$. 
Observe that there always exists a set of triples $(w, A, \underline x)$ where $w \in \rmC(X)$, $A \subset X$, $\underline x \in \rmC(\emptyset)^A$, such that 
\[ \delta_\rmC(v) = v + \sum_{w, A, \underline x} w(\underline x) + \delta_\rmC^+(v). \]

\begin{thm}
\label{explicit formula unit intro}
    The unit $\left( \Omega \rmC, \Omega \ker(\form) \right) \xrightarrow{\eta_{\tilde \Omega (\rmC, \form)}} \left( \cTw( \Omega \rmC), \Omega (\rmC) \vee [\alpha] \right)$ of the adjunction $\Curv \leftrightarrow \cMult$ is explicitly given, for $v \in \rmC$, by
    \[\eta_{\tilde \Omega (\rmC, \form)}(s^{-1}v) = s^{-1}v + \form(v) \kappa + \form(d_\rmC v) \alpha + \sum_{w, A, \underline x} \left(\prod_{a \in A} \form(x_a)\right) (s^{-1} w)^A. \]
\end{thm}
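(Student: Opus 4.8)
The plan is to pin down the unit through the universal property of the adjunction $(\calL \dashv \calR)$ of \cref{thm:adjunction-Curv} and then verify that the displayed formula satisfies it. Write $X=\tilde\Omega(\rmC,\form)=(\Omega\rmC,\Omega\ker(\form))$, so that $\calL X=\Omega\rmC$ and $\calR\calL X=(\cTw(\Omega\rmC),\,\Omega\rmC\,\hat\vee\,[\alpha])$, with counit $\varepsilon_{\Omega\rmC}$ the identity on $\Omega\rmC$ sending $\alpha,\kappa\mapsto 0$. Since the adjunction bijection $\Hom_{\Curv}(X,\calR\rmQ)\xrightarrow{\cong}\Hom_{\cMult}(\calL X,\rmQ)$, $f\mapsto\varepsilon_\rmQ\circ\calL f$, is a bijection, the unit $\eta_X$ is the \emph{unique} morphism of $\Curv$ with $\varepsilon_{\Omega\rmC}\circ\calL\eta_X=\id_{\Omega\rmC}$. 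As $\Omega\rmC$ is the free operad on $s^{-1}\rmC$, the formula well-defines an operad morphism $\Omega\rmC\to\cTw(\Omega\rmC)$ (the sum is locally finite in the completed coproduct); it therefore suffices to check that this morphism (i) lies in $\Curv$, i.e.\ is a dg morphism under $\cLinf$ carrying $\Omega\ker(\form)$ into $\Omega\rmC\,\hat\vee\,[\alpha]$, and (ii) satisfies the triangle identity. Uniqueness then forces it to equal $\eta_X$.

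Three of these are routine. For the triangle identity, applying $\varepsilon_{\Omega\rmC}$ kills every summand but $s^{-1}v$: the terms $\form(v)\kappa$ and $\form(d_\rmC v)\alpha$ vanish, and each $(s^{-1}w)^A$ carries an $\alpha$ because $A\neq\emptyset$ in \cref{notation two-levels}. For the suboperad condition, if $v\in\ker(\form)$ then the sole $\kappa$-term $\form(v)\kappa$ disappears and every remaining summand lies in the suboperad $\Omega\rmC\,\hat\vee\,[\alpha]$. For compatibility with $\cLinf$, it is enough to evaluate the formula on the generators $\ell_n=s^{-1}(\mu_n^*)$, the images of $\mu_n^*$ under $\uComd\to\rmC$: using $\form(\mu_0^*)=1$ and the cocomposition of $\uComd$ dual to $\mu_k\circ_i\mu_l=\mu_{k+l-1}$, the corked summands reproduce exactly $\ell_n^\alpha$, and for $n=0$ the term $\form(\mu_0^*)\kappa=\kappa$ supplies the $\kappa$ in $\ell_0\mapsto\kappa+\ell_0^\alpha$ prescribed by \cref{def of cTw}.

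The crux will be compatibility with the differentials, checked on generators $s^{-1}v$. On the $\cTw$ side, $d_{\cTw(\Omega\rmC)}$ acts by the cobar differential on $\Omega\rmC$, sends each $\alpha$ grafted in $(s^{-1}w)^A$ to $\kappa$ (as $d\alpha=\kappa$), and annihilates $\kappa$; on the other side one applies $\eta$ to $d_{\Omega\rmC}(s^{-1}v)=\pm s^{-1}(d_\rmC v)+(\text{two-vertex cooperadic part})$. I would compare the two expansions after grouping terms by their number of $\kappa$'s. The $\kappa$-free terms match by coassociativity of $\delta_\rmC$—the two-level corked trees in $\eta$ reassemble from the two-vertex part of the cobar differential—together with the dg-cooperad identity relating $\delta_\rmC d_\rmC$ to $d_\rmC$ on the individual vertices. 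Among the terms carrying one $\kappa$: those obtained by differentiating the grafted $\alpha$'s match the $\kappa$'s produced when $\eta$, applied to the two-vertex part, uses a $\form(\,\cdot\,)\kappa$ summand on one vertex, while the isolated term $\form(d_\rmC v)\alpha$ is present precisely so that its differential $\form(d_\rmC v)\kappa$ accounts for the $\kappa$-term of $\eta(s^{-1}d_\rmC v)$ (no further $\alpha$ survives since $d_\rmC^2=0$). I expect this to be the main obstacle: controlling the signs and verifying that $\form$, a cooperad map supported in arity $0$ with $\form(\mu_0^*)=1$, correctly records the creation and annihilation of corks under $\delta_\rmC$ and $d_\rmC$, so that the cork-bookkeeping in $\delta_\rmC(v)=v+\sum w(\underline x)+\delta_\rmC^+(v)$ aligns with the insertions of $\alpha$ and $\kappa$.

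A conceptually cleaner route would transport the unit through the cobar functor. Granting the explicit unit of the cooperadic adjunction $\coCurv\leftrightarrow\coCom$ from \cref{cooperadic adjunction} and the fact (\cref{cobar of coCurv}) that $\tilde\Omega$ is compatible with the two adjunctions—so that it carries the cooperadic right adjoint to $\calR$ and hence the cooperadic unit to $\eta_{\tilde\Omega(\rmC,\form)}$—the displayed formula would follow by simply applying $\tilde\Omega$ to the cooperadic unit. This merely relocates the same differential-and-cork computation to the cooperadic side, so I would still expect that verification to be the decisive step.
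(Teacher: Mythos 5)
Your overall framing is sound: by \cref{thm:adjunction-Curv}, the unit is the unique morphism $\eta$ in $\Curv$ with $\varepsilon_{\Omega\rmC}\circ\calL(\eta)=\id_{\Omega\rmC}$, so exhibiting the displayed formula as a morphism of $\Curv$ satisfying the triangle identity would indeed prove the theorem; your handling of the triangle identity, the suboperad condition, and the $\cLinf$-compatibility is correct. But the proposal stops exactly where the content of the theorem lies: the compatibility with the differentials is only described (group terms by the number of $\kappa$'s, invoke coassociativity of $\delta_\rmC$, match the cork bookkeeping), is flagged by you as ``the main obstacle'', and is never carried out. This is not a routine verification: in the paper an equivalent check occupies the bulk of \cref{unit specific case}, where the cobar differential, the internal differential $d_\rmC$, and the differentials of the grafted $\alpha$'s and $\kappa$'s produce five families of summands that must be matched one against another. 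As it stands, your Route 1 is a correct plan with a genuine gap at its decisive step.

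Your fallback Route 2 moreover rests on a false premise. The functor $\tilde\Omega$ does \emph{not} carry the cooperadic right adjoint $G$ to $\calR$: one has $\tilde\Omega G(\rmC)=\left(\Omega(\rmC\times\rmT^*),\,\Omega\ker(\ev_\alpha)\right)$, the cobar construction of a product of cooperads, whereas $\calR\calL\tilde\Omega(\rmC,\form)=\left(\cTw(\Omega\rmC),\,\Omega\rmC\,\hat\vee\,[\alpha]\right)$ is a coproduct with free generators $\kappa,\alpha$; these are not isomorphic, so $\tilde\Omega(\eta^{GF})$ does not even have the correct target. What the paper actually does (\cref{formula unit}, \cref{unit specific case}, \cref{composition counit unit}) is the corrected version of this idea: by naturality, $\eta^{\calR\calL}_{\tilde\Omega}=f_\rmC\circ\tilde\Omega(\eta^{GF})$, where $f_\rmC=(\calR\Omega)(\varepsilon^{FG}_\rmC)\circ\eta^{\calR\calL}_{\tilde\Omega G\rmC}$ is a comparison map from $\tilde\Omega G(\rmC)$ to $\calR\Omega(\rmC)$. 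The unavoidable differential computation is then performed not on a general object of $\coCurv$ but on the special objects $\tilde\Omega G(\rmC)$, where the corks are literal factors $\alpha_0^*,\kappa_0^*$ of $\rmC\times\rmT_0^*$ and the unit takes the transparent form $\eta(s^{-1}w_{A_0,K_0})=\sum_{A\subset A_0}(s^{-1}w_{A,K_0})^{A_0\setminus A}$; the coefficients $\form(v)$, $\form(d_\rmC v)$, $\prod_{a\in A}\form(x_a)$ of the general formula then fall out by composing with the explicit cooperadic unit of \cref{cooperadic adjunction}, which is the easy part. Note also that this computation is operadic, not cooperadic, so your claim that Route 2 ``relocates the computation to the cooperadic side'' is inaccurate. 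To complete your Route 1 you would have to perform the sign-and-cork verification directly on a general $(\rmC,\form)$, which is essentially the same amount of work as \cref{unit specific case} but in less structured notation.
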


\vspace{2mm}

We can then deduce corresponding results for the adjunction $\Twist \leftrightarrow \Mult$.
Let $\rmI$ be the $\Sym$-module defined by $\rmI(1) \eqdef \mathbf{k}$ and $\rmI(n) \eqdef 0$ for $n \ne 1$.
Note that the product of $\Sym$-modules $\rmO \oplus \rmI$ is the $\Sym$-module given by $(\rmO \oplus \rmI)(0) = (\rmO \oplus \rmI)(1) = \mathbf{k}$ and $(\rmO \oplus \rmI)(n) = 0$ for $n \geq 2$.
Recall that, given a cooperad $\rmC$, a map $\xi : \rmC \to \rmI$ is called a counit if $(\xi \hat \circ \id_\rmC) \circ \Delta = (\id_\rmC \hat \circ \xi) \circ \Delta = \id_\rmC$, where $\hat \circ$ is the completed composite product and $\Delta$ is the full cocomposition in $\rmC$. 

\begin{definition}
\label{coTwist} 
    We denote by $\coTwist$ the category defined as follows
    \begin{itemize}[leftmargin=*]
    \item[--] 
    Objects are triples $(\rmC, \counit, \form)$ where $(\rmC, \form) \in \coCurv$ and $\counit : \rmC \to \rmI$ is a $\Sym$-module map of degree $0$ such that
    \begin{enumerate}
        \item we have $\counit (\mu_1^*) = 1$,
        \item\label{decomposition condition} the map $\counit : \rmC \to \rmI$ is a counit, 
        and $\ker(\counit)$ is preserved by the differential $d_\rmC$.
    \end{enumerate}
    \item[--] 
    Morphisms are maps $f: (\rmC, \form) \to (\rmC', \form')$ in $\coCurv$ such that $\counit' \circ f = \counit$.
    \end{itemize}
\end{definition}

We show (\cref{cobar of coTwist}) that the cobar construction induces a well defined functor $\overline \Omega : \coTwist \to \Twist, \, (\rmC, \counit, \form) \mapsto (\Omega \rmC, \Omega \ker(\form \oplus \counit))$.

Let $(\rmC, \counit, \form)$ be an object of $\coTwist$ and $v$ be an element of $\ker(\form \oplus \counit)$. 
If $v$ is of arity $0$, we write $\delta_\rmC(v) = v + \mu_1^*(v) + \sum_{w, \underline x} w(\underline x)$, where every $w$ appearing in the sum is in $\ker(\counit)$.
If $v$ is of positive arity, we write $\delta_\rmC(v) = v + \sum_{w, A, \underline x} w(\underline x) + \delta_\rmC^+(v)$, where every $w$ appearing in the sum is in $\ker(\counit)$. 
Note that $\delta_\rmC(v)$ is of this form by assumption on objects of $\coTwist$. 

\begin{thm}
\label{explicit formula unit Twist intro}
    The unit $\left( \Omega \rmC, \Omega \ker(\form \oplus \counit) \right) \xrightarrow{\eta_{\overline \Omega (\rmC, \counit, \form)}} \left( \cTw(\Omega \ker(\form \oplus \counit))^+, \Tw(\Omega \ker(\form \oplus \counit)) \right)$
    of the adjunction $\Twist \leftrightarrow \Mult$ is explicitly given, for $v \in \ker(\form \oplus \counit)$, by
    \[
    \eta_{\overline \Omega (\rmC, \counit, \form)}(s^{-1}v) = s^{-1}v + \form(d_\rmC v) \alpha + \sum_{w, A, \underline x} 
    \left(\prod_{a \in A} \form(x_a)\right) (s^{-1} w)^A. 
    \]
\end{thm}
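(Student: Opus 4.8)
Set $\calQ := \overline\Omega(\rmC,\counit,\form)$. The plan is to deduce the formula from its curved analogue, \cref{explicit formula unit intro}, by tracking the unit through the factorization of the adjunction $\forget \dashv \free$ of \cref{thm:adjoint}. There we have $\forget = \pi_! \calL \calI$ and $\free = \calJ \calR \pi^!$, so by the standard formula for the unit of a composite of adjunctions, $\eta_{\calQ}$ is the composite
\[
\calQ \xrightarrow{\ \eta^{\calI\calJ}\ } \calJ\calI\,\calQ \xrightarrow{\ \calJ(\eta^{\calL\calR})\ } \calJ\calR\calL\calI\,\calQ \xrightarrow{\ \calJ\calR(\eta^{\pi})\ } \free\,\forget\,\calQ,
\]
where $\eta^{\calI\calJ}$, $\eta^{\calL\calR}$ and $\eta^{\pi}$ are the units of $(\calI\dashv\calJ)$, $(\calL\dashv\calR)$ and $(\pi_!\dashv\pi^!)$ respectively. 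I would evaluate each arrow on a generator $s^{-1}v$ with $v\in\ker(\form\oplus\counit)$; the values on the remaining cobar generators $s^{-1}\mu_0^*,s^{-1}\mu_1^*$ are forced by the $\cLinf$-structure, so these suffice.

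The key step is the identification $\calI\,\calQ = \tilde\Omega(\rmC,\form)$ in $\Curv$. Both sides have underlying operad $\Omega\rmC$ with the same $\cLinf$-structure, so it suffices to compare the distinguished suboperads. On the left, \cref{definition:functor-Twist-Curv} and \cref{def:+} give $\left(\Omega\ker(\form\oplus\counit)\right)^+ = \Omega\ker(\form\oplus\counit)\vee[\diff]$ with $\diff \mapsto \ell_1 = s^{-1}\mu_1^*$; on the right, $\Omega\ker(\form)$ is the suboperad generated by $s^{-1}\ker(\form)$. Since $\counit(\mu_1^*)=1$ furnishes the splitting $\ker(\form)=\ker(\form\oplus\counit)\oplus\mathbf{k}\,\mu_1^*$ in arity one, and the free operad functor preserves coproducts, the two suboperads coincide as graded operads, the formal generator $\diff$ of the hat construction corresponding to the cobar generator $s^{-1}\mu_1^*$. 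That the differentials agree follows because each is the restriction of $d_{\Omega\rmC}$; here the defining conditions of $\coTwist$ -- that $\counit$ is a counit and that $\ker(\counit)$ is preserved by $d_\rmC$ (cf.\ \cref{cobar of coTwist}) -- are exactly what guarantees that $\Omega\ker(\form\oplus\counit)$ is a sub-dg-operad and that the cobar differential of $s^{-1}\mu_1^*$ reproduces the hat-construction differential $d\,\diff=-\diff\circ_1\diff$. With this identification, $\eta^{\calL\calR}$ at $\calI\,\calQ$ is computed by \cref{explicit formula unit intro}.

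Now I evaluate. As $s^{-1}v$ lies in the suboperad $\Omega\ker(\form\oplus\counit)$, the formula \eqref{eq:IJ-unit} for $\eta^{\calI\calJ}$ fixes it: $\eta^{\calI\calJ}(s^{-1}v)=s^{-1}v$. Since $\calJ$ acts trivially on the underlying morphism, applying $\calJ(\eta^{\calL\calR})$ and using \cref{explicit formula unit intro} together with $\form(v)=0$ (as $v\in\ker\form$) yields
\begin{align*}
& s^{-1}v + \form(v)\kurv + \form(d_\rmC v)\alpha + \sum_{w,A,\underline x}\Big(\prod_{a\in A}\form(x_a)\Big)(s^{-1}w)^A \\
={}& s^{-1}v + \form(d_\rmC v)\alpha + \sum_{w,A,\underline x}\Big(\prod_{a\in A}\form(x_a)\Big)(s^{-1}w)^A,
\end{align*}
where, computing the curved formula with the $\coTwist$-decomposition of $\delta_\rmC(v)$ (legitimate since the value of the formula is decomposition-independent), every $w$ occurring has positive arity and lies in $\ker(\counit)$, hence in $\ker(\form\oplus\counit)$; the possible $w=\mu_1^*$ term carries coefficient $\form(v)=0$. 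Finally $\eta^{\pi}$ is the quotient $\Omega\rmC \to \Omega\rmC/(\ell_0,\ell_1)=\Omega\ker(\form\oplus\counit)$, and $\calJ\calR(\eta^{\pi})$ applies this quotient to the $\Omega\rmC$-components while fixing $\alpha,\kurv,\diff$; as each of $s^{-1}v$ and $s^{-1}w$ represents an element of $\ker(\form\oplus\counit)$, it is untouched. This produces the asserted formula.

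The main obstacle is the identification $\calI\,\calQ=\tilde\Omega(\rmC,\form)$, and specifically the check that the two differentials match: this is where the counit axioms defining $\coTwist$ enter and where the hat construction's formal generator is pinned to the cobar generator $s^{-1}\mu_1^*$. Once this is settled, the remaining steps are bookkeeping, relying on decomposition-independence of the curved formula to arrange that no $w\notin\ker(\counit)$ contributes. The non-symmetric statement follows by the same argument with $\cLinf,\Linf,\ell_n^\alpha$ replaced by $\cAinf,\Ainf,\mu_n^\alpha$.
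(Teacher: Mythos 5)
Your proof is correct and follows essentially the same route as the paper's: decompose the unit as $(\calJ \calR)(\eta^{\pi^! \pi_!}_{\calL \calI}) \circ \calJ (\eta^{\calR \calL}_{\calI}) \circ \eta^{\calJ \calI}$, identify $\calI \overline \Omega = \tilde \Omega I$ (the paper's \cref{forgetful functors commute with cobar}), and feed in \cref{explicit formula unit intro} together with the explicit $\eta^{\calJ \calI}$ of \cref{thm:adjunction-Twist-Curv} and the projection $\eta^{\pi^! \pi_!}$; your final bookkeeping (decomposition-independence of the curved formula, $\form(v)=0$, every $w$ having positive arity and lying in $\ker(\counit)$, hence in $\ker(\form \oplus \counit)$) is exactly what the paper leaves implicit. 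One correction to your ``key step'': since the distinguished suboperad of an object of $\Curv$ is \emph{non-dg}, only the graded identification $(\Omega \ker(\form \oplus \counit))^+ = \Omega \ker(\form)$ inside $\Omega \rmC$ needs checking, and your added claims that $\Omega \ker(\form \oplus \counit)$ is a sub-dg-operad of $\Omega\rmC$ and that the cobar differential of $s^{-1}\mu_1^*$ reproduces the hat-construction differential $-\ell_1 \circ_1 \ell_1$ are both false in general (for instance $d_{\Omega \rmC}\,\ell_1 = -\ell_1 \circ_1 \ell_1 - \ell_2 \circ_1 \ell_0$, and $d_{\Omega\rmC}(s^{-1}v)$ contains the terms $-[\ell_1, s^{-1}v]$ forced by the counit axiom) --- fortunately neither claim is needed for the identification to hold.
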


We will show in \cref{section:extendable-operads,sec:non-homotopy-algebras} that the ``extendable'' operads of Dotsenko--Shadrin--Vallette \cite[Section~4.5]{DotsenkoShadrinVallette23} correspond to objects of $\coTwist$, hence determine $\Tw$-coalgebra structures by the above explicit formula (in fact, recovering a formula which appeared already in \cite{DotsenkoShadrinVallette23}, though not there identified as a $\Tw$-coalgebra structure).  This class of operads includes many examples of interest, as we explore in Section \ref{examples}. 
 

\subsection{The adjunction $\coCurv \leftrightarrow \coCom$}

Since the category of cooperads is complete (as the category of coalgebras for the comonad $\overline \T$ on the complete category of $\Sym$-modules), we can talk about the product of two cooperads $\rmC_-$ and $\rmC_+$.
The underlying $\Sym$-module of $\rmC_- \times \rmC_+$ is the sum of trees whose vertices are labelled by elements of either $\rmC_-$ or $\rmC_+$, with the property that if a vertex is labelled by an operation in $\rmC_\pm$ then its neighbours are labelled by operations in $\rmC_\mp$. 
See \cite[Sec.~1.5]{grignou2025} for a detailed presentation of the product of two cooperads.

\vspace{2mm}

Recall that $\rmT$ denotes the operad freely generated by arity zero elements $\alpha, \kurv$ where $\alpha$ has degree $0$ and $\kurv$ has degree $(-1)$, with differential $d_\rmT \alpha = \kurv$.

\begin{notation}
    Given $\rmC \in \coCom$, we denote by $\mu_n^*$ the operation in $\rmC$ which is the image of the generator $\mu_n^*$ in $\uComd$ under the structural morphism $\uComd \to \rmC$. 
    We then consider the following elements in $\rmC \times \rmT^*$
    \[
    \mu_n^{\alpha^*} \eqdef \sum_{k \geq 0} \frac{1}{k!}(\mu_{n+k}^*;(\alpha^*)^k,-).
    \]
\end{notation}

\begin{lemma}
\label{strucural morphism cooperadic adjoint}
    Let $\rmC \in \coCom$ with structural morphism $\psi: \uComd \to \rmC$.
    The unique morphism $\Psi : \uComd \to \rmC \times \rmT^*$ satisfying $\pi_\rmC \circ \Psi = \psi$ and $\pi_{\rmT^*} \circ \Psi = \ev_{\mu_0}(-) \alpha^*$ is given by $\mu_0^* \mapsto (\alpha^* + \mu_0^{\alpha^*})$ and $\mu_n^* \mapsto \mu_n^{\alpha^*}$ for $n \geq 1$.
\end{lemma}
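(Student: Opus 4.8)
The plan is to get uniqueness essentially for free and then to pin down the explicit formula by exhibiting one cooperad morphism with the prescribed projections. Since $\rmC \times \rmT^*$ is a categorical product in the (complete) category of cooperads, its projections $\pi_\rmC$ and $\pi_{\rmT^*}$ are jointly monic; hence any two morphisms $\uComd \to \rmC \times \rmT^*$ that agree after composing with both projections are equal. This settles uniqueness, and it remains to produce a single morphism $\Phi$ with $\pi_\rmC \circ \Phi = \psi$ and $\pi_{\rmT^*} \circ \Phi = \ev_{\mu_0}(-)\,\alpha^*$ and to check that it is the stated one.

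First I would define $\Phi$ on generators by $\Phi(\mu_0^*) = \alpha^* + \mu_0^{\alpha^*}$ and $\Phi(\mu_n^*) = \mu_n^{\alpha^*}$ for $n \geq 1$, and dispose of the two easy verifications. In the tree model of the product recalled from \cite[Sec.~1.5]{grignou2025}, an element of $\rmC \times \rmT^*$ is a sum of trees whose vertices are alternately labelled by $\rmC$ and by $\rmT^*$; since $\rmT^*$ is concentrated in arity $0$, its vertices are leaf-caps, so $(\mu_{n+k}^*;(\alpha^*)^k,-)$ is the single $\rmC$-corolla $\psi(\mu_{n+k}^*)$ with $k$ of its inputs capped by $\alpha^*$. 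The projection $\pi_\rmC$ retains only the pure $\rmC$-corollas (here the $k=0$ summand), giving $\pi_\rmC\Phi(\mu_n^*) = \psi(\mu_n^*)$, while $\pi_{\rmT^*}$ retains only the lone $\rmT^*$-vertices, giving $\pi_{\rmT^*}\Phi(\mu_n^*) = \ev_{\mu_0}(\mu_n^*)\,\alpha^*$; this verifies the projection conditions. For the chain-map condition, note that $\uComd$ carries the zero differential, while every vertex appearing in $\Phi(\mu_n^*)$ is a cycle: the caps $\alpha^*$ because $d_{\rmT^*}\alpha^* = 0$, and each $\psi(\mu_{n+k}^*)$ because $\psi$ is a chain map out of the zero-differential $\uComd$. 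Hence $\Phi(\mu_n^*)$ is closed, as required.

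The substantive step is to check that $\Phi$ respects the cooperadic cocomposition, i.e. that $\delta_{\rmC \times \rmT^*} \circ \Phi = (\overline \T \Phi) \circ \delta_{\uComd}$. Here one uses the explicit cocomposition of $\uComd$: dually to $\mu_k \circ_i \mu_l = \mu_{k+l-1}$, the map $\delta_{\uComd}$ sends $\mu_n^*$ to the sum over all (iterated) trees obtained by distributing its inputs among sub-corollas $\mu_\bullet^*$ decorating a top corolla $\mu_\bullet^*$. Applying $\overline \T \Phi$ and using that $\ev_{\mu_0}(-)\,\alpha^*$ annihilates every $\mu_{\geq 1}^*$ and sends $\mu_0^* \mapsto \alpha^*$, only those trees survive in which the sub-corollas routed into the $\rmT^*$-factor are the arity-$0$ ones; matching these against the grafting description of $\delta_{\rmC\times\rmT^*}(\mu_n^{\alpha^*})$ is the heart of the argument. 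I expect the main obstacle to be the bookkeeping of the $\tfrac{1}{k!}$ coefficients, which encode the symmetrization over the interchangeable $\alpha^*$-capped inputs, exactly dual to the factors in $\ell_n^\alpha$; the identity to be confirmed is the binomial rearrangement $\tfrac{1}{k!\,r!} = \tfrac{1}{(k+r)!}\binom{k+r}{k}$ that already underlies the computation of $\lambda_0^{\alpha'}$ in the proof of \cref{cTw comonadicity}. Once the coefficients match, $\Phi$ is a morphism in $\coCom$ with the prescribed projections, hence equals $\Psi$ by uniqueness.

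Finally, I would record a more conceptual route that avoids re-deriving these weights by hand. By the description of the product of cooperads in \cite[Sec.~1.5]{grignou2025}, the unique lift $\Psi$ of the pair $(\psi,\, \ev_{\mu_0}(-)\,\alpha^*)$ is computed by summing over admissible trees the iterated cocomposition $\delta^{(\bullet)}_{\uComd}(\mu_n^*)$ with each vertex decorated through the corresponding projection. Since $\ev_{\mu_0}(-)\,\alpha^*$ kills every $\mu_{\geq 1}^*$ and sends $\mu_0^* \mapsto \alpha^*$, the only contributing trees are the corollas $\mu_{n+k}^*$ with $k$ leaves capped by $\alpha^*$, and the cocomposition coefficients of $\uComd$ deliver exactly the weights $\tfrac{1}{k!}$, reproducing $\mu_n^{\alpha^*}$ (together with the isolated $\alpha^*$ summand when $n=0$) directly.
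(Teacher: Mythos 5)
Your proposal is correct, and its decisive final paragraph is precisely the paper's own proof: compute $\Psi$ via the tree-decoration formula for morphisms into a product of cooperads from \cite[Sec.~1.5]{grignou2025}, observe that $\ev_{\mu_0}(-)\,\alpha^*$ kills all cooperations of positive arity so only corollas with $\mu_0^*$-caps contribute, and read off the $\tfrac{1}{k!}$ weights from $\delta_{\uComd}(\mu_n^*) = \sum_{k \geq 0} \tfrac{1}{k!}\, \mu_{k+n}^* \otimes ((\mu_0^*)^{\otimes k}, -) + \delta_{\uComd}^+(\mu_n^*)$. The earlier direct-verification route (checking by hand that your $\Phi$ respects cocomposition) is only sketched, with the coefficient bookkeeping deferred, but since the conceptual route you end with is complete and matches the paper, this does not affect the correctness of the proposal.
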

\begin{proof}  
    As explained in \cite[Section 1.5]{grignou2025}, the complete formula for $\Psi$ is obtained as follows: first apply the $\overline \T$-coalgebra structural morphism $\delta_{\uCom^*}$ to get an element of $\overline \T \uCom^*$, then take the sum of the elements obtained by
    \begin{enumerate}
        \item applying $\pi_\rmC \circ \Psi$ to the root vertex, then $\pi_{\rmT^*} \circ \Psi$ to the vertices just above, then $\pi_\rmC \circ \Psi$ to the vertices just above the previous ones, etc,
        \item applying $\pi_{\rmT^*} \circ \Psi$ to the root vertex, then $\pi_\rmC \circ \Psi$ to the vertices just above, then $\pi_{\rmT^*} \circ \Psi$ to the vertices just above the previous ones, etc.
    \end{enumerate}
    Since $\pi_{\rmT^*} \circ \Psi$ is zero on all operations of positive arity, it is enough to consider elements in the image of $\delta_{\uCom^*}$ which have only arity zero operations outside of the root vertex.
    The result follows since $\delta_{\uCom^*}(\mu_n^*) = \sum_{k \geq 0} \frac{1}{k!} \mu_{k+n}^* \otimes ((\mu_0^*)^{\otimes k}, -)+\delta_{\uCom^*}^+(\mu_n^*)$.
\end{proof}

\begin{lemma}
    Given $\rmC \in \coCom$, the pair $(\rmC \times \rmT^*,\ev_{\alpha})$ is an object of $\coCurv$.
\end{lemma}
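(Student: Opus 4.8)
The plan is to assemble the two clauses of \cref{coCurv} from structure already established, since the prior results do almost all of the work. First I would record that $\rmC \times \rmT^*$ is an object of $\coCom$: the preceding \cref{strucural morphism cooperadic adjoint} exhibits the cooperad morphism $\Psi : \uComd \to \rmC \times \rmT^*$ as its structural map, so the underlying cooperad-under-$\uComd$ is given and nothing further need be checked on that front. It then remains only to verify that $\ev_\alpha$ qualifies as the datum $\form$, i.e.\ that it is a degree-$0$ $\Sym$-module map with $\ev_\alpha(\mu_0^*) = 1$.

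Next I would pin down $\ev_\alpha$ as the composite
\[
\rmC \times \rmT^* \xrightarrow{\ \pi_{\rmT^*}\ } \rmT^* \xrightarrow{\ \langle -,\,\alpha\rangle\ } \rmO,
\]
that is, project onto the second factor and pair against the generator $\alpha \in \rmT(0)$. The structural observation that makes this transparent is that $\rmT$ is freely generated by the two arity-zero operations $\alpha,\kurv$; since arity-zero operations admit no grafting, both $\rmT$ and $\rmT^*$ are concentrated in arity $0$, with $\rmT^*(0) = \mathbf{k}\,\alpha^* \oplus \mathbf{k}\,\kurv^*$.

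I would then check that $\ev_\alpha$ is a degree-$0$ $\Sym$-module map. The projection $\pi_{\rmT^*}$ is a morphism of cooperads, hence a degree-$0$ $\Sym$-module map, and $\langle-,\alpha\rangle$ sends $\alpha^*$ (which has degree $0$ because $\alpha$ does) to $1 \in \rmO(0)$ while killing $\kurv^*$. The composite automatically vanishes in positive arity because $\rmT^*$ does, consistently with $\rmO$ vanishing there, and $\Sym$-equivariance is trivial as $\Sym_0$ acts trivially on both sides; so degree-$0$-ness reduces to the single fact that $\alpha$ is placed in degree $0$.

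Finally I would verify the normalization $\ev_\alpha(\mu_0^*) = 1$, where $\mu_0^*$ denotes its image $\Psi(\mu_0^*)$. Here \cref{strucural morphism cooperadic adjoint} records $\pi_{\rmT^*}\circ\Psi = \ev_{\mu_0}(-)\,\alpha^*$, whence $\pi_{\rmT^*}(\Psi(\mu_0^*)) = \ev_{\mu_0}(\mu_0^*)\,\alpha^* = \alpha^*$, and pairing with $\alpha$ gives $\ev_\alpha(\mu_0^*) = \langle \alpha^*,\alpha\rangle = 1$. None of these steps presents a genuine obstacle; the only point requiring care is the degree computation, and it rests entirely on $\rmT^*$ living in arity $0$ together with $\alpha$ having degree $0$.
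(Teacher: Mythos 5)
Your proof is correct and takes essentially the same route as the paper's: both reduce the whole lemma to the identity $\pi_{\rmT^*} \circ \Psi = \ev_{\mu_0}(-)\,\alpha^*$ from the preceding lemma, combined with the observation that $\ev_{\alpha}$ only detects the $\alpha^*$-component of the $\rmT^*$-projection, which gives $\ev_\alpha(\Psi(\mu_0^*)) = 1$. The additional verifications you spell out (that $\rmT^*$ is concentrated in arity $0$, and the degree-$0$ $\Sym$-module checks) are routine points the paper leaves implicit, so there is no substantive difference.
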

\begin{proof}
    Using notations of \cref{strucural morphism cooperadic adjoint}, we have to check that $\ev_{\alpha} \circ \Psi = \ev_{\mu_0}$.
    This follows since $\ev_{\alpha}$ is only non-zero on $\rmT^*$ and $\pi_{\rmT^*} \circ \Psi = \ev_{\mu_0}(-) \alpha^*$.
\end{proof}

\begin{theorem}
\label{cooperadic adjunction}
    The forgetful functor $F : \coCurv \to \coCom$ admits a right adjoint~$G$, given on objects by $\rmC \mapsto (\rmC \times \rmT^*, \ev_{\alpha})$ and on morphisms by $f \mapsto f \times 1_{\rmT^*}$.
    The counit $\varepsilon : FG \to 1$ is the projection $\rmC \times \rmT^* \to \rmC$.
    The unit $\eta : 1 \to GF$ is characterized by 
    \begin{align*}
        \pi_\rmC \circ \eta & \eqdef \id_\rmC, \\
        \pi_{\rmT^*} \circ \eta & \eqdef \form(-) \alpha^* + (\form \circ d_\rmC)(-) \kappa^*.
    \end{align*}
\end{theorem}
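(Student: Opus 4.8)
The plan is to establish the adjunction through a natural isomorphism of Hom-sets obtained from the universal property of the categorical product $\rmC\times\rmT^*$, and then to read off the unit and counit from the identity morphisms. First I would confirm that $G$ is well defined on objects and morphisms: the preceding lemma gives $(\rmC\times\rmT^*,\ev_\alpha)\in\coCurv$, and for $f:\rmC\to\rmC'$ the map $f\times 1_{\rmT^*}$ commutes with the projections to $\rmT^*$, so $\ev_\alpha\circ(f\times 1_{\rmT^*})=\ev_\alpha$ and $f\times 1_{\rmT^*}$ is a morphism in $\coCurv$.

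The crux is to classify dg-cooperad morphisms into $\rmT^*$. Since $\rmT^*$ is concentrated in arity $0$ with vanishing cocomposition, any $\Sym$-module map $h:\rmD\to\rmT^*$ necessarily vanishes on arities $\geq 1$, and the cooperad-compatibility $\delta_{\rmT^*}\circ h=\overline{\T}(h)\circ\delta_\rmD$ holds automatically: in any tree appearing in $\delta_\rmD(v)$ with at least two vertices the root has positive arity, so $\overline{\T}(h)$ annihilates it, leaving only the corolla term $h(v)$, which is what $\delta_{\rmT^*}(h(v))$ produces. Writing $h=\form(-)\alpha^*+\chi(-)\kappa^*$, the condition that $h$ be a chain map, together with the dual differential $d_{\rmT^*}\alpha^*=0$ and $d_{\rmT^*}\kappa^*=\alpha^*$, forces $\chi=\form\circ d_\rmD$ (the relation $\chi\circ d_\rmD=0$ then being automatic from $d_\rmD^2=0$). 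Hence dg-cooperad morphisms $\rmD\to\rmT^*$ are in bijection with degree-$0$ $\Sym$-module maps $\form:\rmD\to\rmO$, via $\form\mapsto\form(-)\alpha^*+(\form\circ d_\rmD)(-)\kappa^*$.

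Next I would assemble the adjunction isomorphism. By the universal property of the product, a cooperad morphism $g:\rmD\to\rmC\times\rmT^*$ is the same as the pair $(\pi_\rmC\circ g,\pi_{\rmT^*}\circ g)$, and requiring $g$ to be a morphism of $\coCom$, i.e.\ $g\circ\psi_\rmD=\Psi$ for the structural morphisms $\psi_\rmD:\uComd\to\rmD$ and $\Psi:\uComd\to\rmC\times\rmT^*$, splits by \cref{strucural morphism cooperadic adjoint} into the conditions that $\pi_\rmC\circ g$ be a morphism of $\coCom$ and that $\pi_{\rmT^*}\circ g$ agree with $\ev_{\mu_0}(-)\alpha^*$ on the image of $\uComd$. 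Under the classification of the previous step, $\pi_{\rmT^*}\circ g$ corresponds to a datum $\form_g:\rmD\to\rmO$, and the remaining $\coCurv$-condition $\ev_\alpha\circ g=\form$ reads $\form_g=\form$, since $\ev_\alpha$ is supported on the $\rmT^*$-factor and extracts its $\alpha^*$-coefficient. The leftover constraint $\form\circ\psi_\rmD=\ev_{\mu_0}$ is automatic, because $\form:\rmD\to\rmO$ annihilates $\rmD(n)$ for $n\geq 1$ and satisfies $\form(\mu_0^*)=1$ by definition of $\coCurv$. Thus $g\mapsto\pi_\rmC\circ g$ is a bijection onto $\Hom_{\coCom}(\rmD,\rmC)=\Hom_{\coCom}(F(\rmD,\form),\rmC)$, natural in both arguments by functoriality of the product and its projections.

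Finally I would extract the unit and counit. Evaluating the bijection at $\id_{G\rmC}$ yields $\varepsilon_\rmC=\pi_\rmC$, the projection $\rmC\times\rmT^*\to\rmC$; evaluating its inverse at $\id_\rmD=\id_{F(\rmD,\form)}$ yields the unique $g=\eta_{(\rmD,\form)}$ with $\pi_\rmC\circ\eta=\id_\rmD$ and $\pi_{\rmT^*}\circ\eta=\form(-)\alpha^*+(\form\circ d_\rmD)(-)\kappa^*$, exactly the asserted formulas. I expect the sole genuine obstacle to be the classification in the second paragraph: one must verify carefully that the arity-$0$ concentration of $\rmT^*$ renders the cooperad-compatibility automatic, so that only the chain condition survives, and then compute the dual differential to pin down the $\kappa^*$-component. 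Everything else is a formal consequence of the universal property of the product together with the formula for $\Psi$ from \cref{strucural morphism cooperadic adjoint}.
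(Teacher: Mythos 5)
Your proof is correct, but it takes a genuinely different route from the paper's. The paper simply writes down the unit and counit and verifies the two triangle identities directly (leaving implicit the check that the stated unit formula is a well-defined morphism in $\coCurv$), whereas you establish the adjunction as a natural bijection of hom-sets, built from the universal property of the categorical product $\rmC \times \rmT^*$ together with your classification lemma: since $\rmT^*$ is concentrated in arity $0$ with vanishing reduced cocomposition, every degree-zero $\Sym$-module chain map $\rmD \to \rmT^*$ is automatically a map of cooperads and is uniquely of the form $\form(-)\alpha^* + (\form\circ d_\rmD)(-)\kappa^*$. That lemma does not appear in the paper, and conversely the paper's triangle-identity computations play no role in your argument. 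What each approach buys: yours makes the well-definedness of $\eta$ and the uniqueness implicit in ``characterized by'' automatic (the unit is the image of an identity morphism under a bijection), isolates the only genuine computation into the classification of maps into $\rmT^*$, and gives naturality for free from functoriality of the product and its projections; the paper's route is shorter and stays closer to the explicit formulas that are reused later (e.g.\ in the proof of \cref{explicit formula unit intro}). Note that both arguments rest on the same two inputs: the preceding lemmas showing $(\rmC\times\rmT^*,\ev_\alpha)\in\coCurv$ and characterizing $\Psi$ by its projections, and the reconstruction of a morphism into a product of cooperads from its two components, which you also invoke (correctly) to see that $\ev_\alpha\circ g$ extracts exactly the $\alpha^*$-coefficient of the $\rmT^*$-component of $g$.
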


\begin{proof}
    Checking the triangle identities is straightforward: the composition 
    \begin{equation*}
        \begin{matrix}
            \rmC & \overset{F(\eta_{(\rmC, \form)})}{\longrightarrow} & (\rmC \times \rmT^*, \ev_{\alpha}) & \overset{\varepsilon_{F(\rmC, \form)}}{\longrightarrow} & \rmC
        \end{matrix}
    \end{equation*}
    is the identity on $F(\rmC, \form)$, while the composition
    \begin{equation*}
        \begin{matrix}
            (\rmC \times T_0^*, \ev_{\alpha_0}) & \overset{\eta_{G(\rmC)}}{\longrightarrow} & (\rmC \times \rmT_0^* \times \rmT_1^*, \ev_{\alpha_1}) & \overset{G\left(\varepsilon_{\rmC}\right)}{\longrightarrow} & (\rmC \times \rmT_1^*, \ev_{\alpha_1})
        \end{matrix}
    \end{equation*}
    is the identity on $G(\rmC)$.
\end{proof}

\vspace{2mm}

\subsection{Proof of \cref{explicit formula unit intro}}

\begin{lemma-def}
\label{cobar of coCurv}
    The cobar construction induces a well defined functor
    \[\tilde \Omega : \coCurv \to \Curv, \quad (\rmC, \form) \mapsto (\Omega \rmC, \Omega \ker(\form)). \]
\end{lemma-def}

\begin{proof}
    Given $(\rmC, \form) \in \coCurv$ with structural morphism $\uCom^* \xrightarrow{\psi} \rmC$, we have the natural morphism $\cLinf = \Omega \uCom^* \xrightarrow{\Omega \psi} \Omega \rmC$. 
    Moreover, we have that
    \begin{itemize}[leftmargin=*]
        \item[--] 
        The natural map $[\ell_0] \vee \Omega \ker(\form) \to \Omega \rmC$ (given by $\Omega \psi$ on $\ell_0$) is an isomorphism of non-dg operads.
        Indeed we have that $\form(\psi(\mu_0^*)) = 1 \ne 0$ by assumption on objects of $\coCurv$, and therefore the map $(\psi(\mu_0^*) \oplus \ker(\form)) \to \rmC$ is an isomorphism of $\Sym$-modules.
        The claim follows by definition of the cobar construction.
        \item[--] 
        $\Omega \ker(\form)$ contains the image of the $\ell_i$ for $i > 0$; this follows by assumption on $\form$. 
        \item[--] 
        $d_{\Omega \rmC}(\Omega \ker(\form))$ consists of elements of degree at most one in $\ell_0$.
        Indeed, the differential $d_{\Omega \rmC}$ is the sum of the differential $d_{\rmC}$ of $\rmC$, and the cobar differential $d_{\Delta}$ which is given by the infinitesimal decomposition map in $\rmC$.
    \end{itemize} 
    Moreover, for a morphism $(\rmC, \form) \to (\rmC', \form')$ in $\coCurv$, the morphism $\Omega \rmC \to \Omega \rmC'$ is in $\Curv$, since it is a morphism over $\cLinf$ carrying $\Omega \ker(\form) \to \Omega \ker(\form')$. 
\end{proof}

\begin{remark}
    In \cref{bar-cobar}, we discuss when the bar construction of a $\cLinf$-operad can be lifted to an object of $\coCurv$.
\end{remark}

\begin{lemma}
\label{commutativity forgetful cobar}
    The forgetful functors $F : \coCurv \to \coCom$ and $\calL : \Curv \to \cMult$ satisfy $\Omega F = \calL \tilde \Omega$.
\end{lemma}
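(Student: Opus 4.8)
The plan is to establish the strict equality $\Omega F = \calL \tilde\Omega$ of functors $\coCurv \to \cMult$ by checking that the two composites agree on objects and on morphisms. Since both are genuine functors, strict equality (not merely a natural isomorphism) is what is expected, and it will hold because $\tilde\Omega$ was defined precisely so that its underlying cobar operad agrees with $\Omega$ of the underlying cooperad.

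First I would unwind the object level. Fix $(\rmC, \form) \in \coCurv$ with structural morphism $\psi : \uCom^* \to \rmC$. Since $F$ simply discards $\form$, the object $(\Omega F)(\rmC, \form)$ of $\cMult$ is the cobar operad $\Omega\rmC$ equipped with the map $\cLinf = \Omega\uCom^* \xrightarrow{\Omega\psi} \Omega\rmC$. On the other side, $\tilde\Omega(\rmC, \form) = (\Omega\rmC, \Omega\ker(\form))$, and by the construction in \cref{cobar of coCurv} the $\cLinf$-structure carried by this object of $\Curv$ is exactly $\Omega\psi$. Applying $\calL$ forgets the suboperad $\Omega\ker(\form)$ and returns $\Omega\rmC$ with the same map $\Omega\psi$. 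Hence the two objects of $\cMult$ literally coincide.

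Next I would treat morphisms. A morphism $f : (\rmC, \form) \to (\rmC', \form')$ in $\coCurv$ is a morphism $f : \rmC \to \rmC'$ of cooperads under $\uCom^*$ with $\form' \circ f = \form$. Both $F$ and $\tilde\Omega$ act on $f$ through the single underlying cobar morphism: $(\Omega F)(f) = \Omega f$, while $\tilde\Omega(f)$ is the morphism $\Omega f$ in $\Curv$ (which, as recorded in \cref{cobar of coCurv}, carries $\Omega\ker(\form)$ into $\Omega\ker(\form')$), so that $\calL \tilde\Omega(f) = \Omega f$ after forgetting the suboperads. Thus both composites send $f$ to $\Omega f$.

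I do not expect a genuine obstacle: the statement is a bookkeeping identity expressing that $\tilde\Omega$ is a lift of $\Omega$ along the two forgetful functors. The single point requiring care is confirming that the $\cLinf$-structure attached to $\tilde\Omega(\rmC, \form)$ in $\Curv$ is the very map $\Omega\psi$ that $\Omega$ produces from $\rmC \in \coCom$; this is immediate from \cref{cobar of coCurv}, where $\cLinf \to \Omega\rmC$ is defined as $\Omega\psi$. With that identification in hand, agreement on objects and on morphisms yields the claimed strict equality of functors.
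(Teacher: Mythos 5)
Your proposal is correct and is exactly the verification the paper has in mind: the paper's own proof simply declares the statement ``a straightforward verification,'' and your unwinding of objects and morphisms — noting that \cref{cobar of coCurv} defines the $\cLinf$-structure on $\tilde\Omega(\rmC,\form)$ to be $\Omega\psi$, so that forgetting the suboperad $\Omega\ker(\form)$ recovers precisely $(\Omega F)(\rmC,\form)$, and that both composites send $f$ to $\Omega f$ — is that verification carried out in full.
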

\begin{proof}
    This is a straightforward verification.
\end{proof}

Our goal is to give a formula for the unit  $\eta^{\calR \calL}_{\tilde \Omega} : \tilde \Omega \to \calR \calL \tilde \Omega$ in $\Curv$ for objects in the image of the cobar construction.

\begin{lemma}
\label{formula unit}
    We have $\eta^{\calR \calL}_{\tilde \Omega} = (\calR \Omega) (\varepsilon^{F G}_{F}) \circ \eta^{\calR \calL}_{\tilde \Omega G F} \circ \tilde \Omega (\eta^{GF})$.
\end{lemma}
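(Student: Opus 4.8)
The plan is to prove this identity by pure adjunction bookkeeping, using only the naturality of $\eta^{\calR\calL}$, the triangle identities for the adjunction $F \dashv G$ of \cref{cooperadic adjunction}, and the functor identity $\calL\tilde\Omega = \Omega F$ of \cref{commutativity forgetful cobar}. No computation with explicit cooperations is required; the argument is entirely formal. First I would rewrite the composite $\eta^{\calR\calL}_{\tilde\Omega GF} \circ \tilde\Omega(\eta^{GF})$ using naturality of the unit $\eta^{\calR\calL} : 1_{\Curv} \to \calR\calL$. Applied to the morphism $\tilde\Omega(\eta^{GF}) : \tilde\Omega \to \tilde\Omega GF$, naturality yields the commuting square
\[
\eta^{\calR\calL}_{\tilde\Omega GF} \circ \tilde\Omega(\eta^{GF}) = (\calR\calL)\bigl(\tilde\Omega(\eta^{GF})\bigr) \circ \eta^{\calR\calL}_{\tilde\Omega},
\]
so that the right-hand side of the lemma becomes $(\calR\Omega)(\varepsilon^{FG}_F) \circ (\calR\calL)\bigl(\tilde\Omega(\eta^{GF})\bigr) \circ \eta^{\calR\calL}_{\tilde\Omega}$.

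It then suffices to show that the first two factors compose to the identity, i.e.\ that $(\calR\Omega)(\varepsilon^{FG}_F) \circ (\calR\calL)\bigl(\tilde\Omega(\eta^{GF})\bigr) = \id_{\calR\calL\tilde\Omega}$. Here I would invoke \cref{commutativity forgetful cobar}: since $\calL\tilde\Omega = \Omega F$ as functors (agreeing on objects and morphisms alike), whiskering the unit $\eta^{GF}$ gives $\calL\bigl(\tilde\Omega(\eta^{GF})\bigr) = \Omega(F\eta^{GF})$, and hence $(\calR\calL)\bigl(\tilde\Omega(\eta^{GF})\bigr) = (\calR\Omega)(F\eta^{GF})$. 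Using functoriality of $\calR\Omega$ to merge the two remaining factors, the composite equals $(\calR\Omega)\bigl(\varepsilon^{FG}_F \circ F\eta^{GF}\bigr)$.

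Finally I would apply the triangle identity for the adjunction $F \dashv G$, namely $\varepsilon^{FG}_F \circ F\eta^{GF} = \id_F$. This gives $(\calR\Omega)\bigl(\varepsilon^{FG}_F \circ F\eta^{GF}\bigr) = (\calR\Omega)(\id_F) = \id$, so the right-hand side collapses to $\eta^{\calR\calL}_{\tilde\Omega}$, as claimed. The only genuine subtlety — which I would take care to spell out precisely — is the translation step: identifying $(\calR\calL)(\tilde\Omega(\eta^{GF}))$ with $(\calR\Omega)(F\eta^{GF})$ via $\calL\tilde\Omega = \Omega F$, so that both surviving factors land inside a single application of $\calR\Omega$ and can be combined into $\calR\Omega$ applied to a triangle identity. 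Everything else is a routine diagram chase.
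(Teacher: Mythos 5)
Your proposal is correct and is essentially the paper's own proof: the same three ingredients in the same order — naturality of $\eta^{\calR\calL}$ applied to $\tilde\Omega(\eta^{GF})$, the identity $\calL\tilde\Omega = \Omega F$ to rewrite $(\calR\calL)(\tilde\Omega(\eta^{GF}))$ as $(\calR\Omega)(F\eta^{GF})$, and the triangle identity $\varepsilon^{FG}_F \circ F(\eta^{GF}) = \id_F$ to collapse the composite. No discrepancy to report.
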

\begin{proof}
    We compute
    \begin{align*}
        (\calR \Omega) (\varepsilon^{F G}_{F}) \circ \eta^{\calR \calL}_{\tilde \Omega G F} \circ \tilde \Omega (\eta^{GF}) 
        & = (\calR \Omega) (\varepsilon^{F G}_{F}) \circ (\calR \calL \tilde \Omega) (\eta^{GF}) \circ \eta^{\calR \calL}_{\tilde \Omega} \\
        & = (\calR \Omega) (\varepsilon^{F G}_{F}) \circ (\calR \Omega F) (\eta^{GF}) \circ \eta^{\calR \calL}_{\tilde \Omega} = \eta^{\calR \calL}_{\tilde \Omega}.
    \end{align*}
    Here, the first equality follows from the fact that $\eta^{\calR \calL} : 1 \to \calR \calL$ is a natural transformation, the second equality follows from the equality $\calL \tilde \Omega = \Omega F$ (\cref{commutativity forgetful cobar}), and the last equality follows from the unit-counit relation $\varepsilon^{F G}_{F} \circ F (\eta^{GF}) = \id_{F}$.
\end{proof}

We introduce additional notations similar to \cref{notation coproduct} and \cref{notation two-levels}.

\begin{notation}
    Given disjoint subsets $A, K$ of a set $X$, and an operation $v \in \rmP(X)$ in an operad $\rmP$, we denote by $v^{A, K} \in (\rmP \vee \rmT)(X \setminus (A \cup K))$ the operation obtained from $v$ by grafting $\alpha$, respectively $\kappa$, at the entries labelled by elements in $A$, respectively in $K$. 
    In particular, $v^{A,\emptyset} = v^A$.
\end{notation}

\begin{notation}
    Given disjoint subsets $A, K$ in a set $X$, and a cooperation $w \in \rmC(X)$ in a cooperad $\rmC$, we denote by $w_{A, K} \in (\rmC \times \rmT^*)(X \setminus (A \cup K))$ the cooperation obtained from $w$ by grafting $\alpha^*$, respectively $\kappa^*$, at the entries labelled by elements in $A$, respectively in $K$. 
\end{notation}

\begin{lemma}
\label{unit specific case}
    Given $\rmC \in \coCom$, the morphism in $\Curv$
    \[\eta = \eta^{\calR \calL}_{\tilde \Omega G(\rmC)} : \left( \Omega (\rmC \times \rmT_0^*), \Omega \ker(\ev_{\alpha_0}) \right) \to \left( \cTw(\Omega (\rmC \times \rmT_0^*)), \Omega (\rmC \times \rmT_0^*) \vee [\alpha_1] \right) \]
    is given for $w \in \rmC(n)$ and $A_0, K_0 \subset \{1, \dots, n\}$ disjoint by 
    \[\eta(s^{-1}\alpha_0^*) = s^{-1}\alpha_0^* + \kappa_1, \, \eta(s^{-1}\kappa_0^*) = s^{-1}\kappa_0^* + \alpha_1, \text{ and } \eta(s^{-1} w_{A_0, K_0}) = \sum_{A \subset A_0} (s^{-1} w_{A, K_0})^{A_0 \setminus A}. \]
\end{lemma}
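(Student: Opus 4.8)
The plan is to identify the displayed formula as the unit by the universal property of the adjunction $\calL \dashv \calR$ of \cref{thm:adjunction-Curv}. The clean reduction is this: a morphism $\tilde\eta_X \colon X \to \calR\calL X$ in $\Curv$ equals the unit at $X$ as soon as it satisfies the triangle identity $\varepsilon^{\calL\calR}_{\calL X}\circ\calL(\tilde\eta_X)=\id_{\calL X}$. Indeed, writing $\Phi\colon\Hom_{\cMult}(\calL X,\calL X)\xrightarrow{\sim}\Hom_{\Curv}(X,\calR\calL X)$, $g\mapsto\calR(g)\circ\eta_X$, for the adjunction bijection, one has $\tilde\eta_X=\Phi(\Phi^{-1}\tilde\eta_X)=\Phi(\varepsilon^{\calL\calR}_{\calL X}\circ\calL\tilde\eta_X)=\Phi(\id)=\eta_X$. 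So it suffices to (a) check the formula defines a morphism in $\Curv$, and (b) check the triangle identity. Throughout I would use that, since $\rmT_0^*$ is concentrated in arity $0$ with basis $\alpha_0^*,\kappa_0^*$, the product cooperad $\rmC\times\rmT_0^*$ is spanned by $\alpha_0^*$, $\kappa_0^*$, and the capped corollas $w_{A_0,K_0}$; hence the cobar generators $s^{-1}(-)$ on which the formula is prescribed are \emph{all} the generators, and $\eta$ extends uniquely to an operad morphism.

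First I would dispose of the routine parts. For the ``under $\cLinf$'' condition, the map $\cLinf=\Omega\uComd\to\Omega(\rmC\times\rmT_0^*)$ sends $\ell_n\mapsto s^{-1}\mu_n^{\alpha_0^*}$ and $\ell_0\mapsto s^{-1}(\alpha_0^*+\mu_0^{\alpha_0^*})$ by \cref{strucural morphism cooperadic adjoint}; applying the formula and reorganizing the resulting sum according to which $\alpha_0^*$-caps are converted to $\alpha_1$ is a binomial bookkeeping that reproduces $\ell_n^{\alpha_1}$ (respectively $\kappa_1+\ell_0^{\alpha_1}$), the $\cLinf$-operations on the target from \cref{def of cTw} — the isolated $\kappa_1$ in $\eta(s^{-1}\alpha_0^*)$ being exactly what supplies the curvature term. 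Suboperad preservation is immediate: a $\kappa_1$ is produced only through the generator $s^{-1}\alpha_0^*$, which is not in $\Omega\ker(\ev_{\alpha_0})$, while every other generator maps into $\Omega(\rmC\times\rmT_0^*)\vee[\alpha_1]$. The triangle identity (b) is also immediate, since $\varepsilon^{\calL\calR}$ kills $\alpha_1,\kappa_1$ and therefore annihilates every summand of $\eta$ except the diagonal term ($A=A_0$, together with the $s^{-1}\alpha_0^*$ and $s^{-1}\kappa_0^*$ pieces), returning the identity.

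The heart of the argument, and the main obstacle, is the differential compatibility inside (a). On $s^{-1}\alpha_0^*$ it is trivial, and on $s^{-1}\kappa_0^*$ it is what forces the $\alpha_1$: the cobar differential gives $d_\Omega(s^{-1}\kappa_0^*)=\pm s^{-1}\alpha_0^*$, while on the target $d_{\cTw}\alpha_1=\kappa_1$ must reproduce the $\kappa_1$ in $\eta(s^{-1}\alpha_0^*)$, pinning down the sign conventions. For a general capped corolla $s^{-1}w_{A_0,K_0}$ I would expand $d_\Omega$ into its three contributions — the internal $d_\rmC$, the $\rmT_0^*$-differential $\kappa_0^*\mapsto\alpha_0^*$ on the caps in $K_0$, and the cobar decomposition $d_\Delta$ (which in $\rmC\times\rmT_0^*$ either splits $w$ via $\delta_\rmC$ or detaches a single $\alpha_0^*$- or $\kappa_0^*$-cap) — and expand $d_{\cTw}\circ\eta$ into the cobar differential on the heads plus the leg terms coming from $d_{\cTw}\alpha_1=\kappa_1$, then match term by term.

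The matching should then proceed as follows. The $d_\rmC$ contribution matches $\eta(s^{-1}(d_\rmC w)_{A_0,K_0})$ directly, and the $\delta_\rmC$-splitting terms match the head differential. The terms where $d_\Delta$ detaches an $\alpha_0^*$-cap at $p\in A_0$, combined with the $\kappa_1$ in $\eta(s^{-1}\alpha_0^*)$, reproduce precisely the target leg terms of $d_{\cTw}\alpha_1=\kappa_1$ (the index bijection being: detaching at $p$ and splitting the remaining caps corresponds to placing $\kappa_1$ at $p$). The genuinely delicate point is the spurious $\alpha_1$ produced by $\kappa_0^*$-caps: detaching a $\kappa_0^*$-cap at $k\in K_0$ and using $\eta(s^{-1}\kappa_0^*)=s^{-1}\kappa_0^*+\alpha_1$ yields terms carrying an $\alpha_1$ at a $K_0$-position, which do not occur in the target and so must cancel; I expect them to cancel exactly against the terms coming from the $\kappa_0^*\mapsto\alpha_0^*$ cap-differential followed by $\eta$'s conversion of that new $\alpha_0^*$ to $\alpha_1$. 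Checking this cancellation, and getting every Koszul sign right, is the only substantial computation — it is the $\eta$-twisted shadow of the cancellations that make $d_\Omega$ square to zero — after which $\eta$ is a $\Curv$-morphism and the universal property of paragraph one finishes the proof.
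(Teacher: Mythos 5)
Your proposal is correct and takes essentially the same route as the paper: the paper likewise verifies that the formula is a morphism in $\Curv$ (differential compatibility, intertwining of the $\cLinf$-structures, preservation of the distinguished suboperads) and that post-composition with the counit gives the identity, and then concludes it is the unit — your adjunction-bijection argument is exactly the justification the paper leaves implicit for that last step. The only caveat is that the term-by-term differential check, which constitutes the bulk of the paper's proof, is sketched rather than executed in your write-up; but your sketch correctly anticipates its structure, including the delicate cancellation between the $\kappa_0^*\mapsto\alpha_0^*$ cap-differential followed by conversion to $\alpha_1$ and the detached-$\kappa_0^*$ terms, which is precisely how the paper's matching of its labelled summands works out.
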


\begin{proof}
    We have to check the following conditions:
    \begin{enumerate}
        \item\label{commutes with differentials} $\eta$ commutes with the differentials,
        \item\label{intertwines mor from cLinf} $\eta$ intertwines the morphisms from $\cLinf$,
        \item\label{preserves sub-operads} $\eta$ preserves the distinguished sub-operads,
        \item\label{composition counit} the composition of $\calL(\eta)$ with the counit is the identity, i.e.\ $\varepsilon^{\calL \calR}_{\calL \tilde \Omega G \rmC} \circ \calL(\eta) = \id_{\calL \tilde \Omega G \rmC}$.
    \end{enumerate}
    Conditions (\ref{commutes with differentials}), (\ref{intertwines mor from cLinf}) and (\ref{preserves sub-operads}) guarantee that $\eta$ is a morphism in $\Curv$, while Condition~(\ref{composition counit}) guarantees that $\eta$ is the unit of the adjunction. 

    Observe that~(\ref{composition counit}) is satisfied because the counit $\varepsilon^{\calL \calR}_{\rmQ} : \rmQ \vee \rmT_1 \to \rmQ$ is the identity on $\rmQ$ and sends $\rmT_1$ to $0$ (see \cref{thm:adjunction-Curv}). 
    Moreover, (\ref{preserves sub-operads}) is satisfied because $\eta$ sends $s^{-1} \ker(\ev_{\alpha_0})$ to $\Omega(\rmC \times \rmT_0^*) \vee [\alpha_1]$.
    Therefore it remains to check (\ref{commutes with differentials}) and (\ref{intertwines mor from cLinf}). 

    We check (\ref{commutes with differentials}).
    On $s^{-1} \rmT_0^*$, it suffices to observe that by definition we have
    \[d_{\rmT_0^*}(\kappa_0^*)=\alpha_0^*, \quad d_{\rmT_1}(\alpha_1)=\kappa_1. \] 
    Now let $w \in \rmC(X)$ and $A_0, K_0 \subset X$ disjoint.
    On the one hand we have
    \[
    d_{\Omega(\rmC) \vee \rmT_1} \left( \eta \left(s^{-1} w_{A_0,K_0} \right) \right)
    = \sum_{A \subset A_0} \left(
    d_{\Omega(\rmC)} (s^{-1} w_{A,K_0})^{A_0 \setminus A, \emptyset}
    + \sum_{i \in A_0 \setminus A} (s^{-1}w_{A,K_0})^{A_0 \setminus (A \cup \{i\}), \{i\}}
    \right)
    \]
    We compute
    \begin{align*}
        d_{\Omega(\rmC)}(s^{-1} w_{A,K_0}) = & \, s^{-1} d_{\rmC \times \rmT_0^*}(w_{A,K_0}) + \sum s^{-1} w^{(1)}_{A^{(1)},K_0^{(1)}} \otimes s^{-1} w^{(2)}_{A^{(2)},K_0^{(2)}} \\
        & + \sum_{i \in A} s^{-1} w_{A \setminus \{i \}, K_0} \otimes_i s^{-1} \alpha_0^* + \sum_{i \in K_0} s^{-1} w_{A, K_0 \setminus \{i \}} \otimes_i s^{-1} \kappa_0^*
    \end{align*}
    (where we used Sweedler's notation for the infinitesimal cocomposition in $\rmC$) and
    \[
    d_{\rmC \times \rmT_0^*}(w_{A,K_0}) = d_\rmC(w)_{A,K_0}
    + \sum_{i \in K_0} w_{A \cup \{i\}, K_0 \setminus \{i\}}.
    \]
    Therefore, we have
    \begin{align}
        d_{\Omega(\rmC) \vee \rmT_1} \left( \eta \left(s^{-1} w_{A_0,K_0} \right) \right) = & \sum_{A \subset A_0} (s^{-1} d_\rmC(w)_{A,K_0})^{A_0 \setminus A, \emptyset} + \sum_{A \subset A_0} \sum_{i \in K_0} (s^{-1} w_{A \cup \{i\}, K_0 \setminus \{i\}})^{A_0 \setminus A, \emptyset} \label{1} \\
        & + \sum_{A \subset A_0} \sum (s^{-1} w^{(1)}_{A^{(1)},K_0^{(1)}} \otimes s^{-1} w^{(2)}_{A^{(2)},K_0^{(2)}})^{A_0 \setminus A, \emptyset} \label{2} \\
        & + \sum_{A \subset A_0} \sum_{i \in A} (s^{-1} w_{A \setminus \{i \}, K_0} \otimes_i s^{-1} \alpha_0^*)^{A_0 \setminus A, \emptyset} \label{3} \\
        & + \sum_{A \subset A_0} \sum_{i \in K_0} (s^{-1} w_{A, K_0 \setminus \{i \}} \otimes_i s^{-1} \kappa_0^*)^{A_0 \setminus A, \emptyset} \label{4} \\
        & + \sum_{A \subset A_0} \sum_{i \in A_0 \setminus A} (s^{-1}w_{A,K_0})^{A_0 \setminus (A \cup \{i\}), \{i\}}. \label{5} 
    \end{align}
    
    On the other hand we have to compute $\eta(d_{\Omega(\rmC)}(s^{-1}w_{A_0, K_0}))$.
    Observe that we computed $d_{\Omega(\rmC)}(s^{-1}w_{A_0, K_0})$ above and that it consists of four terms.
    Applying $\eta$ to each of these four terms we get
    \begin{align}
        \eta \left( s^{-1} d_{\rmC \times \rmT_0^*}(w_{A,K_0}) \right) & = \eta \left(s^{-1}d_\rmC(w)_{A_0,K_0} \right) + \eta \left( \sum_{i \in K_0} s^{-1} w_{A_0 \cup \{i\}, K_0 \setminus \{i\}} \right) \nonumber \\
        & = \sum_{A \subset A_0} \left(s^{-1} d_{\rmC}(w)_{A,K_0} \right)^{A_0 \setminus A, \emptyset} + \sum_{i \in K_0} \sum_{A' \subset A_0 \cup \{i \}} \left(s^{-1}w_{A',K_0 \setminus \{i \}} \right)^{(A_0 \cup \{i \}) \setminus A', \emptyset} \label{1bis}
    \end{align}
    and 
    \begin{align}
        \eta & \left( \sum s^{-1} w^{(1)}_{A_0^{(1)},K_0^{(1)}} \otimes s^{-1} w^{(2)}_{A_0^{(2)},K_0^{(2)}} \right) \nonumber \\
        & = \sum \left( \sum_{A^{(1)} \subset A_0^{(1)}} (s^{-1} w^{(1)}_{A^{(1)},K_0^{(1)}})^{A_0^{(1)} \setminus A^{(1)}} \right) \otimes \left( \sum_{A^{(2)} \subset A_0^{(2)}} (s^{-1} w^{(2)}_{A^{(2)},K_0^{(2)}})^{A_0^{(2)} \setminus A^{(2)}} \right) \label{2bis}
    \end{align}
    and
    \begin{align}
        \eta \left(\sum_{i \in A_0} s^{-1}w_{A_0 \setminus \{i \}, K_0} \otimes_i s^{-1} \alpha_0^* \right) = & \sum_{i \in A_0} \sum_{A'' \subset A_0 \setminus \{i \}} 
        \left(s^{-1}w_{A'',K_0} \right)^{(A_0 \setminus \{i \}) \setminus A'', \emptyset} \otimes_i s^{-1} \alpha_0^* \label{3bis} \\
        & + \sum_{i \in A_0} \sum_{A'' \subset A_0 \setminus \{i \}} \left(s^{-1} w_{A'',K_0 \setminus \{i\}} \right)^{(A_0 \setminus \{i \}) \setminus A'', \{i \}} \label{5bis}
    \end{align}
    and
    \begin{align}
        \eta \left(\sum_{i \in K_0} s^{-1}w_{A_0, K_0 \setminus \{i\}} \otimes_i s^{-1} \kappa_0^* \right) = & \sum_{i \in K_0} \sum_{A \subset A_0} 
        \left(s^{-1}w_{A,K_0 \setminus \{i\}}\right)^{A_0 \setminus A, \emptyset} \otimes_i s^{-1} \kappa_0^* \label{4bis} \\
        & + \sum_{i \in K_0} \sum_{A \subset A_0} \left(s^{-1}w_{A,K_0 \setminus \{i\}}\right)^{(A_0 \setminus A) \cup \{i \}, \emptyset}. \label{1bisbis}
    \end{align}
    
    To check that $d_{\Omega(\rmC) \vee \rmT_1} \left( \eta \left(s^{-1} w_{A_0,K_0} \right) \right) = \eta(d_{\Omega(\rmC)}(s^{-1}w_{A_0, K_0}))$, it suffices to observe the following identifications: (\ref{1}) with~(\ref{1bis}) and~(\ref{1bisbis}), (\ref{2}) with~(\ref{2bis}), (\ref{3}) with~(\ref{3bis}), (\ref{4}) with~(\ref{4bis}), and (\ref{5}) with~(\ref{5bis}). 
    Therefore (\ref{commutes with differentials}) is satisfied.

    We now check $(\ref{intertwines mor from cLinf})$.
    Using the notations of \cref{strucural morphism cooperadic adjoint} and letting $\lambda_n \eqdef s^{-1} \mu_n^{\alpha_0^*}$, we have to show that 
    \[\eta(s^{-1} \alpha_0^* + \lambda_0) = \kappa_1 + s^{-1} \alpha_0^* + \lambda_0^{\alpha_1} \text{ and } \eta(\lambda_n) = \lambda_n^{\alpha_1} \]
    for $n \geq 1$.
    We compute for $n \geq 0$: 
    \begin{align*}
        \eta(\lambda_n) =  \sum_{k \geq 0} \frac{1}{k!} \eta(s^{-1} (\mu_{n+k}^*)_{\{1, \dots, k\}, \emptyset})
        & = \sum_{k \geq 0} \frac{1}{k!} \sum_{A \subset \{1, \dots, k\}} (s^{-1} (\mu_{n+k}^*)_{A, \emptyset})^{\{1, \dots, k\} \setminus A, \emptyset} \\
        & = \sum_{i, j \geq 0} \frac{1}{(i+j)!} \sum_{\substack{A \subset \{1, \dots, i+j\} \\ |A|=i}} (s^{-1} (\mu_{n+i+j}^*)_{A, \emptyset})^{\{1, \dots, i+j\} \setminus A, \emptyset} \\ 
        & = \sum_{i, j \geq 0} \frac{1}{(i+j)!} \binom{i+j}{i} (s^{-1} (\mu_{n+i+j}^*)_{\{1, \dots, i\}, \emptyset})^{\{i+1, \dots, i+j\}, \emptyset} \\ 
        & = \sum_{j \geq 0} \frac{1}{j!} \left( \sum_{i \geq 0} \frac{1}{i!} s^{-1} (\mu_{n+i+j}^*)_{\{1, \dots, i\}, \emptyset} \right)^{\{i+1, \dots, i+j\}, \emptyset} \\
        & = \sum_{j \geq 0} \frac{1}{j!} \lambda_{j+n}(\alpha_1^j, -) = \lambda_n^{\alpha_1}.
    \end{align*}
    This concludes the proof.
\end{proof}

In the following Lemma, we will denote by $\delta_{(-)}$ the Kronecker delta function: $\delta_{(-)} = 1$ if the condition $(-)$ is satisfied, and $\delta_{(-)} = 0$ otherwise. It should not be confused with the structural morphism $\delta_\rmC : \rmC \to \overline{\mathbb{T}}\rmC$.

\begin{lemma}
\label{composition counit unit}
    Given $\rmC \in \coCom$, the morphism in $\Curv$
    \[f_\rmC \eqdef (\calR \Omega) (\varepsilon^{F G}_{\rmC}) \circ \eta^{\calR \calL}_{\tilde \Omega G \rmC} : \left( \Omega (\rmC \times \rmT_0^*), \Omega \ker(\ev_{\alpha_0}) \right) \to \left( \cTw(\Omega \rmC), \Omega(\rmC) \vee [\alpha_1] \right)\]
    is given for $w \in \rmC(X)$ and $A, K \subset X$ disjoint by 
     \[f_\rmC(s^{-1}\alpha_0^*) = \kappa_1, \, f_\rmC(s^{-1}\kappa_0^*) = \alpha_1, \text{ and } f_\rmC(s^{-1} w_{A, K}) = (s^{-1} w)^A \delta_{K = \emptyset}. \]
\end{lemma}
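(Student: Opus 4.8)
The plan is to evaluate the composite $f_\rmC = (\calR \Omega)(\varepsilon^{FG}_\rmC) \circ \eta^{\calR\calL}_{\tilde\Omega G \rmC}$ directly, as both factors are already available explicitly. The unit $\eta := \eta^{\calR\calL}_{\tilde\Omega G \rmC}$ is computed in \cref{unit specific case}: it sends $s^{-1}\alpha_0^* \mapsto s^{-1}\alpha_0^* + \kappa_1$, $s^{-1}\kappa_0^* \mapsto s^{-1}\kappa_0^* + \alpha_1$, and $s^{-1} w_{A,K} \mapsto \sum_{A' \subset A}(s^{-1} w_{A',K})^{A\setminus A'}$. The other factor is $\calR\Omega$ applied to the counit $\varepsilon^{FG}_\rmC : \rmC \times \rmT_0^* \to \rmC$ of \cref{cooperadic adjunction}, namely the projection $\pi_\rmC$. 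Since $\calR$ sends a morphism $g$ to $\cTw(g) = g \vee 1_{[\kappa]\vee[\alpha]}$ (\cref{thm:adjunction-Curv} and \cref{def of cTw}), the map $(\calR\Omega)(\varepsilon^{FG}_\rmC) = \cTw(\Omega\pi_\rmC)$ acts as $s^{-1} v \mapsto s^{-1}\pi_\rmC(v)$ on the cobar factor and fixes the elements $\alpha_1,\kappa_1$ adjoined by $\cTw$. The whole computation thus hinges on the behaviour of $\pi_\rmC$ on the two-level cooperations $w_{A,K}$.

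The key input, which I expect to be the only nontrivial point, is that
\[ \pi_\rmC(w_{A,K}) = w \quad\text{if } A = K = \emptyset, \qquad \pi_\rmC(w_{A,K}) = 0 \quad\text{otherwise,} \]
and in particular $\pi_\rmC(\alpha_0^*) = \pi_\rmC(\kappa_0^*) = 0$. This is read off from the tree description of the product of cooperads recalled before \cref{cooperadic adjunction} (see \cite[Sec.~1.5]{grignou2025}): an element of $\rmC \times \rmT_0^*$ is a sum of alternating trees with vertices labelled by cooperations of $\rmC$ and of $\rmT_0^*$, and the projection $\pi_\rmC$ is the identity on a single $\rmC$-labelled corolla while annihilating every tree carrying at least one $\rmT_0^*$-plug. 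By \cref{notation two-levels}, $w_{A,K}$ is precisely the tree with $w$ at the root and plugs labelled $\alpha_0^*$ or $\kappa_0^*$ at the entries of $A \cup K$; hence it is a single $\rmC$-corolla (equal to $w$) exactly when $A \cup K = \emptyset$, and lies in $\ker \pi_\rmC$ as soon as $A \cup K \neq \emptyset$.

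With this established the proof is a termwise evaluation. For the arity-zero generators,
\[ f_\rmC(s^{-1}\alpha_0^*) = \cTw(\Omega\pi_\rmC)(s^{-1}\alpha_0^* + \kappa_1) = s^{-1}\pi_\rmC(\alpha_0^*) + \kappa_1 = \kappa_1, \]
and symmetrically $f_\rmC(s^{-1}\kappa_0^*) = \alpha_1$. For a general $w_{A,K}$, applying the operad morphism $\cTw(\Omega\pi_\rmC)$ to each summand of $\eta(s^{-1}w_{A,K})$, and using that it fixes $\alpha_1$ so commutes with the grafting $(-)^{A\setminus A'}$, gives
\[ f_\rmC(s^{-1}w_{A,K}) = \sum_{A' \subset A} \bigl(s^{-1}\pi_\rmC(w_{A',K})\bigr)^{A\setminus A'}. \]
By the key input the only surviving summand is the one with $A' = \emptyset$ and $K = \emptyset$, where $\pi_\rmC(w_{\emptyset,\emptyset}) = w$; this produces exactly $(s^{-1}w)^A\,\delta_{K=\emptyset}$, as claimed. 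No separate verification that $f_\rmC$ lands in $\Curv$ is needed, since it is a composite of morphisms in $\Curv$.
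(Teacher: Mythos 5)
Your proposal is correct and follows essentially the same route as the paper: the paper's proof is precisely the one-line combination of \cref{unit specific case} with the fact that $\varepsilon^{FG}_{\rmC}$ is the projection from \cref{cooperadic adjunction}. You merely spell out what the paper leaves implicit — that $\calR$ acts on morphisms as $\cTw(-)$ fixing $\alpha_1,\kappa_1$, and that the projection $\pi_\rmC$ of the product cooperad annihilates every tree containing a $\rmT_0^*$-labelled vertex (which is indeed consistent with how $\pi_\rmC$ is used in \cref{strucural morphism cooperadic adjoint}) — so the termwise evaluation you perform is exactly the intended argument.
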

\begin{proof}
    This follows from \cref{unit specific case} and the fact that $\varepsilon^{F G}_{\rmC} : \rmC \times \rmT_0^* \to \rmC$ is the projection (see \cref{cooperadic adjunction}).
\end{proof}

We are now ready to compute the unit of the adjunction $\Curv \leftrightarrow \cMult$ on the image of $\tilde \Omega : \coCurv \to \Curv$.

\begin{proof}
[Proof of \cref{explicit formula unit intro}]
    According to \cref{formula unit} and \cref{composition counit unit}, we have the formula $\eta^{\calR \calL}_{\tilde \Omega \rmC} = f_\rmC \circ \tilde \Omega(\eta^{GF}_{(\rmC, \form)})$, where
    \[f_\rmC(s^{-1}\alpha_0^*) = \kappa_1, \, f_\rmC(s^{-1}\kappa_0^*) = \alpha_1, \text{ and } f_\rmC(s^{-1} w_{A, K}) = (s^{-1} w)^A \delta_{K = \emptyset} \]
    for $w \in \rmC(X)$ and $A, K \subset X$ disjoint.

    Moreover, according to \cref{cooperadic adjunction}, the unit $\eta^{GF}_{(\rmC, \form)} : (\rmC, \form) \to (\rmC \times \rmT_0^*, \ev_{\alpha_0})$ is characterized by 
    \begin{align*}
        \pi_\rmC \circ \eta^{GF}_{(\rmC, \form)} & \eqdef \id_\rmC, \\
        \pi_{\rmT_0^*} \circ \eta^{GF}_{(\rmC, \form)} & \eqdef \form(-) \alpha_0^* + (\form \circ d_\rmC)(-) \kappa_0^*.
    \end{align*}
    The complete formula for $\eta^{GF}_{(\rmC, \form)}$ is obtained as follows: first apply the $\overline \T$-coalgebra structural morphism $\delta_\rmC : \rmC \to \overline \T \rmC$, then take the sum of 
    \begin{enumerate}
        \item applying $\pi_\rmC \circ \eta^{GF}_{(\rmC, \form)}$ to the root vertex, then $\pi_{\rmT_0^*} \circ \eta^{GF}_{(\rmC, \form)}$ to the vertices just above, then $\pi_\rmC \circ \eta^{GF}_{(\rmC, \form)}$ to the vertices just above the previous ones, etc,
        \item applying $\pi_{\rmT_0^*} \circ \eta^{GF}_{(\rmC, \form)}$ to the root vertex, then $\pi_\rmC \circ \eta^{GF}_{(\rmC, \form)}$ to the vertices just above, then $\pi_{\rmT_0^*} \circ \eta^{GF}_{(\rmC, \form)}$ to the vertices just above the previous ones, etc.
    \end{enumerate}
    Since $\pi_{\rmT_0^*} \circ \eta^{GF}_{(\rmC, \form)}$ is zero on all operations of positive arity, it is enough to consider elements in the image of $\delta_{\rmC}$ which have only arity zero operations outside of the root vertex.

    Given $v \in \rmC$ with $\delta_\rmC(v) = v + \sum_{w, A, \underline x} w(\underline x) + \delta_\rmC^+(v)$, we then compute 
    \begin{align*}
        \eta&^{\calR \calL}_{\tilde \Omega \rmC}(s^{-1}v) = (f_\rmC \circ \tilde \Omega(\eta^{GF}_{(\rmC, \form)}))(s^{-1}v) \\
        & = f_\rmC \left( s^{-1}v + \form(v) s^{-1} \alpha_0^* + \form(d_\rmC v) s^{-1} \kappa_0^* + \sum_{w, A, \underline x} \sum_{K \subset A} \left(\prod_{a \in A \setminus K, b \in K} \form(x_a) \form(d_\rmC x_b)\right) (s^{-1} w_{A,K}) \right) \\
        & = s^{-1}v + \form(v) \kappa_1 + \form(d_\rmC v) \alpha_1 + \sum_{w, A, \underline x} \left(\prod_{a \in A} \form(x_a)\right) (s^{-1} w)^A.
    \end{align*}
    This completes the proof.
\end{proof}


\vspace{2mm}

\subsection{Proof of \cref{explicit formula unit Twist intro}}

\begin{lemma-def}
\label{cobar of coTwist}
    The cobar construction induces a well defined functor
    \[\overline \Omega : \coTwist \to \Twist, \quad (\rmC, \counit, \form) \mapsto (\Omega \rmC, \Omega \ker(\form \oplus \counit)). \]
\end{lemma-def}

\begin{proof}
    Given $(\rmC, \counit, \form) \in \coTwist$ with structural morphism $\uCom^* \xrightarrow{\psi} \rmC$, we have the natural morphism $\cLinf = \Omega \uCom^* \xrightarrow{\Omega \psi} \Omega \rmC$. 
    Moreover we have that
    \begin{itemize}[leftmargin=*]
        \item [--]
        The natural map $[\ell_0] \vee [\ell_1] \vee \Omega \ker(\form \oplus \counit) \to \Omega \rmC$ (given by $\Omega \psi$ on $\ell_0$ and $\ell_1$) is an isomorphism of (non-dg) operads.
        Indeed we have that $\form(\psi(\mu_0^*)) = \counit(\psi(\mu_0^*)) = 1 \ne 0$ by assumption on objects of $\coTwist$, and therefore the map 
        \[(\psi(\mu_0^*) \oplus \psi(\mu_1^*) \oplus \ker(\form \oplus \counit)) \to \rmC \]
        is an isomorphism of $\Sym$-modules.
        The claim follows by definition of the cobar construction.
        \item[--] 
        $\Omega \ker(\form \oplus \counit)$ contains the image of the $\ell_i$ for $i > 1$; this follows by assumption on $\form$ and $\counit$. 
        \item[--]  
        $(d_{\Omega \rmC} + [\ell_1,-])(\Omega \ker(\form \oplus \counit))$ consists of elements of degree zero in $\ell_1$ and at most one in $\ell_0$.
        This follows from condition~(\ref{decomposition condition}) in \cref{coTwist} since the differential $d_{\Omega \rmC}$ is the sum of the differential $d_{\rmC}$ of $\rmC$ and the cobar differential which is given by the infinitesimal decomposition map in $\rmC$.
    \end{itemize} 
    Moreover, for a morphism $(\rmC, \counit, \form) \to (\rmC', \counit', \form')$ in $\coTwist$, the morphism $\Omega \rmC \to \Omega \rmC'$ is in $\Twist$, since it is a morphism over $\cLinf$ carrying $\Omega \ker(\form \oplus \counit) \to \Omega \ker(\form' \oplus \counit')$. 
\end{proof}

\begin{lemma}
\label{forgetful functors commute with cobar}
    The forgetful functors $I : \coTwist \to \coCurv$ and $\calI : \Twist \to \Curv$ satisfy $\tilde \Omega I = \calI \overline \Omega$.
\end{lemma}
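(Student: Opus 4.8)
The plan is to verify the equality of functors $\coTwist \to \Curv$ on objects and on morphisms, reducing it to a single $\Sym$-module splitting together with the freeness of the cobar construction. The key observation is that both composites send $(\rmC, \counit, \form)$ to a pair whose ambient dg operad is $\Omega \rmC$ with the $\cLinf$-structure $\Omega \psi$ (for $\psi : \uComd \to \rmC$ the structural morphism), so only the distinguished non-dg suboperad has to be compared.

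First I would unwind both sides on objects. By \cref{cobar of coCurv}, $\tilde \Omega I(\rmC, \counit, \form) = \tilde \Omega(\rmC, \form) = (\Omega \rmC, \Omega \ker(\form))$. By \cref{cobar of coTwist}, $\overline \Omega(\rmC, \counit, \form) = (\Omega \rmC, \Omega \ker(\form \oplus \counit))$; applying $\calI$ (\cref{definition:functor-Twist-Curv}) leaves $\Omega \rmC$ and its $\cLinf$-structure untouched and replaces the suboperad by the image of $(\Omega \ker(\form \oplus \counit))^+$ under $\diff \mapsto \ell_1$, where $\ell_1$ here denotes the image $\Omega\psi(\ell_1) = s^{-1}\mu_1^*$ of the cobar generator. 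That image is $[\ell_1] \vee \Omega \ker(\form \oplus \counit)$, so it remains to establish the identity of non-dg suboperads of $\Omega \rmC$
\[ \Omega \ker(\form) = [\ell_1] \vee \Omega \ker(\form \oplus \counit). \]

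The heart of the argument is a $\Sym$-module computation. Since $\rmO$ is concentrated in arity $0$ and $\rmI$ in arity $1$, the maps $\form$ and $\counit$ vanish outside those arities, so $\ker(\form)$ and $\ker(\form \oplus \counit)$ coincide in every arity except arity $1$, where $\ker(\form)(1) = \rmC(1)$ while $\ker(\form \oplus \counit)(1) = \ker(\counit)(1)$. As $\counit(\mu_1^*) = 1$, the line $\mathbf{k}\mu_1^*$ is a complement to $\ker(\counit)(1)$, giving a splitting of $\Sym$-modules
\[ \ker(\form) = \mathbf{k}\mu_1^* \oplus \ker(\form \oplus \counit). \]
Desuspending and passing to the free operad underlying the cobar construction, freeness converts this direct sum into the coproduct $\Omega \ker(\form) = [s^{-1}\mu_1^*] \vee \Omega \ker(\form \oplus \counit) = [\ell_1] \vee \Omega \ker(\form \oplus \counit)$ of non-dg suboperads of $\Omega \rmC$---the same mechanism used to split off $[\ell_0]$ and $[\ell_1]$ in the proofs of \cref{cobar of coCurv,cobar of coTwist}.

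Finally I would note that all four functors act as ``the identity'' on maps in the relevant sense: $I$ and $\calI$ only forget or reinterpret structure, while $\tilde \Omega$ and $\overline \Omega$ both send a morphism $f$ to $\Omega f$; hence the two composites agree on morphisms as well. I do not expect a genuine obstacle, only bookkeeping: the one point needing care is to confirm that the $\ell_1$ produced by the $(-)^+$-embedding inside $\calI$ is literally the cobar generator $s^{-1}\mu_1^*$, and that the displayed suboperad equality is asserted (and only needs to hold) at the level of non-dg operads, since the cobar differential preserves neither side.
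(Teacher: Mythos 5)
Your proposal is correct and is exactly the ``straightforward verification'' that the paper leaves implicit: both composites have the same ambient operad $\Omega\rmC$ under $\cLinf$ via $\Omega\psi$, and the distinguished suboperads coincide because the splitting $\ker(\form) = \mathbf{k}\mu_1^* \oplus \ker(\form \oplus \counit)$ of $\Sym$-modules (using $\counit(\mu_1^*)=1$) turns, by freeness of the cobar construction, into $\Omega\ker(\form) = [\ell_1] \vee \Omega\ker(\form\oplus\counit)$ as non-dg suboperads --- the same mechanism used in \cref{cobar of coCurv,cobar of coTwist}. Your two points of care (that the $\ell_1$ adjoined by $\calI$ is $s^{-1}\mu_1^* = \Omega\psi(\ell_1)$, and that the equality need only hold non-differentially) are exactly the right ones.
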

\begin{proof}
    This is a straightforward verification.
\end{proof}

Recall (\cref{thm:adjoint}) that the adjunction $(\forget \dashv \free)$ between $\Twist$ and $\Mult$ comes from composition of the following three adjunctions 
\[ 
\begin{tikzcd}
    \Twist \ar[r, bend left, "\calI"] & \Curv \ar[r, bend left, "\calL"] \ar[l, bend left, "\calJ"] & \cMult \ar[l, bend left, "\calR"] \ar[r, bend left, "\pi_!"] & \Mult \ar[l, bend left, "\pi^!"].
\end{tikzcd}
\]

We are now ready to compute the unit of the adjunction $\Twist \leftrightarrow \Mult$ on the image of $\overline \Omega : \coTwist \to \Twist$.

\begin{proof}
[Proof of \cref{explicit formula unit Twist intro}]
    The formula for the unit of a composition of adjunctions gives 
    \[\eta^{\free \forget} = (\calJ \calR)(\eta^{\pi^! \pi_!}_{\calL \calI}) \circ \calJ (\eta^{\calR \calL}_\calI) \circ \eta^{\calJ \calI}. \]
    In particular, using \cref{forgetful functors commute with cobar}, we get $\eta^{\free \forget}_{\overline \Omega} = (\calJ \calR)(\eta^{\pi^! \pi_!}_{\Omega F I}) \circ \calJ (\eta^{\calR \calL}_{\tilde \Omega I}) \circ \eta^{\calJ \calI}_{\overline \Omega}$.
    
    According to \cref{thm:adjunction-Twist-Curv}, the unit $\eta^{\calJ \calI} : (\rmQ, \rmQ_{0,0}) \to (\rmQ^+, \rmQ_{0,0} \vee [\ell_1])$ is given by 
    \[\eta^{\calJ \calI}_{|\rmQ_{0,0}} = \id_{\rmQ_{0,0}}, \quad \eta^{\calJ \calI}(\ell_0) = \ell_0, \quad \eta^{\calJ \calI}(\ell_1) = \diff + \ell_1. \]
    Moreover the unit $\eta^{\pi^! \pi_!}_\rmQ : \rmQ \to \rmQ / (\ell_0, \ell_1)$ is the projection $\pi_\rmQ : \rmQ \to \rmQ / (\ell_0, \ell_1)$.
    The result follows from the explicit formula for the unit $\eta_{\tilde \Omega}^{\calR \calL}$ (\cref{explicit formula unit intro}).
\end{proof}


\subsection{Extendable operads}
\label{section:extendable-operads}

Let us recall from \cite[Sec.~4.5]{DotsenkoShadrinVallette23} the definition of an extendable operad. 
Let $\rmP=\rmP(E,R)$ be an operad with arity-wise finite dimensional $\Sym$-module of generators~$E$ such that $E(0)=0$ and quadratic-linear relations~$R$.
We assume it has~$0$ differential.

Suppose given a non-trivial $\Sym_2$-equivariant map $\chi: E(2) \to \mathbf{k}$, where the target is endowed with the trivial action.   
Consider the space of relations $R_\chi \subset \calT(E \oplus \mathbf{k} u)$  generated by 
\begin{enumerate}
\item $\mu \circ_1 u -\chi(\mu) \id$ and $\mu \circ_2 u -\chi(\mu) \id$ for all $\mu \in E(2)$, and
\item all the composites $\mu \circ_i u$ of elements $\mu$ of $E(n)$ with at least one element $u$, for $n \neq 2$.
\end{enumerate}
Then, we form the operad
\begin{equation}
\label{eq:unital-quotient}
    u_\chi \rmP \eqdef (\rmP \vee [u])/(R_\chi).
\end{equation}

\begin{definition}
    We say that $u_\chi \rmP$ is a \emph{unital extension} of $\rmP$ if the natural inclusion $\rmP \to u_\chi \rmP$ is a monomorphism. 
    In this case we say that $\rmP$ is \emph{extendable} and we write $\rmP \hookrightarrow u_\chi \rmP$ for the unital extension.
\end{definition}

An essential property of extendable operads is that they live over the unital commutative operad~$\uCom$.

\begin{lemma}
\label{extendable-over-uCom}
    Let $\rmP \hookrightarrow u_\chi \rmP$ be a unital extension of an extendable operad~$\rmP$.
    Then, the morphism of operads $\psi : u_\chi \rmP \to \uCom$ defined on generators by
    \[u \mapsto \mu_0, \quad \id \mapsto \mu_1, \quad \nu \mapsto \chi(\nu) \mu_2, \quad \nu' \mapsto 0 \]
    for $\nu \in E(2)$ and $\nu' \in E(n)$ with $n \geqslant 3$ is well-defined.
\end{lemma}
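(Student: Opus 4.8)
The plan is to define $\psi$ on the free operad $\calT(E\oplus\mathbf{k}u)$ by the stated assignment on generators and then verify that it annihilates the defining relations, so that it descends to $u_\chi\rmP=\calT(E\oplus\mathbf{k}u)/(R,R_\chi)$. First I would check that the assignment is a map of $\Sym$-modules: the generator $u$ lands in arity $0$, the map $E(2)\to\uCom(2)=\mathbf{k}\mu_2$, $\nu\mapsto\chi(\nu)\mu_2$, is $\Sym_2$-equivariant because $\chi$ is $\Sym_2$-equivariant into the trivial representation and $\mu_2$ is symmetric, and the remaining generators map to $0$. Together with the forced assignment $\id\mapsto\mu_1$ (and $E$ concentrated in arities $\geq 2$), this extends uniquely to an operad morphism out of the free operad, so it remains to show $\psi(R_\chi)=0$ and $\psi(R)=0$.

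The relations $R_\chi$ are handled by a direct computation using $\mu_k\circ_i\mu_l=\mu_{k+l-1}$. For $\mu\in E(2)$ one gets $\psi(\mu\circ_j u-\chi(\mu)\id)=\chi(\mu)(\mu_2\circ_j\mu_0)-\chi(\mu)\mu_1=\chi(\mu)\mu_1-\chi(\mu)\mu_1=0$ for $j\in\{1,2\}$, and for $\mu\in E(n)$ with $n\geq 3$ every composite involving $\mu$ maps to $0$ since $\psi(\mu)=0$. For the relations $R$, write a homogeneous relation as $r=r_2+r_1$ with quadratic part $r_2$ and linear part $r_1\in E$. If $r$ has arity $m\geq 4$, then $\psi(r)=0$ for trivial reasons: $r_1\in E(m)$ maps to $0$, and in each quadratic term at least one of the two composed generators has arity $\geq 3$ (two binary generators compose only to arity $3$), hence maps to $0$.

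The one genuinely non-trivial case, and the main obstacle, is that of arity-$3$ relations. Here $r_1\in E(3)$ maps to $0$, while the quadratic part is a sum of binary-binary composites; since $\uCom(3)=\mathbf{k}\mu_3$ carries the trivial $\Sym_3$-action, every such composite maps to $\mu_3$, so $\psi(r)=s\,\mu_3$ where $s=\sum_k c_k\,\chi(\nu_1^k)\chi(\nu_2^k)$ is the ``$\chi\otimes\chi$-coefficient'' of $r_2$. To see $s=0$ I would compute this scalar intrinsically inside $u_\chi\rmP$, where $r=0$: capping any two of the three inputs with $u$ is a well-defined map $u_\chi\rmP(3)\to u_\chi\rmP(1)$, and using the unit relations $\nu\circ_j u=\chi(\nu)\id$ one checks that each binary-binary term of $r_2$ becomes $\chi(\nu_1)\chi(\nu_2)\id$ independently of which two inputs are capped, while each ternary generator in $r_1$ becomes $0$ by the relations of type (2). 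Hence $s\cdot\id=0$ in $u_\chi\rmP(1)$, and since $\rmP\hookrightarrow u_\chi\rmP$ is a monomorphism by extendability we have $\id\neq 0$, forcing $s=0$ and therefore $\psi(r)=0$. This is exactly where the extendability hypothesis is used; the remaining verifications are routine.
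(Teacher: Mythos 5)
Your proposal is correct and follows essentially the same route as the paper: both reduce to arity-$3$ relations with binary quadratic part, observe that $\psi$ of such a relation is $\mu_3$ times its $\chi\otimes\chi$-coefficient $s$, and then force $s=0$ by composing the relation with $u$ twice inside $u_\chi\rmP$ (where the quadratic terms become $\chi\chi\cdot\id$ and the linear terms die), invoking extendability to ensure $\id \neq 0$. Your write-up is slightly more explicit about the arity $\geq 4$ case, the $\Sym_2$-equivariance, and the independence of which two inputs are capped (which handles the shuffle permutations), but these are routine points the paper's proof treats implicitly.
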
 

\begin{proof}
    We need to show that for any relation $r=0$ in $u_\chi \rmP$, we have $\psi(r)=0$ in $\uCom$.
    For the relations $R_\chi$, this is clear. 
    Now consider a relation $r=0$ in $R$.
    Without loss of generality, we can assume that it contains only generators of arity $2$; otherwise, the associated monomials are sent to $0$ by $\psi$. 
    Since $\rmP$ is quadratic-linear, $r=0$ is of the form $\sum_{\sigma}(\mu \circ_1 \nu)^{\sigma} + \sum_{\tau}(\alpha \circ_2 \beta)^{\tau} =0$
    for some $\mu,\nu,\alpha,\beta \in E(2)$ and shuffle permutations $\sigma,\tau$ in~$\Sym_3$.
    Composing this relation with~$u$ twice gives 
    \begin{align*}
    (r \circ_1 u)\circ_1 u
    &=
    \sum_{\sigma}\left(((\mu \circ_1 \nu)\circ_i u)\circ_j u \right) + 
    \sum_{\tau}\left(((\alpha \circ_2 \beta)\circ_k u)\circ_l u \right)\\
    &=\sum_{\sigma}\chi(\mu)\chi(\nu)\id+\sum_{\tau}\chi(\alpha)\chi(\beta)\id=0.
    \end{align*}
    Requiring $\rmP$ to be extendable is equivalent to requiring that the underlying $\Sym$-module of~$u_\chi\rmP$ is isomorphic to $\rmP \oplus \mathbf{k}u$.
    Therefore, imposing the relations $R_\chi$ in the definition of $u_\chi \rmP$~\eqref{eq:unital-quotient} do not create any new relations between elements of $\rmP$. 
    In particular, $u_\chi \rmP$ is not the trivial operad, and we must have $\sum_{\sigma}\chi(\mu)\chi(\nu)+\sum_{\tau}\chi(\alpha)\chi(\beta)=0$.
    The image of the relation $r$ under the map~$\psi$ then becomes
    \begin{eqnarray*}
        \psi(r) 
        &=& \sum_{\sigma}\psi(\mu \circ_1 \nu)^{\sigma} + \sum_{\tau}\psi(\alpha \circ_2 \beta)^{\tau}  \\
        &=& \sum_{\sigma}(\psi(\mu) \circ_1 \psi(\nu))^{\sigma} + \sum_{\tau}(\psi(\alpha) \circ_2 \psi(\beta))^{\tau}  \\
        &=& \sum_{\sigma}(\chi(\mu)\chi(\nu)\mu_2 \circ_1 \mu_2)^{\sigma} + \sum_{\tau}(\chi(\alpha)\chi(\beta)\mu_2\circ_2 \mu_2)^{\tau}  \\
        &=&\sum_{\sigma}\chi(\mu)\chi(\nu)\mu_2 \circ_1 \mu_2 + \sum_{\tau}\chi(\alpha)\chi(\beta)\mu_2\circ_2 \mu_2  \\
        &=& \left(\sum_{\sigma}\chi(\mu)\chi(\nu)+\sum_{\tau}\chi(\alpha)\chi(\beta)\right)\mu_2 \circ_1 \mu_2  = 0,
    \end{eqnarray*}
    as desired.
\end{proof}

This immediately gives a lift of $(u_\chi \rmP)^{*}$ to $\coCurv$ and $\coTwist$.

\begin{samepage}
\begin{theorem}
\label{dual of Ext is in coCurv intro}
    Any unital extension $\rmP \hookrightarrow u_\chi \rmP$ of an extendable operad~$\rmP$ defines objects $((u_\chi \rmP)^*, \ev_u)$ in $\coCurv$ and $((u_\chi \rmP)^*, \ev_{\id}, \ev_u)$ in $\coTwist$, with both structural morphisms being the linear dual of the operad morphism $u_\chi \rmP \to \uCom$.
\end{theorem}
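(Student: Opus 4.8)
The plan is to verify directly the defining conditions of \cref{coCurv} and \cref{coTwist}, the only substantive input being the operad morphism $\psi : u_\chi \rmP \to \uCom$ of \cref{extendable-over-uCom}.

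First I would set up the underlying cooperad. As a quadratic operad, $\rmP$ is weight-graded with arity-wise finite-dimensional weight pieces, and extendability identifies the underlying $\Sym$-module of $u_\chi\rmP$ with $\rmP \oplus \mathbf{k}u$, so $u_\chi\rmP$ retains these finiteness properties. Its graded linear dual $(u_\chi\rmP)^*$ is therefore a well-defined cooperad admitting the complete weight filtration required for $\Omega$. Dualizing $\psi$ produces a cooperad morphism $\psi^* : \uComd \to (u_\chi\rmP)^*$, which exhibits $(u_\chi\rmP)^* \in \coCom$ and serves as the structural morphism in both assertions. Since $\rmP$, and hence $u_\chi\rmP$, carries the zero differential, the induced differential $d_{(u_\chi\rmP)^*}$ vanishes.

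Next I would check the $\coCurv$ conditions with $\form = \ev_u$. This is a degree-$0$ $\Sym$-module map into $\rmO$, since $\rmO$ is concentrated in arity $0$ where $u_\chi\rmP(0) = \mathbf{k}u$. Following the notation preceding \cref{strucural morphism cooperadic adjoint}, the element $\mu_0^*$ of $(u_\chi\rmP)^*$ (the image of the generator under $\psi^*$) is the functional $v \mapsto \mu_0^*(\psi(v))$; as $\psi(u) = \mu_0$ we get $\psi^*(\mu_0^*)(u) = 1$, whence $\form(\mu_0^*) = \ev_u(\psi^*(\mu_0^*)) = 1$. This gives $((u_\chi\rmP)^*, \ev_u) \in \coCurv$.

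For $\coTwist$ I would take $\counit = \ev_{\id}$, a degree-$0$ $\Sym$-module map into $\rmI$ (concentrated in arity $1$); the same computation, using $\psi(\id) = \mu_1$, yields $\counit(\mu_1^*) = \psi^*(\mu_1^*)(\id) = \mu_1^*(\mu_1) = 1$. The point requiring the most care, and the main obstacle, is showing that $\ev_{\id}$ is a counit in the sense of \cref{coTwist}, namely $(\ev_{\id}\,\hat\circ\,\id_\rmC)\circ\Delta = (\id_\rmC\,\hat\circ\,\ev_{\id})\circ\Delta = \id_\rmC$. I would obtain this by dualizing the two-sided operadic unitality of $\id \in u_\chi\rmP(1)$ (that is, $v\circ_i\id = v = \id\circ_1 v$): the operadic unit map $\rmI \to u_\chi\rmP$, $1\mapsto \id$, dualizes to $\ev_{\id}$, and the operadic unit axioms dualize termwise to the two counit identities, with the completed products $\hat\circ$ matching the graded dual. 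Finally, $\ker(\ev_{\id})$ is trivially preserved by $d_{(u_\chi\rmP)^*}=0$, so condition~(\ref{decomposition condition}) holds, and $((u_\chi\rmP)^*, \ev_{\id}, \ev_u) \in \coTwist$ with structural morphism $\psi^*$.
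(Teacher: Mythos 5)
Your proposal is correct and follows essentially the same route as the paper: the paper treats this theorem as an immediate consequence of \cref{extendable-over-uCom}, whose content is precisely the operad morphism $\psi : u_\chi \rmP \to \uCom$ that you dualize to obtain the structural morphism $\uComd \to (u_\chi \rmP)^*$. The checks you spell out --- the normalizations $\ev_u(\psi^*(\mu_0^*))=1$ and $\ev_{\id}(\psi^*(\mu_1^*))=1$, the counit axiom for $\ev_{\id}$ obtained by dualizing the two-sided unitality of $\id$ in $u_\chi\rmP$, and the trivial compatibility with the zero differential --- are exactly the routine verifications the paper leaves implicit.
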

\end{samepage}

Note that the cooperad $(u_\chi \rmP)^*$ is generated in arity $0$ by a single element $u^*$; therefore given a cooperation $v \in (u_\chi \rmP)^*$, we can write $\delta_{(u_\chi \rmP)^*}(v) = v + \sum_{w, A} w(u^*_A) + \delta_{(u_\chi \rmP)^*}^+(v)$, where $u^*_A$ is the constant family $a \mapsto u^*$ and $\delta_{(u_\chi \rmP)^*}^+(v) \in \overline \T \rmC$ is a sum of trees which have at least one cooperation of non-zero arity outside of the first level (\cref{notation delta +}). 

\begin{corollary}
\label{DSV formula intro}
    Let $\rmP$ be a Koszul operad whose Koszul dual $\rmP^!$ admits a unital extension $\rmP^! \hookrightarrow u_\chi \rmP^!$, and set $\cPinf^\chi \eqdef \Omega (u_\chi \rmP^!)^*$.
    Then, the lift $((u_\chi \rmP^!)^*, \ev_u)$ of $(u_\chi \rmP^!)^*$ to $\coCurv$ induces 
    \begin{enumerate}
        \item a morphism $\cLinf \xrightarrow{\Omega(\psi^*)} \cPinf^\chi$, where $\psi : u_\chi \rmP^! \to \uCom$ is given in \cref{extendable-over-uCom},
        \item \label{cTw-coalgebra-extendable} a $\cTw$-coalgebra structure 
        \[ \eta_{\cPinf^\chi} : \cPinf^\chi \to \cTw(\cPinf^\chi), \quad s^{-1}v \mapsto s^{-1}v + \ev_u(v) \kappa + \sum_{w, A} (s^{-1} w)^A. \]
    \end{enumerate} 
\end{corollary}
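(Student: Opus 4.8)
The plan is to deduce both assertions directly from the general unit formula \cref{explicit formula unit intro}, specialized to the object $((u_\chi \rmP^!)^*, \ev_u) \in \coCurv$ furnished by \cref{dual of Ext is in coCurv intro}. Part (1) is then essentially a matter of unwinding definitions. By \cref{dual of Ext is in coCurv intro}, the pair $((u_\chi \rmP^!)^*, \ev_u)$ is an object of $\coCurv$ whose structural morphism is $\psi^* : \uCom^* \to (u_\chi \rmP^!)^*$, the linear dual of the map $\psi$ of \cref{extendable-over-uCom}. Applying the functor $\tilde\Omega$ of \cref{cobar of coCurv} produces an object of $\Curv$, and by construction its structural morphism $\cLinf = \Omega \uCom^* \to \Omega (u_\chi \rmP^!)^* = \cPinf^\chi$ is exactly $\Omega(\psi^*)$. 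This gives (1).

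For (2), I would invoke comonadicity. By \cref{cTw comonadicity} the forgetful functor $\calL : \Curv \to \cMult$ is comonadic with comonad $\cTw$, so for $x \eqdef \tilde\Omega((u_\chi \rmP^!)^*, \ev_u)$ the object $\calL(x) = \cPinf^\chi$ carries its canonical $\cTw$-coalgebra structure $\calL(\eta_x) : \calL(x) \to \calL\calR\calL(x) = \cTw(\cPinf^\chi)$, where $\eta_x$ is the unit of $(\calL \dashv \calR)$. Since $\calL$ is forgetful, this coalgebra structure is the operad morphism $\eta_x$ itself (automatically a morphism under $\cLinf$), and its formula is precisely \cref{explicit formula unit intro} with $\rmC = (u_\chi \rmP^!)^*$ and $\form = \ev_u$, with target $\calR\calL(x) = (\cTw(\cPinf^\chi), \cPinf^\chi \vee [\alpha])$ matching the stated codomain.

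It then remains to collapse the general expression
\[
\eta_x(s^{-1}v) = s^{-1}v + \form(v)\,\kappa + \form(d_\rmC v)\,\alpha + \sum_{w, A, \underline x} \Bigl(\prod_{a \in A} \form(x_a)\Bigr) (s^{-1} w)^A
\]
to the asserted one. Three reductions do this. First, since $\rmP$ carries the zero differential, so do $\rmP^!$ and $u_\chi \rmP^!$, whence the cooperad $(u_\chi \rmP^!)^*$ has $d_\rmC = 0$ and the term $\form(d_\rmC v)\,\alpha$ vanishes. Second, the arity-zero part of $(u_\chi \rmP^!)^*$ is one-dimensional, spanned by $u^*$; this is exactly the content of the decomposition $\delta_{(u_\chi\rmP^!)^*}(v) = v + \sum_{w, A} w(u^*_A) + \delta_{(u_\chi\rmP^!)^*}^+(v)$ recalled just before the statement, in which every plugging family $\underline x$ is the constant family $u^*_A$. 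Third, $\ev_u(u^*) = 1$, so each coefficient $\prod_{a \in A} \form(x_a) = \prod_{a \in A} \ev_u(u^*)$ equals $1$. Substituting (a)--(c) leaves $s^{-1}v + \ev_u(v)\,\kappa + \sum_{w, A}(s^{-1}w)^A$, as claimed.

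The genuine analytic content has already been discharged in \cref{explicit formula unit intro}, so I expect no real obstacle here; the only point demanding care is the structural verification that $(u_\chi \rmP^!)^*$ has vanishing internal differential and that its arity-zero cooperations are spanned by $u^*$ with $\ev_u(u^*) = 1$, which is what licenses the three simplifications above.
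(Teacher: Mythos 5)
Your proposal is correct and follows essentially the same route as the paper's proof: cite \cref{dual of Ext is in coCurv intro} for the object of $\coCurv$, apply the cobar functor of \cref{cobar of coCurv} to land in $\Curv$ (giving the morphism $\Omega(\psi^*)$), invoke the comonadicity of \cref{cTw comonadicity} for the coalgebra structure, and read off the formula from \cref{explicit formula unit intro}. The only difference is that you spell out the collapse of the general unit formula (vanishing of $\form(d_\rmC v)\alpha$ since $d_\rmC = 0$, constancy of the plugging families at $u^*$, and $\ev_u(u^*)=1$), which the paper leaves implicit, relying on the decomposition of $\delta_{(u_\chi\rmP^!)^*}$ stated just before the corollary; your explicit verification is a welcome, and correct, addition.
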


\begin{proof}
    According to \cref{dual of Ext is in coCurv intro}, a unital extension $\rmP^! \hookrightarrow u_\chi \rmP^!$ defines, in particular, an object $((u_\chi \rmP^!)^*, \ev_u) \in \coCurv$ with an explicit morphism $\psi^* : \uCom^* \to (u_\chi \rmP^!)^*$ given by~$\chi$.
    Using \cref{cobar of coCurv}, we get an object $(\Omega (u_\chi \rmP^!)^*, \ker(\ev_u)) \in \Curv$, with morphism $\Omega(\psi^*) : \cLinf \to \Omega (u_\chi \rmP^!)^* = \cPinf^\chi$.
    Now, \cref{cTw comonadicity} implies that the unit of the adjunction $\Curv \leftrightarrow \cMult$ for $(\cPinf^\chi, \ker(\ev_u))$ induces a $\cTw$-coalgebra structure $\eta_{\cPinf^\chi} : \cPinf^\chi \to \cTw(\cPinf^\chi)$.
    The explicit formula for $\eta_{\cPinf^\chi}$ is given by \cref{explicit formula unit intro}.
\end{proof}

Recall that a quadratic operad $\rmP$ is naturally augmented, therefore $\rmP^*$ is coaugmented and we can consider its coaugmentation ideal $\overline{\rmP^*}$.
Given a unital extension $\rmP \hookrightarrow u_\chi \rmP$, we have~$\overline{\rmP^*} \cong \ker(\ev_u \oplus \ev_{\id})$. 
Hence the following result.

\begin{samepage}
\begin{corollary}
\label{extendable operads are Tw-coalgebras intro}
    Let $\rmP$ be a Koszul operad whose Koszul dual $\rmP^!$ admits a unital extension $\rmP^! \hookrightarrow u_\chi \rmP^!$ and set $\Pinf \eqdef \Omega \overline{(\rmP^!)^*}$. 
    Then, the lift $((u_\chi \rmP^!)^*, \ev_u, \ev_{\id})$ of $(u_\chi \rmP^!)^*$ to $\coTwist$ induces 
    \begin{enumerate}
        \item\label{morphism from Linf} a morphism $\Linf \xrightarrow{\Omega(\psi^*)} \Pinf$, where $\psi : \overline{\rmP^!} \to \overline{\Com}$ is given on generators by $\nu \mapsto \chi(\nu) \mu_2$ for $\nu \in E(2)$ and $\nu' \mapsto 0$ for $\nu' \in E(n)$ with $n \geq 3$,
        \item\label{Tw-coalgebra structure} a $\Tw$-coalgebra structure
        \[ \eta_{\Pinf} : \Pinf \to \Tw(\Pinf), \quad s^{-1}v \mapsto s^{-1}v + \sum_{w, A} (s^{-1} w)^A. \]
    \end{enumerate}
\end{corollary}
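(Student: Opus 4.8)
The plan is to run the argument in complete parallel to the proof of \cref{DSV formula intro}, replacing the curved machinery $(\coCurv, \Curv, \cTw)$ by its uncurved counterpart $(\coTwist, \Twist, \Tw)$. The three inputs are: the lift to $\coTwist$ provided by \cref{dual of Ext is in coCurv intro}; the cobar functor $\overline \Omega : \coTwist \to \Twist$ of \cref{cobar of coTwist}; and the comonadicity of $\forget : \Twist \to \Mult$ together with the identification $\forget \free = \Tw$ from \cref{thm:main-thm}. The explicit formula will then be read off from \cref{explicit formula unit Twist intro} by specialization.

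First I would invoke \cref{dual of Ext is in coCurv intro} to obtain the object $((u_\chi \rmP^!)^*, \ev_{\id}, \ev_u) \in \coTwist$, whose structural morphism is the linear dual $\psi^* : \uComd \to (u_\chi \rmP^!)^*$ of the map $\psi$ of \cref{extendable-over-uCom}. Applying $\overline \Omega$ (\cref{cobar of coTwist}) produces an object of $\Twist$ with underlying operad $\Omega (u_\chi \rmP^!)^*$ and distinguished suboperad $\Omega \ker(\ev_u \oplus \ev_{\id})$. Using the identification $\overline{(\rmP^!)^*} \cong \ker(\ev_u \oplus \ev_{\id})$ recorded before the statement, this suboperad is exactly $\Pinf = \Omega \overline{(\rmP^!)^*}$. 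The first assertion (the morphism from $\Linf$) is then the bottom edge $\Linf \to \Pinf$ of the canonical commuting square attached to a $\Twist$-object (the square following \cref{twist new}): it is the quotient by $(\ell_0, \ell_1)$ of the top map $\cLinf = \Omega \uComd \xrightarrow{\Omega \psi^*} \Omega (u_\chi \rmP^!)^*$, hence equals $\Omega$ applied to the dual of the restriction $\overline \psi : \overline{\rmP^!} \to \overline{\Com}$, which is the map described on generators in the statement.

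For the second assertion, comonadicity of $\forget$ (\cref{thm:main-thm}) guarantees that the $\Twist$-object $\overline \Omega((u_\chi \rmP^!)^*, \ev_{\id}, \ev_u)$ endows its image $\Pinf = \forget \overline \Omega(\dots)$ with a $\Tw$-coalgebra structure, namely $\forget(\eta)$ for $\eta$ the unit of $\forget \dashv \free$. The formula for $\eta$ is exactly \cref{explicit formula unit Twist intro}, which, writing $\rmC = (u_\chi \rmP^!)^*$ and $\form = \ev_u$, reads on the generators $s^{-1}v$ (for $v \in \ker(\ev_u \oplus \ev_{\id})$)
\[
\eta_{\Pinf}(s^{-1}v) = s^{-1}v + \ev_u(d_{\rmC} v)\,\alpha + \sum_{w, A, \underline x} \Big(\prod_{a \in A} \ev_u(x_a)\Big)\,(s^{-1} w)^A .
\]
I would then specialize using that $\rmC$ is generated in arity $0$ by the single element $u^*$, so the families $\underline x$ are the constant families $u^*_A$ appearing in the decomposition $\delta_{\rmC}(v) = v + \sum_{w, A} w(u^*_A) + \delta^+_{\rmC}(v)$ recorded before the corollary; whence each factor $\ev_u(x_a) = \ev_u(u^*) = 1$ and the sum collapses to $\sum_{w, A} (s^{-1}w)^A$.

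The one genuinely substantive point — the step I would treat most carefully — is the vanishing of the $\alpha$-term $\ev_u(d_{\rmC} v)\,\alpha$. This rests on the observation that $u_\chi \rmP^!$ is a non-dg operad, so its linear dual $\rmC = (u_\chi \rmP^!)^*$ is a cooperad with zero internal differential $d_{\rmC}$; all of the curved-homotopy structure on $\Omega \rmC$ arises from the cobar differential, i.e.\ from the cocomposition, rather than from $d_{\rmC}$. Consequently $\ev_u(d_{\rmC} v) = 0$ and the $\alpha$-term drops out, leaving $\eta_{\Pinf}(s^{-1}v) = s^{-1}v + \sum_{w,A}(s^{-1}w)^A$ as claimed. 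I would also note that every $v \in \ker(\ev_u \oplus \ev_{\id}) = \overline{(\rmP^!)^*}$ has positive arity, so only the positive-arity branch of the decomposition preceding \cref{explicit formula unit Twist intro} is invoked, and that the resulting formula recovers the gauge-group formula of \cite{DotsenkoShadrinVallette23}. Apart from this, the proof is a purely formal chaining of the previously established functors and adjunctions.
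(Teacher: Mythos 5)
Your proposal is correct and follows essentially the same route as the paper: the paper's own proof is literally ``the proof of \cref{DSV formula intro}, using \cref{cobar of coTwist} instead of \cref{cobar of coCurv}, \cref{thm:main-thm} instead of \cref{cTw comonadicity}, and \cref{explicit formula unit Twist intro} instead of \cref{explicit formula unit intro}'', which is exactly the chain you run. Your additional care in the specialization step (the $\alpha$-term vanishes because $u_\chi \rmP^!$ has zero differential so $d_\rmC = 0$, the coefficients collapse since every $x_a = u^*$ and $\ev_u(u^*)=1$, and only the positive-arity branch is needed) correctly fills in details the paper leaves implicit.
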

\end{samepage}

\begin{proof}
    The proof is the same as the proof of \cref{DSV formula intro}, using \cref{cobar of coTwist} instead of \cref{cobar of coCurv}, \cref{thm:main-thm} instead of \cref{cTw comonadicity}, and \cref{explicit formula unit Twist intro} instead of \cref{explicit formula unit intro}.
\end{proof}

In \cite[Section~4.5]{DotsenkoShadrinVallette23}, a twisting procedure for $\cPinf^\chi$-algebras is given via the action of a ``deformation gauge group'' \cite[Definition~3.21]{DotsenkoShadrinVallette23} on the convolution (Pre-)Lie algebra 
\begin{equation}
\label{convolution-Lie-algebra}
    \hom_{\Sym}((u_\chi \rmP^!)^*, \End_A).
\end{equation}
Here, $\End_A$ is the endomorphism operad of a complete filtered vector space and $\hom_\Sym$ denote morphisms of $\Sym$-modules.
Recall that Maurer--Cartan elements~$\beta$ in this Lie algebra are in bijection with morphisms of operads $\phi_\beta : \cPinf^\chi \to \End_A, \, s^{-1} v \mapsto \beta(v)$ \cite[Proposition~3.18]{DotsenkoShadrinVallette23}.
According to \cite[Lemma~4.22]{DotsenkoShadrinVallette23}, the fact that $u_\chi \rmP$ is a unital extension implies that the Lie algebra \eqref{convolution-Lie-algebra} is isomorphic to $A \times \hom_{\Sym}((\rmP^{!})^{*}, \End_A)$.
In particular, we have that degree one elements of~$A$ are part of the gauge group, and thus act on the set of Maurer-Cartan elements of the Lie algebra~\eqref{convolution-Lie-algebra}, as follows (see~\cite{DotsenkoShadrinVallette23}, after Definition~3.23).
An element $a \in \calF_1 A$ acts on a Maurer-Cartan element $\beta \in \hom_{\Sym}((u_\chi \rmP^{!})^*, \End_A)$ via the composition 
\begin{equation}
\label{eq:gauge-group}
    (u_\chi \rmP^{!})^* \xrightarrow{\Delta} (u_\chi \rmP^{!})^* \, \hat{\circ} \, (u_\chi \rmP^{!})^* \xrightarrow{\beta \,\hat \circ \, (1+a)} \End_A \hat \circ \End_A \xrightarrow{\gamma} \End_A,
\end{equation}
where $\hat \circ$ denotes the completed composite product, $\Delta$ is the full decomposition of the cooperad~$(u_\chi \rmP^{!})^*$ and $\gamma$ is the full composition of the operad~$\End_A$; this gives a new Maurer--Cartan element $a \cdot \beta$ in $\hom_{\Sym}((u_\chi \rmP^{!})^*, \End_A)$, i.e.\ a twisted $\cPinf^\chi$-algebra structure on~$A$.
Moreover, this new $\cPinf^\chi$-algebra is a $\Pinf$-algebra if and only if $(a \cdot \beta)(u^*)=0$ \cite[Theorem~4.23]{DotsenkoShadrinVallette23}.

Consider the cartesian category $\calC$ of representable functors $\Op \to \Vect$, and the monoid object~$G$ in $\calC$ which sends $\rmP$ to the space of degree $0$ elements in $F_1 \rmP(0)$.
This functor~$G$ is indeed representable by the operad~$\rmT$ generated by two arity $0$ operations $\alpha$ and $\kappa$ in~$F_1$ of degrees $0$ and $(-1)$ respectively, with $d_\rmT \alpha = \kappa$.
In \cref{sec:group-action}, we use the Yoneda Lemma to show that for any $\cLinf$-operad $\rmQ$, a $\cTw$-coalgebra structure on $\rmQ$ is equivalent to a $G$-action on $\hom_{\Op}(\rmQ,-)$ (\cref{Yoneda-power}).
Applying this result to $\cPinf^\chi$, we get an action of~$\calF_1 A$ on $\hom_{\Op}(\cPinf^\chi,\End_A)$ for each complete filtered vector space~$A$. 

\begin{prop}
\label{rem:DSV-formula} 
Let $A$ be a $\cPinf^\chi$-algebra.
    Then, the actions of $\calF_1 A$ on $A$ given by the $\cTw$-coalgebra structure of $\cPinf^\chi$ and the Dotsenko--Shadrin--Vallette gauge group coincide.
    The same holds for $\Pinf$-algebras.
\end{prop}

\begin{proof}
    The action of degree one elements $\calF_1 A$ of a complete filtered vector space $A$ provided by the $\cTw$-coalgebra structure on $\cPinf^\chi$ is given by precomposition with the unit $\eta_{\cPinf^\chi}$ of the adjunction $\Curv \leftrightarrow \cMultSym$ 
    \begin{equation*}
    \label{composition-gauge-group}
        \calF_1 A \times 
        \hom_{\Op}(\cPinf^\chi, \End_A) 
        \to 
        \hom_{\Op} (\cTw(\cPinf^\chi), \End_A) \xrightarrow{(-) \circ \eta_{\cPinf^\chi}} \hom_{\Op}(\cPinf^\chi, \End_A),
    \end{equation*}
    sending $(a, \phi_\beta)$ to $(\phi_\beta \vee (\alpha \mapsto a)) \circ \eta_{\cPinf^\chi}$.
    One can then check from the explicit description of $\eta_{\cPinf^\chi}$ in Point~\eqref{cTw-coalgebra-extendable} of \cref{DSV formula intro}, that this twisted $\cPinf^\chi$-algebra structure agrees with the one given by~\eqref{eq:gauge-group}, i.e.\ that 
    \[(\phi_\beta \vee (\alpha \mapsto a)) \circ \eta_{\cPinf^\chi}(s^{-1} v) = (a \cdot \beta)(v) \]
    for every $v \in (u_\chi \rmP^{!})^*$. 
    For the case of $\Pinf$-algebras, it follows analogously from the comparison with the formula in Point~(2) of \cref{extendable operads are Tw-coalgebras intro}.
\end{proof}

\begin{remark}
In the language of \cite[Definition~4.24]{DotsenkoShadrinVallette23},  Point~(2) of  \cref{extendable operads are Tw-coalgebras intro}
asserts that the operad~$\Pinf$ admits a $\Tw$-coalgebra structure if the category of $\Pinf$-algebras is twistable.
This fact is suggested by the order of chapters in  \cite{DotsenkoShadrinVallette23}, but does not appear to be stated or proven therein.
However, once one has recognized the existence of a structural morphism~$\Linf \to \Pinf$ (which is provided here by~\cref{extendable-over-uCom}), the result can be derived from the properties of the gauge group action~\eqref{eq:gauge-group} developed in loc.\ cit., rather than the methods of the present article; we do so in \cref{sec:twistable-Tw-stable}.
\end{remark}


\subsection{Non-homotopy algebras}
\label{sec:non-homotopy-algebras}
The preceding two \cref{DSV formula intro,extendable operads are Tw-coalgebras intro} admit ``non-homotopy'' counterparts.
As before, let $\rmP$ be a Koszul operad whose Koszul dual admits a unital extension $\rmP \hookrightarrow u_\chi \rmP$, and let $\phi :  \rmP^{\as}\cong (\rmP^{!})^{*} \to \rmP$ be the Koszul twisting morphism associated to $\rmP$ \cite[Sec.~7.4.1]{LodayVallette12}.
One then has a notion of \emph{curved $\rmP$-algebras} encoded by the following operad (see also \cite[Sec.~2.5.4]{CalaqueCamposNuiten21}).

\begin{definition}
    The operad $\cP^{\chi}$ is the quotient of $\cPinf^{\chi} = \Omega(u_\chi \rmP^{!})^{*}$ by the operadic ideal generated by the kernel $\ker(\phi) \subset \rmP^{\as} \subset \mathbf{k}u^{*} \oplus \rmP^{\as} \cong (u_\chi \rmP^{!})^{*}$.
\end{definition}

\begin{lemma}
\label{cP-in-curv}
    Let $\rmP$ be a Koszul operad such that its Koszul dual operad $\rmP^{!}$ admits a unital extension $\rmP^{!} \hookrightarrow u_\chi \rmP^{!}$. 
    Then, the curved variant $(\cP^{\chi},\rmP)$ is in $\Curv$.
    Moreover the projection morphism $(\cPinf^{\chi},\Pinf) \to (\cP^{\chi},\rmP)$ is in $\Curv$. 
\end{lemma}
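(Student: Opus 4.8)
The plan is to check the three conditions of \cref{curv redefined} for $(\cP^\chi,\rmP)$ directly, and then to read off that the projection is a morphism in $\Curv$.

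The first step is to identify the underlying non-dg operad of $\cP^\chi$. As a non-dg operad $\cPinf^\chi=\Omega(u_\chi\rmP^!)^*$ decomposes as $[\ell_0]\vee\Omega\ker(\ev_u)$, where $\ell_0=s^{-1}u^*$ and $\ker(\ev_u)=\rmP^{\as}$ is the complement of $\mathbf{k}u^*$ inside $(u_\chi\rmP^!)^*$. The defining ideal is generated by $s^{-1}\ker(\phi)$ with $\ker(\phi)\subset\rmP^{\as}=\ker(\ev_u)$, so it never meets the free generator $\ell_0$ and $\cP^\chi\cong[\ell_0]\vee\bigl(\Omega\ker(\ev_u)/(\ker\phi)\bigr)$ as non-dg operads. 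The crucial input is the Koszul-duality identification $\Omega\ker(\ev_u)/(\ker\phi)\cong\rmP$: since $\phi$ is supported in weight one, quotienting by $\ker(\phi)$ removes every cogenerator of weight $\neq 1$ and leaves the weight-one cogenerators, which recover the generators $E$ of $\rmP$, while the cobar differentials of the weight-two cogenerators impose precisely the quadratic relations $R$ (this is where Koszulness of $\rmP$ enters, cf.\ \cite[Sec.~7.4]{LodayVallette12}). Taking this factor as the suboperad $\rmP\subset\cP^\chi$ gives the first condition, the isomorphism $[\ell_0]\vee\rmP\xrightarrow{\cong}\cP^\chi$.

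For the remaining two conditions I would transport them along the surjection $\pi:\cPinf^\chi\to\cP^\chi$, using that $(\cPinf^\chi,\Omega\ker(\ev_u))$ already lies in $\Curv$ by \cref{dual of Ext is in coCurv intro} and \cref{cobar of coCurv}. The structural map $\cLinf\to\cP^\chi$ factors as $\cLinf\xrightarrow{\Omega\psi^*}\cPinf^\chi\xrightarrow{\pi}\cP^\chi$, and $\pi$ carries $\Omega\ker(\ev_u)$ into $\rmP$: its generators $s^{-1}\ker(\ev_u)$ involve no $u^*$, the weight-one ones mapping to generators of $\rmP$ and all others to $0$. Since the operations $\ell_i$ with $i>0$ already lie in $\Omega\ker(\ev_u)$, their images lie in $\rmP$, which is the second condition. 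For the third condition, note that $\pi$ fixes $\ell_0$ and sends $\Omega\ker(\ev_u)$ into $\rmP$, so it is a morphism of $\ell_0$-graded operads; as $d_{\cPinf^\chi}\bigl(\Omega\ker(\ev_u)\bigr)$ has $\ell_0$-degree at most one (the third condition for $\cPinf^\chi$), so does $d_{\cP^\chi}(\rmP)$, computed by lifting along $\pi$ and applying $d_{\cPinf^\chi}$. This establishes $(\cP^\chi,\rmP)\in\Curv$.

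The second assertion is then immediate: $\pi$ is a dg operad morphism commuting with the maps from $\cLinf$ and, by the computation above, sending the distinguished suboperad of $\cPinf^\chi$ into $\rmP$, hence a morphism in $\Curv$. I expect the genuine obstacle to be the non-dg identification $\Omega\ker(\ev_u)/(\ker\phi)\cong\rmP$ of the first step. There one must read the ideal in the dg sense, so that the cobar differentials of the weight-two cogenerators, which supply the relations $R$, are included; and one must track the weight-zero cogenerator $s^{-1}\id^*$, which is killed in the quotient while the unary operation $\ell_1$ survives inside $\rmP$ as the image of the corresponding generator of $\cLinf$. The remaining points---that $\pi$ respects the $\ell_0$-grading and the $\cLinf$-structure---are routine.
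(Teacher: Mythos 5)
Your main argument coincides with the paper's proof: the same splitting of non-dg operads $\cPinf^\chi \cong [\ell_0] \vee \Omega\ker(\ev_u)$ with the defining ideal generated inside the second factor, the same identification $\Omega\ker(\ev_u)/(s^{-1}\ker\phi) \cong \rmP$ (which the paper also invokes without further elaboration), and the same transport of the $\ell_0$-degree condition along the projection; you are slightly more explicit than the paper in checking condition (2) of \cref{curv redefined} and in justifying the Koszul-duality identification, but the route is identical.

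One claim in your closing paragraph is wrong, however: the unary operation $\ell_1$ does \emph{not} ``survive inside $\rmP$ as the image of the corresponding generator of $\cLinf$''. That image is $s^{-1}\psi^*(\mu_1^*) = s^{-1}\id^*$, i.e.\ exactly the weight-zero cogenerator you have just said is killed; since the Koszul twisting morphism $\phi$ vanishes outside weight one, $\id^* \in \ker(\phi)$, so the image of $\ell_1$ in $\cP^\chi$ is $0$. This does not damage the proof --- condition (2) of \cref{curv redefined} only asks that the images of the $\ell_i$, $i>0$, lie in the suboperad $\rmP$, and $0 \in \rmP$; the vanishing is moreover forced by \cref{lem:cP-factors-through-cLie}, since $\ell_1$ already dies in $\cLie$ --- but as written the sentence contradicts your own second paragraph, where you correctly send every cogenerator of weight $\neq 1$, including $s^{-1}\id^*$, to zero.
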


\begin{proof}
    We first prove that $(\cP^{\chi}, \rmP)$ is in $\Curv$.
    We have a morphism $\cLinf \to \cP^\chi$ given by the composition $\cLinf \to \cPinf^\chi \to \cP^\chi$.
    Since we had an isomorphism $\Omega(\rmP^!)^* \vee [\ell_0] \xrightarrow{\sim} \cPinf^\chi$ and $\cP^\chi$ is the quotient of $\cPinf^\chi$ by the operadic ideal generated by $s^{-1} \ker(\phi) \subset \Omega(\rmP^!)^*$, we get an isomorphism $\rmP \vee [\ell_0] \xrightarrow{\sim} \cP^\chi$.
    The fact that $d_{\cP^\chi}(\rmP)$ is of degree at most $1$ in $\ell_0$ follows from the corresponding fact for $d_{\cPinf}(\Pinf)$. 
    The projection $(\cPinf^{\chi},\Pinf) \to (\cP^{\chi},\rmP)$ is in $\Curv$ since it sends $\Pinf$ to $\rmP$.
\end{proof}

\begin{lemma}
\label{lem:cP-factors-through-cLie}
    The structural morphism $\cLinf \to \cP^{\chi}$ factors through $\cLie$, in the sense that the following diagram commutes in $\Curv$
    \begin{center}
    \begin{tikzcd}
    \cLinf \arrow[r] \arrow[d] & \cPinf^{\chi} \arrow[d] \\
    \cLie \arrow[r]                 & \cP^{\chi}    
    \end{tikzcd}
    \end{center}
\end{lemma}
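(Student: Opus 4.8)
The plan is to construct the bottom arrow $\cLie \to \cP^\chi$ from the universal property of the quotient defining $\cLie$, which reduces the whole statement to a single vanishing claim about the higher brackets. Recall that $\cLie$ is the curved Lie operad and that the structural morphism $\cLinf \to \cLie$ is the canonical projection whose kernel is the operadic ideal generated by the $\ell_n$ with $n \geq 3$. Hence, to produce a (necessarily unique) operad morphism $\bar f \colon \cLie \to \cP^\chi$ fitting into the square, it suffices to show that the composite $\cLinf \to \cPinf^\chi \to \cP^\chi$ kills $\ell_n$ for every $n \geq 3$; the factorization and the commutativity of the square then follow formally.

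First I would identify the image of $\ell_n$. By \cref{DSV formula intro} the map $\cLinf \to \cPinf^\chi = \Omega(u_\chi \rmP^!)^*$ is $\Omega(\psi^*)$, where $\psi \colon u_\chi \rmP^! \to \uCom$ is the morphism of \cref{extendable-over-uCom}, so under the identification $\cLinf = \Omega \uCom^*$ we have $\ell_n = s^{-1}\mu_n^* \mapsto s^{-1}\psi^*(\mu_n^*)$. Since $\psi^*$ preserves arity and $u^*$ lives in arity $0$, for $n \geq 1$ the element $\psi^*(\mu_n^*)$ lies in the arity-$n$ summand $\rmP^\as(n) \subset (u_\chi \rmP^!)^*$. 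Granting $\psi^*(\mu_n^*) \in \ker(\phi)$ for $n \geq 3$, the element $s^{-1}\psi^*(\mu_n^*)$ is one of the generators $s^{-1}\ker(\phi)$ of the defining ideal of $\cP^\chi$, so its class vanishes and $\ell_n \mapsto 0$, as required.

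The membership $\psi^*(\mu_n^*) \in \ker(\phi)$ is the one point needing care, and is where I expect the main obstacle to lie. As the Koszul twisting morphism, $\phi \colon \rmP^\as \to \rmP$ restricts to the (de)suspension isomorphism onto the generators on the weight-one cogenerators and vanishes in every other weight; thus an element of $\rmP^\as$ lies in $\ker(\phi)$ exactly when its weight-one component is zero. The weight-one component of $\psi^*(\mu_n^*)$ is the functional on the arity-$n$ generators of $\rmP^!$ sending $x \mapsto \langle \mu_n^*, \psi(x) \rangle$, and by \cref{extendable-over-uCom} the morphism $\psi$ annihilates every generator of arity $\geq 3$; hence this component vanishes for $n \geq 3$ and $\psi^*(\mu_n^*) \in \ker(\phi)$. (For $n = 2$ the same computation returns the functional $x \mapsto \chi(x)$, which is exactly why $\ell_2$ survives as the bracket of $\cLie$, while $\ell_0$ survives because $\psi^*(\mu_0^*) = u^*$ lies in the $\mathbf{k}u^*$-summand, outside $\rmP^\as$.)

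It remains to record that $\bar f$ is a morphism in $\Curv$. By \cref{DSV formula intro} and \cref{cP-in-curv} the composite $\cLinf \to \cP^\chi$ is a $\Curv$-morphism carrying the distinguished suboperad $\cLinf_0$ into $\rmP \subset \cP^\chi$; since $\ell_1, \ell_2 \in \cLinf_0$ and $\cLie_0$ is generated by their images, $\bar f$ sends $\cLie_0$ into $\rmP$ and therefore respects the distinguished suboperads. Together with the factorization above this yields the commuting square in $\Curv$, completing the argument.
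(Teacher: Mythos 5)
Your core computation is correct, and it is in fact the paper's own proof made explicit: the paper disposes of the lemma in one line by invoking that the Koszul twisting morphism $\phi$ is natural, i.e.\ defines a natural transformation $(-)^{\as} \to \id$ on Koszul operads. Applied to the morphism $\rmP^! \to \Com$ underlying $\psi$, naturality says exactly that $\psi^*$ carries $\ker(\phi_{\Lie})$ into $\ker(\phi_{\rmP})$, which is what your weight-one computation $\langle \psi^*(\mu_n^*), x\rangle = \langle \mu_n^*, \psi(x)\rangle = 0$ (for $x$ a generator of $\rmP^!$ of arity $n \geq 3$, killed by $\psi$ per \cref{extendable-over-uCom}) verifies by hand. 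So the two arguments are the same in mechanism and differ only in how much is unwound; yours has the merit of being self-contained.

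There is, however, one genuine gap: your presentation of $\cLie$. The paper never defines $\cLie$ separately, but within its framework it is the case $\rmP = \Lie$ of the construction of $\cP^{\chi}$, i.e.\ the quotient of $\cLinf = \Omega\uCom^*$ by the dg operadic ideal generated by $s^{-1}\ker(\phi_{\Lie})$, where $\phi_{\Lie} : \Com^* \to \Lie$ is the Koszul twisting morphism. A twisting morphism vanishes outside weight one, so $\ker(\phi_{\Lie})$ contains not only the $\mu_n^*$ with $n \geq 3$ but also the weight-zero element $\mu_1^* = \id^*$; hence the kernel of $\cLinf \to \cLie$ contains $\ell_1$, contrary to your tacit claim that $\ell_1$ survives. (This is forced independently of how one reads $\ker(\phi)$: by \cref{cP-in-curv} the distinguished suboperad of $\cLie$ is $\Lie$ itself, concentrated in degree $0$, and the definition of $\Curv$ requires the image of the arity-one, degree $-1$ operation $\ell_1$ to lie in it, so $\ell_1 = 0$ in $\cLie$.) Consequently your reduction ``it suffices to show the composite kills $\ell_n$ for $n \geq 3$'' is not sufficient as stated: you must also check that the composite kills $\ell_1$. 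The patch is immediate and uses your own argument -- $\psi^*(\mu_1^*) = \id^* \in (u_\chi \rmP^!)^*$ has weight zero, hence zero weight-one component, hence lies in $\ker(\phi_{\rmP})$ -- but as written the proof checks the wrong list of generators. Two smaller points: the ideals here must be taken as dg ideals (the Jacobi relation of $\cLie$ arises only as $d\ell_3$ modulo the ideal), and your sufficiency claim is then valid because a dg map killing the generators of a dg ideal kills the whole dg ideal; and in your final paragraph the distinguished suboperad of $\cLie$ is accordingly generated by the image of $\ell_2$ alone, not by those of $\ell_1$ and $\ell_2$.
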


\begin{proof}
    This follows from the fact that the canonical twisting morphism $\phi$ defines a natural transformation $(-)^{\as} \to \id$ between the endofunctors $(-)^\as$ and $\id$ on the category of Koszul operads.
\end{proof}

\begin{corollary}
\label{cor:non-homotopy-cTw}
    Let $\rmP$ be a Koszul operad whose Koszul dual $\rmP^!$ admits a unital extension $\rmP^! \hookrightarrow u_\chi \rmP^!$. 
    Then, the lift $((u_\chi \rmP^!)^*, \ev_u)$ of $(u_\chi \rmP^!)^*$ to $\coCurv$ induces a morphism $\cLie \to \cP^\chi$ and a $\cTw$-coalgebra structure $\eta_{\cP} : \cP^\chi \to \cTw(\cP^\chi)$ given by the projections of the formulas from \cref{DSV formula intro}.
\end{corollary}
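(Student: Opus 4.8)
The plan is to deduce the statement formally from \cref{cP-in-curv}, \cref{lem:cP-factors-through-cLie}, and \cref{DSV formula intro}, using naturality of the comonadic unit. The morphism $\cLie \to \cP^\chi$ is immediate from \cref{lem:cP-factors-through-cLie}: the structural map $\cLinf \to \cP^\chi$ factors as $\cLinf \to \cLie \to \cP^\chi$, and the second arrow is the asserted morphism. For the coalgebra structure, I would invoke \cref{cP-in-curv}, which exhibits $(\cP^\chi, \rmP)$ as an object of $\Curv$; by the comonadicity statement \cref{cTw comonadicity}, any object of $\Curv$ carries a canonical $\cTw$-coalgebra structure on its underlying operad, namely the image under $\calL$ of the unit $\eta_{(\cP^\chi, \rmP)}$ of the adjunction $\calL \dashv \calR$. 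This produces the map $\eta_{\cP}\colon \cP^\chi \to \cTw(\cP^\chi)$.

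It then remains to identify $\eta_{\cP}$ with the projection of the formula from \cref{DSV formula intro}. The key observation is that \cref{cP-in-curv} also provides the projection $p\colon (\cPinf^\chi, \Pinf) \to (\cP^\chi, \rmP)$ as a morphism in $\Curv$ whose source underlies the $\cTw$-coalgebra structure $\eta_{\cPinf^\chi}$ of \cref{DSV formula intro}. Naturality of the unit $\eta$ of $\calL \dashv \calR$ with respect to $p$, followed by application of the forgetful functor $\calL$, yields the commuting square encoding the identity
\[\eta_{\cP} \circ \calL(p) = \cTw(\calL(p)) \circ \eta_{\cPinf^\chi}, \]
where $\calL(p)\colon \cPinf^\chi \to \cP^\chi$ is the quotient projection and $\cTw(\calL(p))$ acts as $\calL(p)$ on the $\cPinf^\chi$-part and as the identity on $\kappa$ and $\alpha$. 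Since $\calL(p)$ is surjective, I can read off $\eta_{\cP}$ by projecting the explicit formula for $\eta_{\cPinf^\chi}$: writing $\bar v$ for the image of $v$ in $\cP^\chi$,
\[\eta_{\cP}(s^{-1}\bar v) = s^{-1}\bar v + \ev_u(v)\,\kappa + \sum_{w, A}(s^{-1}\bar w)^A, \]
which is precisely the projection of the \cref{DSV formula intro} formula.

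The argument is essentially formal, so I expect no single difficult computation; the only nontrivial input is the compatibility of the quotient projection with the curved twisting structures, i.e.\ that $p$ genuinely lands in $\Curv$. This is the content of \cref{cP-in-curv}, which rests on the fact that the defining ideal is generated by $\ker(\phi) \subset \rmP^\as$, disjoint from the arity-zero direction carrying $\ell_0$ and $\kappa$; this is what makes the suboperad $\rmP$ well-defined and compatible with the $\ell_0$-grading. Granting this, the well-definedness of the displayed formula (its independence of the chosen lift $v$ of $\bar v$) needs no separate verification, since it follows from the prior existence of $\eta_{\cP}$.
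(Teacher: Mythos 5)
Your proposal is correct and follows essentially the same route as the paper: the morphism $\cLie \to \cP^\chi$ comes from \cref{lem:cP-factors-through-cLie}, the $\cTw$-coalgebra structure comes from \cref{cP-in-curv} via the unit of the adjunction $\calL \dashv \calR$, and the explicit formula is read off from \cref{DSV formula intro} by naturality of the unit applied to the projection $(\cPinf^\chi, \Pinf) \to (\cP^\chi, \rmP)$ together with surjectivity. Your write-up is in fact slightly more explicit than the paper's (which stops at the naturality square), but the underlying argument is identical.
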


\begin{proof}
From \cref{cP-in-curv} we know that $\cPinf \to \cP$ is a morphism in Curv, which factors through $\cLie$ by \cref{lem:cP-factors-through-cLie}. 
    Now, since the unit $\eta$ is a natural transformation between the endofunctors $\id$ and $\cTw(-)$, we have that the following diagram commutes:
    \begin{center}
    \begin{tikzcd}
    \cPinf \arrow[r,"\eta_{\cPinf}"] \arrow[d] & \cTw(\cPinf) \arrow[d] \\
    \cP \arrow[r,"\eta_{\cP}"]                 & \cTw(\cP)     
    \end{tikzcd}
    \end{center}
    This finishes the proof. 
\end{proof}

A similar proof, using $\Twist$ instead of $\Curv$, leads to the following analogous result.

\begin{corollary}
\label{cor:non-homotopy-Tw}
    Let $\rmP$ be a Koszul operad whose Koszul dual $\rmP^!$ admits a unital extension $\rmP^! \hookrightarrow u_\chi \rmP^!$. 
    Then, the lift $((u_\chi \rmP^!)^*, \ev_u, \ev_{\id})$ of $(u_\chi \rmP^!)^*$ to $\coTwist$ induces a morphism $\Lie \to \rmP$ and a $\Tw$-coalgebra structure $\eta_{\rmP} : \rmP \to \Tw(\rmP)$ given by the projections of the formulas from \cref{extendable operads are Tw-coalgebras intro}.
\end{corollary}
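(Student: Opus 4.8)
The plan is to run the argument of \cref{cor:non-homotopy-cTw} verbatim, replacing the adjunction $\calL \dashv \calR$ between $\Curv$ and $\cMult$ by the adjunction $\forget \dashv \free$ between $\Twist$ and $\Mult$ of \cref{thm:adjoint}, whose associated comonad is $\Tw$ by \cref{thm:main-thm}. Concretely, I want to exhibit a $\Twist$-lift of $\rmP$ receiving a $\Twist$-morphism from the $\Twist$-object $(\cPinf^\chi, \Pinf)$ underlying the $\Tw$-coalgebra of \cref{extendable operads are Tw-coalgebras intro}, and then transport the explicit structure of that corollary along naturality of the unit.

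For the $\Twist$-lift of $\rmP$ I would take $\calJ(\cP^\chi, \rmP)$, where $(\cP^\chi, \rmP) \in \Curv$ by \cref{cP-in-curv} and $\calJ : \Curv \to \Twist$ is the functor of \cref{def-prop:functor-Curv-Twist}; by definition $\calJ(\cP^\chi, \rmP) = ((\cP^\chi)^+, \rmP)$, and $\forget \calJ(\cP^\chi, \rmP) = \rmP$, so any resulting coalgebra structure does live on $\rmP$. Here lies the one genuine subtlety: since $\rmP$ is concentrated in degree $0$, the unary class $\ell_1$ is zero in $\cP^\chi$, so one cannot use $\cP^\chi$ itself as the big operad of a $\Twist$-object --- \cref{twist new} requires $[\ell_0]\vee[\ell_1]\vee\rmP \cong \rmQ$ with $\ell_1$ a free generator --- and applying $(-)^+$ re-adjoins $\ell_1 = \diff$ freely, which is exactly what $\calJ$ does. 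The required $\Twist$-morphism $(\cPinf^\chi, \Pinf) \to ((\cP^\chi)^+, \rmP)$ is then obtained by composing the unit of $\calI \dashv \calJ$ (\cref{thm:adjunction-Twist-Curv}), giving $(\cPinf^\chi, \Pinf) \to \calJ\calI(\cPinf^\chi, \Pinf) = ((\cPinf^\chi)^+, \Omega(\rmP^!)^*)$, with $\calJ$ applied to the $\Curv$-projection $(\cPinf^\chi, \Omega(\rmP^!)^*) \to (\cP^\chi, \rmP)$ of \cref{cP-in-curv}.

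Next I would record the $\Twist$-analogue of \cref{lem:cP-factors-through-cLie}: the structural morphism $\Linf \to \rmP$ factors through $\Lie$. This follows either by applying $\pi_! : \cMult \to \Mult$ (quotient by $\ell_0,\ell_1$) to the commuting square of \cref{lem:cP-factors-through-cLie}, or directly from naturality of the Koszul twisting morphism $\phi : (-)^{\as} \to \id$, and it supplies the morphism $\Lie \to \rmP$ claimed in the statement. Finally, since the unit $\eta : \id \to \free\forget$ is a natural transformation, the $\Twist$-morphism of the previous step yields a commuting square; applying $\forget$ and using $\forget\free = \Tw$ gives
\begin{center}
\begin{tikzcd}
\Pinf \arrow[r,"\eta_{\Pinf}"] \arrow[d] & \Tw(\Pinf) \arrow[d] \\
\rmP \arrow[r,"\eta_{\rmP}"] & \Tw(\rmP)
\end{tikzcd}
\end{center}
whose left vertical arrow is the projection $\Pinf \to \rmP$. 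As the top arrow is the explicit map of \cref{extendable operads are Tw-coalgebras intro}, commutativity identifies $\eta_{\rmP}$ with the stated projection of that formula, completing the proof. The main work is thus the bookkeeping of the first step --- keeping $\ell_1$ as a free generator when passing from $\Curv$ to $\Twist$; the factorization through $\Lie$ and the naturality argument are formal and identical to the curved case.
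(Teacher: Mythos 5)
Your proposal is correct and follows essentially the paper's own route: the paper proves the $\cTw$-version (\cref{cor:non-homotopy-cTw}) via \cref{cP-in-curv}, \cref{lem:cP-factors-through-cLie} and naturality of the unit, and then disposes of the present corollary with ``a similar proof, using $\Twist$ instead of $\Curv$.'' Your use of $\calJ$ (equivalently $(-)^+$) to repair the fact that $\ell_1$ is not a free generator of $\cP^\chi$, and of the unit of $\calI \dashv \calJ$ to produce the $\Twist$-morphism out of $(\cPinf^\chi, \Pinf)$, is exactly the bookkeeping that the paper's one-line proof leaves implicit.
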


Analogous results hold for the analogous categories of non-symmetric operads.
The proofs are the same as in the symmetric case.


\section{Examples}
\label{examples}

\subsection{The associative operad}
The associative operad~$\Ass$ is the free operad on one arity~$2$ generator~$m_2$ generating the regular representation of $\Sym_2$, modulo the operadic ideal generated by the associativity relation $m_2 \circ_1 m_2 = m_2 \circ_2 m_2$.
The underlying $\Sym$-module~$\Ass(n)=\mathbf{k}[\Sym_n]$ is the regular representation of the symmetric group~$\Sym_n$.
See \cite[Sec.~9.1.3]{LodayVallette12} for more details.
It is Koszul self-dual, i.e.\ its Koszul dual operad is $\Ass^! \cong \Ass$.

Every associative algebra has an underlying Lie algebra structure given by $[a,b] \eqdef ab - ba$; this corresponds to a morphism $\Lie \to \Ass$, and there is a corresponding morphism $\Linf \to \Ainf$ given explicitly by $\ell_n \mapsto \sum_{\sigma \in \Sym_n} \mu_n^\sigma$.
Induction along this morphism  gives
the notion of universal enveloping $\Linf$-algebra  \cite{LadaMarkl95}.  It is known that $\Ainf$ and $\As$ are $\Tw$-coalgebras and $\Tw$ homotopy fixed points \cite[Cor.~5.17~\&~5.18]{DolgushevWillwacher15}.

We recover all these facts as follows: 

\begin{prop}
    The operad $\Ainf$ admits 
    \begin{enumerate}
        \item a morphism $\Linf \to \Ainf$ given by $\ell_n \mapsto \sum_{\sigma \in \Sym_n} \mu_n^\sigma$,
        \item a $\Tw$-coalgebra structure $\Ainf \to \Tw(\Ainf)$ given by $\mu_n \mapsto \mu_n^{\alpha}$ (see (\ref{mu-n-alpha})). 
    \end{enumerate}
    The same holds when replacing $\Ainf$ and $\Linf$ by $\Ass$ and $\Lie$, respectively.
\end{prop}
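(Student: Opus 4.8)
The plan is to recognize the associative operad as an extendable operad in the sense of \cref{section:extendable-operads} and then feed it into \cref{extendable operads are Tw-coalgebras intro,cor:non-homotopy-Tw}. Since $\Ass$ is Koszul self-dual, taking $\rmP = \Ass$ we have $\rmP^! \cong \Ass$, and the required unital extension of $\rmP^!$ is the unital associative operad: the inclusion $\Ass \hookrightarrow \uAss$ is a monomorphism because $\Ass(n) = \uAss(n) = \mathbf{k}[\Sym_n]$ for $n \geq 1$, while $\Ass(0) = 0 \hookrightarrow \uAss(0) = \mathbf{k}u$. Here the character $\chi \colon E(2) \to \mathbf{k}$ is forced by the unit axioms $m_2 \circ_i u = \id$ to satisfy $\chi(m_2) = 1$, hence $\chi \equiv 1$ on $\Ass(2) = \mathbf{k}[\Sym_2]$ by $\Sym_2$-equivariance. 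Thus $\Ass$ is extendable, $u_\chi \Ass = \uAss$, and $\Pinf = \Omega\overline{(\Ass^!)^*} = \Omega\overline{\Ass^*} = \Ainf$.

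With this in hand, \cref{extendable operads are Tw-coalgebras intro} immediately produces a morphism $\Linf \to \Ainf$ and a $\Tw$-coalgebra structure $\Ainf \to \Tw(\Ainf)$, so it remains only to check that the abstract formulas there specialize to the stated ones. For the morphism I would dualize the map $\psi \colon \overline{\Ass} \to \overline{\Com}$ of \cref{extendable-over-uCom}. Since $\psi$ is the operad map sending every $\sigma \in \Ass(n) = \mathbf{k}[\Sym_n]$ to $\mu_n \in \Com(n)$, its transpose sends $\mu_n^* \mapsto \sum_{\sigma \in \Sym_n}\sigma^*$; applying $s^{-1}$ and the identification $s^{-1}\sigma^* = \mu_n^\sigma$ gives $\Omega(\psi^*)(\ell_n) = \sum_{\sigma \in \Sym_n}\mu_n^\sigma$, as claimed.

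For the $\Tw$-coalgebra structure I would compute the cooperadic decomposition of $\uAss^*$. For a cogenerator $v = \sigma^*$ of $\overline{\Ass^*}(n)$, the term $\sum_{w,A} w(u^*_A)$ appearing in $\delta_{\uAss^*}(\sigma^*)$ dualizes the operation of plugging the unit $u$ into $\uAss$: each pair $(w,A)$ records a permutation $w \in \Sym_{n+|A|}$ together with a set of slots $A$ at which inserting $u$ recovers $\sigma$. Reading off $(s^{-1}w)^A$ --- that is, $\mu_{n+|A|}^{w}$ with $\alpha$ grafted at the slots $A$ --- and summing over all such factorizations reproduces exactly the $\alpha$-insertion formula \eqref{mu-n-alpha}, so that $\eta_{\Ainf}(\mu_n) = \mu_n^\alpha$. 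This bookkeeping is cleanest in the nonsymmetric picture, where $\uAs(k) = \mathbf{k}$ and the decomposition of $\mu_n^*$ literally enumerates the ways of inserting units before, between, and after the $n$ genuine inputs; the symmetric statement then follows by tracking the $\Sym$-labels. This matching of the cooperadic decomposition with \eqref{mu-n-alpha} is the only real content, and is the step I expect to require the most care.

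Finally, the assertions for $\Ass$ and $\Lie$ follow by applying \cref{cor:non-homotopy-Tw} to $\rmP = \Ass$: the relevant formulas are the images of the $\Ainf$-formulas under the projection $\Ainf \to \Ass$ (which sends $\mu_2 \mapsto m_2$ and $\mu_n \mapsto 0$ for $n \ne 2$). This carries $\ell_n \mapsto \sum_{\sigma}\mu_n^\sigma$ to the commutator morphism $\Lie \to \Ass$ and $\mu_n \mapsto \mu_n^\alpha$ to $m_2 \mapsto m_2^\alpha$, completing the proof.
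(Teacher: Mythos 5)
Your proposal is correct and follows essentially the same route as the paper's proof: identify $\Ass$ as Koszul self-dual with unital extension $\Ass \hookrightarrow \uAss$ (with $\chi \equiv 1$), apply \cref{extendable operads are Tw-coalgebras intro} and match its abstract formulas against the stated ones by computing the decomposition map $\delta_{\uAss^*}$ (whose dual is exactly unit-insertion, yielding \eqref{mu-n-alpha}), and then deduce the $\Ass$/$\Lie$ case from \cref{cor:non-homotopy-Tw}. The only difference is presentational: you spell out the extendability of $\Ass$ and the dualization of $\psi$ in more detail, while the paper simply records the closed formula $\delta_{\uAss^{*}}(m_n^{*}) = \sum_{k \geq 0} m_{k+n}^{*}((m_0^{*})^k,-,\ldots,-)+ \delta_{\uAss^{*}}^+(m_n^{*})$.
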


\begin{proof}
    The Koszul dual of the associative operad $\Ass^! \cong \Ass$ admits a unital extension $\Ass \hookrightarrow u_\chi \Ass \defeq \uAss$ given by $\chi(m_2) = \chi(m_2^\sigma) \eqdef 1$.
    Point~(\ref{morphism from Linf}) in \cref{extendable operads are Tw-coalgebras intro} gives explicitly the morphism $\Linf \to \Ainf$.
    In order to apply Point~(\ref{Tw-coalgebra structure}), one computes explicitly the structural morphism $\delta_{\uAss^{*}}: \uAss^{*} \to \overline{\mathbb{T}}(\uAss^{*})$, which is given on the generators by
    \[
    \delta_{\uAss^{*}}(m_n^{*})
    =
    \sum_{k \geq 0} m_{k+n}^{*}((m_0^{*})^k,-,\ldots,-)+ \delta_{\uAss^{*}}^+(m_n^{*}),
    \]
    where $\delta_{\uAss^{*}}^+(m_n^{*})$ is a sum of trees which have at least one cooperation of non-zero arity outside of the first level (\cref{notation delta +}).
    The analogous results for $\Ass$ then follow from \cref{cor:non-homotopy-Tw}.
\end{proof}

We now prove that $\Ainf$ and $\Ass$ are homotopy fixed points of $\Tw$. 
Let $\Ass(x_0,\ldots,x_{n-1})$ denote the free associative algebra on~$n$ generators.

\begin{lemma}
\label{lemma:bracket-not-zero-divisor-in-Ass}
    The canonical associative algebra map 
    \[
    \Ass(x_0, \dots, x_{n-1}) \to \Ass(u,v, x_0, \dots, x_{n-1}) / \left( (uv-vu) - x_0 \right)
    \]
    is injective.
\end{lemma}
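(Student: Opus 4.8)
The plan is to realize the target as a free associative algebra in which the generator $x_0$ has been eliminated, and then prove injectivity by a leading-term argument.

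First I would introduce the free (non-unital) associative algebra $B \eqdef \Ass(u,v,x_1,\dots,x_{n-1})$ together with the algebra map $\rho \colon \Ass(u,v,x_0,\dots,x_{n-1}) \to B$ determined by $x_0 \mapsto uv-vu$ and the identity on the remaining generators. Since $\rho$ sends the relator $(uv-vu)-x_0$ to $0$, it descends to a map $\bar\rho$ out of the quotient $\Ass(u,v,x_0,\dots,x_{n-1})/\left((uv-vu)-x_0\right)$. Composing the canonical map of the statement with $\bar\rho$ yields the algebra map $\phi \colon \Ass(x_0,\dots,x_{n-1}) \to B$ given by $x_0 \mapsto uv-vu$ and $x_i \mapsto x_i$ for $i\ge 1$. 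Because injectivity of a composite forces injectivity of the first map, it suffices to prove that $\phi$ is injective.

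Recall that $\Ass(x_0,\dots,x_{n-1})$ has a linear basis given by the nonempty words in the letters $x_0,\dots,x_{n-1}$, and likewise $B$ has a basis of nonempty words in $u,v,x_1,\dots,x_{n-1}$. I would equip the latter with the degree-lexicographic monomial order associated to the alphabet order $u < v < x_1 < \dots < x_{n-1}$; this is an admissible order, so the leading monomial of a product is the product of the leading monomials. Since the leading monomial of $uv-vu$ is $vu$, the leading monomial of $\phi(w)$ for a word $w = x_{i_1}\cdots x_{i_k}$ is obtained by substituting the two-letter block $vu$ for each occurrence of $x_0$ and leaving the other letters unchanged, with leading coefficient $(-1)^{\#\{j : i_j = 0\}}$.

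The key remaining point, and the only mildly delicate step, is that the substitution $w \mapsto w[x_0 \to vu]$ is injective on words. This holds because the letters $u,v$ occur in the image only inside blocks coming from $x_0$, whereas $x_1,\dots,x_{n-1}$ appear as genuinely distinct letters; hence each maximal $\{u,v\}$-run of the image word has the form $(vu)^m$, which is uniquely decoded and recovers $w$. Thus distinct words have distinct leading monomials under $\phi$, and a standard leading-term argument — the maximal leading monomial occurring in any nontrivial linear combination cannot be cancelled — shows that the images $\phi(w)$ are linearly independent. Therefore $\phi$, and a fortiori the canonical map of the statement, is injective.
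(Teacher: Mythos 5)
Your proof is correct, and its first move coincides with the paper's: both arguments eliminate $x_0$ by passing to the free algebra on $u,v,x_1,\dots,x_{n-1}$ via the substitution $x_0 \mapsto uv-vu$ (the paper does this implicitly, by identifying the image of $p$ in the quotient with the substituted polynomial; you do it by composing with $\bar\rho$ and noting that injectivity of the composite suffices). Where the two proofs differ is in how they rule out cancellation after substitution. The paper asserts directly that when every $x_0$ in $p$ is expanded as $uv-vu$, no two of the resulting monomials cancel, ``since the only equivalence relation between monomials in the free associative algebra is the associativity relation''; making this precise requires knowing that the full $2^j$-fold expansions of distinct words of $p$ involve pairwise distinct monomials, which is exactly a decoding argument that the paper leaves implicit. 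Your leading-term argument is a genuine refinement of that step: by fixing the degree-lexicographic order with $u<v<x_1<\dots<x_{n-1}$, you only need to track the single leading monomial $w[x_0 \to vu]$ of each $\phi(w)$, and the only combinatorial input is that the substitution $w \mapsto w[x_0 \to vu]$ is injective on words, which your maximal-$\{u,v\}$-run decoding establishes cleanly. So the paper's proof is shorter but elides the key combinatorial verification, while yours isolates and proves exactly the minimal claim (distinctness of leading monomials) needed for linear independence; both are valid, and yours is the more self-contained of the two.
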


\begin{proof}
    Consider a non-commutative polynomial $p(x_0, \dots, x_{n-1})$ in $\Ass(x_0, \dots, x_{n-1})$, and suppose that its image under the map above is $0$.
    This means that replacing some occurrences of $x_0$ in $p$ by $uv-vu$ gives $0$.
    But since the only equivalence relation between monomials in the free associative algebra is the associativity relation, no two monomials in the resulting polynomial cancel each other, therefore they must all vanish and we have that~$p=0$. 
\end{proof}

\begin{corollary}
    The operads $\Ainf$ and $\Ass$ are homotopy fixed points for $\Tw$.    
\end{corollary}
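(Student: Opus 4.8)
The plan is to verify the two hypotheses of \cref{thm:homotopy-fixed-point intro} for each of $\Ainf$ and $\Ass$. The first hypothesis, the existence of a $\Tw$-coalgebra structure, is furnished by the preceding Proposition, so no further work is required there. The entire content of the argument lies in checking the second hypothesis: that the Lie bracket $\ell_2$ is not a right zero divisor in homology.

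First I would identify the relevant homology. Since $\Ass$ carries the zero differential, $H_*(\Ass) = \Ass$ as a non-dg operad; and since $\Ainf$ is the (quasi-free, minimal) resolution of $\Ass$, its homology is again $H_*(\Ainf) \cong \Ass$. Under the morphism $\Linf \to \Ainf$ of the preceding Proposition, $\ell_2 \mapsto \sum_{\sigma \in \Sym_2} \mu_2^\sigma$, and under the identification $H_*(\Ainf) \cong \Ass$ this class becomes the commutator bracket $\ell_2(u,v) = uv - vu$. Thus, for both $\rmP = \Ainf$ and $\rmP = \Ass$, the condition ``$\ell_2$ is not a right zero divisor in $H_*(\rmP)$'' reduces to the same assertion about the single non-dg operad $\Ass$, equipped with its morphism $\Lie \to \Ass$.

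Next I would invoke the criterion of \cref{lemma:criteria-bracket-zero-divisor} with $\rmP = \Ass$: it suffices to show that the canonical $\Ass$-algebra map
\[
\Ass(x_0, \dots, x_{n-1}) \to \Ass(u,v, x_0, \dots, x_{n-1}) / \left( \ell_2(u,v) - x_0 \right)
\]
is injective for every $n$. This is precisely \cref{lemma:bracket-not-zero-divisor-in-Ass}, whose argument is that after substituting $uv - vu$ for occurrences of $x_0$ in a noncommutative polynomial, no two resulting monomials can cancel, since the only relation in the free associative algebra is associativity. Hence $\ell_2$ is not a right zero divisor in $\Ass$, and therefore not in $H_*(\Ainf)$ nor in $H_*(\Ass)$.

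With both hypotheses in hand, \cref{thm:homotopy-fixed-point intro} applies and yields that $\Ainf$ and $\Ass$ are homotopy fixed points of $\Tw$. The only point requiring care is matching the bracket $\ell_2$ appearing in the zero-divisor condition of \cref{thm:homotopy-fixed-point intro} with the commutator $uv - vu$ used in \cref{lemma:bracket-not-zero-divisor-in-Ass}; this follows directly from the explicit formula $\ell_n \mapsto \sum_{\sigma} \mu_n^\sigma$ together with the standard Koszulity identification $H_*(\Ainf) \cong \Ass$. I do not expect any genuine obstacle beyond this bookkeeping, as the substantive combinatorial input has already been isolated in \cref{lemma:bracket-not-zero-divisor-in-Ass}.
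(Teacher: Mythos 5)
Your proof is correct and follows the same route as the paper: combine \cref{lemma:criteria-bracket-zero-divisor} with \cref{lemma:bracket-not-zero-divisor-in-Ass} to see that the Lie bracket (commutator) is not a right zero divisor in $\Ass$, then apply \cref{thm:homotopy-fixed-point intro}. The only difference is that you make explicit the identification $H_*(\Ainf) \cong \Ass$ and the matching of $\ell_2$ with the commutator class, which the paper leaves implicit.
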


\begin{proof}
    Combining \cref{lemma:criteria-bracket-zero-divisor} with \cref{lemma:bracket-not-zero-divisor-in-Ass}, we get that the Lie bracket is not a right zero divisor in $\Ass$.
    We conclude with \cref{thm:homotopy-fixed-point intro}.
\end{proof}


\subsection{The Lie operad}
The Lie operad is the free operad on one arity~$2$ generator~$m_2$ generating the sign representation, modulo the operadic ideal generated by the Jacobi relation $m_2\circ_1 m_2+(m_2\circ_1 m_2)^\tau+(m_2\circ_1 m_2)^{\tau^2}=0$,
where $\tau=(123)$ is the 3-cycle in~$\Sym_3$.
The Koszul dual operad of $\Lie$ is the commutative operad~$\Com$, which is the quotient of the free operad on one arity~$2$ operation~$\mu_2$ on which $\Sym_2$ acts trivially, by the operadic ideal generated by the associativity relation $\mu_2 \circ_1 \mu_2 = \mu_2 \circ_2 \mu_2$.
The underlying $\Sym$-module of $\Com$ is in each arity the trivial representation $\Com(n)=\mathbf{k}\mu_n$ of~$\Sym_n$.

\begin{prop}
    The operad $\Linf$ admits 
    \begin{enumerate}
        \item a morphism $\Linf \to \Linf$ given by $\ell_n \mapsto \ell_n$,
        \item a $\Tw$-coalgebra structure $\Linf \to \Tw(\Linf)$ given by $\ell_n \mapsto \ell_n^{\alpha}$.
    \end{enumerate}
    The same holds for the operad~$\Lie$.
\end{prop}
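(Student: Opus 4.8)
The plan is to proceed exactly as in the associative case, invoking the theory of extendable operads. First I would recall that the Koszul dual of $\Lie$ is the commutative operad $\Com$, so that taking $\rmP = \Lie$ we have $\rmP^! = \Com$ and $\Pinf = \Omega \overline{(\Lie^!)^*} = \Omega \overline{\Com^*} = \Linf$. The operad $\Com$ is extendable: its standard unital extension $\Com \hookrightarrow \uCom$ is obtained by adjoining an arity-zero unit, and corresponds to the choice $\chi(\mu_2) = 1$, for which $u_\chi\Com = \uCom$ and the inclusion is visibly a monomorphism. This places us in the setting of \cref{extendable operads are Tw-coalgebras intro} and \cref{cor:non-homotopy-Tw}.

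For the first assertion, I would apply Point~(\ref{morphism from Linf}) of \cref{extendable operads are Tw-coalgebras intro}. The map $\psi : \overline{\Com} \to \overline{\Com}$ appearing there is given on generators by $\nu \mapsto \chi(\nu)\mu_2$ in arity $2$ and $\nu' \mapsto 0$ in arity $\geq 3$; since $\Com$ is quadratic and generated solely in arity $2$, this map is simply the identity, so the induced structural morphism $\Linf = \Omega \overline{\Com^*} \to \Omega \overline{\Com^*} = \Linf$ is $\ell_n \mapsto \ell_n$, as claimed.

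For the second assertion, I would apply Point~(\ref{Tw-coalgebra structure}) of the same theorem, which requires the cooperadic decomposition of $\uCom^*$. As recorded in the proof of \cref{strucural morphism cooperadic adjoint}, one has
\[
\delta_{\uCom^*}(\mu_n^*) = \sum_{k \geq 0} \frac{1}{k!} \mu_{k+n}^* \otimes ((\mu_0^*)^{\otimes k}, -) + \delta_{\uCom^*}^+(\mu_n^*),
\]
with $\mu_0^* = u^*$. Feeding this into the explicit formula $s^{-1}v \mapsto s^{-1}v + \sum_{w, A} (s^{-1} w)^A$, the summand indexed by $k$ grafts $\alpha$ at $k$ of the inputs of $s^{-1}\mu_{n+k}^*$ with coefficient $\frac{1}{k!}$, so that $\ell_n = s^{-1}\mu_n^*$ is sent to $\sum_{k \geq 0} \frac{1}{k!} \ell_{n+k}(\alpha^k, -) = \ell_n^\alpha$, giving the stated $\Tw$-coalgebra structure. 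The corresponding statement for $\Lie$ then follows from \cref{cor:non-homotopy-Tw} applied to the same unital extension, with the morphism $\Lie \to \Lie$ and the $\Tw$-coalgebra structure $\ell_n \mapsto \ell_n^\alpha$ arising as the projections of the homotopy formulas. I do not expect any genuine obstacle here; the only point demanding care is matching the symmetric factorials $\frac{1}{k!}$ produced by $\delta_{\uCom^*}$ with those in the definition of $\ell_n^\alpha$, exactly as the analogous (factorial-free) bookkeeping was required in the non-symmetric $\uAss$ computation.
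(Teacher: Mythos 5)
Your proposal is correct and follows essentially the same route as the paper: identify $\Lie^! \cong \Com$ with its unital extension $\Com \hookrightarrow \uCom$ given by $\chi(\mu_2)=1$, apply Point~(1) of \cref{extendable operads are Tw-coalgebras intro} to get $\ell_n \mapsto \ell_n$, compute $\delta_{\uCom^*}(\mu_n^*) = \sum_{k\geq 0}\frac{1}{k!}\mu_{k+n}^*((\mu_0^*)^k,-,\ldots,-)+\delta_{\uCom^*}^+(\mu_n^*)$ to get the $\Tw$-coalgebra formula $\ell_n \mapsto \ell_n^\alpha$ from Point~(2), and deduce the $\Lie$ case from \cref{cor:non-homotopy-Tw}. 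Your added remarks (why $\psi$ is the identity, and the factorial bookkeeping) are accurate and consistent with the paper's computation.
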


\begin{proof}
    The Koszul dual of the Lie operad $\Lie^! \cong \Com$ admits a natural unital extension $\Com \hookrightarrow u_\chi \Com \defeq \uCom$ defined by $\chi(\mu_2) \eqdef 1$.
    Point~(\ref{morphism from Linf}) in \cref{extendable operads are Tw-coalgebras intro} gives explicitly the morphism $\Linf \to \Linf$.
    In order to apply Point~(\ref{Tw-coalgebra structure}), one computes explicitly the structural morphism $\delta_{\uCom^{*}}: \uCom^{*} \to \overline{\mathbb{T}}(\uCom^{*})$, which is given on the generators by
    \[
    \delta_{\uCom^{*}}(\mu_n^{*})
    =
    \sum_{k \geq 0} \frac{1}{k!}\mu_{k+n}^{*}((\mu_0^{*})^k,-,\ldots,-)+ \delta_{\uCom^{*}}^+(\mu_n^{*}).
    \]
    The analogous results for $\Lie$ then follow from \cref{cor:non-homotopy-Tw}.
\end{proof}

We now prove that $\Linf$ and $\Lie$ are homotopy fixed points of $\Tw$. 
Let us denote by $\Lie(x_0, \dots, x_{n-1})$ the free Lie algebra on~$n$ generators.

\begin{lemma}
\label{lemma:bracket-not-zero-divisor-in-Lie}
    The canonical Lie algebra map 
    \[
    \Lie(x_0, \dots, x_{n-1}) \to \Lie(u,v, x_0, \dots, x_{n-1}) / \left( \ell_2(u,v) - x_0 \right)
    \]
    is injective.
\end{lemma}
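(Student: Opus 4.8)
The plan is to deduce this from the associative case already established in \cref{lemma:bracket-not-zero-divisor-in-Ass}, using the classical embedding of a free Lie algebra into a free associative algebra. For any set $S$, regard the free associative algebra $\Ass(S)$ as a Lie algebra under the commutator $[a,b] = ab - ba$; the universal property of the free Lie algebra then produces a canonical Lie algebra morphism $\iota_S : \Lie(S) \to \Ass(S)$ sending each generator to itself. Since the tensor algebra on $\mathbf{k}S$ is the universal enveloping algebra of $\Lie(S)$ and its positive-degree part is $\Ass(S)$, the Poincar\'e--Birkhoff--Witt theorem shows that $\iota_S$ is injective; note that $\iota_S$ carries the Lie bracket $\ell_2(u,v)$ to $uv - vu$.

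First I would observe that $\iota_{\{u,v,x_0,\dots,x_{n-1}\}}$ sends the element $\ell_2(u,v) - x_0$ to $(uv-vu) - x_0$, and hence carries the Lie ideal generated by the former into the associative ideal generated by the latter. It therefore descends to a morphism $\bar\iota$ on the quotients occurring in the two lemmas. Writing $\Phi$ for the Lie algebra map in the present statement and $\Psi$ for the associative algebra map of \cref{lemma:bracket-not-zero-divisor-in-Ass}, one obtains the square
\begin{center}
\begin{tikzcd}
\Lie(x_0,\dots,x_{n-1}) \arrow[r, "\Phi"] \arrow[d, "\iota"'] & \Lie(u,v,x_0,\dots,x_{n-1})/(\ell_2(u,v)-x_0) \arrow[d, "\bar\iota"] \\
\Ass(x_0,\dots,x_{n-1}) \arrow[r, "\Psi"'] & \Ass(u,v,x_0,\dots,x_{n-1})/((uv-vu)-x_0)
\end{tikzcd}
\end{center}
which I claim commutes. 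Viewing the associative algebras and their quotients under the commutator bracket, all four maps are morphisms of Lie algebras (in particular $\Psi$, being an associative algebra map, is a Lie algebra map). Since the source $\Lie(x_0,\dots,x_{n-1})$ is free on $x_0,\dots,x_{n-1}$, it suffices to verify commutativity on generators, and indeed both composites send $x_0$ to the class of $uv-vu$ and $x_i$ to the class of $x_i$ for $i \geq 1$.

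Finally, because $\iota$ is injective by PBW and $\Psi$ is injective by \cref{lemma:bracket-not-zero-divisor-in-Ass}, the composite $\Psi \circ \iota = \bar\iota \circ \Phi$ is injective; consequently $\Phi$ is injective, which is the assertion. The only genuinely nontrivial ingredient is the embedding $\Lie(S) \hookrightarrow \Ass(S)$, and everything else is formal. I expect the one place demanding care to be the bookkeeping of which arrows are associative-algebra homomorphisms versus merely Lie-algebra homomorphisms, which is what makes the commutativity check on generators legitimate.
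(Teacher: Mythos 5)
Your proof is correct and follows essentially the same route as the paper: embed the free Lie algebras into the free associative algebras via PBW, form the commutative square, and deduce injectivity of the top map from \cref{lemma:bracket-not-zero-divisor-in-Ass}. The only cosmetic difference is that the paper invokes the fact that universal enveloping algebras commute with quotients (making the right-hand vertical map injective as well), whereas you only need that map to be well defined, which follows from the containment of the Lie ideal in the associative ideal.
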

\begin{proof}
    Recall the following consequences of the Poincaré-Birkhoff-Witt theorem:
    \begin{enumerate}[leftmargin=*]
        \item The natural Lie-algebra morphism from a Lie algebra to its universal enveloping algebra (with Lie bracket given by the commutator) is injective.
        \item The universal enveloping algebra of the free Lie algebra on $n$ generators is the free associative algebra on $n$ generators.
        \item The universal enveloping algebra of the quotient of a free Lie algebra by a Lie ideal is the quotient of the universal enveloping algebra of the free Lie algebra by the corresponding associative ideal.
    \end{enumerate}
    Using these facts, we get the following commutative diagram of Lie algebras
    \begin{center}
        \begin{tikzcd}
        \Lie(x_0, \dots, x_{n-1}) \ar[r] \ar[d, hook] & \Lie(u,v, x_0, \dots, x_{n-1}) / \left( \ell_2(u,v) - x_0 \right) \ar[d, hook] \\
        \Ass(x_0, \dots, x_{n-1}) \ar[r] & \Ass(u,v, x_0, \dots, x_{n-1}) / \left( \ell_2(u,v) - x_0 \right). 
    \end{tikzcd}
    \end{center}
    According to ~\cref{lemma:bracket-not-zero-divisor-in-Ass}, the bottom horizontal map is injective.
    Therefore, the top horizontal map is also injective.
\end{proof}

\begin{corollary}
    The operads $\Linf$ and $\Lie$ are homotopy fixed points for $\Tw$.
\end{corollary}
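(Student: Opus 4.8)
The plan is to mirror the argument already carried out for $\Ass$ and $\Ainf$: verify the two hypotheses of \cref{thm:homotopy-fixed-point intro}, namely that $\Linf$ (respectively $\Lie$) admits a $\Tw$-coalgebra structure, and that the Lie bracket $\ell_2$ is not a right zero divisor in its homology. The first hypothesis is already in hand from the preceding proposition, so condition~(1) of \cref{thm:homotopy-fixed-point intro} is immediate for both operads, and only condition~(2) requires work.

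For condition~(2), I would reduce the non-right-zero-divisor property to the injectivity statement of \cref{lemma:bracket-not-zero-divisor-in-Lie} via the criterion of \cref{lemma:criteria-bracket-zero-divisor}. Concretely, \cref{lemma:criteria-bracket-zero-divisor} asserts that injectivity of the canonical map $\Lie(x_0,\dots,x_{n-1}) \to \Lie(u,v,x_0,\dots,x_{n-1})/(\ell_2(u,v)-x_0)$ for all $n$ forces $\ell_2$ to be a non-right-zero-divisor in the non-dg operad $\Lie$, and \cref{lemma:bracket-not-zero-divisor-in-Lie} supplies exactly this injectivity (itself reducing, via PBW, to the associative case \cref{lemma:bracket-not-zero-divisor-in-Ass}).

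The only point that needs care is that \cref{thm:homotopy-fixed-point intro} demands the condition in $H_*(\rmP)$ rather than in $\rmP$ itself. For $\Lie$, which carries the zero differential and is concentrated in degree zero, we have $H_*(\Lie)\cong\Lie$, so the two formulations coincide and the conclusion is immediate. For $\Linf$, I would invoke the fact that $\Linf=\Omega\overline{(\Com)^*}$ is a resolution of $\Lie$, whence $H_*(\Linf)\cong\Lie$; the zero-divisor condition in $H_*(\Linf)$ is therefore precisely the one just established for $\Lie$. With both hypotheses verified, \cref{thm:homotopy-fixed-point intro} yields that $\Linf$ and $\Lie$ are homotopy fixed points for $\Tw$.

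I do not expect a genuine obstacle: the real content lives in \cref{lemma:bracket-not-zero-divisor-in-Lie} and in the general criterion \cref{thm:homotopy-fixed-point intro}, both available. The single subtlety worth stating explicitly is the passage from the operad to its homology, which is harmless here since in each case the homology is $\Lie$.
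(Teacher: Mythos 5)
Your proposal is correct and follows essentially the same route as the paper: the paper's proof is exactly "combine \cref{lemma:criteria-bracket-zero-divisor} with \cref{lemma:bracket-not-zero-divisor-in-Lie} to see that $\ell_2$ is not a right zero divisor in $\Lie$, then apply \cref{thm:homotopy-fixed-point intro}," with the $\Tw$-coalgebra hypothesis supplied by the preceding proposition. Your additional remark about passing from $\Lie$ to $H_*(\Linf)\cong\Lie$ is a correct (and slightly more careful) spelling-out of a step the paper leaves implicit.
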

\begin{proof}
    This follows from combining \cref{lemma:bracket-not-zero-divisor-in-Lie,lemma:criteria-bracket-zero-divisor}, and applying \cref{thm:homotopy-fixed-point intro}.
\end{proof}

The present methods should be compared to the proofs of \cite[Cor.~5.12~\&~5.13]{DolgushevWillwacher15}.


\subsection{Operads obtained by distributive law}
A distributive law between two operads~$\rmP$ and~$\rmQ$ is a morphism of $\Sym$-modules $\rmQ \circ \rmP \to \rmP \circ \rmQ$ satisfying certain properties which allow one to endow the composite product $\rmP \circ \rmQ$ with an operad structure~\cite[Section~8.6]{LodayVallette12}.
We will consider in this section the special cases where $\rmQ$ is the (curved) Lie operad.

\begin{definition}
\label{def:distributive-law}
    We say that an operad $\rmO$ is obtained from an operad $\rmP$ with zero differential and $\Lie$ by \emph{distributive law} if $\rmO = \rmP \circ \Lie$ as $\Sym$-modules, where the symbol~$\circ$ denotes the circle product (aka.\ plethysm), and the operad structure is given by the respective operad structures, together with the relations
    \begin{equation}
        \label{eq:distributive-law}
        \ell_2 \circ_2 \nu = (-1)^{|\nu|} \sum_{i=1}^{n}(\nu \circ_i \ell_2)^{\sigma_{1,i}} \quad \forall \ \nu \in \rmP(n),
    \end{equation}
    where $\sigma_{1,i}$ is the cycle $(12\cdots i)$ in $\Sym_{n+1}$.
\end{definition}

\begin{def-prop}
\label{def:curved-distributive-law}
    We say that an operad $\rmO$ is obtained from an operad $\rmP$ with zero differential and $\cLie$ by \emph{curved distributive law} if
    \begin{enumerate}
        \item\label{underlying module} the underlying non-dg operad is $(\rmP \circ \Lie) \vee [\ell_0] \vee [\ell_1]$,
        \item\label{differential2} the differential $d$ is given by $d_{\cLie}$ on $\cLie = \Lie \vee [\ell_0] \vee [\ell_1]$ and by $-[\ell_1, \,\cdot\,]$ on $\rmP$.
    \end{enumerate} 
\end{def-prop}

\begin{proof}
    We have to check that $d^2=0$.
    We already know that $d_{\cLie}^2 = 0$, so we have to show that $d^2 \nu = 0$ for $\nu \in \rmP$. 

    We first compute 
    \begin{align*}
        d^2(\nu)  = d [-\ell_1, \nu] 
         = -[d \ell_1, \nu] + [\ell_1, d\nu] 
        & = [\ell_2 \circ_1 \ell_0 + \ell_1 \circ_1 \ell_1, \nu] - [\ell_1, [\ell_1, \nu]] \\
        & = [\ell_2 \circ_1 \ell_0, \nu] + [\ell_1 \circ_1 \ell_1, \nu] - [\ell_1, [\ell_1, \nu]] \\
        & = [\ell_2 \circ_1 \ell_0, \nu] + \frac{1}{2} [[\ell_1, \ell_1], \nu] - [\ell_1, [\ell_1, \nu]] \\ 
        & = [\ell_2 \circ_1 \ell_0, \nu] 
    \end{align*}
    where we have used the Jacobi identity to cancel the term $\frac{1}{2} [[\ell_1, \ell_1], \nu] - [\ell_1, [\ell_1, \nu]]$. 
    We finish the computation:
    \begin{align*}
        d^2(\nu) = [\ell_2 \circ_1 \ell_0, \nu] & = (\ell_2 \circ_1 \ell_0) \circ_1 \nu - \sum_{i=1}^n \nu \circ_i (\ell_2 \circ_1 \ell_0) \\
        & = (-1)^{|\nu|} (\ell_2 \circ_2 \nu) \circ_1 \ell_0 - \sum_{i=1}^n (\nu \circ_i \ell_2) \circ_i \ell_0 \\
        & = (-1)^{|\nu|} \left( \ell_2 \circ_2 \nu - \sum_{i=1}^n (\nu \circ_i \ell_2)^{\sigma_{1,i}} \right) \circ_1 \ell_0 = 0,
    \end{align*}
    where the last equality follows from the distributive law identity~\eqref{eq:distributive-law}.
\end{proof}

\begin{prop}
\label{distributive-coalgebra}
    Any operad $\rmO$ which can be written as a distributive law $\rmO = \rmP \circ \Lie$ with the Lie operad admits a lift to $\Twist$.
    Equivalently, it admits a $\Tw$-coalgebra structure.
\end{prop}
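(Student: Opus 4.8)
Here is how I would approach \cref{distributive-coalgebra}.

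The plan is to produce an explicit lift of $\rmO$ to $\Twist$; the equivalence with a $\Tw$-coalgebra structure is then immediate from \cref{thm:main-thm}, which identifies $\Twist$ with the category of $\Tw$-coalgebras via the comonadic forgetful functor $\forget : \Twist \to \Mult$. Note first that $\rmO = \rmP \circ \Lie$ is an object of $\Mult$ through the structural map $\Linf \to \Lie \to \rmO$. The candidate lift is supplied by the curved distributive law of \cref{def:curved-distributive-law}: I take $\rmQ$ to be the operad whose underlying non-dg operad is $(\rmP \circ \Lie) \vee [\ell_0] \vee [\ell_1]$, with differential equal to $d_{\cLie}$ on $\cLie = \Lie \vee [\ell_0] \vee [\ell_1]$ and to $-[\ell_1, \cdot]$ on $\rmP$ (so that $d_\rmQ^2 = 0$ by \cref{def:curved-distributive-law}), and I set $\rmQ_{0,0} := \rmP \circ \Lie = \rmO$. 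The required map $\cLinf \to \rmQ$ factors as $\cLinf \to \cLie \hookrightarrow \rmQ$, and is a morphism of dg operads since $\cLinf \to \cLie$ is one and $d_\rmQ$ restricts to $d_{\cLie}$ on the suboperad $\cLie$.

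Next I would check the three conditions of \cref{twist new}. The isomorphism $[\ell_0] \vee [\ell_1] \vee \rmQ_{0,0} \to \rmQ$ of non-dg operads holds by the very definition of the underlying operad of $\rmQ$. The condition that $\rmQ_{0,0}$ contain the images of the $\ell_i$ for $i>1$ follows because $\cLinf \to \cLie$ sends $\ell_2$ to the Lie bracket, which lies in $\Lie \subset \rmP \circ \Lie = \rmQ_{0,0}$, and sends $\ell_i$ to $0$ for $i \geq 3$.

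The remaining condition concerns the twisted differential $D := d_\rmQ + [\ell_1, \cdot]$; I will in fact show that $D$ vanishes identically on $\rmQ_{0,0}$, which trivially implies the required degree bounds. Since $d_\rmQ$ and $[\ell_1, \cdot]$ are both derivations of $\rmQ$, so is $D$, and it suffices to check $D$ on a generating set of $\rmQ_{0,0} = \rmP \circ \Lie$, namely the generators of $\rmP$ together with $\ell_2$. On $\rmP$ we have $d_\rmQ = -[\ell_1, \cdot]$ by construction, so $D|_{\rmP} = 0$. On $\ell_2$, the only surviving terms of the $\cLinf$ differential \eqref{clinf differential} are those from $(p,q) = (0,2)$ and $(1,1)$, the $\ell_3$ term vanishing in $\cLie$; these assemble into $d_{\cLie}\ell_2 = -[\ell_1, \ell_2]$, whence $D\ell_2 = 0$. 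Therefore $D|_{\rmQ_{0,0}} = 0$. Finally I would confirm that $\forget(\rmQ, \rmQ_{0,0})$ recovers $\rmO$: the induced differential on $\rmQ_{0,0} \cong \rmQ/(\ell_0,\ell_1)$ is zero (both $-[\ell_1,\cdot]$ and $d_{\cLie}\ell_2$ lie in the ideal $(\ell_0,\ell_1)$), and the structural map is $\Linf \to \Lie \to \rmO$, as desired.

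The main obstacle is the verification of the differential condition. It rests on the sign computation $d_{\cLie}\ell_2 = -[\ell_1, \ell_2]$ and, more essentially, on the fact that $[\ell_1, \cdot]$ is a derivation of the operad — the same structural input that makes the curved distributive law of \cref{def:curved-distributive-law} well defined — which is what allows one to pass from the values of $D$ on generators to its vanishing on all of $\rmP \circ \Lie$. The nonsymmetric analogue would follow by the same argument, replacing $\Lie$ and $\cLie$ by their associative counterparts.
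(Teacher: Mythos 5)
Your proposal is correct and follows exactly the paper's route: the paper's proof simply points to the curved distributive law construction (\cref{def:curved-distributive-law}) as the lift to $\Twist$ and invokes \cref{thm:main-thm} for the equivalence with $\Tw$-coalgebra structures. Your write-up fills in the verification of the three conditions of \cref{twist new} (in particular the observation that $d_\rmQ + [\ell_1,\cdot]$ vanishes on $\rmQ_{0,0}$, being a derivation that kills the generators of $\rmP$ and $\ell_2$), which the paper leaves implicit in its citation of \cref{def:curved-distributive-law}.
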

\begin{proof}
    A lift to $\Twist$ is given in \cref{def:curved-distributive-law}.
    The fact that this is equivalent to the existence of a $\Tw$-coalgebra structure follows from \cref{thm:main-thm}.
\end{proof}

\begin{remark}
    The analogue for non-symmetric operads of \cref{def:curved-distributive-law} with $\Lie$ replaced by $\As$ does not work, because $d^2\nu \neq 0$ in this case.  In fact, the analogue of \cref{distributive-coalgebra} is false. 
    Indeed, consider the non-commutative version $\ncGer$ of the Gerstenhaber operad (defined in \cite[Sec.~3.1.2]{DotsenkoShadrinVallette-toric}).
    While $\ncGer$ can be obtained via a distributive law with the associative operad \cite[Prop.~3.1.5]{DotsenkoShadrinVallette-toric}, it is shown in \cite[Prop.~6.1]{DotsenkoShadrinVallette23} that it does not admit a $\Tw$-coalgebra structure.
\end{remark}

\begin{remark}
    More generally, given a morphism $\Lie \to \rmP$, there is a natural candidate for a lift to $\Twist$.
    Consider the non-dg operad $\cP \eqdef \rmP \vee [\kurv] \vee [\diff]$, with derivation given by
    \[d_{\cP} \kurv = - \diff \circ_1 \kurv, \quad d_{\cP} \diff = - \diff \circ_1 \diff + \ell_2 \circ_1 \kurv, \quad \text{and } d_{\cP} \nu = d_\rmP \nu - [\diff, \nu] \]
    for $\nu \in \rmP$.
    Then, we have $d_{\cP}^2(\nu)=0$ for all $\nu \in \rmP$ if and only if $[\ell_2 \circ_1 \kurv,\nu]=0$.
    In this case, it is straightforward to check that $(\cP, \rmP)$ is in $\Twist$, and therefore $\rmP$ has a $\Tw$-coalgebra structure.
    This recovers the criterion in~\cite[Prop.~5.29]{DotsenkoShadrinVallette23} guaranteeing that $\rmP$ is a $\Tw$-coalgebra.
\end{remark}

In the case where $\rmP$ is Koszul, we have that $\rmO$ is Koszul as well \cite[Thm.~4.5]{Markl96}.
One can then ask if the Koszul dual $\rmO^{!}$ admits a unital extension $\rmO^{!} \hookrightarrow u_\chi \rmO^{!}$.

\begin{prop}
\label{Tw-and-distributive}
    Let $\rmO=\rmP \circ \Lie$ be an operad obtained as a distributive law of a Koszul operad $\rmP$ with the Lie operad.
    Then, the Koszul dual operad $\rmO^{!}$ is extendable, and its minimal resolution $\rmO_\infty$ is a $\Tw$-coalgebra.
\end{prop}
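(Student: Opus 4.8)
The plan is to reduce everything to \cref{extendable operads are Tw-coalgebras intro}: once we know that $\rmO$ is Koszul and that its Koszul dual $\rmO^!$ is extendable, that corollary (applied with $\rmP \rightsquigarrow \rmO$) immediately endows the minimal resolution $\rmO_\infty = \Omega\overline{(\rmO^!)^*}$ with a $\Tw$-coalgebra structure. Koszulness of $\rmO$ is \cite[Thm.~4.5]{Markl96}, and by the compatibility of Koszul duality with distributive laws \cite{Markl96} the dual is again a distributive-law operad
\[ \rmO^! \;=\; \rmP^! \circ \Com, \]
the transpose of the law defining $\rmO = \rmP \circ \Lie$ (using $\Lie^! = \Com$), with $\Com$ the inner factor. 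Thus the entire content is to prove that $\rmP^! \circ \Com$ is extendable.

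For extendability I would take $\chi : E(2) \to \mathbf{k}$ to be the projection onto the commutative generator, i.e.\ $\chi(\mu_2) = 1$ and $\chi = 0$ on the arity-$2$ generators of $\rmP^!$; this is visibly a nontrivial $\Sym_2$-equivariant map. The cleanest route is then to compute the free $u_\chi\rmO^!$-algebra via the distributive law. Since the free-algebra monad of a distributive-law operad $\rmA \circ \rmB$ is the composite $T_\rmA T_\rmB$, the free $\rmO^!$-algebra on $V$ is $\rmP^!(\Com(V))$, where $\Com(V)$ is the free (non-unital) commutative algebra. A $u_\chi\rmO^!$-algebra is exactly an $\rmO^!$-algebra together with a distinguished element $1$ that is a two-sided unit for $\mu_2$ and is annihilated by every generator of $\rmP^!$ (hence, by an immediate induction on composites, by every arity-$\geq 2$ operation of $\rmP^!$). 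Adjoining such a unit at the commutative layer replaces $\Com(V)$ by the free unital commutative algebra $\uCom(V) = \mathbf{k}\cdot 1 \oplus \Com(V)$, so the free $u_\chi\rmO^!$-algebra on $V$ is $\rmP^!(\uCom(V))$ modulo the relation that $\overline{\rmP^!}$ kills $1$.

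The key computation is the Schur-functor identity, valid because $\rmO$ is reduced and hence $\rmP^!$ is reduced ($\rmP^!(0)=0$, $\rmP^!(1)=\mathbf{k}\,\id$):
\[ \rmP^!\!\left(\mathbf{k}\cdot 1 \oplus W\right)\big/\bigl(\nu(1,\dots)=0,\ \nu \in \overline{\rmP^!}\bigr) \;\cong\; \mathbf{k}\cdot 1 \;\oplus\; \rmP^!(W). \]
Indeed, expanding $(\mathbf{k}\cdot 1 \oplus W)^{\otimes n}$ according to the number of tensor factors equal to $1$, every summand with at least one $1$ and $n \geq 2$ is killed, the summand $n=1$ with a single $1$ contributes $\mathbf{k}\cdot 1$ via $\rmP^!(1)=\mathbf{k}\,\id$, and the summands with no $1$ reassemble into $\rmP^!(W)$. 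Taking $W = \Com(V)$ shows that the free $u_\chi\rmO^!$-algebra on $V$ is the free $\rmO^!$-algebra $\rmP^!(\Com(V))$ with a single extra element $1$ adjoined. Comparing multilinear components in each arity gives $u_\chi\rmO^!(m) = \rmO^!(m)$ for $m \geq 1$ and $u_\chi\rmO^!(0) = \mathbf{k}u$, i.e.\ the underlying $\Sym$-module of $u_\chi\rmO^!$ is $\rmO^! \oplus \mathbf{k}u$; hence $\rmO^! \hookrightarrow u_\chi\rmO^!$ is a monomorphism and $\rmO^!$ is extendable.

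The main obstacle is making the free-algebra reduction fully rigorous at the operadic level: one must verify that imposing the generator-level relations $\nu \circ_i u = 0$ inside $\rmO^! = \rmP^! \circ \Com$ forces exactly the intended collapse — every inner $u$ directly capping a leaf of a positive-arity $\rmP^!$-operation vanishes, every $u$ lying strictly inside a $\uCom$-subtree of arity $\geq 1$ is absorbed by the commutative unit (landing back in $\rmP^! \circ \Com$), and the lone element $u = \id \circ u$ survives in arity $0$ — so that no unintended relations are created among the $u$-free monomials. The distributive-law description is precisely what makes this transparent, since it separates the commutative layer, where $u$ is a unit, from the $\rmP^!$ layer, where $u$ acts by zero. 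With extendability of $\rmO^!$ established, \cref{extendable operads are Tw-coalgebras intro} yields the $\Tw$-coalgebra structure on $\rmO_\infty$ and completes the proof.
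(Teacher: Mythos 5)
Your overall strategy --- deduce Koszulness of $\rmO$ from \cite[Thm.~4.5]{Markl96}, prove that $\rmO^!$ is extendable for the $\chi$ supported on the commutative generator, and conclude by \cref{extendable operads are Tw-coalgebras intro} --- is exactly the paper's. However, your execution contains a genuine error: Koszul duality \emph{reverses} the order of the two factors of a distributive law. By \cite[Lem.~4.3]{Markl96} one has $\rmO^! \cong \Lie^! \circ \rmP^! \cong \Com \circ \rmP^!$, with $\Com$ as the \emph{outer} factor (this is how the paper states it), not $\rmO^! = \rmP^! \circ \Com$ with $\Com$ inner as you assert. The dual of the law $\Lie \circ \rmP \to \rmP \circ \Lie$ is a map $\rmP^! \circ \Com \to \Com \circ \rmP^!$, and the operad it produces is its \emph{target}, not its source. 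A sanity check: for $\rmP = \Com$ one gets $\rmO = \Ger$, which is Koszul self-dual, so $\rmO^! \cong \Ger \cong \Com \circ \Lie$; your formula would give $\Lie \circ \Com$, and there is no distributive-law presentation of $\Ger$ with $\Com$ on the inside (the Leibniz relation rewrites brackets of products as products of brackets, only in that direction).

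This is not a cosmetic slip, because your entire extendability argument lives on the wrong side of the composite: the free $\rmO^!$-algebra on $V$ is $\Com(\rmP^!(V))$, not $\rmP^!(\Com(V))$, so the unit must be adjoined at the \emph{outer} commutative layer rather than the inner one; your Schur-functor identity $\rmP^!(\mathbf{k}\cdot 1 \oplus W)/\bigl(\overline{\rmP^!}\ \text{kills}\ 1\bigr) \cong \mathbf{k}\cdot 1 \oplus \rmP^!(W)$, while correct in itself, answers the wrong question; and your normal-form analysis (``$u$ inside a $\uCom$-subtree is absorbed, $u$ capping a $\rmP^!$-leaf vanishes'') presupposes a rewriting rule $\Com \circ \rmP^! \to \rmP^! \circ \Com$ that does not exist in general. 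With the correct order the verification is actually simpler, and it is what the paper's terse proof means by ``extend trivially to $\rmO^!$ the map $\chi$ associated to $\Com$'': extend the dual distributive law to $\uCom$ by declaring that $u$ plugged into any positive-arity $\rmP^!$-operation is zero, so that $u_\chi\rmO^!$ has underlying $\Sym$-module $\uCom \circ \rmP^! \cong \mathbf{k}u \oplus \rmO^!$, whence $\rmO^! \hookrightarrow u_\chi\rmO^!$ is a monomorphism; \cref{extendable operads are Tw-coalgebras intro} then gives the $\Tw$-coalgebra structure on $\rmO_\infty$.
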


\begin{proof}
    By \cite[Lem.~4.3]{Markl96}, we have that $\rmO^{!} \cong \Lie^{!} \circ \rmP^{!} \cong \Com \circ \rmP^{!}$.
    One can then extend trivially to $\rmO^{!}$ the map $\chi$ associated to $\Com$.
    The $\Tw$-coalgebra structure is then given by \cref{extendable operads are Tw-coalgebras intro}.
\end{proof}

We now prove that $\rmO_\infty$ and $\rmO$ are homotopy fixed points of $\Tw$.  
The following result appears with a different proof and an extra hypothesis (a certain bound on degrees) in  \cite[Theorem 6.1]{DolgushevWillwacher15}.

\begin{prop}
\label{fixed-point-distributive-law}
    Any operad $\rmO$ which can be written as a distributive law $\rmO = \rmP \circ \Lie$ with the Lie operad is a homotopy fixed point for $\Tw$.
    If further $\rmP$ is Koszul, then the minimal resolution $\rmO_\infty$ is also a homotopy fixed point for $\Tw$.
\end{prop}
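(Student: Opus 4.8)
The plan is to obtain both assertions from \cref{thm:homotopy-fixed-point intro}. Hypothesis~(1) is already available: $\rmO$ carries a $\Tw$-coalgebra structure by \cref{distributive-coalgebra}, and when $\rmP$ is Koszul the resolution $\rmO_\infty$ carries one by \cref{Tw-and-distributive}. For hypothesis~(2), note that $\rmO = \rmP \circ \Lie$ has zero differential, so $H_*(\rmO) = \rmO$; and when $\rmP$ is Koszul the operad $\rmO$ is Koszul (by \cite[Thm.~4.5]{Markl96}), so $\rmO_\infty$ is a resolution of $\rmO$ and $H_*(\rmO_\infty) \cong \rmO$. In either case the homology class of $\ell_2$ is the image of the Lie bracket under the structural map $\Lie \to \rmO$, and since the induced differential on homology vanishes, hypothesis~(2) reduces in both cases to the single statement that $\ell_2$ is not a right zero divisor in $\rmO$.

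To prove the latter I apply \cref{lemma:criteria-bracket-zero-divisor} to the morphism $\Lie \to \rmO$ coming from the $\Lie$-factor, reducing to injectivity, for every $n$, of the canonical $\rmO$-algebra map
\[
\rmO(x_0, \dots, x_{n-1}) \to \rmO(u,v, x_0, \dots, x_{n-1}) / \left( \ell_2(u,v) - x_0 \right).
\]
Here I use the distributive law: restriction along $\Lie \to \rmO$ admits a left adjoint $F$ from $\Lie$-algebras to $\rmO$-algebras whose underlying vector space is the Schur functor $\rmP(-) = \bigoplus_k \rmP(k) \otimes_{\Sym_k} (-)^{\otimes k}$, and the free $\rmO$-algebra on a set $S$ is $F(\Lie(S))$ with $\Lie(S)$ the free Lie algebra. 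Being a left adjoint, $F$ preserves quotients, and since the relation $\ell_2(u,v) - x_0$ lies in the $\Lie$-part $\Lie(u,v,x_0,\dots,x_{n-1})$, the target is identified with $F\big(\Lie(u,v,x_0,\dots,x_{n-1})/(\ell_2(u,v)-x_0)\big)$ and the displayed map with $F(j)$, where $j : \Lie(x_0,\dots,x_{n-1}) \to \Lie(u,v,x_0,\dots,x_{n-1})/(\ell_2(u,v)-x_0)$ sends $x_0 \mapsto [u,v]$ and $x_i \mapsto x_i$.

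The map $j$ is precisely the one shown to be injective in \cref{lemma:bracket-not-zero-divisor-in-Lie}. Finally $F$ preserves injections: over a field of characteristic zero the Schur functor $\rmP(-)$ is exact, since tensor powers preserve monomorphisms and taking $\Sym_k$-coinvariants is exact by Maschke's theorem. Hence $F(j)$ is injective, so $\ell_2$ is not a right zero divisor in $\rmO$, and \cref{thm:homotopy-fixed-point intro} applies both to $\rmO$ and (when $\rmP$ is Koszul) to $\rmO_\infty$. I expect the main obstacle to be the identification in the second paragraph: recognizing the free-algebra quotient as $F$ of a Lie-algebra quotient, and the structure map as $F(j)$. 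This rests on the distributive-law fact that the free functor $F$ has underlying object the $\rmP$-Schur functor and, as a left adjoint, commutes with imposing the relation $[u,v]=x_0$; once this is in place, the reduction to \cref{lemma:bracket-not-zero-divisor-in-Lie} and the characteristic-zero exactness of Schur functors are routine.
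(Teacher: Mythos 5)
Your proof is correct, and its overall skeleton coincides with the paper's: both verify the two hypotheses of \cref{thm:homotopy-fixed-point intro}, getting the $\Tw$-coalgebra structures from \cref{distributive-coalgebra} and \cref{Tw-and-distributive}, and reducing hypothesis~(2) for both $\rmO$ and $\rmO_\infty$ to the statement that $\ell_2$ is not a right zero divisor in $\rmO$ itself. Where you genuinely diverge is in how the distributive law is exploited for that last statement. The paper applies \cref{lemma:criteria-bracket-zero-divisor} only to $\Lie$, concluding via \cref{lemma:bracket-not-zero-divisor-in-Lie} that $\ell_2$ is not a right zero divisor in $\Lie$, and then simply asserts that it is ``clear from the definition of the relations''~\eqref{eq:distributive-law} that the same holds in $\rmO = \rmP \circ \Lie$ --- an operad-level normal-form argument left to the reader (and one which, if written out, would itself need a Maschke-type argument to handle the $\Sym$-coinvariants in the composite product). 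You instead apply \cref{lemma:criteria-bracket-zero-divisor} directly to $\rmO$, and obtain the required injectivity of free-algebra maps by transporting \cref{lemma:bracket-not-zero-divisor-in-Lie} along the left adjoint $F$ to restriction, using the distributive-law identification of free $\rmO$-algebras on Lie algebras with the $\rmP$-Schur functor, $\rmO \circ_{\Lie} \mathfrak{g} \cong \rmP(\mathfrak{g})$, together with exactness of Schur functors in characteristic zero. Your route costs you the justification of that identification (a split-coequalizer argument, which you correctly flag as the crux), but it buys a fully rigorous replacement for the step the paper waves through, and it makes transparent exactly where characteristic zero enters; the paper's route is shorter but leaves its key assertion unproved. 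Both arguments ultimately rest on the same two lemmas, so neither is more general than the other in substance.
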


\begin{proof}
    Combining \cref{lemma:bracket-not-zero-divisor-in-Lie} and \cref{lemma:criteria-bracket-zero-divisor}, we know that $\ell_2$ is not a right zero divisor in~$\Lie$.
    Further, it is clear from the definition of the relations in a distributive law~\eqref{eq:distributive-law} that~$\ell_2$ is not a zero divisor in $\rmO = \rmP \circ \Lie$ either.
    The result then follows from combining \cref{Tw-and-distributive} with \cref{thm:homotopy-fixed-point intro}.
\end{proof}

\begin{remark}
    Checking that the Lie bracket is not a right zero divisor in an operad seems to be related to the existence of a Poincaré-Birkhoff-Witt (PBW) theorem for this operad.
    While we don't have a precise understanding of this relationship, we note that we used the standard PBW theorem in the proof of \cref{lemma:bracket-not-zero-divisor-in-Lie}.
    Moreover, one can read in the introduction of \cite{DotsenkoTamaroff21} that the PBW property is satisfied by operads obtained by means of distributive laws, for which the Lie bracket is also not a right zero divisor.
\end{remark}


\subsection{The Gerstenhaber operad}

The Gerstenhaber operad~$\Ger$ is obtained via distributive law between the commutative and Lie operads $\Ger \eqdef \Com \circ \Lie$ (\cref{def:distributive-law}).
It is Koszul self-dual, i.e.\ we have $\Ger^! \cong \Lie^! \circ \Com^! \cong \Com \circ \Lie \cong \Ger$~\cite[Lem.~4.3]{Markl96}.

Recall from \cite[Sec.~4.3]{DolgushevWillwacher15} that its minimal resolution $\Ginf$ is quasi-free on generators given in arity~$n$ by a family $\{g_n^I\}$ indexed by ordered partitions $I=I_1 \sqcup \cdots \sqcup I_k$ of $\{1,\ldots,n\}$ such that $\max(I_1)< \cdots < \max(I_k)$, for $1 \leq k \leq n$.
The generators of $\Ger(n)$ are indexed by the same partitions~$I$; we will denote them by~$w_n^I$.
Finally, we will write $I_n \eqdef \{1\}\sqcup \cdots \sqcup \{n\}$ for the unique increasing ordered partition of size $n$.

\begin{prop}
\label{Gerst-coalgebra}
    The operad $\Gerinf$ admits 
    \begin{enumerate}
        \item a morphism $\Linf \to \Gerinf$ given by $\ell_n \mapsto g_n^{I_n}$,
        \item a $\Tw$-coalgebra structure $\Gerinf \to \Tw(\Gerinf)$ given by $g_n^{I} \mapsto \sum_{r \geqslant 0} \frac{1}{r!} g_{r+n}^{I_r \sqcup I}(\alpha^r,-)$.
    \end{enumerate}
    The same holds for the operad $\Ger$.
\end{prop}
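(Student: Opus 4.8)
The plan is to specialize the distributive-law machinery of \cref{Tw-and-distributive}, whose hypotheses are met here: $\Ger = \Com \circ \Lie$ is obtained by a distributive law with $\Lie$ (\cref{def:distributive-law}) from the Koszul operad $\Com$. Thus the Koszul dual $\Ger^! \cong \Com \circ \Lie \cong \Ger$ is extendable, with unital extension $u_\chi \Ger$ obtained by extending trivially the map $\chi$ of $\Com$; concretely $\chi$ takes the value $1$ on the commutative product generator and $0$ on the Lie bracket, the latter being forced since the target of $\chi$ carries the trivial $\Sym_2$-action while the bracket generates the sign representation. With this in hand, \cref{extendable operads are Tw-coalgebras intro} produces both a morphism $\Linf \to \Gerinf$ and a $\Tw$-coalgebra structure on $\Gerinf = \Omega \overline{(\Ger^!)^*}$, and the corresponding statement for $\Ger$ itself follows by projection along \cref{cor:non-homotopy-Tw}. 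It remains only to rewrite Points (1) and (2) of \cref{extendable operads are Tw-coalgebras intro} in the basis $\{g_n^I\}$ of \cite[Sec.~4.3]{DolgushevWillwacher15}.

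For Point (1), I would unravel $\Linf \xrightarrow{\Omega(\psi^*)} \Gerinf$: the map $\psi : \overline{\Ger} \to \overline{\Com}$ sends the commutative product to $\mu_2$ and the bracket to $0$, so its linear dual sends $\mu_n^*$ to the functional dual to the unique fully commutative element of $\Ger(n)$ — the one indexed by the fully split ordered partition $I_n = \{1\} \sqcup \cdots \sqcup \{n\}$, since any block of size $\geq 2$ involves a bracket and is killed by $\psi$. After applying $\Omega$ this reads $\ell_n \mapsto g_n^{I_n}$, giving (1).

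For Point (2), I would compute the cooperadic decomposition $\delta_{(u_\chi \Ger)^*}$ on the dual generator $(w_n^I)^*$ corresponding to $g_n^I$. The only arity-zero cooperation is $u^*$, and by \cref{explicit formula unit Twist intro} the $\Tw$-coalgebra structure is $s^{-1} v \mapsto s^{-1} v + \sum_{w, A}(s^{-1} w)^A$, the pairs $(w, A)$ running over the two-level terms of $\delta_{(u_\chi \Ger)^*}(v)$ whose corks are decorated by $u^*$. On the $\Com \circ \Lie$ side, grafting $u^*$ into an input caps it with the unit, which is non-trivial precisely on inputs occupying a singleton (commutative) block; doing so at $r$ inputs manufactures $r$ leading singleton blocks, so the corresponding dual generator is $g_{r+n}^{I_r \sqcup I}$. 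Summing over all subsets $A$ of a given size $r$ and accounting for the $\Sym_r$-symmetry of the added singletons produces the factor $\tfrac{1}{r!}$, so the whole sum collapses to $\sum_{r \geq 0} \tfrac{1}{r!} g_{r+n}^{I_r \sqcup I}(\alpha^r, -)$, which is (2); this mirrors the computation of $\delta_{\uCom^*}$ carried out for the Lie operad above.

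The main obstacle is this last identification of the two-level decomposition in the ordered-partition basis. Concretely, one must verify that $\delta_{(u_\chi \Ger)^*}$ applied to $(w_n^I)^*$ equals $(w_n^I)^*$ plus $\sum_{r \geq 0} \tfrac{1}{r!}$ times $(w_{r+n}^{I_r \sqcup I})^*$ corked by $r$ copies of $u^*$, plus terms in $\delta^+$; matching these, together with the signs induced by the degree $(-1)$ desuspension, against the normalization $\max(I_1) < \cdots < \max(I_k)$ of \cite[Sec.~4.3]{DolgushevWillwacher15} is the only genuinely computational step, the extendability and distributive-law inputs being otherwise immediate.
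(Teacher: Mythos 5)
Your proposal is correct and follows essentially the same route as the paper's proof: existence via \cref{Tw-and-distributive} (equivalently, the unital extension of $\Ger^! \cong \Com \circ \Lie$ extending $\chi$ from $\Com$ trivially, feeding into \cref{extendable operads are Tw-coalgebras intro}), the explicit formulas via the computation of $\delta_{\uGer^*}$ on the generators $(w_n^I)^*$, and the non-homotopy case via \cref{cor:non-homotopy-Tw}. Your additional unpacking of why $\chi$ kills the bracket and where the $\tfrac{1}{r!}$ comes from is consistent with the coproduct formula the paper states.
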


\begin{proof}
    This follows from \cref{Tw-and-distributive}, whose proof relies on \cref{extendable operads are Tw-coalgebras intro}.
    The explicit $\Tw$-coalgebra structure comes from Point~(\ref{Tw-coalgebra structure}) in \cref{extendable operads are Tw-coalgebras intro}, where one computes explicitly the structural morphism $\delta_{\uGer^{*}}: \uGer^{*} \to \overline{\mathbb{T}}(\uGer^{*})$, which in the present case is given on the generators by
    \[
    \delta_{\uGer^{*}}((w_n^I)^{*})
    =
    \sum_{r \geq 0} \frac{1}{r!}(w_{r+n}^{I_r \sqcup I})^{*}((w_0^{*})^r,-,\ldots,-)+ \delta_{\uGer^{*}}^+((w_n^I)^{*}).
    \]
    The analogous results for $\Ger$ then follow either from \cref{distributive-coalgebra} or \cref{cor:non-homotopy-Tw}.
    The shortest path to the explicit formula for the $\Tw$-coalgebra structure is Point~(2) in \cref{cor:non-homotopy-Tw}.
\end{proof}

\begin{prop}
\label{Gerst-homotopy-fixed-point}
    The operads $\Gerinf$ and $\Ger$ are homotopy fixed points for $\Tw$.
\end{prop}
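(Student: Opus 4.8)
The plan is to recognize this statement as an immediate instance of the general \cref{fixed-point-distributive-law}. Recall that the Gerstenhaber operad is obtained by distributive law $\Ger = \Com \circ \Lie$, and that $\Gerinf$ is its minimal resolution $\Ger_\infty$. Since $\Com$ is a Koszul operad, both conclusions of \cref{fixed-point-distributive-law} apply: the first sentence gives that $\Ger = \Com \circ \Lie$ is a homotopy fixed point for $\Tw$, while the second sentence---applicable precisely because $\Com$ is Koszul---gives that the minimal resolution $\Ger_\infty = \Gerinf$ is likewise a homotopy fixed point.

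To see why nothing beyond a specialization is needed, recall what \cref{fixed-point-distributive-law} demands, via \cref{thm:homotopy-fixed-point intro}. One needs $\Ger$ and $\Gerinf$ to carry $\Tw$-coalgebra structures, which are supplied by \cref{Gerst-coalgebra}, and one needs the Lie bracket $\ell_2$ not to be a right zero divisor in homology. The latter is the only substantive input, and it is already dispatched inside the proof of \cref{fixed-point-distributive-law}: since $\ell_2$ is not a right zero divisor in $\Lie$ (by \cref{lemma:bracket-not-zero-divisor-in-Lie} together with the criterion \cref{lemma:criteria-bracket-zero-divisor}), the defining relations~\eqref{eq:distributive-law} of the distributive law force the same conclusion in $\rmO = \Com \circ \Lie = \Ger$, and hence in its minimal resolution.

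Consequently there is no genuine obstacle to overcome here; the result follows by taking $\rmP = \Com$ in \cref{fixed-point-distributive-law}. The only points worth stating explicitly in the write-up are the identification $\Ger = \Com \circ \Lie$ as a distributive law (already recorded at the head of this subsection) and the Koszulness of $\Com$, after which both assertions are read off from the two sentences of \cref{fixed-point-distributive-law}.

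\begin{proof}
    Since $\Ger = \Com \circ \Lie$ is obtained via distributive law with the Lie operad, and $\Com$ is Koszul, the claim is a direct application of \cref{fixed-point-distributive-law}: its first assertion gives that $\Ger$ is a homotopy fixed point for $\Tw$, and its second assertion, valid because $\Com$ is Koszul, gives that the minimal resolution $\Gerinf = \Ger_\infty$ is a homotopy fixed point for $\Tw$.
\end{proof}
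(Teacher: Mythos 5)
Your proof is correct and is exactly the paper's argument: the paper also deduces this statement as an immediate consequence of \cref{fixed-point-distributive-law}, applied to the distributive-law presentation $\Ger = \Com \circ \Lie$ with $\Com$ Koszul. Your additional unpacking of why the hypotheses hold (via \cref{Gerst-coalgebra}, \cref{lemma:bracket-not-zero-divisor-in-Lie}, and \cref{lemma:criteria-bracket-zero-divisor}) is accurate but already contained in the proof of \cref{fixed-point-distributive-law} itself.
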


\begin{proof}
This in an immediate consequence of \cref{fixed-point-distributive-law}.
\end{proof}

The present methods should be compared to the proofs of \cite[Cor.~5.12~\&~5.13]{DolgushevWillwacher15}.

\begin{remark} 
    Let us recall (following \cite{Willwacher-KGT}) how some of the above structures appear in the context of Kontsevich's work on formality of the little disks operad $\mathbb{E}_2$ \cite{Kontsevich99}.
    The homology of~$\mathbb{E}_2$ is known to be the Gerstenhaber operad $\Ger$.
    Kontsevich uses the semi-algebraic forms on the Fulton--MacPherson operad $\Omega_{PA}(\FM_2)$ as a model for cochains on $\mathbb{E}_2$.  The main content of Kontsevich's proof is the introduction of a certain operad $\Gra$ of graphs and, by studying certain integrals, a quasi-isomorphism of operads
    $$\Omega_{PA}(\FM_2)^* \to \Tw(\Gra)^c$$
    where $\Tw(\Gra)^c$ is a quotient of $\Tw(\Gra)$ by certain disconnected graphs.  

    The translation to a statement about $\Ger$ is the following.
    There is a map $\Ger \to \Gra$ given by sending the product to the graph with 2 vertices and the bracket to the graph with 2 vertices and an edge between them.  Applying $\Tw$ and precomposing with the Tw-coalgebra structure morphism (recovered above in \cref{Gerst-coalgebra}), one gets maps $\Ger~\to~\Tw(\Ger) \to \Tw(\Gra)$.
    One composes further by the quotient to $\Tw(\Gra)^c$, giving finally a map 
    \begin{equation}
    \label{eq:Ger-Graph-morphism}
    \Ger \to \Tw(\Graph)^c
    \end{equation}
    One checks that this is a quasi-isomorphism.
    (Using the fact, recovered above in \cref{Gerst-homotopy-fixed-point}, that $\Ger$ is a $\Tw$ homotopy fixed point, one could instead check that $\Tw(\Ger) \to \Tw(\Graph)^c$ is a quasi-isomorphism.)
\end{remark}

\begin{remark} 
\label{Dolgushev-Willwacher}
The Deligne conjecture asserts that the $\mathbb{E}_2$ operad acts on the Hochschild chains of an $\mathbb{E}_1$ algebra; it was proven by Kontsevich and Soibelman \cite{Kontsevich-Soibelman-deligneconjecture} by constructing a morphism from $\Omega_{PA}(\FM_2)^*$ to an operad which acts on Hochschild chains.
We follow Dolgushev and Willwacher \cite{DolgushevWillwacher15} in denoting said operad as $\Br$; in fact, loc.\ cit.\ introduces another operad $\BT$ such that $\Tw(\BT)$ acts on Hochschild chains and contains $\Br$ as a quasi-isomorphic sub-operad.  One may thus view `the space of solutions to the Deligne conjecture' as the quasi-isomorphisms $\Gerinf \to \Tw(\BT)$.  

The main concern of \cite{DolgushevWillwacher15} is to show that any solution to the Deligne conjecture is homotopic to one compatible with the $\Tw$-coalgebra structure.  We have now recovered all the ingredients they used to show this result: 
Fix a quasi-isomorphism $f : \Gerinf \xrightarrow{\sim} \Tw(\BT)$.  
Now, $\Gerinf$ has a $\Tw$-coalgebra structure $\eta_{\Gerinf} : \Gerinf \to \Tw(\Gerinf)$ (\cref{Gerst-coalgebra})
and is a homotopy fixed point for $\Tw$ (\cref{Gerst-homotopy-fixed-point}).  By the latter fact, $\Tw(\BT) \cong \Gerinf$ is  also a homotopy fixed point for~$\Tw$.
By \cite[Thm~10.1]{DolgushevWillwacher15}, recovered above as \cref{create Tw morphism}, we get a morphism $\tilde f \eqdef \Tw(\varepsilon_{\BT} \circ f) \circ \eta_{\Gerinf} : \Gerinf \xrightarrow{\sim} \Tw(\BT)$ of $\Tw$-coalgebras which is homotopy equivalent to $f$.

Let us note however that \cite{DolgushevWillwacher15} showed a bit more, namely that if the original morphism $f$ factors through $\Br$, then so too does $\widetilde{f}$.  
\end{remark}


\subsection{The gravity operad}
The gravity operad $\Grav$ was introduced by Getzler in \cite{Getzler94,Getzler95}.
It is a generalization of the Lie operad given by the homology $H_{\bullet-1} (\mathcal{M}_{0,n+1})$ of the moduli space of genus zero curves with $(n+1)$ marked points \cite[Sec.~3.4]{Getzler95}.
It is the free operad on the $\Sym$-module $E$, where $E(n)$, $n\geq 2$ is the sign representation of $\Sym_n$ generated by one operation~$m_n$, modulo the operadic ideal generated by the relations given for $k > 2$ and $l \geq 0$ by
\[
\sum_{1 \leq i < j \leq k} (m_{k+l-1}\circ_1 m_2)^{\sigma_{ij}} 
=
\begin{cases}
    m_{l+1}\circ_1 m_k  & \text{if } l >0 ,\\
    0 & \text{else},
\end{cases}
\]
where $\sigma_{ij} \in \Sym_{k+l}$ takes $i \mapsto 1$ and $j \mapsto 2$ and preserves the order of all other $h \in \{1,\ldots,k+l\}\setminus\{i,j\}$. 
Note that for $k=3$ and $l=0$ this gives the Jacobi identity.

The Koszul dual of $\Grav$ is the operad $\HypCom$ \cite[Thm.~4.13]{Getzler95}, which is given by the homology $H_\bullet(\overline{\mathcal{M}}_{0,n+1})$ of the Deligne--Mumford compactification of the moduli space of genus zero curves with $n+1$ marked points \cite[Sec.~3.6]{Getzler95}.
It is the free operad on the $\Sym$-module $E$, where $E(n)$, $n\geq 2$ is the trivial representation of $\Sym_n$ generated by one operation~$\mu_n$, modulo the operadic ideal generated by the relations given for $m \geqslant 0$ by 
\[
\sum_{\substack{k+l=m \\ \sigma \in \Sh(k,l)}}(\mu_{k+2} \circ_1 \mu_{l+2})^{\sigma'}
=
\sum_{\substack{k+l=m \\ \sigma \in \Sh(k,l)}}(\mu_{k+2} \circ_2 \mu_{l+2})^{\tilde \sigma},
\]
where $\sigma'$ is the permutation of $\{1,\ldots,m+3\}$ which acts as the identity on $1,2$ and $l+3$ and by $\sigma$ elsewhere, while $\tilde \sigma$ acts as the identity on $1,2,3$ and by $\sigma$ elsewhere. 
Note that for~$m=0$ we get the usual associativity relation defining the operad~$\Com$.

\begin{prop}
    The operad $\Grainf$ admits 
    \begin{enumerate}
        \item a morphism $\Linf \to \Grainf$,
        \item a $\Tw$-coalgebra structure $\Grainf \to \Tw(\Grainf)$.
    \end{enumerate}
    The same results hold for the operad $\Grav$.
\end{prop}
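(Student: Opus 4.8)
The plan is to follow the template of the earlier examples, but with one essential difference: unlike the Gerstenhaber operad, the gravity operad is \emph{not} a distributive law $\rmP \circ \Lie$ (its generators already live in every arity $\geq 2$), so \cref{Tw-and-distributive} does not apply. Instead I would invoke the extendability machinery of \cref{extendable operads are Tw-coalgebras intro} and \cref{cor:non-homotopy-Tw} directly. The starting point is that $\Grav$ is Koszul with Koszul dual $\Grav^! \cong \HypCom$ \cite[Thm.~4.13]{Getzler95}, so everything reduces to exhibiting a unital extension of $\HypCom$.

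Concretely, I would take the $\Sym_2$-equivariant map $\chi : E(2) \to \mathbf{k}$ sending the (symmetric) generator $\mu_2$ to $1$; this is the natural choice, since for $m=0$ the defining relations of $\HypCom$ degenerate to the associativity relation of $\Com$. With this $\chi$, the relations $R_\chi$ become the flat-identity axioms of a genus-$0$ cohomological field theory: $\mu_2 \circ_1 u = \mu_2 \circ_2 u = \id$, while $\mu_n \circ_i u = 0$ for every $n \geq 3$. Writing $\uHypCom \eqdef u_\chi \HypCom$ for the resulting flat-unital hypercommutative operad, I claim that $\HypCom \hookrightarrow \uHypCom$ is a unital extension. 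Granting this, the conclusion is formal: \cref{extendable operads are Tw-coalgebras intro} produces the morphism $\Linf \xrightarrow{\Omega(\psi^*)} \Grainf$ and the explicit $\Tw$-coalgebra structure $\Grainf \to \Tw(\Grainf)$, read off from the structural morphism $\delta_{\uHypCom^*}$ exactly as in the $\Com$ and $\Ger$ cases, while \cref{cor:non-homotopy-Tw} yields the morphism $\Lie \to \Grav$ and the $\Tw$-coalgebra structure on $\Grav$ itself (the Lie structure being the arity-$2$ generator $m_2$, which satisfies Jacobi by the $k=3$, $l=0$ relation).

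The hard part will be verifying extendability, i.e.\ that imposing $R_\chi$ does not collapse $\HypCom$ and that the inclusion $\HypCom \to \uHypCom$ is injective; by the argument in the proof of \cref{extendable-over-uCom}, this is equivalent to showing that the underlying $\Sym$-module of $u_\chi \HypCom$ is $\HypCom \oplus \mathbf{k}u$. I would establish this through the genus-$0$ cohomological field theory realization $\HypCom(n) = H_\bullet(\overline{\mathcal{M}}_{0,n+1})$: the extension $\uHypCom$ is the flat-unital operad governing formal Frobenius manifolds with a flat identity, the flat-identity relations are precisely $R_\chi$, and the non-unital classes persist in the unital theory, so adjoining $u$ adds exactly the one-dimensional space $\mathbf{k}u$ and identifies no two distinct elements of $\HypCom$. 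Alternatively, one can argue combinatorially by checking that the relations $R_\chi$ only rewrite each composite $\mu_n \circ_i u$ in terms of operations of strictly smaller arity, so that an induction on arity produces a basis of $u_\chi \HypCom$ of the stated form $\HypCom \oplus \mathbf{k}u$. Once this monomorphism is in hand, both displayed structures follow immediately from the cited theorems.
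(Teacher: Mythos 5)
Your proof takes essentially the same route as the paper's: both reduce the statement to the unital extension $\HypCom \hookrightarrow u_\chi\HypCom$ determined by $\chi(\mu_2)=1$, then invoke \cref{extendable operads are Tw-coalgebras intro} for $\Grainf$ and \cref{cor:non-homotopy-Tw} for $\Grav$. The only difference is that the paper simply asserts the extendability of $\HypCom$ (``as the commutative operad''), whereas you correctly flag it as the point needing verification and sketch a CohFT/combinatorial justification --- a useful supplement, not a divergence in method.
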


\begin{proof}
    Observe that, as the commutative operad, the hypercommutative operad $\Grav^! \cong \HypCom$ admits a unital extension $\HypCom \hookrightarrow u_\chi \HypCom$ given by $\chi(\mu_2)\eqdef 1$.
    Therefore the result follows from \cref{extendable operads are Tw-coalgebras intro}.
    The analogous results for $\Grav$ are given by \cref{cor:non-homotopy-Tw}.
\end{proof}

\begin{remark}
    Given a choice of basis for $\HypCom(n)=H_\bullet(\overline{\mathcal{M}}_{0,n+1})$, one could further compute explicitly both the morphism $\Linf \to \Grainf$ and the $\Tw$-coalgebra structure via the structural morphism $\delta_{\HypCom^{*}}$.
    In order to do so, one could use the  Givental graphs of~\cite{KhoroshkinMarkarianShadrin13}, the cellular model of \cite{RossiSalvatore24}, chain models of the gravity operad \cite{DupontHorel18}, or the homology of Brown's moduli spaces \cite{DupontVallette17}.
\end{remark}

It is unknown to us whether or not the operads $\Grainf$ and $\Grav$ are homotopy fixed points for $\Tw$. 


\subsection{The permutative operad} \label{sec: permutative}

The permutative operad $\Perm$ was defined by Chapoton in \cite{Chapoton01}.
It is the free operad on one arity $2$ operation $\mu_2$ generating the regular representation of $\Sym_2$, modulo the operadic ideal generated by the associativity and permutative relations
\begin{equation}
\label{eq:perm-relations}
    \mu_2 \circ_1 \mu_2 = \mu_2 \circ_2 \mu_2 \quad \text{and} \quad
\mu_2 \circ_2 \mu_2 = \mu_2 \circ_2 \mu_2^{(12)}.
\end{equation}
Its Koszul dual $\Perm^! \cong \PreLie$ is the Pre-Lie operad \cite[Prop.~2.1]{ChapotonLivernet01}.
This operad is the free operad on one arity~$2$ operation $t_2$ generating the sign representation of $\Sym_2$, modulo the operadic ideal generated by the relation
\[
t_2 \circ_1 t_2 - t_2 \circ_2 t_2
= 
(t_2 \circ_1 t_2 - t_2 \circ_2 t_2)^{(23)}.
\]
As shown in \cite[Thm.~1.9]{ChapotonLivernet01}, $\PreLie$ is isomorphic to the operad of rooted trees~$t$. 
Let $t_n$ denote the linear rooted tree (ladder) with $n$ vertices.

We write $\tau \eqdef s^{-1}t^{*}$ for the generators of $\Perinf$, which we identify with their predual rooted trees~$t$.
Given a rooted tree $\tau$, one can consider its \emph{unital expansions}, which are bigger rooted trees~$\tilde \tau$ with additional ``black'' vertices, such that when $u \in \uPreLie$ (see below for the definition of $\uPreLie$) is substituted into them, one gets $\tau$ back, up to a coefficient $c_{\tilde \tau}$ \cite[Lem.~4.27]{DotsenkoShadrinVallette23}.
We denote by $r(\tilde \tau)$ the number of black vertices in $\tilde \tau$.

\begin{samepage}
    \begin{prop}
\label{Perm is a Tw-coalgebra}
    The operad $\Perinf$ admits 
    \begin{enumerate}
        \item a morphism $\Linf \to \Perinf$ sending $\ell_n \mapsto \sum_{\sigma \in \Sym_n} \tau_n^\sigma$,
        \item a $\Tw$-coalgebra structure $\Perinf \to \Tw(\Perinf)$ given by $\tau \mapsto \sum_{\tilde \tau} \frac{1}{c_{\tilde \tau}} {\tilde \tau}(\alpha^{r(\tilde \tau)},-)$.
    \end{enumerate}
    The same is true for the operad $\Perm$.
\end{prop}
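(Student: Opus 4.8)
The plan is to realize this as another instance of the extendable-operad machinery of \cref{extendable operads are Tw-coalgebras intro} and \cref{cor:non-homotopy-Tw}. The operad $\Perm$ is Koszul with Koszul dual $\Perm^! \cong \PreLie$ \cite[Prop.~2.1]{ChapotonLivernet01}, so the only genuinely new input needed is that $\PreLie$ is \emph{extendable}, i.e.\ that the unital pre-Lie operad $\uPreLie = u_\chi \PreLie$ (with $\chi$ the normalization sending the generator $t_2$ to $1$) really is a unital extension $\PreLie \hookrightarrow \uPreLie$; this is established in \cite[Sec.~4.5]{DotsenkoShadrinVallette23}. Granting this, \cref{dual of Ext is in coCurv intro} lifts $(\uPreLie^*, \ev_{\id}, \ev_u)$ to $\coTwist$, and applying $\overline\Omega$ together with \cref{extendable operads are Tw-coalgebras intro} produces both asserted structures on $\Perinf = \Omega\,\overline{\PreLie^*}$; the statements for $\Perm$ then follow from \cref{cor:non-homotopy-Tw} by projecting along $\Perinf \to \Perm$.

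For part (1), the morphism $\Linf \to \Perinf$ is $\Omega(\psi^*)$, where $\psi \colon \overline{\PreLie} \to \overline{\Com}$ sends $t_2 \mapsto \chi(t_2)\mu_2 = \mu_2$ (there are no higher generators, as $\PreLie$ is quadratic). I would compute $\psi^*(\mu_n^*)$ directly in the rooted-tree basis furnished by Chapoton--Livernet's identification $\PreLie \cong \{\text{rooted trees}\}$ \cite[Thm.~1.9]{ChapotonLivernet01}: since $\psi$ takes the pre-Lie grafting to the commutative product, $\psi(t)$ is a multiple of $\mu_n$ for every tree $t$, and the sign convention on the generator forces this multiple to be supported exactly on the linear trees $t_n^\sigma$. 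Dualizing and desuspending then gives $\ell_n = s^{-1}\mu_n^* \mapsto \sum_{\sigma \in \Sym_n} \tau_n^\sigma$, as claimed.

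Part (2) is where the real work lies. By Point (2) of \cref{extendable operads are Tw-coalgebras intro} (resting on \cref{explicit formula unit Twist intro}), the $\Tw$-coalgebra structure is read off from the reduced decomposition $\delta_{\uPreLie^*}$, so I must make this cocomposition explicit. The terms of $\delta_{\uPreLie^*}(t^*)$ whose non-root vertices all have arity zero are precisely those of the form $w(u^*_A)$, and under the linear dual these correspond to the \emph{unital expansions} $\tilde\tau$ of $\tau$ in the sense of \cite[Lem.~4.27]{DotsenkoShadrinVallette23}: a unital expansion has $r(\tilde\tau)$ additional black ($u^*$-decorated) vertices, and substituting $u$ recovers $\tau$ up to the coefficient $c_{\tilde\tau}$. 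Matching the pairing shows that the corresponding term of $\delta_{\uPreLie^*}$ carries coefficient $1/c_{\tilde\tau}$. Feeding this into the explicit unit formula (each black vertex being grafted with $\alpha$) yields $\eta_{\Perinf}(\tau) = \sum_{\tilde\tau} \tfrac{1}{c_{\tilde\tau}} \tilde\tau(\alpha^{r(\tilde\tau)}, -)$, where the trivial expansion $\tilde\tau = \tau$ contributes the leading $\tau$ term.

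The main obstacle is exactly this last computation: identifying the arity-zero part of $\delta_{\uPreLie^*}$ with the combinatorics of unital expansions and pinning down the scalars $c_{\tilde\tau}$. Everything else --- Koszulness of $\Perm$, the existence of $\psi$, and the lift to $\coTwist$ --- is formal once extendability of $\PreLie$ is cited, so the proof reduces to careful bookkeeping of rooted trees with black vertices and the coefficients produced when $u$ is substituted. As in the remark following \cref{extendable operads are Tw-coalgebras intro}, the resulting formula coincides with the gauge-group action already recorded in \cite{DotsenkoShadrinVallette23}.
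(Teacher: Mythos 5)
Your proposal is correct and follows essentially the same route as the paper: cite the unital extension $\PreLie \hookrightarrow \uPreLie$ from Dotsenko--Shadrin--Vallette, apply \cref{extendable operads are Tw-coalgebras intro} (via the $\coTwist$ lift and cobar), identify the arity-zero part of $\delta_{\uPreLie^*}$ with unital expansions $\tilde\tau$ and their coefficients $c_{\tilde\tau}$, and deduce the $\Perm$ case from \cref{cor:non-homotopy-Tw}. Your extra verification in part (1) that $\psi$ kills non-linear trees (so that $\psi^*(\mu_n^*)$ is supported on the ladders $t_n^\sigma$) is a correct detail the paper leaves implicit, though the reason is the collapse of the pre-Lie relation in $\Com$ rather than any "sign convention."
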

\end{samepage}

This is the formula that appears in \cite[Prop.~4.28]{DotsenkoShadrinVallette23} (see \cref{rem:DSV-formula}).

\begin{proof}
    The Koszul dual of the permutative operad $\Perm^! \cong \PreLie$ admits a unital extension $\PreLie \hookrightarrow u_\chi \PreLie \defeq \uPreLie$ defined by $\chi(t_2) \eqdef 1$ \cite[Prop.~4.21]{DotsenkoShadrinVallette23}.
    Point~(\ref{morphism from Linf}) in \cref{extendable operads are Tw-coalgebras intro} gives explicitly the morphism $\Linf \to \Perinf$.
    In order to apply Point~(\ref{Tw-coalgebra structure}), one computes explicitly the structural morphism $\delta_{\uPreLie^{*}}: \uPreLie^{*} \to \overline{\mathbb{T}}(\uPreLie^{*})$, which is given on the generators by
    \[
    \delta_{\uPreLie^{*}}(\tau)
    =
    \sum_{\tilde \tau} \frac{1}{c_{\tilde \tau}}{\tilde \tau}((u^{*})^{r(\tilde \tau)},-,\ldots,-)+ \delta_{\uPreLie^{*}}^+(\tau).
    \]
    The analogous results for $\Perm$ then follow from \cref{cor:non-homotopy-Tw}.
\end{proof}

\begin{remark}
    The morphism $\Linf \to \Perinf$ factors through a morphism $\Ainf \to \Perinf$, which sends a generator $\mu_n^\sigma \mapsto \tau_n^\sigma$ to the the linear rooted tree with~$n$ vertices decorated by the same permutation $\sigma$.  
    This is Koszul dual to the canonical chain of morphisms $\uPreLie \to \uAss \to \uCom$ between the unital Pre-Lie, associative and commutative operads.
    The fact that the morphism $\uPreLie \to \uCom$ factors through $\uAss$ is due to the fact that non-linear trees give $0$ when composed with the element~$u$ in $\uPreLie$ \cite[Lem.~4.27]{DotsenkoShadrinVallette23}.
\end{remark}

Unlike our previous examples: 

\begin{prop}
    The operad $\Perm$ is not a homotopy fixed point of $\Tw$.
\end{prop}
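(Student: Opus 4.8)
The plan is to exploit the fact that $\Perm$ already carries a $\Tw$-coalgebra structure (\cref{Perm is a Tw-coalgebra}), so that \cref{corollary:MC-isomorphic-Tw} supplies an isomorphism of dg operads $\MC(\Perm) \cong \Tw(\Perm)$ which is compatible with the two counits to $\Perm$. Consequently $\Perm$ is a homotopy fixed point of $\Tw$ if and only if the Maurer--Cartan counit $\MC(\Perm) \to \Perm$ of \cref{maurer cartan endofunctor}, killing $\alpha$, is a quasi-isomorphism. First I would therefore reduce the claim to showing that \emph{this} map fails to be a quasi-isomorphism, which is much easier to analyze than $\Tw(\Perm)$ directly, since the latter carries the twisted differential $d_{\MC(\Perm)} + [\ell_1^\alpha,\cdot]$ with $\ell_1^\alpha = \ell_2(\alpha,-) \neq 0$.

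The key computation is that $\MC(\Perm)$ carries the \emph{zero} differential. Indeed, the structural morphism $\Linf \to \Perm$ factors through $\Lie$ (\cref{cor:non-homotopy-Tw}, or Point~(1) of \cref{extendable operads are Tw-coalgebras intro}), so $\ell_n \mapsto 0$ for $n \neq 2$, while $\ell_2$ maps to a $\Sym_2$-antisymmetric element of $\Perm(2)$. Hence
\[ \ell_0^\alpha = \sum_{k \geq 0} \tfrac{1}{k!}\,\ell_k(\alpha^k) = \tfrac12\,\ell_2(\alpha,\alpha), \]
and antisymmetry forces $\ell_2(\alpha,\alpha) = \ell_2^{(12)}(\alpha,\alpha) = -\ell_2(\alpha,\alpha)$, so that $\ell_2(\alpha,\alpha) = 0$ in characteristic zero. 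Since $\Perm$ has zero differential and $d_{\MC(\Perm)}\alpha = -\ell_0^\alpha = 0$, the differential on $\MC(\Perm) = \Perm \, \hat{\vee} \, [\alpha]$ vanishes identically.

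It then follows at once that $H_*(\MC(\Perm)) = \Perm \, \hat{\vee} \, [\alpha]$, and the counit is the projection killing $\alpha$, which has nontrivial kernel: for instance the arity-zero class $\alpha$ is a nonzero cycle mapping to $0 \in \Perm(0) = 0$. So the counit is not injective on homology, hence not a quasi-isomorphism; transporting this across $\MC(\Perm) \cong \Tw(\Perm)$ shows that the counit $\Tw(\Perm) \to \Perm$ is not a quasi-isomorphism, i.e.\ $\Perm$ is not a homotopy fixed point of $\Tw$.

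I do not anticipate a genuine obstacle. The only point requiring care is the observation that the Lie bracket lands in the antisymmetric part of $\Perm(2)$, which is precisely what makes $\ell_0^\alpha$, and hence the whole twist, collapse --- in sharp contrast with the associative and Lie cases, where $\ell_2$ is not a right zero divisor and \cref{corollary:MC-isomorphic-Tw} combines with \cref{prop:MC-fixed-point} to give a quasi-isomorphism. One should also verify that the isomorphism of \cref{corollary:MC-isomorphic-Tw} intertwines the two counits to $\Perm$, which is immediate from the formula $\eta_\Perm \vee (-\id_\alpha)$ together with $\eta_\Perm(\nu) \in \nu + (\text{positive } \alpha\text{-filtration})$, so that postcomposing with $\varepsilon$ (which kills $\alpha$) recovers the Maurer--Cartan counit.
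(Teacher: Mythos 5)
Your reduction step is fine: since $\Perm$ admits a $\Tw$-coalgebra structure (\cref{Perm is a Tw-coalgebra}), \cref{corollary:MC-isomorphic-Tw} does give an isomorphism $\MC(\Perm)\cong\Tw(\Perm)$ compatible with the counits, so it suffices to show that $\MC(\Perm)\to\Perm$ is not a quasi-isomorphism. The fatal gap is the next step, the claim that $\MC(\Perm)$ has zero differential, i.e.\ that $\ell_0^\alpha=\tfrac12\ell_2(\alpha,\alpha)=0$. Your sign argument mixes two incompatible conventions. In the paper's shifted conventions the generators satisfy $\ell_n^\sigma=\ell_n$, so $\ell_2$ is $\Sym_2$-\emph{invariant} (the element written $\mu_2-\mu_2^{(12)}$, which is antisymmetric for the unshifted action, is exactly the invariant line for the shifted one), and $\alpha$ has degree $0$; swapping the two copies of $\alpha$ then produces no Koszul sign, no contradiction arises, and in fact $\ell_2(\alpha,\alpha)=2\,\mu_2(\alpha,\alpha)\neq 0$ in $\Perm\,\hat\vee\,[\alpha]$. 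In unshifted conventions $\ell_2$ is indeed antisymmetric, but then $\alpha$ has odd degree, so the Koszul rule gives $\ell_2^{(12)}(\alpha,\alpha)=-\ell_2(\alpha,\alpha)$ and antisymmetry again forces nothing. Thus $d_{\MC(\Perm)}\alpha=-\mu_2(\alpha,\alpha)\neq 0$; in particular your witness class $\alpha$ is not even a cycle. A quick check that your argument proves too much: the image of $\ell_2$ in $\Ass$ is also the commutator $\mu_2-\mu_2^{(12)}$, so the identical reasoning would show that $\MC(\Ass)$ has zero differential and that $\Ass$ is not a homotopy fixed point, contradicting the paper (and \cref{prop:MC-fixed-point}, which would be vacuous if $\ell_0^\alpha$ always vanished --- the Maurer--Cartan term is the whole point of twisting).

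What actually makes the statement true is not a sign phenomenon but the permutative relation~\eqref{eq:perm-relations}: one has $\mu_2\circ_2\ell_2=\mu_2\circ_2(\mu_2-\mu_2^{(12)})=0$, i.e.\ the bracket is a right \emph{zero divisor} in $\Perm$. This is how the paper proceeds: $\nu=\mu_2\circ_2\alpha$ is a cycle in $\Tw(\Perm)$ precisely because $\mu_2\circ_2\ell_2=0$; it is not a boundary because its image under $\Tw(p)$, where $p:\Perm\to\Com$ is the quotient by the ideal generated by $\ell_2$, is nonzero in $\Tw(\Com)$ --- and $\Tw(\Com)$ really does have zero differential, for the honest reason that $\ell_2\mapsto 0$ in $\Com$ (the commutator dies there, unlike in $\Perm$, where $\mu_2$ generates the regular representation). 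Since the counit kills $\nu$, it fails to be injective on homology. If you wish to keep your $\MC$-reduction, the same cycle repairs your proof: $d_{\MC(\Perm)}(\mu_2\circ_2\alpha)=\pm\,\mu_2\circ_2\ell_0^\alpha=\pm\tfrac12(\mu_2\circ_2\ell_2)(-,\alpha,\alpha)=0$, and non-boundaryness follows by projecting to $\MC(\Com)$, which has zero differential.
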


\begin{proof}
    First note that $\nu \eqdef \mu_2 \circ_2 \alpha$ is a cycle in $\Tw(\Perm)$: we have indeed
    \[d_{\Tw(\Perm)} \nu = \frac{1}{2}(\mu_2 \circ_2 \ell_2)(-, \alpha, \alpha) = 0 \]
    since the second defining relation~\eqref{eq:perm-relations} in $\Perm$ asserts that
    \begin{equation} 
    \label{zero divisor} 
    \mu_2 \circ_2 \ell_2 = \mu_2 \circ_2 \left(\mu_2 - \mu_2^{(12)}\right) = 0.
    \end{equation}
    We claim that $\nu$ is not a boundary in $\Tw(\Perm)$.
    Observe that the quotient of $\Perm$ by the operadic ideal generated by $\ell_2$ is the commutative operad $\Com$, and denote by $p : \Perm \to \Com$ the projection.
    Now  the image of $\nu$ under $\Tw(p)$ is not a boundary in $\Tw(\Com)$ since $p(\mu_2) \ne 0$ and $d_{\Tw(\Com)} = 0$. 
    Therefore $\nu = \mu_2 \circ_2 \alpha$ is not a boundary in $\Tw(\Perm)$ despite being sent to $0$ by the counit $\Tw(\Perm) \to \Perm$.
    This completes the proof.
\end{proof}

Consistency with \cref{thm:homotopy-fixed-point intro} requires that the Lie bracket is a right zero divisor in $\Perm$; this is true and indeed appears in the  above calculation in \eqref{zero divisor}. 


\subsection{The pre-Lie operad} 
\label{sec: prelie}
The~$\PreLie$ operad was introduced as the Koszul dual of~$\Perm$ in the previous \cref{sec: permutative}.
We have the following result. 

\begin{lemma}
    The Lie bracket is not a right zero divisor in $\PreLie$.
\end{lemma}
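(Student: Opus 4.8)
The plan is to verify the hypothesis of \cref{lemma:criteria-bracket-zero-divisor}, i.e.\ to show that for every $n$ the canonical pre-Lie algebra map
\[
\PreLie(x_0, \dots, x_{n-1}) \to \PreLie(u,v, x_0, \dots, x_{n-1}) / \left( \ell_2(u,v) - x_0 \right)
\]
is injective. First I would simplify the target. Since $\ell_2(u,v)$ does not involve $x_0$, adjoining $x_0$ and then setting it equal to $\ell_2(u,v)$ recovers the free pre-Lie algebra on the remaining generators: the quotient is canonically isomorphic to $\PreLie(u,v,x_1,\dots,x_{n-1})$, and under this isomorphism the displayed map becomes the pre-Lie algebra morphism
\[
\phi : \PreLie(x_0, x_1, \dots, x_{n-1}) \to \PreLie(u,v,x_1,\dots,x_{n-1}), \qquad x_0 \mapsto u \curvearrowleft v - v \curvearrowleft u, \quad x_i \mapsto x_i .
\]
It then suffices to prove that $\phi$ is injective. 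I would first note that no pre-Lie algebra retraction of $\phi$ can exist: a retraction would force $\ell_2(\psi u,\psi v)=x_0$, but a Lie bracket of two nonzero elements has at least two tree vertices, whereas $x_0$ has one. So the injectivity must be witnessed by a purely linear left inverse.

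Second, I would use the rooted-tree basis of the free pre-Lie algebra \cite[Thm.~1.9]{ChapotonLivernet01}: $\PreLie(S)$ has a basis of rooted trees with vertices decorated by $S$, the product $T_1 \curvearrowleft T_2$ grafts the root of $T_2$ onto each vertex of $T_1$ in turn, and a morphism is evaluated on a tree by substituting the image trees at its vertices and reconnecting the children over all vertices of the substituted trees. I would grade the source by the number $a$ of vertices decorated by $x_0$. Since $\phi$ replaces each such vertex by the two-vertex trees comprising $\ell_2(u,v)$, it carries the degree-$a$ part into the span $A^{(a,a)} \subseteq \PreLie(u,v,x_1,\dots,x_{n-1})$ of trees with exactly $a$ vertices labelled $u$ and $a$ labelled $v$; as these spans are linearly independent for distinct $a$, it is enough to show $\phi$ is injective on each degree-$a$ piece.

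The key step is to construct an explicit linear retraction on $A^{(a,a)}$. Call a decorated tree in \emph{good form} if its $v$-vertices are precisely the leaves occurring as children of $u$-vertices, each $u$-vertex having exactly one such $v$-leaf child; on a good-form tree, deleting all these $v$-leaves and relabelling the corresponding $u$-vertices by $x_0$ produces a tree decorated by $x_0, x_1, \dots, x_{n-1}$. I would define $\sigma$ to be this contraction on good-form trees and zero on all other basis trees. For a basis tree $T$ with $a$ vertices labelled $x_0$, expanding $\phi(T)$ yields a signed sum over, at each $x_0$-vertex, a choice of the term $u \curvearrowleft v$ or $-\,v \curvearrowleft u$ together with a distribution of that vertex's children. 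The claim to verify is that exactly one of these summands is in good form, namely the tree $T^+$ obtained by choosing $u \curvearrowleft v$ and grafting all children onto the root $u$ at every $x_0$-vertex, and that it occurs with coefficient $+1$: any other choice either attaches a child below a $v$ (destroying the leaf condition) or uses $-\,v \curvearrowleft u$ (placing $v$ above $u$), and so fails to be in good form. Granting this, $\sigma(\phi(T)) = \sigma(T^+) = T$, so $\sigma$ is a left inverse of $\phi$ on each degree-$a$ piece and $\phi$ is injective.

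The hard part will be the bookkeeping in this last step: making the grafting/substitution combinatorics for a pre-Lie morphism evaluated on a tree fully precise, and checking rigorously that $T^+$ is the unique good-form summand of $\phi(T)$ and carries coefficient $+1$ (in particular that no coincidence among the many grafting terms produces a spurious good-form contribution). Once this is established, \cref{lemma:criteria-bracket-zero-divisor} immediately yields that $\ell_2$ is not a right zero divisor in $\PreLie$.
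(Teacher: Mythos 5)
Your proposal is correct, but it takes a genuinely different route from the paper. The paper's proof is structural: it invokes the Poincar\'e--Birkhoff--Witt property for the pair $(\PreLie,\Lie)$ established in \cite[Prop.~4.5]{DotsenkoTamaroff21}, and then repeats the argument of \cref{lemma:bracket-not-zero-divisor-in-Lie} with $(\Ass,\Lie)$ replaced by $(\PreLie,\Lie)$, so that the pre-Lie statement is reduced to the already-proved Lie statement before applying \cref{lemma:criteria-bracket-zero-divisor}. You instead prove the required injectivity directly: after the (correct) identification of the quotient $\PreLie(u,v,x_0,\dots,x_{n-1})/(\ell_2(u,v)-x_0)$ with $\PreLie(u,v,x_1,\dots,x_{n-1})$, you work in the Chapoton--Livernet rooted-tree basis \cite[Thm.~1.9]{ChapotonLivernet01} and exhibit an explicit linear (not pre-Lie) retraction $\sigma$ built from your ``good form'' trees. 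Your key combinatorial claim checks out: in the expansion of $\phi(T)$, choosing $-\,v \curvearrowleft u$ at any $x_0$-vertex makes that $v$ a non-leaf, and attaching any child to $v$ in a $+\,u \curvearrowleft v$ substitution does the same, so $T^+$ is indeed the unique good-form summand and carries coefficient $+1$; no cancellation can remove it, and $\sigma\circ\phi=\id$ follows. What each approach buys: the paper's argument is short, delegates all combinatorics to a citable PBW theorem, and fits its broader remark that the zero-divisor property seems tied to PBW phenomena; yours is self-contained, needs neither the PBW theorem nor the chain of reductions through $\Lie$ and $\Ass$, and the explicit-retraction technique could plausibly transfer to other operads with tree-type bases for which no PBW theorem is available. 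The one step you flag as ``hard'' --- the substitution rule for a pre-Lie morphism evaluated on a basis tree --- is exactly the operadic composition of the rooted-tree operad (sum over all reattachments of children, parent edge to the root of the substituted tree), so it is standard and your bookkeeping goes through.
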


\begin{proof}
    According to \cite[Proposition ~4.5]{DotsenkoTamaroff21}, the morphism $\PreLie \to \Lie$ satisfies the PBW property.
    Therefore, we can repeat the proof of \cref{lemma:bracket-not-zero-divisor-in-Lie} by replacing $(\Ass, \Lie)$ by $(\PreLie, \Lie)$ and use the result of \cref{lemma:bracket-not-zero-divisor-in-Lie} to show that the canonical pre-Lie algebra map 
    \[
    \PreLie(x_0, \dots, x_{n-1}) \to \PreLie(u,v, x_0, \dots, x_{n-1}) / \left( \ell_2(u,v) - x_0 \right)
    \]
    is injective.
    The result then follows from \cref{lemma:criteria-bracket-zero-divisor}.
\end{proof}

It is known that  that $\Tw(\PreLie)$ is quasi-isomorphic to $\Lie$  \cite[Thm.~5.1]{DotsenkoKoroshkin24}, and so $\PreLie$ is not a homotopy fixed point for $\Tw$.  It follows from 
\cref{thm:homotopy-fixed-point intro} that $\PreLie$ does not admit a $\Tw$-coalgbra structure, recovering 
 \cite[Prop.~5.2]{DotsenkoShadrinVallette23}. 


\appendix


\section{Reconstruction}
\label{sec:reconstruction}

The definition of $\Curv$ given in the introduction differs from the definition we gave in \cite{Laplante-Petr-Shende2025}, which was the following: 

\begin{definition}[{\cite[Def.~1.1]{Laplante-Petr-Shende2025}}]
\label{curv}    
    We denote by ${}^*\!\Curv$ the category whose objects are given by the data of tuples $(\rmQ, d_\rmQ, \kurv, \rmQ_0)$ where $(\rmQ, d_\rmQ)$ is a dg operad, $\kurv \in \rmQ$ is a arity zero element of (homological) degree $(-1)$, and $\rmQ_0$ is a sub-operad of the underlying graded (not dg) operad $\rmQ$.  
    They must satisfy the conditions:
    \begin{enumerate}
        \item \label{free weights} The natural morphism $\rmQ_0 \vee [\kurv] \to \rmQ$ is an isomorphism of non-dg operads.
        \item \label{strict mixed} Under the resulting $\kappa$-grading $\rmQ = \bigoplus \rmQ_i$ (the notation $\rmQ_0$ is not ambiguous), consider the splitting $d = \sum d_i$ where $d_i(\rmQ_j) \subset \rmQ_{i+j}$.  Then all $d_i$ vanish except $d_0, d_1$.  
        \item \label{d1 closed} $d_1 \kurv = 0$.
    \end{enumerate}    
    Morphisms are morphisms of tuples. 
    We write ${}^*\!\nsCurv$ for the corresponding category of non-symmetric operads. 
\end{definition}

There is an evident candidate morphism $\Curv \to {}^* \! \Curv$ given by setting $\kappa \in \rmQ$ to be the image of $\mu_0$ and then forgetting the map from $\cLinf$; it is straightforward from the properties of $\cLinf$ and $\Curv$ that the resulting tuples satisfy \eqref{strict mixed} and \eqref{d1 closed} above. The fact that this morphism $\Curv \to {}^* \! \Curv$ is an equivalence, i.e.\ that one can uniquely reconstruct the morphism from $\cLinf$, is  \cite[Thm. 1.2]{Laplante-Petr-Shende2025}.  

Here we give deduce from said theorem an result for $\Twist$. 

\begin{definition}
\label{def:twist}
    Let ${}^* \!\Twist$ be the category whose objects are given by the data of tuples $(\rmQ, d_\rmQ, \kurv, \diff, \rmQ_{0,0})$, where  $(\rmQ, d_\rmQ)$ is a dg operad, $\rmQ_{0,0} \subset \rmQ$ is a non-dg sub-operad, $\kurv \in \rmQ$ is an arity zero, degree $(-1)$ element, and
    $\diff \in \rmQ$ is an arity one, degree $(-1)$ element.      
    They must satisfy the conditions: 
    \begin{enumerate}
        \item\label{item:coproduct} The natural map $\rmQ_{0,0} \vee [\kurv] \vee [\diff] \to \rmQ$ is an isomorphism of non-dg operads.
        \item\label{item:graded-mixed} Under the resulting grading $\rmQ = \bigoplus \rmQ_{i,j}$ where $i$ is the number of $\kurv$ and $(j-i)$ is the number of $\diff$ (the notation $\rmQ_{0,0}$ is not ambiguous), consider the splitting $d_\rmQ = \sum d_{i,j}$ where $d_{i,j}(\rmQ_{i',j'}) \subset \rmQ_{i'+i,j'+j}$.
        Then all $d_{i,j}$ vanish except for $i,j \in \{0,1\}$.
        \item\label{item:kurv-differential} $d_{0,0} \kurv = 0 = d_{1,1} \kurv$. 
        \item\label{item:diff-differential} $d_{0,0} \diff = 0 = d_{1,1} \diff$. 
        \item\label{item:general-differential} 
        $d_{0,1} \diff + \diff \circ_1 \diff = 0$ 
        \item \label{new differential effect} $d_{0,1} + [\diff,-] = 0$ on $\kappa$ and $\rmQ_{0, 0}$.\footnote{
    An element $\diff \in \rmQ(1)$ is said to be an operadic Maurer-Cartan if $d \diff + \diff \circ_1 \diff = 0$. In this case  $d^\diff := d + [\diff, -]$ is a new differential on $\rmQ$ (see \cite[Section 5.1]{DotsenkoShadrinVallette23}).
    With this terminology, \cref{def:twist} \eqref{item:general-differential} says  that $\diff$ is an operadic Maurer-Cartan in $(\rmQ, d_{0,1})$ and \eqref{new differential effect} says that $d_{0,1}^\diff=0$ on $\kappa$ and  $\rmQ_{0, 0}$.} 
    \end{enumerate}
    Morphisms are morphisms of tuples.
    We write ${}^* \!\nsTwist$ for the corresponding category of non-symmetric operads.
\end{definition}

One feature of this definition is the following.

\begin{lemma} 
\label{Q00 restriction}
    Fix an object $(\rmQ, d_\rmQ, \kurv, \diff, \rmQ_{0,0}) \in {}^* \! \Twist$ and a  $(\rmQ, d_\rmQ)$-algebra with structure map $r : (\rmQ, d_\rmQ) \to \End_{(A,d_A)}$.  If $r(\kappa) = 0$, then $(d_A + r(\diff))^2 = 0$ and $r$ gives $(A, d_A + r(\diff))$ the structure of a $(\rmQ_{0,0}, d_{0,0})$-algebra.   
\end{lemma}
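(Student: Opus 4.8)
The plan is to leverage the single hypothesis $r(\kurv)=0$ together with the multilinearity of operadic composition. Since $r$ is a morphism of operads and $\kurv$ is nullary, in the decomposition of condition~(\ref{item:coproduct}) every element of positive $\kurv$-degree is an operadic composite having at least one vertex labelled $\kurv$; as composition is multilinear, $r$ sends each such composite to $0$. Thus $r$ annihilates $\rmQ_{i,j}$ for all $i\geq 1$, so that for any $x\in\rmQ$ the value $r(d_\rmQ x)$ only sees the $\kurv$-free components $d_{0,0}x$ and $d_{0,1}x$ of $d_\rmQ x$. Conceptually this says that in the representation $r(\diff)\in\End_{(A,d_A)}(1)$ becomes a genuine degree $(-1)$ Maurer--Cartan element: the obstruction $d_{1,0}\diff\in\rmQ_{1,1}$ to $\diff$ being Maurer--Cartan in $(\rmQ,d_\rmQ)$ is $\kurv$-positive and hence killed, and $D:=d_A+r(\diff)$ is the corresponding twisted differential. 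The two claims then reduce to the standard facts that twisting by a Maurer--Cartan element yields a square-zero operator and preserves algebra structures.

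First I would establish $D^2=0$. Expanding $D^2=d_A^2+[d_A,r(\diff)]+r(\diff)^2$ and using $d_A^2=0$, it suffices to show $[d_A,r(\diff)]=-r(\diff)^2$. Because $r$ is a dg morphism, $[d_A,r(\diff)]=r(d_\rmQ\diff)$. By condition~(\ref{item:diff-differential}) the components $d_{0,0}\diff$ and $d_{1,1}\diff$ vanish, while $d_{1,0}\diff\in\rmQ_{1,1}$ is $\kurv$-positive and hence killed by $r$; so $r(d_\rmQ\diff)=r(d_{0,1}\diff)$. Condition~(\ref{item:general-differential}) gives $d_{0,1}\diff=-\diff\circ_1\diff$, whence $r(d_{0,1}\diff)=-r(\diff)\circ_1 r(\diff)=-r(\diff)^2$, and $D^2=0$ follows.

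Next I would check that $r$ restricts to a dg morphism $(\rmQ_{0,0},d_{0,0})\to\End_{(A,D)}$; here one first notes that $d_{0,0}$ squares to zero on $\rmQ_{0,0}$, being the $(0,0)$-component of $d_\rmQ^2=0$. Since $\rmQ_{0,0}$ is a (non-dg) suboperad and $\End_A$ is independent of the differential as a graded operad, $r|_{\rmQ_{0,0}}$ is automatically a morphism of graded operads, so only compatibility with differentials must be verified. For $\nu\in\rmQ_{0,0}$, the differential of $\End_{(A,D)}$ applied to $r(\nu)$ is $\partial_D(r(\nu))=\partial_{d_A}(r(\nu))+[r(\diff),r(\nu)]=r(d_\rmQ\nu)+r([\diff,\nu])=r\!\left(d_\rmQ\nu+[\diff,\nu]\right)$, using once more that $r$ is a dg morphism for $d_A$ and an operad morphism for the bracket. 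The $\kurv$-positive components $d_{1,0}\nu$ and $d_{1,1}\nu$ are killed by $r$, leaving $r(d_{0,0}\nu)+r(d_{0,1}\nu+[\diff,\nu])$, and condition~(\ref{new differential effect}) makes the second summand vanish. Hence $\partial_D(r(\nu))=r(d_{0,0}\nu)$, exactly the required compatibility.

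The only genuinely delicate point is the sign bookkeeping in the identifications $\partial_{d_A}(r(\nu))=r(d_\rmQ\nu)$ and $[r(\diff),r(\nu)]=r([\diff,\nu])$, where one must match the operadic bracket $[\diff,-]=\diff\circ_1(-)\mp\sum_i(-)\circ_i\diff$ against the commutator-type differential on $\End$ induced by the degree $(-1)$ operator $r(\diff)$. As $r$ is a morphism of dg operads these identifications hold with consistent signs, so no essential difficulty arises; every other step is a direct consequence of the grading decomposition and conditions~(\ref{item:diff-differential})--(\ref{new differential effect}).
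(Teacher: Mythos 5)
Your proof is correct and follows essentially the same route as the paper's: both arguments rest on the observation that $r(\kurv)=0$ together with multilinearity of composition forces $r$ to kill all components of positive $\kurv$-degree, then use conditions~(4)--(5) to get $(d_A+r(\diff))^2=r(d_{1,0}\diff)=0$ and condition~(6) to identify $r(d_{0,0}\nu)$ with the twisted differential applied to $r(\nu)$. The only differences are organizational (you compute $\partial_D(r(\nu))$ and reduce it to $r(d_{0,0}\nu)$, whereas the paper expands $r(d_{0,0}\nu)$ and arrives at $(d_A+\diff_A)(r\nu)$), which amounts to reading the same chain of equalities in the opposite direction.
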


\begin{proof}
    Let us write $\kurv_A := r(\kurv) \in A$ and $\diff_A := r(\diff) : A \to A$.  
    Conditions \eqref{item:diff-differential} and \eqref{item:general-differential} imply that $(d_A + \diff_A)^2 = r(d_\rmQ \diff + \diff \circ_1 \diff) = r(d_{1,0} \diff)$ is a multiple of $\kappa_A$.  
    In particular, if $\kappa_A = 0$, then $(d_A + \diff_A)^2 = 0$.  
    Finally we check that $r|_{\rmQ_{0,0}}$ intertwines $d_{0,0}$ with $d_A + \diff_A$: 
    \begin{align*}
        r(d_{0,0} \nu) & = r(d_\rmQ \nu - d_{0,1} \nu - d_{1,0} \nu - d_{1,1} \nu) \\
        & = r(d_\rmQ \nu) - r(d_{0,1} \nu) \\
        & = d_A(r \nu) + r([\diff, \nu]) = d_A(r\nu) + [\diff_A, r\nu] = (d_A + \diff_A)(r \nu).
    \end{align*}
\end{proof}

\begin{prop}
\label{reconstruction}
    The map $\Twist \to {}^*\! \Twist$ given by sending $(\cLinf \to \rmQ, \rmQ_{0,0})$ to the tuple $(\rmQ, d_{\rmQ}, \ell_0, \ell_1, \rmQ_{0,0})$ is an equivalence. 
\end{prop}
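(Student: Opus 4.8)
The plan is to deduce \cref{reconstruction} from the already-established equivalence $\Curv \xrightarrow{\sim} {}^{*}\!\Curv$ of \cite[Thm.~1.2]{Laplante-Petr-Shende2025}, by comparing the two sides through the functor $\calI$ that merely enlarges the distinguished suboperad by $\ell_1$. Write $\Phi\colon \Twist \to {}^{*}\!\Twist$ for the forgetful map in the statement. First I would record that $\Phi$ is an equivalence as soon as one shows that every tuple in ${}^{*}\!\Twist$ admits a \emph{unique} compatible morphism $\cLinf \to \rmQ$ (with $\ell_0 \mapsto \kurv$, $\ell_1 \mapsto \diff$, and $f(\ell_n) \in \rmQ_{0,0}$ for $n \geq 2$): existence yields essential surjectivity, while uniqueness makes any morphism of tuples automatically a morphism under $\cLinf$ (it already preserves $\ell_0,\ell_1$, hence the uniquely determined higher $\ell_n$), giving full faithfulness. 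The forward direction, that $\Phi$ lands in ${}^{*}\!\Twist$, is a direct check: conditions \eqref{item:coproduct}--\eqref{new differential effect} of \cref{def:twist} follow from the three defining conditions of $\Twist$ (\cref{twist new}) together with the explicit differential \eqref{clinf differential}, since $d_{\cLinf}\ell_0 = -\ell_1\circ_1\ell_0$ and $d_{\cLinf}\ell_1 = -(\ell_1\circ_1\ell_1 + \ell_2 \circ_1 \ell_0)$ prescribe exactly the bigraded pieces of $d_\rmQ$ on $\kurv$ and $\diff$.

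Next I would build the comparison square. The key observation is that the functor $\calI$ of \cref{definition:functor-Twist-Curv} does not alter the underlying dg operad: it sends $(\cLinf \to \rmQ, \rmQ_{0,0})$ to $(\cLinf \to \rmQ, \rmQ_0)$ with $\rmQ_0 = \rmQ_{0,0} \vee [\ell_1]$ the suboperad generated by $\rmQ_{0,0}$ and the image of $\ell_1$. I would define the analogous map ${}^{*}\!\calI\colon {}^{*}\!\Twist \to {}^{*}\!\Curv$ on tuples by $(\rmQ, d_\rmQ, \kurv, \diff, \rmQ_{0,0}) \mapsto (\rmQ, d_\rmQ, \kurv, \rmQ_{0,0} \vee [\diff])$, and check it is well defined: condition~\eqref{free weights} of \cref{curv} is immediate, while the $\kurv$-grading conditions \eqref{strict mixed} and \eqref{d1 closed} are read off from \cref{def:twist}\,\eqref{item:graded-mixed} and \eqref{item:kurv-differential} by grouping the bigraded components $d_{i,j}$ according to their $\kurv$-degree $i$. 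With the forgetful equivalence $\Curv \xrightarrow{\sim} {}^{*}\!\Curv$ along the bottom, the square with $\Phi$ on top and $\calI,\,{}^{*}\!\calI$ on the sides commutes strictly, since both composites send a $\Twist$ object to $(\rmQ, d_\rmQ, \ell_0, \rmQ_{0,0}\vee[\ell_1])$.

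Applying the bottom equivalence then produces, from any ${}^{*}\!\Twist$ tuple, a genuine $\Curv$ object $(\cLinf \xrightarrow{f} \rmQ, \rmQ_0 = \rmQ_{0,0}\vee[\diff])$ with $f(\ell_0) = \kurv$ and all $f(\ell_n)$ determined. The heart of the argument is the identification $f(\ell_1) = \diff$. I would extract this from conditions \eqref{item:diff-differential}, \eqref{item:general-differential}, and \eqref{new differential effect}: the first two say precisely that $\diff$ is an operadic Maurer--Cartan element for the $\kurv$-degree--preserving part $d_{0,1}$ of the differential, while \eqref{new differential effect} says that $d_{0,1} + [\diff, -]$ annihilates $\kurv$ and $\rmQ_{0,0}$ --- and these are exactly the relations that $f$ being a dg morphism forces on $f(\ell_1)$, through $d_{\cLinf}\ell_0$ and $d_{\cLinf}\ell_1$ above. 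Since \cite[Thm.~1.2]{Laplante-Petr-Shende2025} guarantees the $\cLinf$-structure extending $(\kurv, \rmQ_0)$ is unique, matching these relations forces $f(\ell_1) = \diff$; consequently $\rmQ_0 = \rmQ_{0,0}\vee[f(\ell_1)]$ and $\rmQ_{0,0}$ satisfies the conditions of \cref{twist new}, so $(\cLinf \to \rmQ, \rmQ_{0,0})$ is the desired $\Twist$ object lying over the given tuple.

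The step I expect to be the main obstacle is precisely the identification $f(\ell_1) = \diff$. Comparing $d_\rmQ\kurv$ computed two ways yields directly only $(\diff - f(\ell_1))\circ_1 \kurv = 0$, which does not by itself force $\diff = f(\ell_1)$ because $(-)\circ_1\kurv$ may have a kernel; one must instead feed the full set of bigraded conditions on $\diff$ into the uniqueness clause of the $\Curv$-reconstruction, presumably via a filtration or recursion on the $\diff$-degree (equivalently on arity), so that $\diff$ is recognized as solving the very recursion that characterizes $f(\ell_1)$. Once this is in place, the analogous statement for ${}^{*}\!\nsTwist$ follows verbatim, replacing $\cLinf$ by $\cAinf$ and the operations $\ell_n$ by the $\mu_n$.
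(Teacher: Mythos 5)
Your skeleton coincides with the paper's: you reformulate the claim as the statement that $(\cLinf, d_{\cLinf}, \ell_0, \ell_1, \Linf)$ is initial in ${}^*\!\Twist$, you pass through the faithful comparison functor ${}^*\!\Twist \to {}^*\!\Curv$ sending $(\rmQ, d_\rmQ, \kurv, \diff, \rmQ_{0,0})$ to $(\rmQ, d_\rmQ, \kurv, \rmQ_{0,0}\vee[\diff])$, and you invoke \cite[Thm.~1.2]{Laplante-Petr-Shende2025} to produce the unique ${}^*\!\Curv$-morphism $f\colon \cLinf \to \rmQ$ with $f(\ell_0)=\kurv$ and $f(\ell_n)\in\rmQ_{0,0}\vee[\diff]$ for $n\geq 1$. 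After this reduction exactly two things remain: (a) $f(\ell_1)=\diff$, and (b) $f(\ell_n)\in\rmQ_{0,0}$ for $n\geq 2$. Both of your treatments of these points have genuine gaps.

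Concerning (a): your proposed mechanism, feeding the bigraded conditions on $\diff$ into the uniqueness clause of the $\Curv$-reconstruction, is circular. Uniqueness of $f$ can only force $f(\ell_1)=\diff$ once you have exhibited \emph{some} dg morphism $\cLinf\to\rmQ$ in ${}^*\!\Curv$ sending $\ell_1\mapsto\diff$; constructing such a morphism, with all higher $\ell_n$ and compatibility with the differential, is precisely the existence problem you are solving. Nor does the observation that $\diff$ and $f(\ell_1)$ satisfy analogous relations help: two elements satisfying the same equations need not be equal. Meanwhile, the direct argument you set aside does work, and is what the paper uses. From $d_{\cLinf}\ell_0=-[\ell_1,\ell_0]$ and $d_\rmQ\kurv=-[\diff,\kurv]$ (the latter from conditions \eqref{item:kurv-differential} and \eqref{new differential effect} of \cref{def:twist}) one gets $(f(\ell_1)-\diff)\circ_1\kurv=0$. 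Since $f(\ell_1)-\diff$ lies in $\rmQ_{0,0}\vee[\diff]$ and hence contains no $\kurv$, and since $\rmQ\cong\rmQ_{0,0}\vee[\kurv]\vee[\diff]$ as non-dg operads by condition \eqref{item:coproduct}, grafting the free generator $\kurv$ is injective on such elements; so $f(\ell_1)=\diff$ follows in one line. Your worry that ``$(-)\circ_1\kurv$ may have a kernel'' is exactly what the freeness in condition \eqref{item:coproduct} rules out.

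Concerning (b), which is the more serious omission: you never prove that $\rmQ_{0,0}$ contains the images of the $\ell_n$ for $n\geq 2$, which is the second condition of \cref{twist new}; you only assert ``consequently $\rmQ_{0,0}$ satisfies the conditions of \cref{twist new}''. The $\Curv$-level result places $f(\ell_n)$ only in $\rmQ_0=\rmQ_{0,0}\vee[\diff]$, and nothing you have said excludes occurrences of $\diff$ in $f(\ell_n)$. This is where the paper spends the second half of its proof: an induction on $n$ using the $\ell_0$-linear part of the differential, $-f(\ell_{n+1})\circ_1\kurv=f(d_1\ell_n)=(d_{1,0}+d_{1,1})(f\ell_n)$, where the base case $n=1$ uses $d_{1,1}\diff=0$ (condition \eqref{item:diff-differential}) and the inductive step uses that $d_{1,0}$ vanishes on $\rmQ_{0,0}$ for bidegree reasons; in both cases the right-hand side has $\diff$-degree zero, and injectivity of grafting $\kurv$ then forces every $\diff$-degree $\geq 1$ component of $f(\ell_{n+1})$ to vanish. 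Without this recursion the lift of $f$ to ${}^*\!\Twist$, and hence the equivalence, is not established.
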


\begin{proof}
    Equivalently, we show that any object $(\rmQ, d_{\rmQ}, \kurv, \diff, \rmQ_{0,0})$ in ${}^*\! \Twist$ receives a unique morphism from $(\cLinf, d_{\cLinf}, \ell_0, \ell_1, \Linf)$.
    Observe that there is a faithful functor  
    \[{}^*\! \Twist \to {}^*\! \Curv, \quad (\rmQ, d_{\rmQ}, \kurv, \diff, \rmQ_{0,0}) \mapsto (\rmQ, d_{\rmQ}, \kurv, \rmQ_{0,0} \vee [\diff]). \]
    The latter sends $(\cLinf, d_{\cLinf}, \ell_0, \ell_1, \Linf)$ to the initial object of ${}^*\! \Curv$ (\cite[Thm.~1.2]{Laplante-Petr-Shende2025}), so there exists a unique morphism
    \[f: (\cLinf, d_{\cLinf}, \ell_0, \Linf \vee [\ell_1]) \to (\rmQ, d_{\rmQ}, \kurv, \rmQ_{0,0} \vee [\diff]).\]
    Therefore it suffices to show that $f$ lifts to a morphism in $\Twist$, i.e.\ that $f(\ell_1) = \diff$ and $f(\Linf) \subset \rmQ_{0,0}$.

    We first show that $f(\ell_1) = \diff$.
    Since $f$ is a dg morphism, that $f(\ell_0) = \kurv$, and that 
    \[d_{\cLinf}(\ell_0) = -[\ell_1, \ell_0], \quad d_{\rmQ}(\kurv) = -[\diff, \kurv],\]
    we get that $[f(\ell_1), \kurv] = [\diff, \kurv]$.
    By freeness of $\kurv$ and $\diff$, and since $f(\ell_1) \in \rmQ_{0,0} \vee [\diff]$, we get $f(\ell_1) = \diff$.
     
    We now recursively check that $f(\ell_n) \in \rmQ_{0,0}$ for $n \geq 2$.
    For $n=2$ we have 
    \[-f(\ell_2) \circ_1 \kurv = -f(\ell_2 \circ_1 \mu_0) = f(d_1 \ell_1) = (d_{1,0} + d_{1,1}) (f \ell_1) = d_{1,0} \diff \in \rmQ_{0,0} \vee [\kurv], \]
    where the last equality holds since $d_{1,1} \diff = 0$ by definition of ${}^*\! \Twist$. This implies that $f(\ell_2) \in \rmQ_{0,0}$.
    Assume that the result holds for a fixed $n \geq 2$. Then
    \[-f(\ell_{n+1}) \circ_1 \kurv = -f(\ell_{n+1} \circ_1 \ell_0) = f(d_1 \ell_n) = (d_{1,0} + d_{1,1}) (f \ell_n) = d_{1,1} (f \ell_n) \in \rmQ_{0,0} \vee [\kurv], \]
    where the last equality holds since $f(\ell_n) \in \rmQ_{0,0}$ by assumption.
    This implies $f(\ell_{n+1}) \in \rmQ_{0,0}$.
    This finishes the proof.
\end{proof}



\section{Operadic twisting via distributive comonads}
\label{section distributivity}

We  discussed the comonad $\cTw$ on $\cMult$.
There is another endofunctor on $\cMult$ that consists in adding and twisting by an operadic Maurer-Cartan, which we denoted by $(-)^+$ (\cref{def:+}).
In fact, there is a natural map $\rmP^+ \to (\rmP^+)^+$ sending $m \mapsto m_1+m_2$ which, together with the map $\rmP^+ \to \rmP$ sending $m \mapsto 0$, determine a comonad.

Given two comonads on the same category, one can  ask if one distributes over the other. 
A distributive law of a comonad $C_2$ over another comonad $C_1$ is a natural transformation $C_2 C_1 \to C_1 C_2$ satisfying compatibility conditions with respect to the comonad structures.
Distributive laws of $C_2$ over $C_1$ are in bijection with lifts of $C_2$ to the category of $C_1$-coalgebras. 
See \cite[Section~9.2]{barr2000toposes}.
Our results imply the following. 

\begin{prop}
\label{distributivity}
    The comonad $(-)^+$ distributes over the comonad $\cTw(-)$, i.e.\ $(-)^+$ lifts to the category of $\cTw$-coalgebras.
    In particular the composition $\cTw(-)^+$ defines a comonad on $\cMult$.
\end{prop}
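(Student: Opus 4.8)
The plan is to reduce everything to the standard equivalence, for comonads, between distributive laws and liftings of one comonad to the coalgebras of the other \cite[Section~9.2]{barr2000toposes}. By \cref{cTw comonadicity} the category of $\cTw$-coalgebras is $\Curv$, with forgetful functor $\calL : \Curv \to \cMult$, so it suffices to promote $(-)^+$ to a comonad on $\Curv$ lifting the comonad $(-)^+$ on $\cMult$ along $\calL$. The candidate is the endofunctor $\calI\calJ$ of \cref{monad IJ}, sending $(\rmQ,\rmQ_0) \mapsto (\rmQ^+, \rmQ_0^+)$. Since $\calI \dashv \calJ$ is an adjunction (\cref{thm:adjunction-Twist-Curv}), $\calI\calJ$ carries a canonical comonad structure on $\Curv$, with counit the adjunction counit $\varepsilon^{\calI\calJ}$ and comultiplication $\calI\, \eta^{\calI\calJ}\calJ$.

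First I would record that $\calL$ strictly intertwines the underlying endofunctors: on objects $\calL\calI\calJ(\rmQ,\rmQ_0) = \rmQ^+ = (-)^+\calL(\rmQ,\rmQ_0)$, with identical $\cLinf$-structure (\cref{def:+}), and on morphisms both functors act by $\psi \mapsto \psi \vee 1_{[\diff]}$. The substance of the argument is then to check that $\calL$ also carries the comonad structure of $\calI\calJ$ onto that of $(-)^+$. For the counit this is immediate from \cref{thm:adjunction-Twist-Curv}, whose $\varepsilon^{\calI\calJ}$ is the identity on $\rmQ$ and kills $\diff$, matching the counit $m \mapsto 0$ of $(-)^+$. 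For the comultiplication I would unwind $\calI\,\eta^{\calI\calJ}\calJ$ using the explicit unit $\eta^{\calI\calJ}(\ell_1) = \diff + \ell_1$ of \cref{thm:adjunction-Twist-Curv}: applied to $\calJ(\rmQ,\rmQ_0) = (\rmQ^+, \rmQ_0)$, whose distinguished arity-one generator is the $\diff$ adjoined by the inner $(-)^+$, the unit sends $\diff \mapsto \diff' + \diff$ (with $\diff'$ the generator from the outer $(-)^+$) and fixes $\rmQ$. Applying $\calL$ then reproduces exactly the comultiplication $m \mapsto m_1 + m_2$ of $(-)^+$. I expect this last identification — keeping straight the distinguished $\Twist$-generators of $\calJ(\rmQ,\rmQ_0)$ and matching the two copies of the adjoined operation — to be the only genuinely delicate point, but it follows directly from the explicit unit of \cref{thm:adjunction-Twist-Curv}.

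Having exhibited $\calI\calJ$ as a comonad on $\Curv$ lifting $(-)^+$, the cited correspondence shows this is equivalent to a distributive law of the comonad $(-)^+$ over the comonad $\cTw$; this is precisely the assertion that $(-)^+$ distributes over $\cTw$, equivalently that $(-)^+$ lifts to the category of $\cTw$-coalgebras. For the final statement it is cleanest to observe that composing the adjunctions $\calL \dashv \calR$ (\cref{thm:adjunction-Curv}) and $\calI \dashv \calJ$ gives an adjunction $\calL\calI \dashv \calJ\calR$ between $\Twist$ and $\cMult$, whose associated comonad on $\cMult$ is $\calL\calI\calJ\calR$. Since $\calR(\rmP) = \cTw(\rmP)$ equipped with its suboperad $\rmP \, \hat\vee \, [\alpha]$ and $\calI\calJ$ applies $(-)^+$, this comonad sends $\rmP \mapsto \cTw(\rmP)^+$; thus $\cTw(-)^+$ is a comonad on $\cMult$, as claimed.
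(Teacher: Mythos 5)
Your proof is correct and follows essentially the same route as the paper: identify $\Curv$ with the category of $\cTw$-coalgebras via \cref{cTw comonadicity}, exhibit $\calI\calJ$ (with its adjunction comonad structure) as a lift of the comonad $(-)^+$, and invoke the lift/distributive-law correspondence of \cite[Section~9.2]{barr2000toposes}. The paper's proof is a two-line citation of these same facts, so your explicit check that $\calL$ matches counits and comultiplications, and your derivation of the composite comonad $\cTw(-)^+$ from the composite adjunction $\calL\calI \dashv \calJ\calR$, simply fill in details the paper leaves implicit.
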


\begin{proof}
    This follows from the fact that $\Curv$ is the category of $\cTw$-coalgebras (\cref{cTw comonadicity}), and the fact that $\calI \calJ$ is a lift of $(-)^+$ to $\Curv$ (\cref{monad IJ}).
\end{proof}

Additionally to these two comonads, we also encountered a monad on $\cMult$ via the adjunction induced by $\pi : \cLinf \to \Linf$ (see beginning of \cref{adjunction twist}).

\begin{lemma}
\label{monadic adjunction}
    The adjunction
    \[\pi_!: \cMult \leftrightarrow \Mult: \pi^!\]
    induced by $\pi : \cLinf \to \Linf$ is monadic, i.e.\ $\Mult$ is the category of algebras over the monad $\pi^! \pi_! : \rmQ \mapsto \rmQ / (\mu_0, \mu_1)$.
\end{lemma}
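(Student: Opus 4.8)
The plan is to identify $\pi^!\pi_!$ as an idempotent monad and deduce monadicity from the theory of reflective localizations, rather than verifying Beck's criterion by hand as in the proof of \cref{cTw comonadicity}. Using the formulas for $\pi_!$ and $\pi^!$ from the beginning of \cref{adjunction twist}, for $(\cLinf \to \rmQ)$ in $\cMult$ one has $\pi_!(\rmQ) = (\Linf \to \rmQ/(\ell_0,\ell_1))$, where $\rmQ/(\ell_0,\ell_1)$ is the quotient dg operad by the operadic ideal generated by the images of the arity $0$ and arity $1$ generators $\ell_0,\ell_1$ (the $\mu_0,\mu_1$ of the non-symmetric variant); this ideal is preserved by $d_\rmQ$ since $d_{\cLinf}\ell_0$ and $d_{\cLinf}\ell_1$ already lie in $(\ell_0,\ell_1)$, so the quotient is a genuine object of $\Mult$. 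Applying $\pi^!$ returns this quotient to $\cMult$ along $\cLinf \xrightarrow{\pi} \Linf$, giving $\pi^!\pi_!(\rmQ) = \rmQ/(\ell_0,\ell_1)$ as asserted, with unit $\eta_\rmQ$ the quotient projection.

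The crucial observation is that the counit $\varepsilon : \pi_!\pi^! \to \id_\Mult$ is a natural isomorphism. Indeed, for $\rmP \in \Mult$ the structure morphism $\cLinf \to \rmP$ factors through $\pi$, so the images of $\ell_0$ and $\ell_1$ in $\rmP$ vanish; hence $\pi_!\pi^!(\rmP) = \rmP/(\ell_0,\ell_1) = \rmP$ and $\varepsilon_\rmP = \id_\rmP$. Equivalently, quotienting a second time by $(\ell_0,\ell_1)$ changes nothing, so $T := \pi^!\pi_!$ is idempotent: the multiplication $T^2 \to T$ is invertible.

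From this I would conclude by the standard fact that an adjunction whose counit is invertible is a reflective localization. The right adjoint $\pi^!$ is then fully faithful, it identifies $\Mult$ with the reflective subcategory of $\cMult$ spanned by those objects on which $\ell_0$ and $\ell_1$ act as zero, and the comparison functor $\Mult \to (\cMult)^{\pi^!\pi_!}$ into the category of algebras for the idempotent monad $\pi^!\pi_!$ is an equivalence. This is precisely the asserted monadicity.

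I do not expect a real obstacle here: the content is the one-line computation that the counit is an isomorphism, and the remaining points are bookkeeping --- checking that $d_\rmQ$ descends to $\rmQ/(\ell_0,\ell_1)$ and that the structure maps from $\cLinf$ match up under $\pi$. For uniformity with the proof of \cref{cTw comonadicity}, one could instead verify Beck's conditions for $\pi^!$ directly: it reflects isomorphisms since it leaves the underlying operad unchanged, and it creates coequalizers of $\pi^!$-split pairs because coequalizers of dg operads are computed arity-wise on underlying chain complexes. The idempotency argument is, however, the shortest route.
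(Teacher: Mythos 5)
Your main argument is correct, but it takes a different route from the paper. The paper's proof is a direct two-line computation of the Eilenberg--Moore category: an algebra structure map $f : \rmQ/(\mu_0,\mu_1) \to \rmQ$ satisfies $f \circ \pi_\rmQ = \id_\rmQ$, and since the projection $\pi_\rmQ$ is surjective this forces $\pi_\rmQ$ to be an isomorphism, i.e.\ $\mu_0 = \mu_1 = 0$ in $\rmQ$; hence the algebras are exactly the objects in the essential image of $\pi^!$, which is $\Mult$. You instead observe that the counit $\pi_!\pi^! \to \id_{\Mult}$ is an isomorphism and then invoke the standard facts that such an adjunction exhibits $\Mult$ as a reflective subcategory of $\cMult$ and that reflective localizations are monadic (the monad being idempotent, its algebras are the objects whose unit is invertible). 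Both proofs hinge on the same observation --- objects of $\Mult$ already kill $\ell_0$ and $\ell_1$, so quotienting does nothing --- but you package it through general machinery, whereas the paper's direct verification is essentially the proof of that machinery specialized to this case. Your route makes the structural picture (reflective subcategory, idempotent monad) explicit; the paper's is shorter and self-contained.

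One caveat about your closing aside: the justification offered there for Beck's criterion, namely that ``coequalizers of dg operads are computed arity-wise on underlying chain complexes,'' is false. Colimits of operads are not computed arity-wise; only limits are, which is exactly what the paper exploits in its comonadicity proofs (equalizers computed arity-wise). If you wanted the Beck route, you would instead argue that for a $\pi^!$-split pair the split coequalizer in $\cMult$ automatically lies in the essential image of $\pi^!$ (the images of $\ell_0,\ell_1$ vanish in any quotient of an object on which they vanish) and then lift it along the fully faithful $\pi^!$. Since your idempotent-monad argument is complete on its own, this slip does not affect your proof.
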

\begin{proof}
    An algebra for $\pi^! \pi_!$ is a dg-operad $\rmQ \in \cMult$ together with a morphism $f : \rmQ / (\mu_0, \mu_1) \to \rmQ$ such that $f \circ \pi_\rmQ = \id_\rmQ$.
    This implies that $\mu_0 = \mu_1 = 0$ in $\rmQ$, and therefore $\rmQ = (\pi^! \pi_!)(\rmQ)$.
    This completes the proof.
\end{proof}

Recall now that, given a comonad $T$ and a monad $M$ on the same category, the composition $MT$ defines a comonad on the category of $M$-algebras (note $(MT)(A)$ has an $M$-algebra structure given by the multiplication $\gamma^M_{TA}$ of $M$) with counit and comultiplication given by $\varepsilon^{MT}_A \eqdef a \circ M(\varepsilon^T_A)$ and $\delta^{MT}_A \eqdef MT(\eta^M_{TA}) \circ M(\delta^T_A)$
for an $M$-algebra $A$ with structural morphism $a : MA \to A$.

Applying this observation to the comonad $T = \cTw(-)^+$ (\cref{distributivity}) and the monad $M = \pi^! \pi_!$, one gets a comonad $MT$ on the category of $M$-algebras, i.e.\ on $\Mult$ (\cref{monadic adjunction}). 

We observe that this recovers $\Tw$:

\begin{prop}
    $MT = \cTw(-)^+/(\kappa + \ell_0^\alpha, m + \ell_1^\alpha) = \Tw$ on $\Mult$.            
\end{prop}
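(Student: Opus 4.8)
The plan is to prove the two equalities in turn, treating the first as a bookkeeping identity unwinding the definitions and the second as the matching of two comonad structures on one common underlying functor. Throughout I will use that $m = \diff$.

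For the first equality, I would unwind the monad $M = \pi^!\pi_!$ of \cref{monadic adjunction}, which sends an object of $\cMult$ to its quotient by the operadic ideal generated by the images of $\ell_0$ and $\ell_1$. On an object with underlying operad $\pi^!\rmP$, the comonad $MT$ produces the free $M$-algebra on $T(\pi^!\rmP) = \cTw(\pi^!\rmP)^+$, whose underlying operad is therefore $\cTw(\pi^!\rmP)^+/(\ell_0,\ell_1)$. So it remains to identify those two images inside $\cTw(\pi^!\rmP)^+$. By \cref{def of cTw} the map $\cLinf \to \cTw(\pi^!\rmP)$ sends $\ell_0 \mapsto \kurv + \ell_0^\alpha$ and $\ell_1 \mapsto \ell_1^\alpha$, and by \cref{def:+} the operation $(-)^+$ replaces $\ell_1$ by $\diff + \ell_1$; composing, the map $\cLinf \to \cTw(\pi^!\rmP)^+$ sends $\ell_0 \mapsto \kurv + \ell_0^\alpha$ and $\ell_1 \mapsto \diff + \ell_1^\alpha$. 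Hence the quotient is exactly $\cTw(-)^+/(\kurv + \ell_0^\alpha,\ \diff + \ell_1^\alpha)$, giving the first equality.

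For the functor-level part of the second equality, the cleanest route avoids recomputing the quotient by hand. By \cref{monadic adjunction} the free and forgetful functors of the monadic adjunction $\pi_! \dashv \pi^!$ are identified with $\pi_!$ and $\pi^!$, so the general construction recalled just above the statement gives, on underlying functors, $MT = \pi_!\,T\,\pi^!$. Since $T = \cTw(-)^+ = \calL\calI\calJ\calR$ — which is precisely the content of \cref{distributivity} and \cref{monad IJ}, where $\calR$ produces $\cTw$ with its distinguished suboperad, $\calI\calJ$ applies $(-)^+$, and $\calL$ forgets — we obtain $MT = \pi_!\calL\calI\calJ\calR\pi^! = \forget\free = \Tw$ by \cref{thm:main-thm}. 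Equivalently, one can compute the quotient directly: eliminating $\kurv$ and $\diff$ via $\kurv = -\ell_0^\alpha$ and $\diff = -\ell_1^\alpha$ leaves the operad $\rmP \,\hat\vee\, [\alpha]$, and the induced differential is exactly the computation already carried out in the proof of \cref{thm:adjoint}, namely $d_\rmP + [\ell_1^\alpha,-]$ on $\rmP$ and $-\ell_0^\alpha + [\ell_1^\alpha,\alpha]$ on $\alpha$, i.e.\ $d_\Tw$.

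It then remains to check that the two comonad structures on this common functor agree, which I would do by evaluating the counit and comultiplication of $MT$ on the adjoined generator $\alpha$ and confirming they act as $\id$ on $\nu \in \rmP$ and as $f \vee 1$ on morphisms. The counit $\varepsilon^{MT} = a \circ M(\varepsilon^T)$ is immediate: the counit of $T = \cTw(-)^+$, being the composite of the counits of \cref{thm:adjunction-Curv} and \cref{thm:adjunction-Twist-Curv}, kills $\diff$, $\alpha$ and $\kurv$, and this descends through $M$ and the identity algebra structure $a$ to the map $\alpha \mapsto 0$, $\nu \mapsto \nu$, which is Willwacher's counit. The comultiplication $\delta^{MT} = MT(\eta^M_{TA}) \circ M(\delta^T_A)$ is the laborious part: the comultiplication of $T$ sends $\alpha \mapsto \alpha' + \alpha$, $\kurv \mapsto \kurv' + \kurv$, $\diff \mapsto \diff' + \diff$, and after applying $M$ and $MT(\eta^M)$ the $\kurv$- and $\diff$-contributions are reabsorbed by the relations defining the quotient, leaving precisely $\alpha \mapsto \alpha' + \alpha$ — the same bookkeeping already performed in \cref{prop:comonad-structure}. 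The main obstacle is exactly this last computation, i.e.\ tracking how the $(-)^+$- and $\cTw$-comultiplications interact through the distributive law and how $\kurv,\diff$ are re-expressed in the quotient; but since it parallels \cref{prop:comonad-structure} it introduces no genuinely new difficulty, and the two structures coincide with that of $\Tw$.
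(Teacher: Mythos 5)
First, a point of reference: the paper states this proposition \emph{without proof} (it appears as a closing observation of \cref{section distributivity}), so your argument has to stand on its own merits. Much of it does: the first equality (identifying the images of $\ell_0,\ell_1$ in $\cTw(\pi^!\rmP)^+$ as $\kurv+\ell_0^\alpha$ and $\diff+\ell_1^\alpha$), the identification of the quotient with $(\Tw(\rmP),d_{\Tw(\rmP)})$ via the computation from the proof of \cref{thm:adjoint}, the functor-level identity $MT=\pi_!\,T\,\pi^!\cong\forget\free$, and the counit comparison are all correct.

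The genuine problem is your asserted formula for the comultiplication $\delta_T$ of $T=\cTw(-)^+$: it does \emph{not} send $\kurv\mapsto\kurv'+\kurv$. Since $\delta_T$ is a morphism in $\cMult$, it must carry the image of $\ell_0$ in $T(\rmQ)$, namely $\kurv+\ell_0^\alpha$, to the image of $\ell_0$ in $TT(\rmQ)$, namely $\kurv'+\Lambda_0^{\alpha'}$, where $\Lambda_0=\kurv+\ell_0^\alpha$, $\Lambda_1=\diff+\ell_1^\alpha$ and $\Lambda_k=\ell_k^\alpha$ ($k\geq 2$) are the $\cLinf$-operations of $T(\rmQ)$. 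Expanding with the identity $\sum_{k\geq 0}\tfrac{1}{k!}\,\ell_k^\alpha((\alpha')^k)=\ell_0^{\alpha+\alpha'}$ from the proof of \cref{cTw comonadicity} gives $\Lambda_0^{\alpha'}=\kurv+\diff\circ_1\alpha'+\ell_0^{\alpha+\alpha'}$, so that, granting $\delta_T|_{\rmQ}=\id$ and $\delta_T(\alpha)=\alpha'+\alpha$,
\[
\delta_T(\kurv)=\kurv'+\kurv+\diff\circ_1\alpha' .
\]
The same correction is forced by the differential, since $d\alpha=\kurv-\diff\circ_1\alpha$ in $(\cTw\rmQ)^+$. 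Your map (identity on $\rmQ$, $\alpha\mapsto\alpha'+\alpha$, $\kurv\mapsto\kurv'+\kurv$, $\diff\mapsto\diff'+\diff$) is therefore not a morphism in $\cMult$ at all, so it cannot be the comultiplication. As a cross-check, the unit computed in \cref{prop:comonad-structure} is $T(\mathrm{proj})\circ\delta_{T,\pi^!\rmP}$, where $\mathrm{proj}$ sends $\kurv\mapsto-\ell_0^\alpha$ and $\diff\mapsto-\ell_1^\alpha$; its value $\kurv\mapsto\kurv'-(\ell_0^\alpha+[\ell_1^\alpha,\alpha'])$ there is exactly the image of $\kurv'+\kurv+\diff\circ_1\alpha'$, whereas your formula would give only $\kurv'-\ell_0^\alpha$.

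This error does not destroy the conclusion, because $\delta^{MT}=M\bigl(T(\eta^M_{T\pi^!\rmP})\circ\delta_T\bigr)$ is the map induced on the quotient, and $MT(\pi^!\rmP)=\Tw(\rmP)$ is generated by $\rmP$ and $\alpha$, so only $\delta_T|_{\rmP}$ and $\delta_T(\alpha)$ ever enter --- and those values are correct. But this is where your write-up has a gap rather than a typo: the ``each adjoined generator gets duplicated'' heuristic you used to write down $\delta_T$ is precisely what fails on $\kurv$, so the values on $\rmP$ and $\alpha$ need an argument as well. The cleanest repair bypasses generators entirely and upgrades your Route A to the level of comonads: the comonad structure of \cref{distributivity} on $T$ is that of the adjunction $(\calL\calI)\dashv(\calJ\calR)$, and the comonad $MT$ built from a monadic $M$ and an adjunction-comonad $T$ is the comonad of the composite adjunction $(\pi_!\calL\calI)\dashv(\calJ\calR\pi^!)=\forget\dashv\free$, which is $\Tw$ as a comonad by \cref{thm:main-thm}. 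Alternatively, keep your computational route but use the corrected $\delta_T$ above (equivalently, read it off from \cref{prop:comonad-structure}), after which your quotient computation goes through verbatim.
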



\section{Lift of the bar construction to $\coCurv$}
\label{bar-cobar}

Denote by $\eta^{B \Omega} : 1 \to B \Omega$ and $\varepsilon^{\Omega B} : \Omega B \to 1$ the unit and counit of the bar-cobar adjunction.

Since $\cLinf = \Omega(\uCom^*)$, every morphism $\cLinf \xrightarrow{f} \rmP$ induces a map
\[\psi := B(f) \circ \eta^{B \Omega}_{\uCom^*} : \uCom^* \to B \rmP, \]
and therefore a map $\Omega(\psi) : \cLinf \to \Omega B \rmP$.
Observe that 
\[\varepsilon^{\Omega B}_\rmP \circ \Omega(\psi) = \varepsilon^{\Omega B}_\rmP \circ (\Omega B)(f) \circ \Omega(\eta^{B \Omega}_{\uCom^*}) = f \circ \varepsilon^{\Omega B}_{\cLinf} \circ \Omega(\eta^{B \Omega}_{\uCom^*}) =f. \]
The second equality above holds because $\varepsilon^{\Omega B} : \Omega B \to 1$ is a natural transformation, and the second equality follows from the counit-unit relation since $\cLinf = \Omega(\uCom^*)$.

\begin{prop}
    Consider $\cLinf \xrightarrow{f} \rmP$ with corresponding morphism $\uCom^* \xrightarrow{\psi} B \rmP$.
    If $f(\ell_0) \ne 0$, then there exists a lift of $\uCom^* \xrightarrow{\psi} B \rmP$ to $\coCurv$, and therefore a lift of $\cLinf \xrightarrow{\Omega(\psi)} \Omega B \rmP$ to $\Curv$ (\cref{cobar of coCurv}).
\end{prop}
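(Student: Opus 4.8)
The plan is to exhibit the lift as the pair $(B\rmP, \form)$, where $B\rmP$ is viewed as an object of $\coCom$ via its structural morphism $\psi$, and $\form : B\rmP \to \rmO$ is a suitable $\Sym$-module map. Since $\rmO$ is concentrated in arity $0$ and the definition of $\coCurv$ (\cref{coCurv}) imposes no differential-compatibility on $\form$, the only content is to produce a degree-$0$ linear functional $\form : (B\rmP)(0) \to \mathbf{k}$ with $\form(\psi(\mu_0^*)) = 1$, extended by zero in all positive arities. Such a functional exists precisely when $\psi(\mu_0^*)$ is a nonzero, degree-$0$ element of $(B\rmP)(0)$; hence the crux of the argument, and the only place where the hypothesis $f(\ell_0) \ne 0$ is used, is the nonvanishing $\psi(\mu_0^*) \ne 0$.

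First I would note that $\psi(\mu_0^*)$ has degree $0$, since $\psi$ has degree $0$ and $\mu_0^* \in \uCom^*$ has degree $0$. To see it is nonzero, I would invoke the identity $\varepsilon^{\Omega B}_\rmP \circ \Omega(\psi) = f$ established immediately above the statement. By naturality of the degree $(-1)$ map $s^{-1}$, the cobar functor sends the generator $\ell_0 = s^{-1}\mu_0^*$ of $\cLinf = \Omega(\uCom^*)$ to $\Omega(\psi)(\ell_0) = s^{-1}\psi(\mu_0^*)$. Evaluating the identity at $\ell_0$ therefore yields $f(\ell_0) = \varepsilon^{\Omega B}_\rmP(s^{-1}\psi(\mu_0^*))$. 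As $f(\ell_0) \ne 0$ by hypothesis, the element $s^{-1}\psi(\mu_0^*)$ cannot vanish, and so $\psi(\mu_0^*) \ne 0$.

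With $\psi(\mu_0^*)$ a nonzero degree-$0$ element of $(B\rmP)(0)$, I would extend it to a basis of the degree-$0$ part and define $\form$ to be the corresponding dual functional (sending $\psi(\mu_0^*) \mapsto 1$ and the remaining basis vectors to $0$), taking $\form$ to vanish on all homogeneous elements of nonzero degree and in all positive arities. This is a degree-$0$ $\Sym$-module map, the only relevant arity being $0$, on which $\Sym_0$ acts trivially, and it satisfies $\form(\mu_0^*) = 1$. Hence $(B\rmP, \form)$ is an object of $\coCurv$ lifting $\psi$. Applying the functor $\tilde\Omega$ of \cref{cobar of coCurv} then produces the object $(\Omega B\rmP, \Omega\ker(\form))$ of $\Curv$, whose structural morphism is exactly $\cLinf \xrightarrow{\Omega(\psi)} \Omega B\rmP$, giving the desired lift.

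I do not anticipate a serious obstacle: once $\psi(\mu_0^*) \ne 0$ is known, every remaining step is formal. In particular, the well-definedness of $\tilde\Omega$ on this input is guaranteed by \cref{cobar of coCurv} and uses only $\form(\mu_0^*) = 1$ (the remaining two bullet points in its proof hold automatically, since $\rmO$ is arity-$0$ and the infinitesimal decomposition always has positive arity in its lower cooperation). The single delicate point is thus concentrated in the counit identity $\varepsilon^{\Omega B}_\rmP \circ \Omega(\psi) = f$, which converts the hypothesis $f(\ell_0) \ne 0$ into the required nonvanishing.
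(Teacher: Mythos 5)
Your proof is correct, and its overall strategy coincides with the paper's: both reduce the statement to producing a degree-zero functional $\form$ on $(B\rmP)(0)$ with $\form(\psi(\mu_0^*))=1$, so that the entire content of the proposition is the nonvanishing of $\psi(\mu_0^*)$. Where you diverge is in how that nonvanishing is established. The paper computes $\psi(\mu_0^*)$ explicitly from the formula for the bar-cobar unit,
\[
\psi(\mu_0^*) = s f(\ell_0) + s f(\ell_1) \otimes s f(\ell_0) + s f(\ell_2) \otimes \left(s f(\ell_0) \otimes s f(\ell_0)\right) + \cdots,
\]
and observes that the weight-one term $s f(\ell_0)$ is nonzero, so the sum cannot vanish (the summands lie in distinct weight components of $B\rmP$). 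You instead argue formally: evaluating the identity $\varepsilon^{\Omega B}_{\rmP} \circ \Omega(\psi) = f$, established in the paper immediately before the statement, on the generator $\ell_0 = s^{-1}\mu_0^*$, and using that $\Omega(\psi)$ sends $s^{-1}\mu_0^*$ to $s^{-1}\psi(\mu_0^*)$ by naturality of $s^{-1}$, you obtain $f(\ell_0) = \varepsilon^{\Omega B}_{\rmP}(s^{-1}\psi(\mu_0^*))$, which forces $\psi(\mu_0^*)\ne 0$. Both arguments are sound; yours is more economical in that it needs no explicit formula for the unit $\eta^{B\Omega}$, only the already-proved counit identity, while the paper's computation has the side benefit of exhibiting $\psi(\mu_0^*)$ concretely, which one could use to write down a choice of $\form$ explicitly. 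Your remaining steps (extending $\psi(\mu_0^*)$ to a basis of the degree-zero part to define $\form$, vanishing in positive arities and nonzero degrees, and applying the functor $\tilde\Omega$ of the cited lemma, whose structural morphism is indeed $\Omega(\psi)$) match the paper's and are all correct.
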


\begin{proof}
    We have to find a linear form $\form : (B \rmP)(0) \to \mathrm{k}$ such that $(\uCom^* \xrightarrow{\psi} B \rmP, \form)$ is in $\coCurv$, i.e.\ such that $\form \circ B(f) \circ \eta^{B \Omega}_{\uCom^*} = \ev_{\mu_0}$.
    Now observe that, since $f(\ell_0) \ne 0$,
    \[\psi(\mu_0^*) = s f(\ell_0) + s f(\ell_1) \otimes s f(\ell_0) + s f(\ell_2) \otimes (s f(\ell_0) \otimes s f(\ell_0)) + \cdots \ne 0 \]
    It remains to choose $\form : (B \rmP)(0) \to \mathrm{k}$ such that $\form(\psi \mu_0^*) = 1$.
    This completes the proof.
\end{proof}


\section{From $\cTw$ to group action via Yoneda}
\label{sec:group-action}

In \cite{DotsenkoShadrinVallette23}, the authors describe the twisting procedure of a $\Qinf$-algebra $A$ as a group action of $F_1 A$ on the Maurer-Cartan elements in the convolution Lie algebra of $\rmQ^\as$ and $\End_A$, i.e. on $\hom(\Qinf, \End_A)$.
In this section, we give a conceptual relationship between this approach and the comonad $\cTw$.

\vspace{2mm}

This relationship is based on the following basic observation: given a monoid object $G$ in a cartesian category $(\calC, \times, I)$, there is a corresponding monad on $\calC$ that sends $X \mapsto G \times X$.
The unit $X \cong I \times X \to G \times X$ is $\eta_G \times \id_X$, where $\eta_G : I \to G$ is the unit of $G$. The multiplication $G \times G \times X \to G \times X$ is $\mu_G \times \id_X$, where $\mu_G : G \times G \to G$ is the multiplication of $G$.
Given $X \in \calC$, an algebra structure on $X$ for this monad is exactly an action of~$G$ on~$X$.

\vspace{2mm}

We will apply this observation to the cartesian category $\calC$ of representable functors $H~:~\Op \to \Vect$.
The cartesian structure on $\calC$ is inherited from the cartesian structure on $\Vect$, so the unit $I$ sends $\rmP \mapsto 0$.
 
We consider the monoid object $G$ in $\calC$ which sends $\rmP$ to the space of degree $0$ elements in $F_1 \rmP(0)$.
Note that the functor $G$ is representable by the operad $\rmT$ generated by two arity~$0$ operations $\alpha$ and $\kappa$ in $F_1$ of degrees $0$ and $(-1)$ respectively, with $d_\rmT \alpha = \kappa$.
The unit $\eta_G : I \to G$ is the zero morphism for each  $\rmP$, and the multiplication $G \times G \to G$ is given by the addition on $\rmP(0)$. 

\vspace{2mm}

We now modify slightly the situation. 
Consider the specific object $H_\infty := \hom_\op(\cLinf, -)$ in $\calC$ (note that the functor $H_\infty(\rmP)$ is alternatively described as the set of Maurer-Cartan elements in the totalization Lie algebra $\bigoplus_{n \geq 0} \rmP(n)$).
We denote by $(\calC \downarrow H_\infty)$ the category of objects $H$ in $\calC$ together with a natural transformation $H \to H_\infty$.
We proceed to show that the monad $G \times (-)$ on $\calC$ induces a monad $M_G$ on $(\calC \downarrow H_\infty)$.
For that we need to define a morphism $\psi :G \times H \to H_\infty$ for each morphism $\varphi: H \to H_\infty$ which sends $f \in H(\rmP)$ to $\varphi_P^f \in H_\infty(\rmP)$. 
Given an operad $\rmP$, a degree $0$ element $a \in F_1 \rmP(0)$, and an element $f \in H(\rmP)$ we define $\psi_{\rmP}^{a, f} \in H_\infty(\rmP)$ by
\[
\psi_{\rmP}^{a, f}(\ell_0) := d_\rmP a + \sum_{k \geq 0} \frac{1}{k!} \varphi_{\rmP}^f(\ell_{k})(a^k), 
\quad \text{and} \quad 
\psi_{\rmP}^{a, f}(\ell_n) := \sum_{k \geq 0} \frac{1}{k!} \varphi_{\rmP}^f(\ell_{n+k})(a^k, -) \quad \text{for } n \geqslant 1.
\]

\begin{lemma}
    The monad $G \times (-)$ on $\calC$ induces a monad $M_G$ on $(\calC \downarrow H_\infty)$ via the formula above.
\end{lemma}
\begin{proof}
    Given $H \to H_\infty$, we need to check that the natural transformations $H \to G \times H$ and $G \times G \times H \to G \times H$ coming from the monad structure on $G$ commute with the morphisms to $H_\infty$. 
    This amount to the usual facts that twisting by zero does nothing, while twisting by $a$ then $b$ or by $a+b$ is the same.
\end{proof}

Note that, since $M_G$ is a monad on $(\calC \downarrow H_\infty)$, it is defines a comonad on the opposite category $(H_\infty \downarrow \calC^{op})$.
We can now state and prove the main result of this section.

\begin{theorem}
\label{Yoneda-power}
    The Yoneda functor 
    \[\cMult \to (H_\infty \downarrow \calC^{op}), \quad \rmQ \mapsto \hom_{\Op}(\rmQ, -) \]
    induces an equivalence of comonads between $\cTw$ and $M_G$.
    In particular, $\cTw$-coalgebra structures on $\rmQ$ are equivalent to $G$-actions on $\hom_{\Op}(\rmQ, -)$.
\end{theorem}
\begin{proof}
    Observe that we have
    \[\hom_{\Op}(\cTw(\rmQ), \rmP)) \cong \hom_{\Op}(\rmT, \rmP) \times \hom_{\Op}(\rmQ, \rmP) = (G \times \hom_{\Op}(\rmQ, -))(\rmP), \]
    because $\cTw(\rmQ)$ is $\rmQ \vee \rmT$ with morphism $\cLinf \to \cTw(\rmQ)$ given by
    \[
    \ell_0 \mapsto d_\rmT \alpha + \sum_{k \geq 0} \ell_k(\alpha^k), \quad \text{and} \quad 
    \ell_n \mapsto \sum_{k \geq 0} \frac{1}{k!} \ell_{n+k}(\alpha^k, -) 
    \quad \text{for } n \geqslant 1.
    \]
\end{proof}


\section{Twistable operads are $\Tw$-stable}
\label{sec:twistable-Tw-stable}

Consider a Koszul operad $\rmP=\rmP(E,R)$ with finite dimensional $E(n)$ for every $n$ and whose Koszul dual admits a unital extension $\rmP^! \hookrightarrow u_\chi\rmP^!$.
In this case, there is a meaningful notion of \emph{unital $\rmP^{!}$-algebras} encoded by $u_\chi \rmP^{!}$, and dually a notion of \defn{curved homotopy $\rmP$-algebras}, or $\cPinf^\chi$-algebras, encoded by Maurer--Cartan elements of the Pre-Lie convolution algebra 
\[
\hom_\Sym((u_\chi \rmP^{!})^{*},\End_A).
\]
According to \cite[Lemma 4.22]{DotsenkoShadrinVallette23}, the convolution Lie algebra $\hom_{\Sym}((u_\chi \rmP^{!})^*, \End_A)$ is isomorphic to $A \times \hom_{\Sym}((\rmP^{!})^{*}, \End_A)$.
In particular, $\calF_1 A$ acts on the set of Maurer-Cartan elements in $\hom_{\Sym}((u_\chi \rmP^{!})^*, \End_A)$ which is in bijection with $\hom_{\Op}(\cPinf^\chi, \End_A)$.
Therefore, given an element $a$ in $A$, and a $\cPinf^\chi$-algebra $\beta \in \hom_\Sym((u_\chi \rmP^{!})^{*},\End_A)$ on $A$, one can produce a new $\cPinf^\chi$-algebra \cite[Thm.~4.23]{DotsenkoShadrinVallette23}:
\begin{equation}
\label{gauge-group-action}
\begin{matrix}
        \calF_1 A \times \hom_\Sym((u_\chi \rmP^{!})^{*}, \End_A) & \to & 
\hom_\Sym((u_\chi \rmP^{!})^{*}, \End_A) \\
(a,\beta) & \mapsto & a \cdot \beta .
\end{matrix}
\end{equation}
Our goal is to show that this twisting procedure gives $\cPinf^\chi$ (resp.\ $\Pinf$) a $\cTw$-coalgebra (resp.\ $\Tw$-coalgebra) structure using only the methods of \cite{DotsenkoShadrinVallette23}. 

\subsection{$\cTw$-coalgebra structure on $\cPinf$}
Let us fix $\chi$ for the rest of the section and write~$\uP^! \eqdef u_\chi \rmP^!$ and $\cPinf \eqdef \cPinf^\chi$ from now on.
As we have shown in \cref{extendable-over-uCom}, there is a morphism of operads $\uP^! \to \uCom$.
Since $\hom_\Sym(-,-)$ is a bifunctor (with respect to strict morphisms), this gives a morphism of Pre-Lie algebras 
\begin{equation}
\label{cLinf-to-cPinf}
    \hom_\Sym((\uP^{!})^{*},\End_A) 
    \to 
    \hom_\Sym(\uCom^{*},\End_A).
\end{equation}
This is equivalent to a morphism of operads $\cLinf \to \cPinf$. 
Now, we claim that the gauge group action~\eqref{gauge-group-action}
gives a a morphism of operads
\begin{equation}
\label{cTw-coalgebra-map}
    \eta_{\cPinf} : \cPinf \to \cTw(\cPinf)
\end{equation}
which satisfies the $\cTw$-coalgebra axioms.

\medskip

Let us write $\rmC \eqdef (\uP^{!})^{*}$.
Given $a \in \calF_1 A$ and a Maurer-Cartan element $\beta \in \hom_{\Sym}(\rmC, \End_A)$, the result of the action is given by the composite
\[
a \cdot \beta : \rmC 
\xrightarrow{\Delta_\rmC} \rmC \, \hat \circ \, \rmC 
\xrightarrow{\beta \hat \circ (1+a)} \End_A \hat \circ \End_A 
\xrightarrow{\gamma} \End_A ,
\]
where $\Delta_\rmC$ is the full decomposition map of $\rmC$ and $\gamma$ is the full composition map of $\End_A$ (see \cite[after Definition~3.23]{DotsenkoShadrinVallette23}).
Explicitly,  choose a basis of $\uP^{!}$, and pick a basis element~$\mu$ of arity~$n$.
The image of $\mu^{*}$ under the decomposition map $\Delta_{\rmC}$ is given by the sum over all composites of basis elements in $\uP^{!}$ equal to $\mu$, permuted by some shuffles.
Let us denote by~$\omega_k^{*}((u^{*})^k,-)$ denote the sum of terms in $\Delta_{\rmC}(\mu^{*})$ in which $u^{*}$ appears exactly $k$ times.
Then, as explained in the proof of~\cite[Prop.~4.7]{DotsenkoShadrinVallette23}, the operation $(a \cdot \beta)(\mu^{*})$ takes the form
\begin{equation}
\label{eq:gauge-twisting}
    (a \cdot \beta)(\mu^{*}) = \beta(\mu^{*})+\sum_{k \geqslant 1}\beta(\omega_k^{*})(a^k,-).
\end{equation}
This translates immediately into a map $\eta_{\cPinf} : \cPinf \to  \cTw(\cPinf)$ defined by
\begin{equation}
\label{cTw-coalgebra-structure-morphism}
        s^{-1}\mu^* \mapsto s^{-1}\mu^* + \sum_{k \geqslant 1} (s^{-1}\omega_k^*)(\alpha^k,-),
\end{equation}
which we take as our definition for~\eqref{cTw-coalgebra-map}.
Here by $s^{-1}\omega_k^*$ we mean the sum $\omega_k^*$ viewed in $\cPinf$, i.e.\ with the $-1$ shift induced by the cobar construction.
The fact that this is a well-defined morphism of operads follows directly from the fact that $a \cdot \beta$ is a Maurer--Cartan element.
It remains to show that $\eta_{\cPinf}$ endows $\cPinf$ with a $\cTw$-coalgebra structure.  
The fact that post-composition with the counit $\cTw(\cPinf) \to \cPinf$, which sends $\alpha,\kurv \mapsto 0$, is the identity on $\cPinf$ is clear from~\eqref{cTw-coalgebra-structure-morphism}.
It remains to check that the following diagram commutes
\begin{equation}
\label{cTw-diagram}
\begin{tikzcd}
\cPinf \arrow[r, "\eta_{\cPinf}"] \arrow[d, "\eta_{\cPinf}"'] & \cTw(\cPinf) \arrow[d, "\cTw(\eta_{\cPinf})"] \\
\cTw(\cPinf) \arrow[r, "\eta(\cPinf)"]                 & \cTw(\cTw(\cPinf))     
\end{tikzcd}
\end{equation}
The bottom morphism is given by the comonad structure on $\cTw$.
The upper composite corresponds to twisting a $\cPinf$-algebra structure $\beta$ on $A$ by a Maurer--Cartan element $a$ and then by a Maurer--Cartan element $b$ to get the $\cPinf$-algebra structure $b\cdot (a \cdot \beta)$. The lower composite corresponds to twisting by the Maurer--Cartan element $b+a$ to get the $\cPinf$-algebra structure $(b+a) \cdot \beta$. 
The definition of the gauge group \cite[Def.~3.9]{DotsenkoShadrinVallette23} implies $b \cdot ( a \cdot \beta)=(b+a) \cdot \beta$.
Thus \eqref{cTw-diagram} commutes, and  $\eta_{\cPinf}$ defines a $\cTw$-coalgebra structure on $\cPinf$.

\subsection{$\Tw$-coalgebra structure on $\Pinf$}
One can see directly from the equation~\eqref{eq:gauge-twisting} that the Maurer--Cartan equation $(a \cdot \beta)(u^{*})=0$ coincides with the Maurer--Cartan equation coming from the $\cLinf$-algebra structure on $A$.
One can also see that $(a \cdot \beta)(\id^{*})$ coincides with the twisting  of the image of the $\cLinf$-operation $\ell_1$ on $A$ by the element $a$ : the only way  to obtain $\id$ as composite of operations in $\uP^{!}$ with a certain number of~$u$'s is to take these operations to be made strictly from generators in~$E(2)$. 
Therefore, whenever $(a \cdot \beta)(u^{*})=0$, the gauge action~\eqref{gauge-group-action} produces a $\Pinf^+$-algebra structure on $(A,d)$, or equivalently a $\Pinf$-algebra structure on $(A,d+\ell_1^a)$.
This is precisely the datum of a $\Tw$-coalgebra structure on~$\Pinf$, whose structure morphism $\eta_{\Pinf}$ is given by~\eqref{cTw-coalgebra-structure-morphism}.



\newpage

\bibliographystyle{plain}
\bibliography{uncurving}

\end{document}